\documentclass[10pt]{amsart}

\usepackage{hyperref}
\usepackage{enumerate}
\usepackage{comment}

\makeatletter

\@namedef{subjclassname@2010}{

  \textup{2010} Mathematics Subject Classification}

\usepackage{amssymb, amsmath,amsthm}
\usepackage{mathrsfs}
\newtheorem{thm}{Theorem}[section]

\newtheorem{prop}[thm]{Proposition}
\newtheorem{conj}[thm]{Conjecture}

\newtheorem{cor}[thm]{Corollary}
\newtheorem{lem}[thm]{Lemma}
\theoremstyle{definition}
\newtheorem{rem}[thm]{Remark}
\newtheorem{def1}[thm]{Definition}
\newtheorem{ex}[thm]{Example}
\newtheorem{prob}[thm]{Problem}

\newcommand{\ra}{\rightarrow}
\newcommand{\bk}{\backslash}
\newcommand{\mc}{\mathcal}
\newcommand{\mf}{\mathfrak}
\newcommand{\mb}{\mathbb}

\renewcommand{\ss}{\substack}

\newcommand{\e}{\varepsilon}

\renewcommand{\bar}{\overline}

\frenchspacing

\textwidth=15.5cm

\textheight=23cm

\parindent=16pt

\oddsidemargin=0cm

\evensidemargin=0cm

\topmargin=-0.5cm

\begin{document}
\title{Gap problems for integer-valued multiplicative functions}
\author{Alexander P. Mangerel}
\address{Department of Mathematical Sciences, Durham University, Stockton Road, Durham, DH1 3LE, UK}
\email{smangerel@gmail.com}
\maketitle

\begin{abstract}
Motivated by questions about the typical sizes of gaps $|f(n+1)-f(n)|$ in the sequence $(f(n))_n$, where $f$ is an integer-valued multiplicative function, we investigate the set of solutions
$$
\{n \in \mb{N} : f(n+a) = f(n) + b\}, \quad ab \neq 0.
$$
We formulate a conjecture classifying those multiplicative functions for which this set has logarithmic density zero, and prove that the conjectured classification is tight. Moreover, using techniques from additive combinatorics building on previous work of the author, we show how to reduce the classification problem to the study of \emph{local power maps} modulo prime $\ell$, i.e., maps $g: \mb{N} \ra \mb{Z}$ for which there is $0 \leq k_{\ell} < \ell-1$ such that
$$
g(n) \equiv n^{k_{\ell}} \pmod{\ell} \text{ for all } n \in \mb{N}.
$$
We prove a partial result towards our classification conjecture by employing a strategy of N. Jones that uses Kummer theory to study local power maps modulo many primes $\ell$.
\end{abstract}

\section{Introduction and Main Results}
\subsection{Motivation} 
Let $f: \mb{N} \ra \mb{N}$ be a multiplicative function that takes positive integer values. It is natural to speculate about the growth and distribution of the sequence of gaps $(|f(n+1)-f(n)|)_n$. Heuristics about the independence of multiplicative and additive structure in integers suggests that \emph{unless $f$ has very rigid properties} the following should hold true: 
\begin{equation} \label{eq:growingGap}
\text{if $f(n) \ra \infty$ as $n \ra \infty$ a.e.}\footnote{Here, a statement that occurs as $n \ra \infty$ a.e., i.e., almost everywhere, means that $n \ra \infty$ along a set of natural density $1$.} \text{ then
$|f(m+1)-f(m)| \ra \infty$ as $m \ra \infty$ a.e.}
\end{equation}
Conversely, if $|f(n+1)-f(n)|$ is bounded on a set of positive upper density then we would expect that either: 
\begin{enumerate}[(i)]
\item $f$ is bounded on a set of positive upper density, or 
\item $f$ exhibits \emph{rigid} behaviour (to be made more precise shortly). 
\end{enumerate}
Problems relating to the sequence of gaps $|f(n+1)-f(n)|$ of a multiplicative function $f: \mb{N} \ra \mb{C}$ has inspired many interesting previous works in the literature. For example, K\'{a}tai \cite{Kat2} conjectured that if $|f(n+1)-f(n)| \ra 0$ as $n \ra \infty$ then $f(n) = n^{it}$ for some $t \in \mb{R}$, a result that was proven by Wirsing (see the joint paper \cite{Wir}). On the basis of a conjecture of Erd\H{o}s \cite{Erd} about the average gaps in the sequence of an additive function, K\'{a}tai \cite{Kat2} also conjectured that if, as $x \ra \infty$,
$$
\frac{1}{x}\sum_{n \leq x} |f(n)| \gg 1 \text{ and } \frac{1}{x} \sum_{n \leq x} |f(n+1)-f(n)| = o(1)
$$ 
then $f(n) = n^{s}$ for some $s \in \mb{C}$ with $\text{Re}(s) < 1$, a result that was more recently proved by Klurman \cite{Klu}. In the other direction, Klurman and the author \cite{Rig}, addressing a folklore conjecture, proved that if $f: \mb{N} \ra \mb{C}$ is completely multiplicative and takes values on the unit circle then
$$
\liminf_{n \ra \infty} |f(n+1)-f(n)| = 0.
$$
In contrast to K\'{a}tai's problems mentioned above, which address the ``99\%'' problem of when the gaps are \emph{small for almost all} $n$, our interest in this paper is to understand the more challenging ``1\%'' variant of this problem, namely when the gaps are \emph{bounded on a positive upper density set}. \\
It is certainly true that there are multiplicative functions for which \eqref{eq:growingGap} does not hold. The simplest such example is to take the monomial $f(n) = n$, so that $|f(n+1)-f(n)| = 1$ for all $n$. This example can be generalised readily in various ways. 
\begin{ex} \label{ex:bddEx}
Let $S$ be a set of odd primes that is \emph{sparse} in the sense that
$$
\sum_{p \in S} \frac{1}{p} < \infty.
$$
Let $p_1,p_2 \notin S$ be fixed primes and let $b \in \mb{N}$. Define a completely multiplicative function $f$ at primes $p$ by
$$
f(p) := \begin{cases} p &\text{ if $p \notin S \cup \{p_1,p_2\}$} \\ bp &\text{ if $p \in \{p_1,p_2\}$} \\ 1 &\text{ if $p \in S$.} \end{cases}
$$
Then whenever $n$ is chosen so that $p_1||n$, $p_2||(n+1)$ and so that $n(n+1)$ has no prime factors from $S$, then $f(n+1) = f(n) + b$. Indeed,
$$
f(n) = bp_1 \cdot \frac{n}{p_1} = bn, \quad f(n+1) = bp_2 \cdot \frac{n+1}{p_2} = bn+b,
$$
Using a zero-dimensional sieve it can easily be shown that this occurs for a set of $n \in \mb{N}$ with natural density
$$
\frac{1}{p_1p_2}\left(1-\frac{1}{p_1}\right)\left(1-\frac{1}{p_2}\right)\prod_{p \in S} \left(1-\frac{2}{p}\right) > 0.
$$
\end{ex}
\noindent Such examples are all morally similar in that $p|f(p)$ outside of a sparse set of primes, and that for generic primes $p$ we more precisely have $f(p) = p$, as in the . \\
As an application of the more general results in this paper (discussed in the next subsection), we have the following consequence. In the statement, given a set $A \subseteq \mb{N}$ we write
$$
dA := \lim_{x \ra \infty} \frac{|A \cap [1,x]|}{x}, \quad \delta A := \lim_{x \ra \infty} \frac{1}{\log x}\sum_{n \in A \cap [1,x]} \frac{1}{n},
$$
whenever the limits exist.
\begin{cor}\label{cor:growingPos}
Assume the\footnote{By the ERH we mean that for each number field $K/\mb{Q}$ the Dedekind zeta function $\zeta_K$ of $K$ has no zeros in the half-plane $\text{Re}(s) > 1/2$. In fact, we do not need the full strength of this hypothesis; see Remark \ref{rem:ERHReq} for a relevant discussion of this.} Extended Riemann Hypothesis. Let $f: \mb{N} \ra \mb{N}$ be a multiplicative function, such that
$$
d\{ n \in \mb{N} : |f(n)| \leq C\} = 0 \text{ for all } C > 0.
$$
Then either
$$
\delta\{n \in \mb{N} : |f(n+1)-f(n)| \leq C\} = 0 \text{ for all } C > 0,
$$
or else precisely one of
\begin{equation}\label{eq:primeSumConds}
 \sum_{p|f(p)} \frac{1}{p} \text{ and }  \sum_{f(p) \neq p} \frac{1}{p}
\end{equation}
converges.
\end{cor}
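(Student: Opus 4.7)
The plan is to derive Corollary~\ref{cor:growingPos} from the main classification results of this paper (stated in the following subsections) together with their partial resolution via Kummer theory. Assume the first alternative fails, so some $C > 0$ has $\delta\{n : |f(n+1)-f(n)| \leq C\} > 0$. By pigeonhole over the finitely many values $b \in [-C, C] \cap \mb{Z}$, one obtains a fixed integer $b$ with $\delta S_b > 0$, where $S_b := \{n \in \mb{N} : f(n+1) - f(n) = b\}$. A K\'atai-Wirsing type analysis for integer-valued multiplicative functions rules out $b = 0$ under the hypothesis that $f \to \infty$ a.e., so $b \neq 0$ and we are placed in the framework of the abstract.

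Invoking the main reduction theorem of the paper, one concludes that $f$ must be a local power map modulo $\ell$ for every $\ell$ in some infinite set $L$ of primes: there exist exponents $k_\ell \in \{0, \ldots, \ell - 2\}$ with $f(n) \equiv n^{k_\ell} \pmod \ell$ for every $n$. Combining this with the relation $f(n+1) - f(n) = b$ for $n \in S_b$ gives $(n+1)^{k_\ell} - n^{k_\ell} \equiv b \pmod \ell$ for every $n \in S_b$. A short elementary polynomial argument in $\mb{F}_\ell[X]$ (using that $k_\ell < \ell - 1$, so that the polynomial $(X+1)^{k_\ell} - X^{k_\ell} - b$ has degree $k_\ell - 1 < \ell$) together with the distribution of $S_b \bmod \ell$ then forces $k_\ell \in \{0, 1\}$ for all sufficiently large $\ell \in L$. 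One splits $L = L_0 \sqcup L_1$ accordingly.

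The third step is to lift these local congruences to a global statement about the sequence $(f(p))_p$ via the Kummer-theoretic/Chebotarev argument (following Jones's strategy, conditional on ERH) that forms the partial classification result of the paper. If $L_1$ is infinite, then $f(p) \equiv p \pmod \ell$ for every prime $p$ and every $\ell \in L_1$; combined with effective Chebotarev density estimates over the Kummer extensions $\mb{Q}(\zeta_\ell, (f(p)/p)^{1/\ell})$, this translates into $f(p) = p$ outside a sparse set of primes, yielding $\sum_{f(p) \neq p} 1/p < \infty$. The analogous argument applied to $L_0$ (starting from $f(p) \equiv 1 \pmod \ell$ for all $\ell \in L_0$) produces $\sum_{p \mid f(p)} 1/p < \infty$. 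The ``precisely one'' statement is automatic: since $f(p) = p$ implies $p \mid f(p)$, the two sums cannot simultaneously converge.

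The principal obstacle is the third step: converting infinitely many local congruences modulo varying primes $\ell$ into a global near-identity on $(f(p))_p$ is not formal and requires the full Kummer-theoretic machinery, with ERH entering precisely to furnish sharp enough error terms in the Chebotarev density theorem over the relevant tower of Kummer extensions. By contrast, the earlier steps amount to applying the main reduction theorem and performing an elementary polynomial analysis in $\mb{F}_\ell[X]$.
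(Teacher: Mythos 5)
Your proposal misreads the logical structure of the proof and introduces a step that is not sound. The paper's argument is much shorter: it simply checks whether $\sum_{p \mid f(p)} 1/p$ converges. If it does, the first sum in \eqref{eq:primeSumConds} converges and one is done (the ``precisely one'' follows since $f(p)=p$ implies $p\mid f(p)$, so the two sums cannot both converge). If $\sum_{p\mid f(p)} 1/p = \infty$, then the hypotheses of Theorem~\ref{thm:Inhom}(b) are in force; together with Lemma~\ref{lem:fnGrows} (which upgrades $d\{n : f(n)\leq C\}=0$ to $\sum_{f(p)\neq 1}1/p=\infty$) and the $b=0$ case from \cite[Thm.~1.1]{ManConsec}, Corollary~\ref{cor:ofConj} delivers the gap conclusion. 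There is no case analysis on the exponents $k_\ell$.

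Two steps in your argument do not work. First, your intermediate claim that the local-power-map data forces $k_\ell \in \{0,1\}$ via a polynomial argument in $\mb{F}_\ell[X]$ has no support: Proposition~\ref{prop:almostHom} only gives $f(p)^D \equiv p^{g_\ell} \pmod{\ell}$ for \emph{primes} $p$ outside an $\ell$-dependent exceptional set $\mc{T}_\ell$, not $f(n)\equiv n^{k_\ell} \pmod{\ell}$ for all $n\in\mb{N}$, so you cannot substitute into $f(n+1)-f(n)=b$ to obtain a polynomial identity; and even if you could, the polynomial $(X+1)^{k_\ell}-X^{k_\ell}-b$ has degree $k_\ell-1$, which may be as large as $\ell-3$, so exhibiting more roots than the degree would require $S_b$ to hit nearly every residue class $\bmod\ \ell$, which the positive-density hypothesis does not supply. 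Second, your branch ``$L_0$ infinite $\Rightarrow \sum_{p\mid f(p)}1/p<\infty$'' is backwards: the Kummer/Chebotarev machinery (Proposition~\ref{prop:Assumlfl}) \emph{requires} $\sum_{p\mid f(p)}1/p=\infty$ as a hypothesis and never produces its negation as a conclusion; if the exponent were $k_\ell=0$ for a dense set of $\ell$ the forced conclusion would be $f(p)=1$ for most $p$, which contradicts $\sum_{f(p)\neq 1}1/p=\infty$. The convergence of $\sum_{p\mid f(p)}1/p$ is not something one derives; it is the fallback branch of the corollary when Theorem~\ref{thm:Inhom} cannot be invoked. Finally, the $b=0$ case is handled by \cite[Thm.~1.1]{ManConsec} (a positive-density rigidity result), not by a K\'atai--Wirsing argument, which concerns the almost-everywhere (``99\%'') regime.
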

\noindent We will deduce Corollary \ref{cor:growingPos} from Theorem \ref{thm:Inhom} below, in Section \ref{sec:GapCors}.
\begin{rem} \label{rem:condsNec}
We expect that the first alternative still holds even when $\sum_{p|f(p)} 1/p < \infty$ in \eqref{eq:primeSumConds}, but are unable to prove it at present. We leave this as an open problem to the interested reader. 
\end{rem}
\subsection{Inhomogeneous problems relating shifted values of $f$}
In proving Corollary \ref{cor:growingPos} we are naturally led to the following general problem. 
\begin{prob}\label{prob:classify}
Let $a,b,A, B \in \mb{Z}$ with $aAB \neq 0$. Classify those multiplicative functions $f: \mb{N} \ra \mb{C}$ for which the set
\begin{equation}\label{eq:NfabAB}
\mc{N}_{f,a,b;A,B} := \{n \in \mb{N} : A f(n+a) = Bf(n) + b\}
\end{equation}
satisfies\footnote{This statement should be understood as including the possibility that $\delta \mc{N}_{f,a,b;A,B}$ does not exist.} $\delta \mc{N}_{f,a,b;A,B} \neq 0$.
\end{prob}
In this paper we are especially concerned with the case where $A = B = 1$ and $ b\neq 0$. Our work extends the classification theorems for complex-valued multiplicative functions that was undertaken in \cite{ManConsec}, where we treated the case $b = 0$ and arbitrary non-zero $A,B$. The main result of that paper, \cite[Thm. 1.1]{ManConsec}, showed (in slightly more generality) that for any multiplicative function $f: \mb{N} \ra \mb{C}$ \emph{at least one} of the following events must occur:
\begin{enumerate}[(i)]
\item the set of primes $\{p : |f(p)| \neq 1\}$ is \emph{sparse} in the sense that
$$
\sum_{p : |f(p)| \neq 1} \frac{1}{p} < \infty;
$$
\item for any non-zero integers $a, A,B$ the set
$$
\mc{N}_{f,a;A,B}' := \{n \in \mb{N} : Af(n+a) = Bf(n) \neq 0\}
$$
satisfies $\delta \mc{N}_{f,a;A,B}' = 0$.
\end{enumerate}
It is not hard to show (see Lemma \ref{lem:fnGrows} below) that (i) occurs whenever $|f(n)|$ is bounded on a set of positive natural density, and thus $|f(n)| \to \infty$ \emph{does not} hold a.e. In view of the discussion in the previous section such functions will be of little interest to us, and in the sequel we will on occasion refer to these examples as \emph{essentially bounded}. 
Thus, \cite[Thm. 1.1]{ManConsec} shows that if $f: \mb{N} \ra \mb{C}$ is multiplicative and \emph{not} essentially bounded, $\delta\mc{N}_{f,a;A,B}' = 0$ whenever $aAB \neq 0$. We focus in this paper on characterising the set $\mc{N}_{f,a,b;A,B}$ when $b \neq 0$ and $f$ takes \emph{integer values}. In contrast to the case $b = 0$ the non-vanishing condition $f(n)f(n+a) \neq 0$ may be removed. \\
In the remainder of the paper we will focus on the case $A = B = 1$, though with only minor modifications the same techniques may be applied to study the general case $AB \neq 0$. In the sequel, therefore, given $a,b \in \mb{Z} \bk \{0\}$ and an arithmetic function $f: \mb{N} \ra \mb{Z}$ we abbreviate for convenience
$$
\mc{N}_{f,a,b} := \mc{N}_{f,a,b;1,1} =  \{n \in \mb{N} : f(n+a) = f(n) + b\}.
$$
\subsection{Classifying examples for the inhomogeneous problem}
Extending Example \ref{ex:bddEx}, it is not difficult to find $f: \mb{N} \ra \mb{Z}$ such that $f(n+a) = f(n) + b$, whenever $a|b$. Indeed, let $a, b \in \mb{Z} \bk \{0\}$ with $a|b$. Let $p_1,p_2$ be distinct primes with $(p_1p_2,a) = 1$, and let $S$ be a sparse set of odd primes not including $p_1,p_2$. Then the completely multiplicative function $f:\mb{N} \ra \mb{Z}$ defined at primes via
$$
f(p) = \begin{cases} p &\text{ if } p \notin S \cup \{ p_1,p_2\} \\ p b/a &\text{ if } p \in \{p_1,p_2\} \\ 1 &\text{ if } p \in S, \end{cases}
$$
then $f(n+a) = f(n) + b$ whenever\footnote{In the sequel, given a set of primes $T$ we write $(m,T) = 1$ to mean that $p|m \Rightarrow p \notin T$.}
$$
n \in \{am \in \mb{N} : p_1 || m, \, p_2||(m+1), (n(n+a),S) = 1\}.
$$ 
Once again, this set may be shown to have positive density 
$$
\frac{1}{ap_1p_2}(1-1/p_1)(1-1/p_2) \prod_{p \in S} \left(1-\frac{2}{p}\right).
$$ 
The classification problem for the inhomogeneous case therefore admits many possible examples. The following conjecture gives a precise description of all of these.
\begin{conj} \label{conj:Inhom}
Let $f: \mb{N} \ra \mb{Z}$ be a multiplicative function that satisfies
\begin{equation} \label{eq:fpnonSparse}
\sum_{p: |f(p)| \neq 1} \frac{1}{p} = \infty.
\end{equation}
Suppose $\delta(\mc{N}_{f,a,b}) \neq 0$ for some non-zero $a,b \in \mb{Z}$.
Then the set $
\mc{E}_{f} := \{p : |f(p)| \neq p\}$ satisfies
$$
\sum_{p \in \mc{E}_f} \frac{1}{p} < \infty.
$$
Moreover, if $f$ is completely multiplicative then there is a divisor $d$ of $a$ such that 
$$
f(d) \mid b \text{ and }(a/d) \mid (b/f(d)).
$$
\end{conj}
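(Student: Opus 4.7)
The plan is to work locally modulo many primes $\ell$ and then piece together a global picture. First I would reduce the defining equation of $\mc{N}_{f,a,b}$ modulo a prime $\ell$: the identity $f(n+a) = f(n) + b$ implies $\bar{f}(n+a) \equiv \bar{f}(n) + b \pmod{\ell}$ on a set of logarithmic density at least $\delta \mc{N}_{f,a,b} > 0$, where $\bar{f}$ denotes the reduction of $f$ mod $\ell$. This transports the problem into the finite ring $\mb{Z}/\ell\mb{Z}$, where local power maps will become the relevant objects.

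Next, I would invoke the local classification signalled in the abstract: a multiplicative $\bar{f}: \mb{N} \ra \mb{Z}/\ell\mb{Z}$ whose shifts satisfy a nontrivial additive relation on a positive-density set must be a local power map, i.e.\ there exists $0 \leq k_\ell < \ell - 1$ with $f(n) \equiv n^{k_\ell} \pmod{\ell}$ for all $n$ coprime to $\ell$. This step would adapt the additive-combinatorial machinery of \cite{ManConsec} to the inhomogeneous setting $b \neq 0$, using Elliott-type inequalities to force algebraic rigidity modulo $\ell$ and then a descent to rule out proper subgroup structures in $(\mb{Z}/\ell\mb{Z})^\times$.

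The global problem then becomes constraining the family $(k_\ell)_\ell$. Hypothesis \eqref{eq:fpnonSparse} prevents $k_\ell = 0$ from occurring for too many $\ell$, since $k_\ell = 0$ would mean $f(p) \equiv 1 \pmod{\ell}$ for all $p$. Following N.~Jones' strategy, I would apply effective Chebotarev under ERH to the Kummer extensions $\mb{Q}(\zeta_\ell, f(p)^{1/\ell})$ and their compositums, comparing the Galois-theoretic densities there with the pointwise congruences $f(p) \equiv p^{k_\ell} \pmod{\ell}$. For distinct $\ell$ these Kummer extensions are essentially linearly disjoint, and aggregating the constraints forces $k_\ell = 1$ for a positive-density set of $\ell$. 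Pinning down $f(p) \equiv p \pmod{\ell}$ for many $\ell$ and applying a large-sieve argument then converts this to $|f(p)| = p$ outside a sparse set of $p$, yielding $\sum_{p \in \mc{E}_f} 1/p < \infty$.

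For the completely multiplicative refinement, write any $n \in \mc{N}_{f,a,b}$ as $n = dm$ with $d \mid a^\infty$ and $(m,a) = 1$, and analogously $n+a = d'm'$. Restricting to those $n$ with $(mm', \mc{E}_f) = 1$ — still a positive-density subset of $\mc{N}_{f,a,b}$ by the first part — complete multiplicativity gives $f(n) = f(d)\chi(m) m$ and $f(n+a) = f(d')\chi(m')m'$ for some $\{\pm 1\}$-valued character $\chi$ encoding the residual signs. Substituting into $f(n+a) = f(n) + b$, matching the $a$-adic structure, and exploiting that $m, m'$ vary freely over a positive-density family, one extracts a single divisor $d \mid a$ for which $f(d) \mid b$ and $(a/d) \mid (b/f(d))$. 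The main obstacle will be the Kummer-theoretic step: simultaneously controlling $k_\ell$ for a positive-density family of $\ell$, establishing the required linear disjointness, and ruling out the anomalous exponents $k_\ell \notin \{0,1\}$ is exactly the partial progress the author flags in the abstract, and obtaining an unconditional or fully general version of this step appears to be the genuinely hard part.
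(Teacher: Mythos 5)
There is a genuine gap, and it occurs earlier than you flag it. You have reproduced the general shape of the paper's programme (project onto Dirichlet characters, obtain local power maps, glue them with Kummer theory à la Jones), but your step (1) is precisely the \emph{naïve} reduction that the paper explains does not work, and your step (2) papers over the single most important bottleneck in the whole argument — one that is not fixed by \eqref{eq:fpnonSparse} alone.

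Concretely: reducing $f(n+a)=f(n)+b$ modulo $\ell$ and then opening $1_{f(n)\equiv b\pmod\ell}$ with character orthogonality forces you to handle sums $\sum_n \chi(f(n+a))\bar\chi(f(n)+b)$, and the map $n\mapsto\bar\chi(f(n)+b)$ is not multiplicative when $q\nmid b$. One can repair multiplicativity by a second orthogonality, but that introduces Jacobi sums and only yields a lower bound for the correlation of size $\delta/\sqrt{q}$, which is too small for the known correlation machinery once $q$ is large relative to $\delta$. The device that actually gets around this is different: one restricts to $n=m\ell\in\mc{N}_{f,a,b}$ with $\ell\nmid m$ and chooses a prime power $q_\ell\mid f(\ell)$, whereupon $f(m\ell+a)=f(\ell)f(m)+b\equiv b\pmod{q_\ell}$; this is a congruence in $f(n+a)$ \emph{alone}, so the orthogonality over $\Xi_{q_\ell}$ produces genuinely multiplicative summands $f^\chi$, after which Elliott's dual inequality, Hal\'asz--Montgomery--Tenenbaum, and an inverse-sumset argument on $\Xi_{q_\ell}$ do the work. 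Crucially, the resulting map is a homomorphism $(\mb{Z}/\ell\mb{Z})^\times\to(\mb{Z}/q_\ell\mb{Z})^\times$, and one only gets a \emph{local power map modulo $\ell$} when $q_\ell$ can be taken to be a power of $\ell$, i.e.\ when $\ell\mid f(\ell)$. Nothing in \eqref{eq:fpnonSparse} forces $S_f=\{p:p\mid f(p)\}$ to be large, so your step (2) — ``$\bar f$ must be a local power map for many $\ell$'' — does not follow from the hypotheses of the conjecture. This is precisely why the paper proves only the conditional Theorem~\ref{thm:Inhom}, under the added assumption that $S_f$ has positive Dirichlet density, or that $\sum_{p\in S_f}p^{-1}=\infty$ under ERH, and explicitly records (Remark~\ref{rem:SfSizeSpec}) that removing this assumption is the open part. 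So the obstruction is not only, as you say, ``simultaneously controlling $k_\ell$ and ruling out anomalous exponents''; it is that without an extra hypothesis you do not yet have local power maps at enough primes $\ell$ to feed into the Jones/Kummer machine at all. Your outline of the final step, deriving the divisibility constraints on $a,b$ from complete multiplicativity by pigeonholing on $(n,a)=d$ and letting $n\to\infty$, is essentially correct and is close to what the paper does.
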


\subsection{Partial result towards Conjecture \ref{conj:Inhom}}
We give evidence towards Conjecture \ref{conj:Inhom} by proving that its conclusion holds for a non-trivial collection of multiplicative functions, which we motivate presently. \\
According to Conjecture \ref{conj:Inhom}, any integer-valued multiplicative function $f$ which is not essentially bounded and for which $\delta(\mc{N}_{f,a,b}) \neq 0$ for some $a,b \in \mb{Z}$ non-zero must necessarily satisfy $p|f(p)$ outside of a sparse set of primes $p$. It turns out that the condition that $p|f(p)$ for ``sufficiently many'' $p$ is also sufficient to showing that $f$ satisfies Conjecture \ref{conj:Inhom}. \\
Let $\mc{P}$ denote the set of all primes. To state our result, recall that the \emph{Dirichlet density} of a set of primes $\mc{S} \subseteq \mc{P}$, if it exists, is defined as
$$
d_{\text{Dir}}(\mc{S}) := \lim_{X \ra \infty} \frac{1}{\log\log X}\sum_{\ss{p \leq X \\ p \in \mc{S}}} \frac{1}{p} > 0.
$$
\begin{thm}\label{thm:Inhom}
Let $f: \mb{N} \ra \mb{Z}$ be a multiplicative function.
Assume one of the following conditions\footnote{It is easily seen that if either (a) or (b) holds then so does \eqref{eq:fpnonSparse}.} holds: 
\begin{enumerate}[(a)]
\item $d_{\text{Dir}}(\{p \in \mc{P} : p | f(p)\}) > 0$, or
\item the Extended Riemann Hypothesis holds, and $\sum_{p|f(p)} p^{-1} = \infty$.
\end{enumerate}
Suppose that $\delta(\mc{N}_{f,a,b}) \neq 0$ for some pair of $a,b \in \mb{Z}$ non-zero.
Then the set $
\mc{E}_{f} := \{p : |f(p)| \neq p\}$ satisfies
$$
\sum_{p \in \mc{E}_f} \frac{1}{p} < \infty.
$$
Moreover, if $f$ is completely multiplicative then there is a divisor $d$ of $a$ such that 
$$
f(d) \mid b \text{ and }(a/d) \mid (b/f(d)).
$$
\end{thm}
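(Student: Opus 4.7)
The plan is to split the proof into three stages following the roadmap outlined in the abstract. \emph{Stage~1} is a modular reduction: from $\delta(\mc{N}_{f,a,b}) \neq 0$, together with Lemma~\ref{lem:fnGrows} (which ensures under~(a) or~(b) that $f$ is not essentially bounded), I would extract, for a sufficiently rich family of primes $\ell$, the statement that the mod-$\ell$ reduction $\bar f$ is a local power map: there exists $k_\ell \in \{0, 1, \dots, \ell - 2\}$ with $f(n) \equiv n^{k_\ell} \pmod{\ell}$ for all $n \in \mb{N}$. For primes $\ell \mid b$ this is close to the homogeneous setting of \cite[Thm.~1.1]{ManConsec}: the identity $f(n+a) = f(n) + b$ collapses modulo $\ell$ to $\bar f(n+a) \equiv \bar f(n) \pmod{\ell}$ on the positive log-density set $\mc{N}_{f,a,b}$, and the additive-combinatorial argument of \cite{ManConsec}, adapted to values in $\mb{F}_\ell^{\times}$, yields the power-map conclusion. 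For primes $\ell \nmid b$ the congruence remains inhomogeneous and a genuinely new argument is required, presumably via iterated application of $f(n+a) = f(n) + b$ combined with concentration estimates for $\bar f$ taking values in a finite group.

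\emph{Stage~2} is the main step: upgrading the pointwise mod-$\ell$ congruences into a global statement through Jones's Kummer-theoretic strategy. One studies the splitting behavior of rational primes in carefully chosen Kummer extensions of the form $\mb{Q}(\zeta_\ell, \alpha^{1/\ell})$, where $\alpha$ encodes the congruence $f(p) \equiv p^{k_\ell} \pmod{\ell}$. Under hypothesis~(a), the set $\{p : p \mid f(p)\}$ has positive Dirichlet density, and unconditional Chebotarev applied across the family of good primes $\ell$ rules out $k_\ell \neq 1$: the combined splitting constraints are incompatible with any non-trivial exponent. Under hypothesis~(b), ERH supplies the effective form of Chebotarev needed to turn the divergence $\sum_{p \mid f(p)} p^{-1} = \infty$ into the analogous conclusion. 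Either way, one obtains $\ell \mid (f(p) - p)$ simultaneously for infinitely many $\ell$, forcing $f(p) = p$ for all $p$ outside a sparse set (with a short sign analysis handling the possibility $f(p) = -p$); this is precisely $\sum_{p \in \mc{E}_f} 1/p < \infty$.

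\emph{Stage~3} is the divisibility conclusion for completely multiplicative $f$. With $f(p) = p$ for $p \notin \mc{E}_f$, decompose a generic $n \in \mc{N}_{f,a,b}$ as $n = d_1 m_1$ and $n + a = d_2 m_2$, where $d_1, d_2 \mid a^{\infty}$ and $(m_1 m_2, a) = 1$. Since $\mc{E}_f$ is sparse, for a positive log-density subset of $\mc{N}_{f,a,b}$ one has $f(m_i) = m_i$, so complete multiplicativity reduces the defining identity to
\[
f(d_2) m_2 - f(d_1) m_1 = b, \qquad d_2 m_2 - d_1 m_1 = a.
\]
Setting $d := \gcd(d_1, d_2)$ and eliminating $m_i$ between the two equations, elementary manipulation extracts the claimed divisor of $a$ satisfying $f(d) \mid b$ and $(a/d) \mid (b/f(d))$.

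The principal obstacle will be Stage~1 for primes $\ell \nmid b$: no homogeneous identity is directly available, so new additive-combinatorial input must be produced to establish the power-map structure of $\bar f$ modulo such $\ell$, and the resulting information must be obtained with enough uniformity in $\ell$ to feed the Kummer machinery of Stage~2. Stage~3 is essentially elementary once the sparsity of $\mc{E}_f$ has been established, and Stage~2 is delicate but conceptually follows Jones's blueprint.
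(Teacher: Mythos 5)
Your proposal correctly identifies the high-level roadmap (modular analysis $\rightarrow$ Jones-style Kummer theory $\rightarrow$ elementary finish), and you honestly flag that Stage~1 for $\ell \nmid b$ is the heart of the matter and that you do not know how to carry it out. But this is precisely where the paper's main novelty lies, and without it the argument does not close.

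Two concrete points about Stage~1. First, reducing $f(n+a) = f(n) + b$ modulo a prime $\ell \mid b$ does give a homogeneous congruence, but there are only \emph{finitely many} such $\ell$, which is useless for the local-to-global step in Stage~2: Jones's Chebotarev argument requires a set of "local power map" moduli $\ell$ that is not sparse, and that set is exactly where the hypothesis on $S_f := \{\ell : \ell \mid f(\ell)\}$ enters (you never use the $d_{\mathrm{Dir}}(S_f) > 0$ or $\sum_{\ell \in S_f} 1/\ell = \infty$ hypothesis). Second, the natural attempt for $\ell \nmid b$ — studying $f(n+a) \equiv f(n) + b \pmod{q}$ via orthogonality of characters mod $q$ — is blocked by Jacobi sums: the inner character sum has modulus $\sqrt{q}$, so the resulting lower bound $\delta/\sqrt{q}$ on a binary correlation is too weak for Tao's theorem when $q$ must grow with $\delta^{-1}$ (the paper explains this in the "Shortcomings of the na\"{i}ve approach" discussion). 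The paper's actual mechanism is different: it conditions on $\ell \| n$, so that $n = \ell m$ gives $f(n+a) = f(\ell)f(m) + b \equiv b \pmod{q_\ell}$ for any prime power $q_\ell \mid f(\ell)$. This converts the inhomogeneous equation into a statement about the mean value of the bounded multiplicative function $f^{\chi}$ in an arithmetic progression mod $\ell$, which is accessible via Hal\'{a}sz and Granville--Harper--Soundararajan, not correlations; an inverse sumset lemma in the cyclic character group then upgrades this into the homomorphism / local-power-map structure. Taking $q_\ell$ to be a power of $\ell$ requires $\ell \mid f(\ell)$, which is why the supply of admissible $\ell$ comes from $S_f$.

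One further calibration of expectations for Stage~2: the conclusion you should be feeding into the Kummer machinery is not a clean congruence $f(n) \equiv n^{k_\ell} \pmod{\ell}$ for all $n$, but rather $f(p)^D \equiv p^{g_\ell} \pmod{\ell}$ for primes $p$ outside an exceptional set $\mc{T}_\ell$ that depends on $\ell$ and the scale $x_k$, and with a uniform but nontrivial bounded power $D = O_\delta(1)$. Managing the overlap of the exceptional sets $\mc{T}_\ell$ (showing most $p$ lie in very few $\mc{T}_\ell$) and choosing multiplicatively independent auxiliary primes $p_1, p_2$ for the Kummer extensions are genuine extra steps; one cannot simply follow Jones's blueprint verbatim. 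Your Stage~3 sketch is plausibly repairable and is close in spirit to the paper's (pigeonholing over $\gcd(n,a) = d$, dividing through by $f(d)$, and then a size/growth comparison to pin down the exponent $\alpha_0 = 1$), but it is downstream of the missing Stage~1 argument.
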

\begin{rem}\label{rem:ERHReq}
In fact, our arguments yield something slightly stronger than this. Define
$$
S_f := \{p \in \mc{P} : p|f(p)\}.
$$
Then the unconditional conclusion of Theorem \ref{thm:Inhom} holds as long as 
$$
\sum_{\ss{p \leq X \\ p \in S_f}} \frac{1}{p} \gg_{\e} (\log\log X)^{1/2+\e}, \quad X \ra \infty.
$$
Moreover, we do not need the full strength of the Extended Riemann Hypothesis in our conditional result either. Indeed, it suffices for a version of RH to hold for the Dedekind zeta functions of an infinite family of Kummer extensions. Moreover, a narrow zero-free region of Vinogradov-Korobov type would also suffice. See Section \ref{subsec:Prop63} below for relevant details, and for remarks regarding the quality of sufficient zero-free regions for this problem. 
See also Remark \ref{rem:SfSizeSpec} below for a discussion related to whether considerations about $S_f$ are necessary in this problem.
\end{rem}
\noindent 
\subsection{A converse result}
At first glance the conclusion of Conjecture \ref{conj:Inhom} appears rather flexible, and one might ask whether it could possibly be strengthened. We prove, however, that the classification proposed in Conjecture \ref{conj:Inhom} is tight. In particular:
\begin{enumerate}[(i)]
\item the exceptional set $\mc{E}_f$ can essentially be arbitrary, 
\item the signs $f(p)/p$ for $p \notin \mc{E}_f$ are not significantly restricted, and
\item the divisibility conditions given on $a$ and $b$ are necessary.
\end{enumerate}
\begin{prop} \label{prop:converse}
Let $a \in \mb{Z}$ be non-zero, let $d|a$ and let $b \neq a/d$ be an integer multiple of $a/d$. Let $\mc{S}$ be a set of odd primes that satisfies
$$
\{p \in \mc{P} : p|d\} \subseteq \mc{S}, \quad |\mc{S} \bk \{p \in \mc{P} :p|d\}| \geq 2 \text{ and } \sum_{p \in \mc{S}} \frac{1}{p} < \infty.
$$
Then there are uncountably many sets $\mc{T} \subseteq \mc{P} \bk \mc{S}$ with $\sum_{p \in \mc{T}}p^{-1} = \infty$  
for which a completely multiplicative function
$f = f_{\mc{T}} : \mb{N} \ra \mb{Z}$ may be constructed with the following properties:
\begin{enumerate}[(i)]
\item for each $p \notin \mc{S}$,
$$
f(p) = \begin{cases} -p &\text{ if } p \in \mc{T} \\ p &\text{ if } p \notin \mc{T}, \end{cases}
$$
\item for each $p \in \mc{S}$,  $|f(p)| \neq p$, 
\item $f(d)|b$ and $(a/d)|(b/f(d))$, and
\item $\delta(\mc{N}_{f,a,b}) \neq 0$.
\end{enumerate}
When $\mc{T} = \emptyset$ there also exists a corresponding function $f = f_{\emptyset}$ that satisfies the properties (i)-(iv).
\end{prop}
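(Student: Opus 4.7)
The plan is to extend Example~\ref{ex:bddEx}; the new ingredient required when $\sum_{p \in \mc{T}} 1/p = \infty$ is a simultaneous control on the completely multiplicative sign function $\lambda_{\mc{T}}(n) := (-1)^{\Omega_{\mc{T}}(n)}$ at $n$ and $n+a$, since one can no longer impose $(n(n+a), \mc{T}) = 1$ on a positive density set.

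Set $r := a/d$ and $b' := b/r \in \mb{Z}$, nonzero with $b' \neq 1$. In the generic case $|b'| \geq 2$, choose distinct primes $p_1, p_2 \in \mc{S} \setminus \{p : p \mid d\}$ and define $f$ completely multiplicatively by prescribing (i) on $\mc{P} \setminus \mc{S}$, $f(p_j) = b' p_j$ for $j = 1, 2$, and $f(p) = 1$ for all other $p \in \mc{S}$. Then (ii) holds (since $|b'|p_j \neq p_j$ and $1 \neq p$ for $p$ prime) and $f(d) = 1$ makes (iii) automatic. Now define
\begin{equation*}
\mc{M} := \Bigl\{m \in \mb{N} : p_1 \| m,\; p_2 \| (m+r),\; \bigl(\tfrac{m(m+r)}{p_1p_2}, \mc{S} \setminus \{p_1, p_2\}\bigr) = 1,\; \lambda_{\mc{T}}(m) = \lambda_{\mc{T}}(m+r) = 1\Bigr\}.
\end{equation*}
For $m \in \mc{M}$, complete multiplicativity together with $p_1, p_2 \notin \mc{T}$ gives $f(m) = b' m$ and $f(m+r) = b'(m+r)$, whence $f(d(m+r)) - f(dm) = b' r = b$ and so $dm \in \mc{N}_{f,a,b}$. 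The residual case $b' = -1$ is handled by the parallel construction $f(p) = 1$ throughout $\mc{S}$ with the parity in $\mc{M}$ changed to $\lambda_{\mc{T}}(m) = \lambda_{\mc{T}}(m+r) = -1$, giving $f(n+a) - f(n) = -f(d) r = -r = b$; the corner case $d = 1$, $b' = -1$, $\mc{T} = \emptyset$ requires a small ad hoc variant involving more primes from $\mc{S}$.

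It now suffices to show $\delta(\mc{M}) > 0$. Letting $\mc{M}_0$ be the set obtained by dropping the $\lambda_{\mc{T}}$ conditions, sparsity of $\mc{S}$ and a zero-dimensional sieve yield $|\mc{M}_0 \cap [1,x]| \sim \alpha x$ with $\alpha > 0$. Using
\begin{equation*}
\mathbf{1}_{\lambda_{\mc{T}}(m) = \lambda_{\mc{T}}(m+r) = 1} = \tfrac{1}{4}\bigl(1 + \lambda_{\mc{T}}(m) + \lambda_{\mc{T}}(m+r) + \lambda_{\mc{T}}(m)\lambda_{\mc{T}}(m+r)\bigr)
\end{equation*}
and a truncated M\"obius expansion of $\mathbf{1}_{\mc{M}_0}$ (valid by absolute convergence of $\sum_{\mc{S}} 1/p$), this reduces to showing that the logarithmic averages of $\lambda_{\mc{T}}(m)$, of $\lambda_{\mc{T}}(m+r)$, and of the correlation $\lambda_{\mc{T}}(m)\lambda_{\mc{T}}(m+r)$, restricted to $\mc{S}$-smooth arithmetic progressions, are all $o(\log x)$. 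These follow respectively from Hal\'asz's theorem and Tao's logarithmic two-point Chowla theorem, as extended by Tao and Ter\"av\"ainen to non-pretentious completely multiplicative $\pm 1$-valued functions in arithmetic progressions, provided $\lambda_{\mc{T}}$ is non-pretentious, i.e.\ $\mb{D}(\lambda_{\mc{T}}, \chi(n) n^{it})^2 = \infty$ for every Dirichlet character $\chi$ and every $t \in \mb{R}$.

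To exhibit uncountably many valid $\mc{T}$, take a sparse sequence $(q_k)_{k \geq 1}$ of primes in $\mc{P} \setminus \mc{S}$ with $\sum_k 1/q_k < \infty$ (e.g.\ $q_k \geq 2^{2^k}$), and for arbitrary $J \subseteq \mb{N}$ set $\mc{T}_J := (\mc{P} \setminus \mc{S}) \setminus \{q_k : k \in J\}$. Then $\sum_{p \in \mc{T}_J} 1/p = \infty$, and writing $\lambda$ for the Liouville function one has $\mb{D}(\lambda_{\mc{T}_J}, \lambda)^2 \leq 2\sum_{p \in \mc{S}} 1/p + 2\sum_k 1/q_k < \infty$; since $\lambda$ is non-pretentious to every $\chi(n) n^{it}$, the triangle inequality for $\mb{D}$ transfers this property to $\lambda_{\mc{T}_J}$. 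Distinct $J$ give distinct $\mc{T}_J$, supplying $2^{\aleph_0}$ valid sets. The case $\mc{T} = \emptyset$ is immediate: $\lambda_{\mc{T}} \equiv 1$ so the parity constraint on $\mc{M}$ is automatic and the sieve alone gives $\delta(\mc{M}) > 0$. The main obstacle is executing the M\"obius expansion of the sieve compatibly with the Tao-Ter\"av\"ainen correlation estimate in $\mc{S}$-smooth arithmetic progressions with uniform enough error; the edge cases $|b'|=1$ and the corner $d=1$, $b'=-1$, $\mc{T}=\emptyset$ demand minor bespoke adjustments.
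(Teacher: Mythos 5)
Your overall strategy (reduce (iv) to a two-point logarithmic correlation of $\lambda_{\mc{T}}$ under $\mc{S}$-coprimality constraints, and then invoke Tao's theorem under non-pretentiousness of $\lambda_{\mc{T}}$) matches the paper's. You diverge in three places, and they are of uneven quality.

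Your deterministic family $\mc{T}_J = (\mc{P}\setminus\mc{S})\setminus\{q_k : k\in J\}$ is a genuinely different and perfectly valid route to uncountability: the paper instead randomizes $\mc{T}$ via i.i.d.\ Bernoulli variables (Lemmas~\ref{lem:probProps}, \ref{lem:nonPretrandom}, \ref{lem:unctble}) and shows non-pretentiousness of $\lambda_{\mathbf{T}}$ almost surely, using Borel--Cantelli to pass to a positive-measure event on which the desired $o(1)$ correlation estimates hold along a growing sequence of scales. Your sparse-perturbation-of-Liouville argument, using $\mb{D}(\lambda_{\mc{T}_J},\lambda)^2<\infty$ and the triangle inequality, gets the same conclusion explicitly and is a nice simplification.

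However, the step you flag as ``the main obstacle'' — executing a truncated M\"obius expansion of $\mathbf 1_{\mc{M}_0}$ compatibly with a Chowla-type correlation estimate in $\mc{S}$-smooth arithmetic progressions — is not an obstacle at all once you notice that the coprimality constraint $(\,\cdot\,,\mc{S})=1$ is the indicator of a \emph{multiplicative} function $g(n) := 1_{(n,\mc{S})=1}$. After the change of variables locking in $p_1\|n$, $p_2\|(n+a/d)$, what needs to be $o(1)$ is precisely a logarithmic correlation of the $1$-bounded multiplicative functions $g\lambda_{\mc{T}}^{\eta_1}$ and $g\lambda_{\mc{T}}^{\eta_2}$ at two linear forms, which is exactly the setting of Theorem~\ref{thm:Tao}; Lemma~\ref{lem:realNonPret} then upgrades your real-character non-pretentiousness to the full non-pretentiousness that Theorem~\ref{thm:Tao} requires. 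No sieve expansion, and nothing from Tao--Ter\"av\"ainen about progressions, is needed.

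The third divergence is a genuine gap in your argument. Your construction encodes the sign of $b'$ in the parity constraint on $\lambda_{\mc{T}}$, so that the case $b'=-1$ requires $\lambda_{\mc{T}}(m)=\lambda_{\mc{T}}(m+r)=-1$. This is unsatisfiable for $\mc{T}=\emptyset$, and the problem is not limited to $d=1$: it occurs for every $d$ whenever $b'=-1$ and $\mc{T}=\emptyset$. You punt on the ``ad hoc variant'' rather than supplying it. The paper's construction avoids a case split entirely: it \emph{always} imposes $\lambda_{\mc{T}}(m)=+1$ (satisfiable for every $\mc{T}$, including $\emptyset$) and instead encodes $k=db/a$ in the prime values $f(p)$, $p\in\mc{S}_d$ (and at $P^+(\tilde d)$), giving $f(n+a)-f(n)=k\cdot(a/d)=b$ uniformly in all cases without touching the sign of $\lambda_{\mc{T}}$.
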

\begin{rem}
We highlight in passing the following relationship that our results bear with Diophantine equations. Observe first of all that for any $d \geq 1$ the function $f^d$ is integer-valued and multiplicative whenever $f$ is. We may thus view $\mc{N}_{f^d,a;A,B}'$ as the locus of non-zero solutions in $(X,Y) = (f(n),f(n+a))$ to the diagonal binary homogeneous equation
$$
AX^d - BY^d = 0, \quad XY \neq 0.
$$
More generally, if $b \neq 0$ we may consider the corresponding\footnote{Naturally, one might also allow the powers of $X$ and $Y$ here to differ. Similar analysis applies in that case (treating two different multiplicative functions $f^k,f^\ell$), so we confine ourselves here to the case that they have the same power.} \emph{inhomogeneous} binary problems
$$
AX^d - BY^d = b
$$
(here the condition $XY \neq 0$ may be removed). 
In this direction, Problem \ref{prob:classify} addresses the classification of multiplicative functions $f: \mb{N} \ra \mb{Z}$ for which the corresponding locus of solutions
$\mc{N}_{f^d,a,b;A,B}$ satisfies $\delta \mc{N}_{f^d,a,b;A,B} \neq 0$.
Thus, among integer solutions $(X,Y)$ to an inhomogeneous binary problem we seek to understand how frequently such solutions arise in the form $(f(n),f(n+a))$, for some integer $n$. (We focus here on $d = 1$, since the problem for larger $d$ may be studied in the same way, replacing $f$ by $f^d$ as above.)
\end{rem}
\section*{Acknowledgments} 
\noindent We would like to thank Oleksiy Klurman for a stimulating conversation about the contents of this paper, and for his encouragement.
\section*{Notation and Conventions}
\noindent Throughout the paper, the letters $a,b,d,m,n,k$ denote integers, and $p,\ell$ are reserved for primes. \\
We write $\mc{P}$ to denote the set of all primes.\\
Given $d \geq 1$ we denote by $\mu_d$ the set of $d$th order roots of unity. We also write
$$
\mb{U} := \{z \in \mb{C} : |z| \leq 1\}
$$
to denote the closed unit disc in the complex plane. \\
Given a set $\mc{A}$ of primes and $n \in \mb{N}$ we write $(n,\mc{A}) = 1$ whenever $p \nmid n$ for all $p \in \mc{A}$. \\
Given a prime $\ell$ and an integer $m$ coprime to $\ell$, we write $\text{ord}_{\ell}(m)$ to denote the order of $m \pmod{\ell}$ as an element of the multiplicative group $(\mb{Z}/\ell \mb{Z})^{\times}$. \\
Given an integer $q \geq 1$ we write $\Xi_q$ to denote the group of Dirichlet characters $\pmod{q}$. We denote by $\chi_0^{(q)}$ (or simply $\chi_0$ when the modulus is understood from context) the principal character $\pmod{q}$. If $q$ is a prime power then we also denote by $\chi_q$ a generator for $\Xi_q$.\\
Given a set of prime powers $\mc{S}$ and $n \in \mb{N}$ we write
$$
\omega_{\mc{S}}(n) := |\{p^\nu \in \mc{S} : p^\nu || n\}|.
$$
Given a prime $p$ and an integer $n$ we write 
$$
\nu_p(n) := \max \{\nu \geq 1 : p^\nu|n\}.
$$
Given a finite abelian group $(G,+)$, written additively, and given a set $A \subseteq G$ and an integer $m \geq 1$ we write
$$
mA := \{a_1 + \cdots + a_m : a_i \in A\}
$$
to denote the \emph{$m$-fold sumset} of $A$.\\
Given $n \in \mb{N}$ we write $\phi(n)$ to denote the Euler phi function. 
We also write $P^+(1) := 1$, and for $n > 1$ let $P^+(n)$ denote the largest prime factor of $n \in \mb{N}$. Finally, we write $d(n)$ to denote the number of divisors of $n$.\\
Given a probability space $(\Omega,\mc{B},\mb{P})$ and a random variable $X$ on $\Omega$ we write $\mb{E}(X)$ to denote the expectation of $X$.\\
Finally, the following definitions will be used repeatedly in the sequel.
\begin{def1} \label{def:sparse}
Let $C > 0$. We say that a subset $\mc{S}$ (finite or infinite) of prime powers is \emph{$C$-sparse} if
$$
\sum_{p^\nu \in \mc{S}} \frac{1}{p^\nu} \leq C.
$$
More generally, we say that $\mc{S}$ is \emph{sparse} if there is a $C > 0$ for which it is $C$-sparse.
\end{def1}

\begin{def1}\label{def:logDens}
Given a set $\mc{A} \subset \mb{N}$ the \emph{upper (natural) density} and \emph{upper logarithmic density} of $\mc{A}$ are, respectively,
$$
\bar{d}(\mc{A}) := \limsup_{x \ra \infty} \frac{1}{x} |\mc{A}\cap [1,x]|, \quad \bar{\delta}(\mc{A}) := \limsup_{x \ra \infty} \frac{1}{\log x} \sum_{n \leq x} \frac{1_{\mc{A}}(n)}{n}.
$$
\end{def1}

\section{Proof Ideas}
\subsection{Proof strategy for Theorem \ref{thm:Inhom}}
To prove Theorem \ref{thm:Inhom} we apply two version of the ``projection method'' introduced in \cite{ManConsec}, namely the ``archimedean'' variant found there, as well as a ``non-archimedean'' variant; see Section \ref{subsec:nonArch} for the details. The basic objective of this method is to obtain information about the distribution of the (possibly unbounded) sequence $(f(p))_p$ by ``projecting'' $f$ onto the closed unit disc
$$
\mb{U} := \{z \in \mb{C} : |z| \leq 1\}.
$$ 
in a way that preserves its multiplicative structure. Specifically, we study a collection of \emph{bounded} multiplicative functions of the shape
$$
|f|_t(n) := |f(n)|^{it}, \quad f^{\chi}(n) := \chi(f(n)), \quad f(n) \neq 0,
$$
where $t \in \mb{R}$ and $\chi$ is a Dirichlet character, suitably chosen in each case. The analysis of the functions $|f|_t$ will follow swiftly from the corresponding work done in \cite[Sec. 3]{ManConsec}. The key novelty of the present paper concerns our treatment of the compositions $f^{\chi}$. These ideas were inspired by previous work \cite{ManHighOrd} of the author in the context of studying partial sums of Dirichlet characters of large order. \\ \\
\underline{The Archimedean argument:} 
As in Section 3 of \cite{ManConsec}, we use the compositions $|f|_t(n) := |f(n)|^{it}$ (for $f(n) \neq 0$) to probe the magnitude of the prime values $(|f(p)|)_p$. Our main result in that direction, Proposition \ref{prop:ArchInhom}, is the exact analogue of \cite[Prop. 3.1]{ManConsec}. It states that if $\overline{\delta}(\mc{N}_{f,a,b}) > \delta$ then there is $0 < r \ll_{\delta} 1$ and a $O_{\delta}(1)$-sparse set $\mc{S}_{\delta}$ of primes such that
\begin{equation}\label{eq:fppr}
|f(p)|/p^r \in (1/2,2) \text{ for all } p \notin \mc{S}_{\delta}.
\end{equation}
The proof follows the same lines as \cite[Prop. 3.1]{ManConsec}. The argument there involved an analysis of those $f$ for which the set of $n$ that satisfies 
\begin{equation}\label{eq:archCond}
f(n) \neq 0 \text{ and } \frac{1}{2} < |f(n+a)/f(n)| < 2
\end{equation}
has positive upper logarithmic density. For $n \in \mc{N}_{f,a,b}$ the above chain of inequalities holds as long as $|f(n)| > 2|b|$. We use the Tur\'{a}n-Kubilius inequality (Lemma \ref{lem:TK}) and the fact that $|f(p)| \neq 1$ for \emph{many} primes $p$ to show that $|f(n)|> 100|b|$, say, for all but a small subset of $\mc{N}_{f,a,b}$ (see Lemma \ref{lem:bddCase}). In this way, we are able to apply the arguments
in \cite[Sec. 3]{ManConsec} to our problem.\\ \\
\underline{The non-Archimedean argument:}
We use the compositions $f^{\chi}(n) := \chi(f(n))$, where $\chi$ is a Dirichlet character of suitably chosen conductor $q$, to obtain information about the distribution of $(f(p) \pmod{q})_p$. We do this for many moduli $q$, which then sheds light on the global behaviour of $f(p)$, outsider of a sparse set of primes $p$. 
\subsubsection*{Shortcomings of the na\"{i}ve approach}
In analogy to the archimedean argument described above, we might study functions $f$ for which
$$
\bar{\delta}\{n \in \mb{N} : f(n+a) = f(n) + b\neq 0\} > \delta
$$
by considering the implied congruence 
$$
f(n+a) \equiv f(n) + b \pmod{q}
$$ 
for well-chosen \emph{large} primes $q$ that do not divide $f(n)f(n+a)$. \\
When $b = 0$ the orthogonality relation for Dirichlet characters allows us to relate the solutions to this congruence to averages
$$
\frac{1}{q-1} \sum_{\chi \pmod{q}} \frac{1}{\log x} \sum_{n \leq x} \frac{\chi(f(n+a))\bar{\chi}(f(n)+b)}{n}
= \frac{1}{q-1} \sum_{\chi \pmod{q}} \frac{1}{\log x} \sum_{n \leq x} \frac{f^\chi(n+a)\bar{f^\chi}(n)}{n}.
$$
Crucially, the summands are of the form $g(n+a)\bar{g}(n)$, where $g$ is a multiplicative function taking values in $\mb{U}$.
Given the hypothesis $\bar{\delta}(\mc{N}_{f,a,0}) > \delta$, 
taking $q$ large enough compared to $\delta$, we may ignore 
the contribution from the principal character $\chi_0$,
leading to a lower bound for the
\emph{non-trivial} correlations
\begin{equation}\label{eq:largebinCorrelHom}
\max_{\ss{\chi \pmod{q} \\ \chi \neq \chi_0}} \left|\frac{1}{\log x} \sum_{n \leq x} \frac{f^{\chi}(n+a) \bar{f^{\chi}}(n)}{n} \right| \gg \delta,
\end{equation}
for $\gg_{\delta} q$ primitive characters $\pmod{q}$. 
The theory of correlations of multiplicative functions, specifically Tao's theorem on binary correlations of bounded multiplicative functions (see Theorem \ref{thm:Tao}) can then be fruitfully applied to deduce non-trivial information\footnote{In fact, this was the approach taken in a previous version of \cite{ManConsec}, at least for integer-valued $f$. It was realised thereafter that the theorem could be proven (in the broader context of complex-valued functions) simply by bootstrapping the Archimedean argument involving the compositions $|f|_t$.}  about $f$.\\
If we try to argue in the same fashion with $b \neq 0$ then the averaged correlations
$$
\frac{1}{q-1} \sum_{\chi \pmod{q}} \frac{1}{\log x} \sum_{n \leq x} \frac{\chi(f(n+a))\bar{\chi}(f(n)+b)}{n}
$$
arise. Here, the map $n \mapsto \bar{\chi}(f(n)+b)$ is \emph{no longer multiplicative} if $q \nmid b$. Since $b$ is fixed and $q$ is meant to be large as a function of $\delta$, we must  have $q \nmid b$. Thus, a direct appeal to Tao's theorem is not possible here. \\
Since $\chi$ is periodic, one could na\"{i}vely circumvent this obstacle by splitting the set of $n$ according to the residue class $f(n) \pmod{q}$ and then applying orthogonality a second time. This yields
$$
\frac{1}{(q-1)^2} \sum_{\chi,\psi \pmod{q}} \left(\sum_{c \pmod{q}} \psi(c)\bar{\chi}(c+b)\right) \frac{1}{\log x} \sum_{n \leq x} \frac{f^{\chi}(n+a)\bar{f^{\psi}}(n)}{n}. 
$$
The bracketed sum (which is, up to a simple change of variables, a Jacobi sum) has absolute value $\sqrt{q}$ whenever $\chi,\psi$ are distinct non-principal characters. After removing the contributions from the cases that $\chi,\psi$ or $\chi\bar{\psi}$ are principal, we at best obtain the lower bound\footnote{This bound, obtained by the triangle inequality, cannot in general be improved upon given that the complex arguments of the Jacobi sums of characters modulo $q$ are known to distribute uniformly on $S^1$ by work of Katz and Zheng \cite{KaZh}, and thus are difficult to control.}
$$
\max_{\ss{\chi,\psi \pmod{q} \\ \chi \neq \psi \\ \chi,\psi \neq \chi_0}} \left|\frac{1}{\log x} \sum_{n \leq x} \frac{f^{\chi}(n+a)\bar{f^{\psi}}(n)}{n}\right| \gg \frac{\delta}{\sqrt{q}}.
$$
For $q = q(x)$ large relative to $\delta$, this lower bound is too small for existing results about correlations of multiplicative functions to be effective for general $f$.
\subsubsection*{An alternative strategy}
Seeking a different approach, therefore, we observe the following. Suppose $\ell$ is prime and $n \in \mc{N}_{f,a,b}$ satisfies $\ell||n$. Writing $n = m\ell$ with $\ell \nmid m$, we have
$$
f(\ell m+a) = f(\ell)f(m)+b.
$$
Assume further that $(f(\ell),b) = 1$, and let $q$ be a prime power that divides $f(\ell)$.
It follows that
\begin{equation}\label{eq:MFAP}
f(\ell m+a) \equiv b \pmod{q}.
\end{equation}
It is this sort of congruence that we capture using the functions $\{f^{\chi}\}_{\chi \pmod{q}}$, rather than $f(n+a) \equiv f(n) + b \pmod{q}$. Besides avoiding the non-multiplicativity issue 
discussed earlier, this approach allows us to 
bring to bear the quantitative theory of \emph{mean values} of multiplicative functions (over arithmetic progressions), which is currently much better understood than that of \emph{correlations}. \\
In order to apply this idea we use an estimate due to Elliott (see Lemma \ref{lem:Ell} below), which shows that for $X$ large, $\mc{N}_{f,a,b}$ contains approximately the ``expected'' number of $n$ with $\ell || n$, i.e., 
\begin{equation}\label{eq:EllAppIdea}
|\{n \in \mc{N}_{f,a,b} : \ell || n\} \cap [1,X]| \approx \frac{1}{\ell}\left(1-\frac{1}{\ell}\right) |\mc{N}_{f,a,b} \cap [1,X]|
\end{equation}
for all but a small set of primes $\ell$ (that may depend on $X$). As $\bar{\delta}(\mc{N}_{f,a,b}) > \delta$ we can find an infinite increasing sequence $(x_k)_k$ along which the right-hand side of \eqref{eq:EllAppIdea} is $\gg \delta x_k/\ell$ when $X = x_k$, outside of a small set of exceptional\footnote{The dependence of the exceptional set on $x_k$ introduces a complication, but is dealt with by suitably arranging the arguments in the sequel.} $\ell$. Moreover, most of those $\ell$ for which \eqref{eq:EllAppIdea} holds satisfy $(2b,f(\ell)) = 1$ and $\ell \nmid a$ (see Lemma \ref{lem:Gdeltakm}). Thus, for any odd prime power $q|f(\ell)$, orthogonality of Dirichlet characters modulo $q$ gives
$$
\frac{1}{\phi(q)} \sum_{\chi \pmod{q}} \bar{\chi}(b) \sum_{\ss{n \leq x_k \\ n \equiv a \pmod{\ell}}} f^{\chi}(n) \geq \sum_{\ss{m \leq x_k/\ell \\ \ell \nmid m}} 1_{f(m\ell + a) \equiv b \pmod{q}} \gg \frac{\delta x_k}{\ell}.
$$
In particular, we find that for $\gg_{\delta} \phi(q)$ characters $\chi \pmod{q}$ we have
$$
\left|\sum_{\ss{n \leq x_k \\ n \equiv a \pmod{\ell}}} f^{\chi}(n)\right| \gg \frac{\delta x_k}{\ell}.
$$
Since $f^{\chi}$ is a bounded multiplicative function, work of Granville, Harper and Soundararajan \cite{GHS} implies (see Lemma \ref{lem:charDist}) that as long as $\ell$ is not too large, for any such $\chi$ we have
$$
\max_{\psi \pmod{\ell}} \left|\sum_{n \leq x_k} f^{\chi}(n) \bar{\psi}(n)\right| \gg \delta x_k.
$$
This is the desired analogue of \eqref{eq:largebinCorrelHom} in our circumstances, in which the summands $n \mapsto f^{\chi} \bar{\psi}(n)$ are multiplicative functions. By a quantitative version of Hal\'{a}sz's theorem (see Theorem \ref{thm:HMT}) we deduce that there are $\gg_{\delta}\phi(q)$ characters $\chi \pmod{q}$ for which there exists a character $\psi_{\chi}$ modulo $\ell$ and a $t_{\chi} \in \mb{R}$ such that
\begin{align}\label{eq:approxChifPsi}
\chi(f(p)) \approx \psi_{\chi}(p)p^{it_{\chi}} \text{ for most } p \leq x_k.
\end{align}
A key point here is that the character groups $\Xi_q$ and $\Xi_\ell$ are both cyclic.  On this basis, we use inverse sumset theorems from elementary additive combinatorics to study the set of $\chi$ satisfying \eqref{eq:approxChifPsi}. 
We 
show more precisely that there is a \emph{large} subgroup $H = H_{k,q,\ell} \leq \Xi_q$ of $\chi$ that satisfy \eqref{eq:approxChifPsi} with
$$
t_{\chi} = 0 \text{ for all } \chi \in H.
$$
Moreover, the mapping $\gamma: \chi \mapsto \psi_{\chi}$ is a homomorphism whose image lies in $\Xi_\ell$.
Since $H$ and $\gamma(H)$ are both cyclic, we show (see Proposition \ref{prop:almostHom} below) that there is a positive integer $D = O_{\delta}(1)$, independent of $k$ and $\ell$, a homomorphism
$$
\varphi: (\mb{Z}/\ell\mb{Z})^\times \rightarrow (\mb{Z}/q\mb{Z})^\times
$$
and an $O_{\delta}(1)$-sparse set of primes $\mc{T}_{\ell} \subseteq \mc{P}\cap [2,x_k]$ such that the following holds:
if $p \leq x_k$, $p \notin \mc{T}_\ell$ then
$$
p \equiv a \pmod{\ell} \Rightarrow f(p)^D \equiv \varphi(a) \pmod{q}.
$$
(The exceptional set $\mc{T}_{\ell}$ depends on $\ell$ and $k$, which introduces additional complications). \\
Assuming Conjecture \ref{conj:Inhom} holds, we expect that if $q|f(\ell)$ then $q$ is a power of $\ell$. In that case, taking $q = \ell$, $f^D$ is behaves (at least along the primes) like an endomorphism $\varphi$ on $(\mb{Z}/\ell \mb{Z})^{\times}$. Since this group is cyclic, it is easy to show that any such endomorphism is a power map $a \mapsto a^{g_{\ell}}$, for some $0 \leq g_{\ell} < \ell-1$. Thus, we deduce that
\begin{equation}\label{eq:locPowMapDef}
f(p)^D \equiv p^{g_{\ell}} \pmod{\ell} \text{ for all but a sparse set } p \in \mc{T}_{\ell},
\end{equation}
whenever $\ell$ is a prime (excluded from a sparse set) such that $\ell|f(\ell)$.
\subsubsection*{Local-to-global principle for power maps} Given a prime $\ell$ let us say that a map $F: \mb{N} \ra \mb{Z}$ is a \emph{local power map modulo $\ell$} if there exists an integer $k_{\ell}$ such that
\begin{equation}\label{eq:LPMProp}
F(n) \equiv n^{k_{\ell}} \pmod{\ell} \text{ for all } n \in \mb{N}.
\end{equation}
We say that $F$ is a \emph{global power map} if there is a non-negative integer $k$ such that
$$
F(n) = n^k \text{ for all } n \in \mb{N}.
$$
Obviously, any global power map is a local power map modulo any prime $\ell$. Conversely, it is not difficult to show that if $F$ satisfies \eqref{eq:LPMProp} with $k_{\ell} = k$ for infinitely many $\ell$ then $F$ is a global power map. It is reasonable to expect more generally that if $F$ is a local power map (without restrictions on $k_{\ell}$) for \emph{sufficiently many} distinct primes $\ell$ then $F$ is a global power map. Such a local-to-global phenomenon for multiplicative functions $F$ is the subject of a 1988 conjecture due to Fabrykowski and Subbarao \cite{FabSub} (see Conjecture \ref{conj:FS} and Lemma \ref{lem:red1} for relevant statements). Their conjecture states that if $F$ is a multiplicative function that satisfies \eqref{eq:LPMProp} modulo infinitely many primes $\ell$ then $F$ is a global power map. As far as the author is aware the conjecture is still open in full generality. \\ 
Using an ingenious argument based on applying the Chebotarev density theorem to suitably chosen Kummer extensions of $\mb{Q}$, Jones \cite{Jones} proved that such a local-to-global phenomenon occurs provided the set of moduli $\ell$ for which $F$ is a local power map modulo $\ell$ has positive upper density in the primes, i.e.,
$$
\limsup_{X \ra \infty} \frac{1}{\pi(X)} |\{\ell \leq X : \eqref{eq:LPMProp} \text{ holds with } \ell\}| > 0.
$$
Equation \eqref{eq:locPowMapDef} shows that $F := f^D$ is a local power map modulo $\ell$ along the primes $p \notin \mc{T}_{\ell}$, for all primes $\ell$ in $S_f := \{\ell \in \mc{P} : \ell|f(\ell)\}$, outside of a sparse set. 
Incorporating some additional ideas into Jones' argument, we show a similar local-to-global phenomenon for $f^D$ in our more restricted situation, under the (weaker) assumption that $d_{\text{Dir}}(S_f) > 0$ (or indeed, conditionally on ERH, that $\sum_{p|f(p)} p^{-1} = \infty$). \\
More precisely, we first show that if \eqref{eq:locPowMapDef} holds for a set of primes with positive Dirichlet density then $f(\ell)^D$ is a power of $\ell$ for all but a sparse set of primes $\ell$ (see Proposition \ref{prop:Assumlfl}). Arguing by contradiction, we show that if $f(\ell)$ is \emph{not} a power of $\ell$ for many $\ell$ then we may cover the bulk of $S_f$ by \emph{thin} subsets of primes that are unramified in suitably constructed Kummer extensions 
$$
\mb{Q}(\zeta_m,p_1^{1/m},p_2^{1/m},F(p_1)^{1/m},F(p_2)^{1/m}), \quad m,p_1,p_2 \in \mc{P}, \, p_1 \neq p_2.
$$ 
Care is taken to ensure that the values $p_1,p_2,F(p_1),F(p_2)$ are multiplicatively independent, in order for the Galois groups of the extensions to be suitably large. To do this we must deal with the possibility that the exceptional sets $\mc{T}_{\ell}$ overlap. We succeed in showing in this direction that the vast majority of primes $p$ belong to only \emph{very few} of the sets $\mc{T}_\ell$. \\
Having done this, we show that if $|f(p)| = p^{\alpha_p}$ for all but a sparse set of $p$ then $\alpha_p = 1$ for all such $p$ (see Proposition \ref{prop:veryRigid}).  To do this, we first prove that $\alpha_p$ is constant (outside of a sparse set of $p$). This is accomplished by locating arbitrarily large primes $\ell$ and primes $p_\ell \notin \mc{T}_\ell$ that are primitive roots $\pmod{\ell}$ and that satisfy $|f(p_\ell)| \ll p_\ell^r$, where $r$ is the exponent from \eqref{eq:fppr}. To see why this is helpful, note that 
$$
p_{\ell}^{D\alpha_{p_\ell}} = f(p_\ell)^D \equiv p_{\ell}^{g_\ell} \pmod{\ell},
$$
with $D\alpha_{p_\ell} = O_{\delta}(1)$ and $g_\ell < \ell-1$. If $p_\ell$ is a primitive root modulo $\ell$ and $\ell \geq \ell_0(\delta)$ then, necessarily, $g_{\ell} = D\alpha_{p_\ell}.$ 
By pigeonholing over the possible (integer) values of $\alpha_{p_\ell} = O_{\delta}(1)$ we find that 
$$
g_{\ell} = D\alpha_0 = O_{\delta}(1) \text{ for many } \ell.
$$
We then show that for each fixed prime $p \geq p_0(\delta)$ we may find one of these $\ell$, large relative to $p$ and $\delta$, such that $f(p)^D \equiv p^{D\alpha_0} \pmod{\ell}$, 
leading to the conclusion $|f(p)| = p^{\alpha_0}$ for all but a sparse set of $p$. \\
To show that $\alpha_0 = 1$ we must exploit the equation $f(n+a) = f(n) + b$, which is linear in values of $(f(m))_m$, taking care to control the influence of certain exceptional primes in the factorisations of $n$ and $n+a$ using elementary sieve theory. \\
Putting all of these arguments together, we obtain the first claim of Theorem \ref{thm:Inhom}. The conditions on $a,b$ are a simple consequence when $f$ is completely multiplicative.
\subsection{Proof strategy for Proposition \ref{prop:converse}}
By hypothesis, $\mc{S}$ is a sparse set of primes such that $\mc{S}_d := \mc{S} \bk \{p : p|d\}$ satisfies $|\mc{S}_d| \geq 2$. Let $\mc{T}$ be a set of primes disjoint from $\mc{S}$.\\
To prove Proposition \ref{prop:converse} we construct the sets
$$
\mc{N} := \{dpm : p \in \mc{S}_d, \, (m,\mc{S}) = 1\}, \quad \mc{N}_{\mc{T}} := \{dpm \in \mc{N} : \lambda_{\mc{T}}(m) = +1\},
$$
where $\lambda_{\mc{T}}(m)$ is the completely multiplicative function defined at primes via 
$$
\lambda_{\mc{T}}(p) = \begin{cases} -1 \text{ if } p \in \mc{T} \\ +1 \text{ if } p \notin \mc{T}.\end{cases}
$$
We then construct a completely multiplicative function $f = f_{\mc{T}}$ satisfying the conditions (i)-(iii) of the proposition (in particular that $f(p) = \lambda_{\mc{T}}(p) p$ for all $p \notin \mc{S}$) such that
$$
\mc{N}_{\mc{T}}' := \{n \in \mb{N} : n,n+a \in \mc{N}_{\mc{T}}\} \subseteq \mc{N}_{f,a,b}.
$$
To verify (iv) it suffices to show that $\overline{\delta}(\mc{N}_{\mc{T}}') > 0$ for uncountably many $\mc{T}$. Note that when $\mc{T} = \emptyset$ we have $\mc{N} = \mc{N}_{\mc{T}}$, and the positive density of $\mc{N}_{\emptyset}'$ (which is stronger than what we are aiming for) follows from a zero-dimensional sieve estimate.  In general, however, the condition $\lambda_{\mc{T}}(m) = +1$ introduces a complication. \\
Writing $g(n) := 1_{(n,\mc{S}) = 1}$, we reduce matters to showing that for any pair of fixed, distinct primes $p_1,p_2 \in \mc{S}_d$, 
and fixed integers $u,v \in \mb{Z}$ with $(u,p_2) = (v,p_1) = 1$,
$$
\frac{1}{d\log x} \sum_{\ss{n \leq x/(dp_1p_2) \\ \lambda_{\mc{T}}(np_2+u) = \lambda_{\mc{T}}(np_1+v) = +1}} \frac{g(np_2+u)g(np_1+v)}{n} \gg_{d,p_1,p_2} 1.
$$
Writing $1_{\lambda_{\mc{T}}(m) = +1} = (1+\lambda_{\mc{T}}(m))/2$, the problem becomes one of estimating the correlations
$$
\frac{1}{d\log x}\sum_{n \leq x/dp_1p_2} \frac{g\lambda_{\mc{T}}^{\eta_1}(np_2+u) g\lambda_{\mc{T}}^{\eta_2}(np_1+v)}{n}, \text{ for each } (\eta_1,\eta_2) \in \{0,1\}^2.
$$
The sum with $(\eta_1,\eta_2) = (0,0)$ effectively reduces the problem to the case $\mc{T} = \emptyset$, which is easily handled. 
The heart of the matter is to handle the contributions from $(\eta_1,\eta_2) \neq (0,0)$, and it suffices to show that these correlations are $o(1)$. \\
Appealing to Tao's theorem once again, the correlations are $o(1)$ for all $(\eta_1,\eta_2) \neq (0,0)$ as long as $\lambda_{\mc{T}}$ is \emph{non-pretentious}, i.e., $\lambda_{\mc{T}}$ does not correlate with any fixed, real Dirichlet character $\chi$ in the sense that
\begin{equation}\label{eq:nonPretCondlambdaT}
\lim_{x \ra \infty} \sum_{p \leq x} \frac{1-\lambda_{\mc{T}}(p)\chi(p)}{p} = \infty \text{ for each real, primitive } \chi.
\end{equation}
We show that this is the case almost surely
for $\mc{T}$ \emph{randomly chosen} (see Lemma \ref{lem:nonPretrandom}). That is, we show that $\lambda_{\mc{T}}$ is non-pretentious in the above sense if we let $\mc{T}$ be the outcome of a random variable\footnote{It should be noted that we may also \emph{deterministically} construct sets $\mc{T}$ for which $\lambda_{\mc{T}}$ is non-pretentious in the above sense. For example, any set $\mc{T}$ with Dirichlet density zero would suit our purposes. In any case, our probabilistic construction is intended to indicate that this is generic behaviour.} $\mathbf{T}$, restricted to a set of probability 1, defined by
$$
\mathbf{T} := \{p \notin \mc{S} : X_p = 1\}, 
$$
where $(X_p)_{p \notin \mc{S}}$ is a collection of i.i.d. Bernoulli random variables with mean $1/2$. Equation \eqref{eq:nonPretCondlambdaT} is proved using elementary calculations in probability theory, see Section \ref{sec:converse}. 

\section{Background and auxiliary results} \label{sec:background}
\subsection{Lemmas about additive functions}
Recall that $g: \mb{N} \ra \mb{C}$ is \emph{additive} if whenever $n,m\in \mb{N}$ are coprime, 
$$
g(mn) = g(m) + g(n).
$$
For an additive function, and $X \geq 2$ set
$$
A_g(X) := \sum_{p^\nu \leq X} \frac{g(p^\nu)}{p^\nu}\left(1-\frac{1}{p}\right), \quad B_g(X) := \left(\sum_{p^\nu \leq X} \frac{|g(p^\nu)|^2}{p^{\nu}}\right)^{1/2}.
$$
In the sequel, the following example will reoccur several times. Given $\mc{S}$ a set of prime powers define the additive function
$$
\omega_{\mc{S}}(n) := |\{p^{\nu} || n : p^\nu \in \mc{S}\}|.
$$
In this case,
\begin{equation}\label{eq:AgBgComp}
A_{\omega_{\mc{S}}}(X) = \sum_{\ss{p^\nu \leq X \\ p^\nu \in \mc{S}}} \frac{1}{p^\nu}\left(1-\frac{1}{p}\right) = \sum_{\ss{p^\nu \leq X \\ p^\nu \in \mc{S}}} \frac{1}{p^\nu} + O(1) = B_{\omega_{\mc{S}}}(X)^2 + O(1).
\end{equation}
The function $A_g(X)$ is asymptotically the mean value of $g(n)$ for $n \in [1,X]$, as $X \ra \infty$. The following lemma allows us to estimate how frequently and by how much $g(n)$ can deviate from $A_g(X)$.
\begin{lem}[Tur\'{a}n-Kubilius inequality]\label{lem:TK}
Let $g: \mb{N} \ra \mb{C}$ be an additive function, and let $X \geq 3$. Then
$$
\frac{1}{X}\sum_{n \leq X} |g(n) - A_g(X)|^2 \ll B_g(X)^2.
$$
The implicit constant is absolute.
\end{lem}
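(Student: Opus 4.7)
The plan is to exploit the additivity of $g$ to write $g(n) - A_g(X)$ as a linear combination of (nearly centred) indicator variables, and then carry out a second-moment computation with respect to the uniform distribution on $\{1,\dots,X\}$.

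Since $g(1)=0$, for every $n \leq X$ we have $g(n) = \sum_{p^\nu \leq X} g(p^\nu) \chi_{p^\nu}(n)$, where $\chi_{p^\nu}(n) := 1_{p^\nu || n}$. Treating $n$ as uniform on $\{1,\dots,X\}$, direct counting gives $\mb{E}[\chi_{p^\nu}] = p^{-\nu}(1-1/p) + O(1/X)$, and for $p \neq q$ the Chinese Remainder Theorem yields $\mb{E}[\chi_{p^\nu}\chi_{q^\mu}] = p^{-\nu}q^{-\mu}(1-1/p)(1-1/q) + O(1/X)$, so $\text{Cov}(\chi_{p^\nu},\chi_{q^\mu}) = O(1/X)$ whenever $p \neq q$. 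When $p=q$ with $\nu\neq\mu$, the events $p^\nu||n$, $p^\mu||n$ are disjoint, whence $\text{Cov}(\chi_{p^\nu},\chi_{p^\mu}) = -\mb{E}[\chi_{p^\nu}]\mb{E}[\chi_{p^\mu}]$, of absolute value $\leq p^{-(\nu+\mu)} + O(1/X)$. Expanding the square then gives
$$
\frac{1}{X}\sum_{n \leq X}|g(n) - \mb{E}[g]|^2 = \sum_{p^\nu, q^\mu \leq X} g(p^\nu)\overline{g(q^\mu)}\,\text{Cov}(\chi_{p^\nu},\chi_{q^\mu}),
$$
and I would then estimate the three types of contribution separately. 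The full diagonal $p^\nu = q^\mu$ contributes at most $\sum_{p^\nu \leq X}|g(p^\nu)|^2 \mb{E}[\chi_{p^\nu}] \leq B_g(X)^2$. The same-prime off-diagonal part is bounded (up to negligible $O(1/X)$ corrections) by
$$
\sum_p\Big(\sum_\nu|g(p^\nu)|/p^\nu\Big)^2 \leq \sum_p \frac{1}{p-1}\sum_\nu \frac{|g(p^\nu)|^2}{p^\nu} \leq B_g(X)^2
$$
via Cauchy-Schwarz in $\nu$ and $1/(p-1) \leq 1$. For the cross-primes contribution, each covariance is $O(1/X)$, so in absolute value it is $O\big(X^{-1}(\sum_{p^\nu \leq X}|g(p^\nu)|)^2\big)$; by Cauchy-Schwarz,
$$
\Big(\sum_{p^\nu \leq X}|g(p^\nu)|\Big)^2 \leq B_g(X)^2 \sum_{p^\nu \leq X} p^\nu \ll B_g(X)^2 \cdot X^2,
$$
so this contributes $\ll B_g(X)^2$ after division by $X$.

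Finally, I would replace $\mb{E}[g]$ by $A_g(X)$: the same Cauchy-Schwarz estimate gives $|\mb{E}[g] - A_g(X)| \ll X^{-1}\sum_{p^\nu \leq X}|g(p^\nu)| \ll B_g(X)$, and then $|g(n) - A_g(X)|^2 \leq 2|g(n) - \mb{E}[g]|^2 + 2|\mb{E}[g] - A_g(X)|^2$ yields the claim. The main technical obstacle is the cross-primes contribution: although each individual covariance is tiny ($O(1/X)$), there are potentially $\sim (X/\log X)^2$ such pairs and the values $|g(p^\nu)|$ are not assumed bounded. The crucial observation is the elementary bound $\sum_{p^\nu \leq X} p^\nu \ll X^2$, which is exactly what Cauchy-Schwarz needs in order to convert an $L^1$-type estimate on $|g(p^\nu)|$ into an $L^2$-bound controlled by $B_g(X)$.
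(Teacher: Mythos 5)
The paper's proof of this lemma is simply a citation to Elliott, so you are supplying a proof the paper does not contain; your route is the natural direct second-moment expansion, but as written it has a genuine gap in the cross-prime estimate. You correctly reduce to bounding $\sum_{p^\nu,q^\mu\leq X}g(p^\nu)\overline{g(q^\mu)}\,\text{Cov}(\chi_{p^\nu},\chi_{q^\mu})$, correctly observe $|\text{Cov}(\chi_{p^\nu},\chi_{q^\mu})|=O(1/X)$ for $p\neq q$, and apply Cauchy--Schwarz to get $(\sum_{p^\nu\leq X}|g(p^\nu)|)^2\leq B_g(X)^2\sum_{p^\nu\leq X}p^\nu\ll B_g(X)^2X^2$. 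But assembling these gives a cross-prime contribution of order $X^{-1}\cdot B_g(X)^2X^2=B_g(X)^2\,X$, not $B_g(X)^2$: the closing phrase ``after division by $X$'' double-counts the factor $X^{-1}$ that was already present in the covariance bound. For your argument to close you would need $\sum_{p^\nu\leq X}p^\nu\ll X$, which is false; already $\sum_{p\leq X}p\asymp X^2/\log X$. As written, your bound is therefore off by roughly a factor of $X/\log X$.

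The standard repair is truncation. Split $g=g_1+g_2$ where $g_1(n)=\sum_{p^\nu||n,\,p^\nu\leq\sqrt X}g(p^\nu)$. Any $n\leq X$ has at most one exact prime-power divisor exceeding $\sqrt X$, so $\sum_{n\leq X}|g_2(n)|^2=\sum_{\sqrt X<p^\nu\leq X}|g(p^\nu)|^2\lfloor X/p^\nu\rfloor\leq X\,B_g(X)^2$, and the corresponding mean $A_{g_2}(X)$ is $O(B_g(X))$ because $\sum_{\sqrt X<p^\nu\leq X}p^{-\nu}=O(1)$ by Mertens. For $g_1$ your expansion now involves only prime powers at most $\sqrt X$, for which $\sum_{p^\nu\leq\sqrt X}p^\nu\ll X/\log X$, so the cross-prime contribution becomes $\ll X^{-1}B_g(X)^2\cdot X/\log X=o(B_g(X)^2)$, which is acceptable. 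The remaining pieces of your argument --- the diagonal bound $\text{Var}(\chi_{p^\nu})\leq\mb{E}[\chi_{p^\nu}]\leq p^{-\nu}$, the same-prime off-diagonal estimate via Cauchy--Schwarz in $\nu$, and the replacement of $\mb{E}[g]$ by $A_g(X)$ --- are all sound.
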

\begin{proof}
This is \cite[Lem. 4.1]{Ell1}.
\end{proof}
\noindent The following ``dual'' form of Lemma \ref{lem:TK} was discovered by Elliott.
\begin{lem}[Elliott] \label{lem:Ell}
Let $(a_n)_n \subset \mb{C}$ be a sequence and let $X \geq 1$. Then
$$
\sum_{p^\nu \leq X} p^{\nu}\left|\sum_{\ss{n \leq X \\ p^\nu||n}} a_n - \frac{1}{p^\nu}\left(1-\frac{1}{p}\right) \sum_{n \leq X} a_n\right|^2 \ll X\sum_{n \leq X} |a_n|^2.
$$
\end{lem}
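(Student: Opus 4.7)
The plan is to derive this estimate by a duality argument from the Tur\'{a}n-Kubilius inequality (Lemma \ref{lem:TK}). Fix the sequence $(a_n)_{n \leq X}$ and, for an auxiliary sequence $(c_{p^\nu})_{p^\nu \leq X}$ of complex numbers, consider the bilinear pairing
$$
\Phi(a,c) := \sum_{p^\nu \leq X} c_{p^\nu} \Delta_{p^\nu}(a), \quad \Delta_{p^\nu}(a) := \sum_{\ss{n \leq X \\ p^\nu || n}} a_n - \frac{1}{p^\nu}\left(1-\frac{1}{p}\right)\sum_{n \leq X} a_n.
$$
The strategy is to bound $|\Phi(a,c)|$ in terms of the ordinary $\ell^2$ norm of $(a_n)$ and the weighted $\ell^2$ norm $\|c\|_\ast^2 := \sum_{p^\nu \leq X} |c_{p^\nu}|^2/p^\nu$, and then take the supremum over $c$ with $\|c\|_\ast \leq 1$; by the standard duality identification between $\|\cdot\|_\ast$ and the norm $\|\Delta\|_{\ast\ast}^2 := \sum_{p^\nu \leq X} p^\nu |\Delta_{p^\nu}|^2$ on the space of sequences indexed by prime powers $\leq X$, this supremum computes exactly the quantity we want to estimate.

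To execute this, I define an additive function $g : \mb{N} \ra \mb{C}$ by setting $g(p^\nu) := c_{p^\nu}$ for $p^\nu \leq X$ and $g(p^\nu) := 0$ otherwise. Since every integer $n \leq X$ has all its prime-power components $p^\nu || n$ satisfying $p^\nu \leq n \leq X$, one gets $g(n) = \sum_{p^\nu || n} c_{p^\nu}$ for all such $n$, and by construction $A_g(X) = \sum_{p^\nu \leq X} (c_{p^\nu}/p^\nu)(1-1/p)$. Swapping the order of summation therefore reformulates the pairing as
$$
\Phi(a,c) = \sum_{n \leq X} a_n\bigl(g(n) - A_g(X)\bigr).
$$
Cauchy-Schwarz together with Lemma \ref{lem:TK} now yields
$$
|\Phi(a,c)|^2 \leq \left(\sum_{n \leq X} |a_n|^2\right)\sum_{n \leq X} |g(n) - A_g(X)|^2 \ll X \cdot B_g(X)^2 \sum_{n \leq X} |a_n|^2 = X \|c\|_\ast^2 \sum_{n \leq X} |a_n|^2,
$$
using the identity $B_g(X)^2 = \sum_{p^\nu \leq X} |c_{p^\nu}|^2/p^\nu$.

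Since this bound holds uniformly in $c$, taking the supremum over $c$ with $\|c\|_\ast \leq 1$ yields $\sum_{p^\nu \leq X} p^\nu |\Delta_{p^\nu}(a)|^2 \ll X \sum_{n \leq X} |a_n|^2$, which is the claim. The argument is essentially standard Banach-space duality applied to Tur\'{a}n-Kubilius; the main point to verify carefully is that the truncated additive function $g$ faithfully encodes the pairing $c \mapsto \sum_{p^\nu || n} c_{p^\nu}$ on each $n \leq X$, and this is automatic from the constraint $p^\nu \leq X$. A secondary bookkeeping check is that the subtracted mean arising in the definition of $\Delta_{p^\nu}$ matches $A_g(X)$ exactly, which is a direct calculation.
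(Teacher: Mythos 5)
Your duality argument is correct, and it is in fact the standard derivation of Elliott's dual Tur\'an-Kubilius inequality from the direct form: the pairing $\Phi(a,c)$ is precisely the adjoint of the Tur\'an-Kubilius operator between the weighted $\ell^2$ spaces, and the two bookkeeping points you flag (that the truncated additive $g$ satisfies $g(n)=\sum_{p^\nu\|n}c_{p^\nu}$ for $n\leq X$, and that $A_g(X)$ matches the subtracted mean) are indeed exactly the things to check and both hold. The paper simply cites \cite[Lem.\ 4.2]{Ell1} without reproducing a proof, and the proof given there is this same duality-from-Lemma~\ref{lem:TK} argument, so your proposal matches the intended proof.
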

\begin{proof}
This is \cite[Lem. 4.2]{Ell1}.
\end{proof}
\subsection{Background on the pretentious distance}
As in \cite{ManConsec} we will make frequent use of the pretentious distance of Granville and Soundararajan. Given arithmetic functions $f,g : \mb{N} \ra \mb{U}$ and $x \geq 2$ we define
$$
\mb{D}(f,g;x) := \left(\sum_{p \leq x} \frac{1-\text{Re}(f(p)\bar{g}(p))}{p}\right)^{1/2}.
$$
\noindent We recall the following two forms of the \emph{pretentious triangle inequality} (see \cite[Lem. 3.1]{GSPret}). For any $J \geq 1$ and 1-bounded arithmetic functions $f,g,h, (f_j)_{1 \leq j \leq J},(g_j)_{1 \leq j \leq J}$ we have
\begin{align}
&\mb{D}(f,h;x) \leq \mb{D}(f,g;x) + \mb{D}(g,h;x), \label{eq:usualTri}\\
&\mb{D}(f_1\cdots f_J,g_1 \cdots g_J;x) \leq \sum_{1 \leq j \leq J} \mb{D}(f_j,g_j;x).
\label{eq:multTri}
\end{align}
Controlling the distance $\mb{D}(n^{it},1;x)$ uniformly in a range of $t$ is important in the study of mean values and correlations of 1-bounded multiplicative functions. In this direction, 
Lemma 2.1 in \cite{ManConsec} gave the standard estimates
\begin{align} \label{eq:nitTo1Small}
\mb{D}(1,n^{it};x)^2 \geq \begin{cases}  \log(1+|t|\log x) + O(1) &\text{ if } |t| \leq 10 \\
(1/3-o(1)) \log\log x &\text{ if } 10 \leq |t| \leq x^2.
\end{cases}
\end{align}
In addition, in the sequel we will need to consider the distance between $n^{it}$ and more general primitive Dirichlet characters $\chi$. In this direction we will use the following standard fact repeatedly.
\begin{lem}\label{lem:charnit}
There is an absolute constant $C_0 \geq 1$ such that for $C \geq C_0$ the following is true. Let $x$ be sufficiently large relative to $C$, and let $1 \leq q \leq x^{1/(5C)}$ be a prime power. Let $\chi$ be a Dirichlet character modulo $q$ and let $|t| \leq x^2$. If
$$
\mb{D}(\chi, n^{it};x)^2 \leq C
$$
then $\chi$ is principal and $|t| \ll_C \tfrac{1}{\log x}$.
\end{lem}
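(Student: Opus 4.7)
The plan is to handle the principal and non-principal cases of $\chi$ separately. For the principal case we reduce directly to the quoted estimates \eqref{eq:nitTo1Small}; for the non-principal case we combine the standard pretentious-distance-to-$L$-function identity with a classical upper bound for $|L(1+it,\chi)|$ near the line $\text{Re}(s) = 1$ to derive a contradiction.

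\textbf{Principal case.} Suppose $\chi = \chi_0^{(q)}$ with $q = p_0^m$ a prime power. Since $\sum_{p\mid q} 1/p = 1/p_0 \leq 1/2$, unfolding definitions gives
$$
\mb{D}(\chi_0, n^{it}; x)^2 = \mb{D}(1, n^{it}; x)^2 + O(1).
$$
Combining the hypothesis $\mb{D}(\chi,n^{it};x)^2 \leq C$ with the lower bounds in \eqref{eq:nitTo1Small}: if $10 \leq |t| \leq x^2$ then $(1/3 - o(1))\log\log x \leq C + O(1)$, which fails for $x$ sufficiently large relative to $C$. Thus $|t| \leq 10$, and the first estimate in \eqref{eq:nitTo1Small} gives $\log(1+|t|\log x) \leq C + O(1)$, whence $|t| \ll_C 1/\log x$.

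\textbf{Non-principal case.} Suppose for contradiction that $\chi$ is non-principal. By Mertens' theorem together with the Euler product expansion of $\log L(s,\chi)$, one obtains the standard identity
$$
\mb{D}(\chi, n^{it}; x)^2 = \log\log x - \log\bigl|L(1+1/\log x + it, \chi)\bigr| + O(1),
$$
so the hypothesis forces $|L(1 + 1/\log x + it, \chi)| \gg_C \log x$. The Vinogradov--Korobov zero-free region for Dirichlet $L$-functions, valid uniformly for $q \leq x^{1/(5C)}$ and $|t| \leq x^2$, delivers the opposing bound
$$
|L(1+1/\log x + it, \chi)| \ll \bigl(\log(q(|t|+3))\bigr)^{2/3}\bigl(\log\log(q(|t|+3))\bigr)^{1/3} = o(\log x).
$$
Possible Siegel zeros (occurring only for real $\chi$) only make $|L|$ smaller near the real axis, which strengthens the argument. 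Inserting this estimate into the identity yields $\mb{D}(\chi, n^{it};x)^2 \geq (1/3 + o(1))\log\log x$, contradicting the bound $\leq C$ for $x$ sufficiently large relative to $C$.

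\textbf{Main obstacle.} The crux of the argument is the upper bound on $|L(1+1/\log x + it, \chi)|$ that beats the naive convexity bound $|L| \ll \log(q(|t|+3)) \ll \log x$---which by itself is only of the same order as our target and therefore insufficient. The Vinogradov--Korobov zero-free region supplies the needed logarithmic saving, with a minor subtlety around real characters (where Siegel zeros fortuitously act in our favour rather than against it). Under ERH, as invoked elsewhere in the paper, the sharper bound $|L(1+it,\chi)| \ll \log\log(q(|t|+3))$ renders this step immediate and the entire lemma essentially trivial.
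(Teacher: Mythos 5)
Your principal-case argument matches the paper's exactly: reduce $\mb{D}(\chi_0,n^{it};x)$ to $\mb{D}(1,n^{it};x)$ and invoke \eqref{eq:nitTo1Small}. For the non-principal case, however, you take a genuinely different route. The paper dispatches it in one line by quoting \cite[Lem.~7.8]{MRAP}, which supplies the lower bound $\mb{D}(\chi,n^{it};x)^2 \geq \frac14\log(\log x/\log q) + O(1)$ for non-principal $\chi$ directly; you instead reconstruct a (stronger) lower bound via $L$-function estimates and the Vinogradov--Korobov zero-free region. Both approaches close the argument, but yours deploys substantially heavier machinery for a lemma the paper treats as a quick citation.

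Two points in your non-principal argument are glossed over and deserve care. First, the ``standard identity''
$\mb{D}(\chi,n^{it};x)^2 = \log\log x - \log|L(1+1/\log x+it,\chi)| + O(1)$
hinges on showing that the tail $\sum_{p>x}\chi(p)p^{-(1+1/\log x+it)}$ is $O(1)$. The trivial bound on this tail is $O(\log x)$ --- the same order as the quantity you are trying to isolate --- so the $O(1)$ requires genuine input from the distribution of primes in arithmetic progressions. This is delicate precisely in your regime $q \leq x^{1/(5C)}$, which is far beyond Siegel--Walfisz uniformity, so the justification ends up resting on (essentially) the same zero-free-region technology as the upper bound on $|L|$ you apply a line later. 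That circularity is not fatal, but the identity is not ``standard'' in the form you assert for this range of $q$, and the tail bound needs to be stated and sourced. Second, the Vinogradov--Korobov upper bound $|L(1+1/\log x+it,\chi)| \ll (\log(q(|t|+3)))^{2/3}(\log\log)^{1/3}$ with the required $q$-uniformity and the exceptional-zero caveat is correct in the literature but not as ``classical'' as for $\zeta$; a precise citation is needed, and your parenthetical remark that Siegel zeros ``strengthen the argument'' is only valid once the identity itself is known to survive the Siegel-zero case (it does, but the tail analysis is exactly where that has to be checked). In short: the route is sound modulo these technical justifications, yields a quantitatively stronger lower bound $\gg \log\log x$ than the lemma requires, but is considerably more elaborate than the paper's single citation to \cite[Lem.~7.8]{MRAP}.
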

\begin{proof}
Assume first that $\chi$ is non-principal. By\footnote{The bound in \cite{MRAP} is given only in the range $|t| \leq 10 x$, but the same argument extends to the full range $|t| \leq x^2$.} \cite[Lem. 7.8]{MRAP} we have
$$
C \geq \mb{D}(\chi,n^{it};x)^2 \geq \frac{1}{4} \log\left(\frac{\log x}{\log q}\right) + O(1) \geq \frac{5}{4}C + O(1) > C
$$
if $C_0$ is sufficiently large and $C \geq C_0$. This contradiction implies that $\chi$ must in fact be principal. \\
Next, we observe that
$$
C \geq \mb{D}(\chi_0,n^{it};x)^2 \geq \mb{D}(1,n^{it};x)^2 + O(1).
$$
In light of \eqref{eq:nitTo1Small},
if $x$ is large enough we must have $|t| \leq 10$, and indeed $|t| \log x \ll_C 1$, as claimed.
\end{proof}
\noindent Finally, for real-valued multiplicative functions $g: \mb{N} \ra [-1,1]$ we will use the following.
\begin{lem}\label{lem:realNonPret}
Let $x \geq 3$. Let $g : \mb{N} \ra [-1,1]$ be multiplicative, and let $\chi$ be a fixed Dirichlet character. Put $\e_{\chi} := 1$ if $\chi$ is real-valued, and $\e_{\chi} = 0$ otherwise. Then
$$
\inf_{\e_{\chi} \leq |t| \leq x^2} \mb{D}(g,\chi(n)n^{it};x) \geq \frac{1}{4}\sqrt{\log\log x} + O_{\chi}(1).
$$
Moreover, if $\chi$ is real-valued then
$$
\inf_{|t| \leq 1} \mb{D}(g,\chi(n)n^{it};x) \geq \frac{1}{3}\mb{D}(g,\chi;x) + O(1).
$$
\end{lem}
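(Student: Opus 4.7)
The plan is to exploit the real-valuedness of $g$, which gives $\mb{D}(g, \chi n^{it}; x) = \mb{D}(g, \bar{\chi} n^{-it}; x)$ for every $\chi$ and every $t$, and to combine this with the triangle inequality \eqref{eq:usualTri} to reduce both claims to lower bounds on $\mb{D}(\chi^2 n^{2it}, 1; x)$. Inserting $g$ as a midpoint and noting that $\chi(p) p^{it} \cdot \overline{\bar{\chi}(p) p^{-it}} = \chi^2(p) p^{2it}$ yields the pointwise bound
\[
2\,\mb{D}(g, \chi n^{it}; x) \;\geq\; \mb{D}(\chi n^{it}, \bar{\chi} n^{-it}; x) \;=\; \mb{D}(\chi^2 n^{2it}, 1; x).
\]

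For the first claim I would handle two cases. If $\chi$ is not real-valued, then $\chi^2$ is non-principal; the bound from \cite[Lem. 7.8]{MRAP} invoked in the proof of Lemma \ref{lem:charnit} gives $\mb{D}(\chi^2 n^{2it}, 1; x)^2 \geq \tfrac{1}{4}\log\log x + O_\chi(1)$ uniformly for $|t| \leq x^2$, hence $\mb{D}(g, \chi n^{it}; x) \geq \tfrac{1}{4}\sqrt{\log\log x} + O_\chi(1)$. If instead $\chi$ is real-valued, then $\chi^2 = \chi_0$ and $\mb{D}(\chi^2 n^{2it}, 1; x) = \mb{D}(n^{2it}, 1; x) + O_\chi(1)$; for $1 \leq |t| \leq x^2$, applying \eqref{eq:nitTo1Small} to $n^{2it}$ (its first case covering $|t| \in [1,5]$ and yielding $\log\log x + O(1)$, its second case covering $|t| \geq 5$ and yielding $(\tfrac{1}{3}-o(1))\log\log x$) furnishes $\mb{D}(n^{2it}, 1; x)^2 \geq \tfrac{1}{3}\log\log x + O(1)$, so $\mb{D}(g, \chi n^{it}; x) \geq \tfrac{1}{2\sqrt{3}}\sqrt{\log\log x} + O_\chi(1) \geq \tfrac{1}{4}\sqrt{\log\log x} + O_\chi(1)$.

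For the moreover, since $\chi$ is real the equality $\mb{D}(g, \chi n^{it}; x) = \mb{D}(g\chi, n^{it}; x)$ holds pointwise; setting $h := g\chi$, which remains real-valued and $1$-bounded, reduces the goal to showing $\mb{D}(h, n^{it}; x) \geq \tfrac{1}{3}\,\mb{D}(h, 1; x) + O(1)$ for $|t| \leq 1$. Writing $A := \mb{D}(h, 1; x)$, $B := \mb{D}(h, n^{it}; x)$, $C := \mb{D}(n^{it}, 1; x)$, $D := \mb{D}(n^{2it}, 1; x)$, the expansion $B^2 - A^2 = \sum_{p \leq x} h(p)(1 - \cos(t\log p))/p$ together with $|h| \leq 1$ gives $B^2 \geq A^2 - C^2$, while the displayed inequality above applied with $\chi = 1$ yields $B \geq D/2$. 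I split into cases on the relative size of $A$ and $C$: if $C^2 \leq \tfrac{8}{9} A^2$, then $B^2 \geq A^2/9$ and hence $B \geq A/3$; if $C^2 > \tfrac{8}{9} A^2$, then $A < \tfrac{3}{2\sqrt{2}}\,C$, and the desired inequality will follow once I establish $D \geq C - O(1)$.

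The main obstacle will be this uniform estimate $D = C + O(1)$ for $|t| \leq 1$, which I expect to follow from Mertens-zeta asymptotics: for $|t|\log x \geq 1$, writing $C^2 = \sum_{p \leq x} 1/p - \text{Re}\sum_{p \leq x} p^{-1+it}$ and using the identity $\sum_{p \leq x} p^{-1+it} = \log\zeta(1-it) + O(1)$ together with the pole bound $\log|\zeta(1-it)| = -\log|t| + O(1)$ yields $C^2 = \log(|t|\log x) + O(1)$, and the parallel computation for $D^2$ gives $D^2 - C^2 = \log 2 + O(1) = O(1)$; for $|t|\log x < 1$, the Taylor bound $1 - \cos(t\log p) \leq (t\log p)^2/2$ shows that $C, D = O(1)$ directly. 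Feeding $D \geq C - O(1)$ into $B \geq D/2$ gives $B \geq C/2 - O(1) \geq \tfrac{\sqrt{2}}{3}A - O(1) \geq A/3 - O(1)$ in the second case, completing the proof.
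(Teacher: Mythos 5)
The paper proves this lemma by a one-line citation to \cite[Lem. C.1]{MRT}, so your self-contained derivation is a genuinely different route in presentation, and the mechanism you use is the right one. The key observation --- that for real-valued $g$ one has $\mb{D}(g,\chi n^{it};x)=\mb{D}(g,\bar\chi n^{-it};x)$, so inserting $g$ as a midpoint in \eqref{eq:usualTri} gives $2\mb{D}(g,\chi n^{it};x)\geq \mb{D}(\chi^2 n^{2it},1;x)$ --- reduces the first claim to repulsion bounds for $\chi^2$: the MRAP estimate underlying Lemma \ref{lem:charnit} when $\chi^2$ is non-principal, and \eqref{eq:nitTo1Small} when $\chi$ (hence $\chi^2$) is real. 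The ``moreover'' claim then follows from your case split on whether $C^2\leq\tfrac{8}{9}A^2$ (in your notation), using $B^2\geq A^2-C^2$ in the easy case and $B\geq D/2$ together with $D\geq C-O(1)$ in the other; the latter estimate is correct and follows, as you sketch, from the asymptotic $\mb{D}(n^{is},1;x)^2 = \tfrac12\log(1+(s\log x)^2)+O(1)$ for $|s|\leq 2$.

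Two details deserve tightening. First, every invocation of \eqref{eq:nitTo1Small} and of the MRAP bound occurs at parameter $2t$, so with $|t|\leq x^2$ you need those estimates at a parameter up to $2x^2$, slightly past the nominal range $|\cdot|\leq x^2$; this is harmless (the footnote in the proof of Lemma \ref{lem:charnit} already notes the bound extends beyond $10x$), but it should be acknowledged rather than passed over silently. Second, the intermediate step asserting that ``$(1/3-o(1))\log\log x$ furnishes $\mb{D}(n^{2it},1;x)^2\geq\tfrac13\log\log x+O(1)$'' is not literally correct, since an error of size $o(1)\cdot\log\log x$ is not $O(1)$. Your final conclusion is unaffected because $\tfrac{1}{2\sqrt3}>\tfrac14$ leaves constant slack: $\tfrac12\sqrt{(1/3-o(1))\log\log x}\geq\tfrac14\sqrt{\log\log x}$ holds once $x\geq x_0(\chi)$, and smaller $x$ are absorbed into the $O_\chi(1)$; but the argument should be phrased this way rather than through the false intermediate inequality.
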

\begin{proof}
This is \cite[Lem. C.1]{MRT}.
\end{proof}
\subsection{Auxiliary results about multiplicative functions}
As in \cite{ManConsec}, Tao's theorem on logarithmically-averaged binary correlations plays a crucial role in our analysis.
\begin{thm}[Tao, \cite{Tao}] \label{thm:Tao}
Let $\eta > 0$ and let $a,c \in \mb{N}$, $b,d \in \mb{Z}$ satisfy $ad-bc \neq 0$. Let $g_1,g_2: \mb{N} \ra \mb{U}$ be 1-bounded multiplicative functions with the property that for some $x \geq x_0(\eta)$,
$$
\frac{1}{\log x} \left|\sum_{n \leq x} \frac{g_1(an+b) g_2(cn+d)}{n} \right| \geq \eta.
$$
Then there are primitive Dirichlet characters $\psi_j = \psi_{j,x}$ with conductor $O_{\eta}(1)$ and real numbers $t_j = t_{j,x}$ with $|t_j| = O_{\eta}(x)$ such that for $j = 1,2$,
$$
\mb{D}(g_j,\psi_j(n)n^{it_j};x)^2 = \sum_{p \leq x} \frac{1-\normalfont{\text{Re}}(g_j(p)\bar{\psi}_j(p)p^{-it_j})}{p} = O_{\eta}(1).
$$
\end{thm}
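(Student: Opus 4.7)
The plan is to follow Tao's entropy decrement strategy, arguing by contrapositive. Suppose that at least one of $g_1,g_2$ is \emph{non-pretentious} in the sense that $\mb{D}(g_j,\psi(n)n^{it};x)^2 \to \infty$ as $x \to \infty$ uniformly over all primitive characters $\psi$ of conductor $\leq M$ and all $|t| \leq x$, where $M = M(\eta)$ is a parameter to be chosen large. The goal is to show that the logarithmic correlation is $o_{x\to\infty}(1)$, contradicting the hypothesis.

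First, via an affine substitution $n \mapsto acn + O(1)$, I would reduce to correlations of the form
$$
\frac{1}{\log x}\sum_{n \leq x} \frac{g_1(n)g_2(n+h)}{n}
$$
for a bounded nonzero integer $h$, replacing each $g_j$ by a multiplicative function of the same pretentious type (up to a bounded conductor twist). The heart of the argument is the \emph{entropy decrement lemma}: place the logarithmic probability measure on $[1,x]$ and consider the conditional Shannon entropy of the random vector $(g_1(n),g_2(n+h))$ given the residue class $n \bmod Q_k$, where $Q_k$ is a product of primes in a dyadic range $[P_k,2P_k]$ with $P_k = \exp(k^C)$. Monotonicity, together with the total entropy being $O(1)$, produces some $k \leq K := O(\log\log x)$ at which the decrement is negligible. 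At this scale, multiplication by most primes $p \in [P_k,2P_k]$ is approximately distribution-preserving, and combining with the multiplicativity $g_j(pn) = g_j(p)g_j(n)$ yields the approximate identity
$$
\sum_{n \leq x}\frac{g_1(n)g_2(n+h)}{n} \approx g_1(p)g_2(p)\sum_{n \leq x/p}\frac{g_1(n)g_2(n + h/p)}{n}
$$
for most such $p$, up to $o(\log x)$ error.

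Averaging this identity over primes in the dyadic range, and using that the relative shift $h/p$ is negligible after logarithmic averaging, one concludes that the mean of $g_1(p)g_2(p)$ over $p \in [P_k,2P_k]$ is close to $1$. Iterating with suitably shifted versions of the correlation and then applying Hal\'asz's mean value theorem to rule out cancellation on arithmetic progressions isolates primitive characters $\psi_j$ of conductor $O_\eta(1)$ and real numbers $t_j$ with $|t_j| = O_\eta(x)$ such that $\mb{D}(g_j,\psi_j(n)n^{it_j};x)^2 = O_\eta(1)$, as required. The main obstacle is the delicate management of the entropy decrement across the scales $k \leq K$: the dyadic prime ranges $P_k$ must be chosen so that the cumulative entropy loss telescopes to a finite total, and converting ``measure-preservation on most residue classes'' into useful pointwise approximate multiplicativity requires Hoeffding-type concentration estimates on sparse conditional supports. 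The quantitative dependencies on $\eta$ emerging from this method are ineffective; obtaining effective bounds would require substantially more powerful tools such as nilsequence inverse theorems of Green--Tao--Ziegler type or quantitative refinements of the Matom\"aki--Radziwi\l{}\l{} pretentious structure theorem.
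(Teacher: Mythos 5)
The paper does not prove this theorem: it is quoted verbatim as a black-box result of Tao, with no proof block at all (contrast the neighbouring Hal\'asz--Montgomery--Tenenbaum statement, which carries a one-line citation proof). Your attempt to reconstruct Tao's entropy-decrement argument contains a concrete error at the pivotal step. The approximate identity you write,
$$
\sum_{n \leq x}\frac{g_1(n)g_2(n+h)}{n} \approx g_1(p)g_2(p)\sum_{n \leq x/p}\frac{g_1(n)g_2(n + h/p)}{n},
$$
is not a legal manipulation: $h/p\notin\mathbb{Z}$, so $g_2(n+h/p)$ is undefined. After the substitution $n = pm$ one obtains $g_1(pm)\,g_2(pm+h)$; multiplicativity lets you factor $g_1(pm)=g_1(p)g_1(m)$ for $p\nmid m$, but $g_2(pm+h)$ does \emph{not} factor, since $p\nmid(pm+h)$ for generic $m$. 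The fact that the shift $h$ does not rescale under dilation is exactly the obstruction that makes the binary correlation problem hard, and your sketch silently assumes it away.

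The second genuine gap is the omission of the Matom\"aki--Radziwi\l{}\l{} short-interval theorem and the Matom\"aki--Radziwi\l{}\l{}--Tao averaged-Elliott result built on it, which form the decisive analytic input of Tao's proof. After the entropy decrement one is left with an expression averaged over primes $p$ in a dyadic range --- effectively a correlation averaged over a range of shifts --- and it is the averaged-Elliott theorem, implemented via the circle method, that controls this. Hal\'asz's theorem, which you cite as the concluding step, bounds the mean value of a \emph{single} multiplicative function and cannot close this argument; Hal\'asz had been available since the 1970s, whereas Tao's theorem required the 2015 short-interval breakthrough. Your intermediate assertion that ``the mean of $g_1(p)g_2(p)$ over $p\in[P_k,2P_k]$ is close to $1$'' also does not follow from the lines preceding it. The high-level theme (entropy decrement feeding into a quantitative structure theorem) is the right one, but the proposal fails precisely at the steps where the actual difficulty resides.
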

In addition, we will also need the following quantitative version of Hal\'{a}sz's theorem on mean values of multiplicative functions, which provides similar but more precise data about functions with large partial sums than Tao's theorem.
\begin{thm}[Hal\'{a}sz-Montgomery-Tenenbaum Inequality] \label{thm:HMT}
Let $x \geq 3$ and $T \geq 1$. Let $g : \mb{N} \ra \mb{U}$ be a multiplicative function. Then
$$
\frac{1}{x} \sum_{n \leq x} g(n) \ll (1+M_g)e^{-M_g} + \frac{1}{T},
$$
where $M_g = M_g(x;T)$ is defined by
$$
M_g(x;T) := \min_{|t| \leq T} \mb{D}(g,n^{it};x)^2 = \min_{|t| \leq T} \sum_{p \leq x} \frac{1-\normalfont{\text{Re}}(g(p)p^{-it})}{p}.
$$
\end{thm}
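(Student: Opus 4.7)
The plan is to follow the classical Hal\'{a}sz contour-integration strategy, in the refined form due to Montgomery and Tenenbaum that yields the sharp factor $1+M_g$. Write $F(s) := \sum_{n \geq 1} g(n) n^{-s}$, convergent for $\text{Re}(s) > 1$, and set $c := 1 + 1/\log x$. First I would apply a truncated Perron formula,
\[
\sum_{n \leq x} g(n) = \frac{1}{2\pi i} \int_{c - iU}^{c+iU} F(s) \frac{x^s}{s}\,ds + \text{error},
\]
for a parameter $U$ chosen large enough (e.g.\ $U \asymp Tx$) that the Perron error is absorbed into the target $x/T$ term.

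The key pointwise bound comes from the Euler product: expanding $\log F(c+it) = \sum_{p} g(p)/p^{c+it} + O(1)$ and using $\sum_{p \leq x} p^{-c} = \log\log x + O(1)$, one obtains
\[
|F(c+it)| \ll (\log x)\, \exp\!\bigl(-\mb{D}(g, n^{it}; x)^2\bigr).
\]
Let $t_0 \in [-T, T]$ attain the minimum $M_g = \mb{D}(g, n^{it_0}; x)^2$. The pretentious triangle inequality \eqref{eq:usualTri} combined with the quantitative lower bounds \eqref{eq:nitTo1Small} applied to $\mb{D}(1, n^{i(t-t_0)}; x)$ yields a lower bound for $\mb{D}(g, n^{it}; x)^2$ that grows like $\log(1+|t-t_0|\log x)$ on a short neighbourhood of $t_0$, and attains full size $\tfrac{1}{3}\log\log x - O(1)$ outside that neighbourhood.

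Inserting these estimates into the Perron integral, the main contribution comes from $|t - t_0| \ll 1$, where the integrand is controlled by $(\log x)\, e^{-M_g}/(1 + |t - t_0|\log x)$. After integration against the smooth weight $x^{it}/(c+it)$, one obtains a bound of the form $x\, e^{-M_g}(1 + M_g)$, with the linear-in-$M_g$ factor arising from the cross-term $\exp\!\bigl(2\sqrt{M_g}\,\mb{D}(1,n^{i(t-t_0)};x)\bigr)$ produced by the reverse triangle inequality, integrated against the polynomial weight. The complementary range $|t - t_0| \gg 1$ gives a power saving in $\log x$ and is negligible, while $|t| > T$ is handled by the trivial $1/T$ estimate from the Perron truncation.

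The main technical obstacle is extracting precisely the factor $1 + M_g$ rather than the weaker $\log\log x$ that the naive pointwise bound alone produces. This refinement traditionally requires either supplementing the Euler-product bound with a Montgomery-type $L^2$ mean-value estimate for $\int_{-T}^{T} |F(c+it)|^2\, dt$, balancing $L^1$ and $L^2$ norms via Plancherel applied to a smoothed kernel, or, following Granville--Soundararajan, bypassing contour integration entirely by iterating the convolution identity $g(n)\log n = \sum_{p^k m = n} \Lambda_g(p^k)\, g(m)$ together with partial summation and a bootstrap on the partial sum. Either route delivers the theorem as stated.
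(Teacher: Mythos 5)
The paper does not prove this statement at all; it simply cites \cite[Cor. III.4.12]{Ten}. Your sketch, by contrast, outlines the classical Hal\'{a}sz--Montgomery--Tenenbaum contour argument that underlies that reference, and it is faithful in its main lines: Perron's formula with a suitable truncation to absorb the $x/T$ term, the Euler-product bound $|F(c+it)| \ll (\log x)\exp(-\mb{D}(g,n^{it};x)^2)$, the pretentious triangle inequality together with \eqref{eq:nitTo1Small} to localise the dominant contribution near the minimising $t_0$, and the observation that the cross-term $\exp\bigl(2\sqrt{M_g}\,\mb{D}(1,n^{i(t-t_0)};x)\bigr)$ in the reverse triangle inequality is the source of the linear factor $1+M_g$. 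You also correctly flag that a naive $L^1$ estimate of the Perron integral only yields a $\log\log x$ loss, and that one needs either an $L^2$/mean-value balancing (Montgomery's refinement) or the Granville--Soundararajan iterative/convolution route to extract the sharp $(1+M_g)e^{-M_g}$; this is exactly where the genuine work lies, and your sketch leaves those details to the cited literature rather than carrying them out. So the proposal is a correct outline of the standard proof, but it is a sketch rather than a complete argument; since the paper itself delegates the entire proof to Tenenbaum, the two are compatible, with your version supplying strictly more detail than the paper chose to include.
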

\begin{proof}
This is \cite[Cor. III.4.12]{Ten}.
\end{proof}
The following variant of Shiu's theorem, which is a consequence of a result due to Pollack \cite{Pol}, will allow us to work conveniently with sums over shifted primes in Section \ref{sec:LocGlob}.
\begin{lem} \label{lem:Pol}
Let $X \geq 2$ and let $k \geq 0$ be an integer. For each prime $p \leq X$ let $\mc{E}_p \subseteq (\mb{Z}/p\mb{Z})^{\times}$ be a set of $\nu(p) \leq k$ non-zero residue classes, and let
$$
\mathscr{S} := \bigcap_{p \leq X} \{n \leq X : \, n \pmod{p} \notin \mc{E}_p\}.
$$
Then, uniformly over all multiplicative functions $f: \mb{N} \ra \mb{R}$ that satisfy  
$$
0 \leq f(n) \leq d(n) \text{ for all } n,
$$ 
we have
$$
\sum_{\ss{n \leq X \\ n \in \mathscr{S}}} f(n) \ll_k \frac{X}{\log X} \exp\left(\sum_{p \leq X} \frac{f(p)-\nu(p)}{p}\right).
$$
\end{lem}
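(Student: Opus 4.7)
The plan is to combine Shiu's classical bound for sums of nonnegative multiplicative functions with a Brun-type upper bound sieve that handles the residue class restrictions defining $\mathscr{S}$. The target estimate factors naturally as Shiu's bound $\frac{X}{\log X}\exp(\sum_{p \leq X} f(p)/p)$ multiplied by the sieve ``density'' factor $\exp(-\sum_{p \leq X}\nu(p)/p) \asymp \prod_{p \leq X}(1-\nu(p)/p)$, so the proof is naturally a hybrid Shiu/sieve argument.

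First I would apply Brun's fundamental lemma of dimension $k$ with sifting parameter $z = X^{1/s}$ for a constant $s = s(k)$ to be chosen, producing nonnegative weights $\lambda_d^+$ supported on squarefree $d \mid P(z) := \prod_{p \leq z} p$ with $\omega(d) \leq 2r(k)$, such that
$$1_{\mathscr{S}_z}(n) \leq \sum_{d \mid P(z)} \lambda_d^+ \, \Phi_d(n),$$
where $\mathscr{S}_z$ is the subset of $\mathscr{S}$ obtained by sifting only at primes $\leq z$, and $\Phi_d(n)$ is the indicator that $n$ lies in one of the $|\mathcal{E}_d| = \prod_{p \mid d}\nu(p)$ ``forbidden'' residue classes modulo $d$. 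Since each $\mathcal{E}_p \subseteq (\mb{Z}/p\mb{Z})^\times$, every forbidden class modulo $d$ is coprime to $d$, so for each $d$ I would apply Shiu's theorem in arithmetic progressions (uniformly in the modulus up to $d \leq z^{2r} = X^{2r/s}$, chosen so $2r/s < 1$) to obtain
$$\sum_{\substack{n \leq X \\ n \equiv a \pmod{d}}} f(n) \ll \frac{X}{\phi(d)\log X}\exp\left(\sum_{\substack{p \leq X \\ p \nmid d}} \frac{f(p)}{p}\right) \leq \frac{X}{\phi(d)\log X}\exp\left(\sum_{p \leq X} \frac{f(p)}{p}\right).$$

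Summing against $\lambda_d^+$ and invoking the Brun-sieve identity $\sum_d \lambda_d^+ \, |\mathcal{E}_d|/\phi(d) \asymp_k \prod_{p \leq z}(1 - \nu(p)/p)$ then yields
$$\sum_{\substack{n \leq X \\ n \in \mathscr{S}_z}} f(n) \ll_k \frac{X}{\log X}\exp\left(\sum_{p \leq X}\frac{f(p)}{p} - \sum_{p \leq z}\frac{\nu(p)}{p}\right).$$
Since $\mathscr{S} \subseteq \mathscr{S}_z$, it remains only to compare this to the target. The discrepancy in the exponent is $\sum_{z < p \leq X}\nu(p)/p \leq k\log(\log X/\log z) = k\log s$, which is $O_k(1)$ for our fixed choice $s = s(k)$; thus the bound above is already $\ll_k$ the desired one.

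The main obstacle is technical: securing a form of Shiu's theorem in arithmetic progressions that is uniform in the modulus up to $d \leq X^{O_k(1)/s}$, including the regime where $d$ has many prime factors. This is a standard but intricate generalization of Shiu's original argument via Landau--Selberg--Delange type reasoning, and I would simply cite Pollack's result (or its original provenance) rather than reprove it. A secondary subtlety is to verify that the Brun parameter $r$ and the sifting parameter $s$ can be chosen as functions of $k$ alone, independently of the function $f$; this comes out of the fundamental lemma with explicit constants.
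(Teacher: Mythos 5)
Your high-level heuristic --- factor the bound into a Shiu contribution times a sieve density --- is the right reading of the statement, but the execution has a gap that cannot be patched within your proposed framework. You assert that Brun's fundamental lemma yields \emph{nonnegative} upper-bound weights $\lambda_d^+$; it does not. The weights are truncated M\"obius, $\lambda_d^+ = \mu(d)\,1_{\omega(d)\leq 2r}$, and alternate in sign. Nonnegativity is in fact incompatible with the sieve identity you invoke: the pointwise bound $1_{\mathscr{S}_z}(n) \leq \sum_d\lambda_d^+\Phi_d(n)$ forces $\lambda_1^+ \geq 1$ (test it on any $n\in\mathscr{S}_z$, for which $\Phi_d(n)=0$ whenever $d>1$), and then for nonnegative weights $\sum_d\lambda_d^+|\mathcal{E}_d|/\phi(d) \geq \lambda_1^+ \geq 1$, which cannot be $\asymp_k\prod_{p\leq z}(1-\nu(p)/p)$ when that product is small. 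Once you accept the weights are signed, inserting Shiu's \emph{upper bound} termwise no longer works: a term with $\lambda_d^+ < 0$ needs a matching lower bound for $\sum_{n \equiv a \pmod d} f(n)$, or equivalently an asymptotic with controllable remainder, and Shiu-type arguments for a general $f$ bounded by the divisor function give upper bounds only. This is the essential difficulty, and it is why the congruence restrictions have to be threaded through Shiu's own rough/smooth decomposition of $n$ rather than bolted on afterward via a separate sieve.

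The paper's proof is a one-line citation of Pollack's Theorem 1.1 (with $A_1 = 2$ in the definition of $\mathscr{M}$ there and $y = X$), which carries out precisely that hybridisation. Since your sketch ends by citing Pollack anyway, the clean route is to do the same --- but be aware that what you are importing is not ``Shiu's theorem in arithmetic progressions'' (Shiu's original 1980 paper already treats that uniformly in $q \leq x^{1-\varepsilon}$), it is the full sieved estimate over the set $\mathscr{S}$, which is the genuine content of Pollack's result.
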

\begin{proof} 
This is immediate from \cite[Thm. 1.1]{Pol}, taking $A_1 = 2$ in the definition of $\mathscr{M}$ there, and $y = X$.
\end{proof}

\section{Applying the projection method: the archimedean argument}\label{sec:arch}
Given $a \in \mb{N}$, $b \in \mb{Z} \bk \{0\}$ and a multiplicative function $f: \mb{N} \ra \mb{Z}$ recall the notation
$$
\mc{N}_{f,a,b} := \{n \in \mb{N} : f(n+a) = f(n) + b\}.
$$
In this section we use the compositions $|f|_t := |f|^{it}$ to study the sequence of magnitudes $(|f(p)|)_p$ when $\overline{\delta}(\mc{N}_{f,a,b}) > 0$. In most aspects this is identical to the procedure in \cite[Sec. 3]{ManConsec}, where $b = 0$.
\begin{prop} \label{prop:ArchInhom}
Let $f: \mb{N} \ra \mb{Z}$ be a multiplicative function that satisfies
\begin{equation}\label{eq:cond01}
\sum_{p : |f(p)| \neq 1} \frac{1}{p} = \infty,
 \quad \sum_{p : f(p) = 0} \frac{1}{p} < \infty.
\end{equation}
Let $a \in \mb{N}$, $b\in \mb{Z} \bk \{0\}$, and assume that $\bar{\delta}(\mc{N}_{f,a,b}) > \delta$ for some $\delta \in (0,1/2)$. 
Then there is a $0 < r \ll_{\delta} 1$ and a $O_{\delta}(1)$-sparse sequence of primes $\mc{S}_{\delta}$ 
such that 
$$
\frac{1}{2} < |f(p)|/p^r < 2 \text{ for all } p \notin \mc{S}_{\delta}.
$$
\end{prop}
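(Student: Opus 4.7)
The plan is to extend the argument of \cite[Prop.~3.1]{ManConsec} (which handles $b=0$) by first reducing the inhomogeneous condition $f(n+a)=f(n)+b$ to a purely archimedean condition on $|f(n+a)/f(n)|$ via a preliminary pruning of small values. Specifically, the first step is to show that the subset
$$
\mc{N}'_{f,a,b}:=\big\{n \in \mc{N}_{f,a,b}:|f(n)|>100|b|\big\}
$$
still satisfies $\bar\delta(\mc{N}'_{f,a,b}) \gg \delta$. This uses the Tur\'an--Kubilius inequality (Lemma~\ref{lem:TK}) applied to the additive function $g(n):=\sum_{p^\nu \| n}\log\max(1,|f(p^\nu)|)$: since $f$ is integer-valued, each prime $p$ with $|f(p)|\neq 1$ contributes either to the sparse zero-set (removed by hypothesis) or at least $\log 2$ to $A_g$, so $A_g(x)\to\infty$ while, after truncating the ``spiky'' prime powers where $|f(p^\nu)|$ is anomalously large, $B_g(x)^2/A_g(x)$ remains bounded. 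This forces $|f(n)|$ to grow along a density-one set of $n\le x$, and in particular on the bulk of $\mc{N}_{f,a,b}$ along a suitable sequence $x_k\to\infty$.

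On $\mc{N}'_{f,a,b}$ the identity $f(n+a)=f(n)+b$ gives $|f(n+a)/f(n)-1|<1/100$, so in particular $\tfrac{1}{2}<|f(n+a)/f(n)|<2$. This is precisely the archimedean hypothesis required by the projection method of \cite[Sec.~3]{ManConsec}. Next I introduce the family of $1$-bounded multiplicative functions $|f|_t(n):=|f(n)|^{it}$ (defined wherever $f(n)\neq 0$ and set to $0$ otherwise; the sparsity hypothesis $\sum_{f(p)=0}1/p<\infty$ ensures this alteration is harmless). For each sufficiently small real $t$ and each $n\in\mc{N}'_{f,a,b}$, the summand $|f|_t(n+a)\,\overline{|f|_t(n)}=|f(n+a)/f(n)|^{it}$ lies in a tight arc near $1$, so a logarithmically-weighted summation together with $\bar\delta(\mc{N}_{f,a,b})>\delta$ yields
$$
\left|\frac{1}{\log x_k}\sum_{n\le x_k}\frac{|f|_t(n+a)\,\overline{|f|_t(n)}}{n}\right|\gg\delta
$$
along the sequence $x_k$.

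The final step converts this large binary correlation into an archimedean statement on $|f(p)|$. By Tao's theorem (Theorem~\ref{thm:Tao}), for each such $t$ there exist primitive characters $\psi_{j,t}$ of conductor $O_\delta(1)$ and real numbers $s_{j,t}$ with $|s_{j,t}|=O_\delta(x_k)$ such that $\mb{D}(|f|_t,\psi_{j,t}(n)n^{is_{j,t}};x_k)^2=O_\delta(1)$. Invoking this for two well-separated small values $t_0$ and $2t_0$, combining via the identity $|f|_{2t_0}=|f|_{t_0}\cdot|f|_{t_0}$ and the pretentious triangle inequality \eqref{eq:multTri}, and using Lemma~\ref{lem:charnit} to force the associated characters to be principal and the $s_{j,t}$ to depend linearly on $t$, I isolate a single real $r=O_\delta(1)$ with $\mb{D}(|f|_{t_0},n^{irt_0};x_k)^2=O_\delta(1)$. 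Unwinding this prime-by-prime produces a $O_\delta(1)$-sparse set $\mc{S}_\delta$ outside of which $|f(p)|/p^r\in(\tfrac{1}{2},2)$. The main obstacle is the Tur\'an--Kubilius reduction in the first paragraph: one must match the divergence rate of $A_g(x)$ against $\delta$ quantitatively, and control the prime powers $p^\nu$ where $|f(p^\nu)|$ is anomalously large, in order to guarantee that the large-$|f(n)|$ subset of $\mc{N}_{f,a,b}$ retains logarithmic density $\gg\delta$ along some unbounded sequence $x_k$.
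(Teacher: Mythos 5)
Your overall strategy mirrors the paper's, but there is a genuine gap at the step where you assert the lower bound
$$
\left|\frac{1}{\log x_k}\sum_{n\le x_k}\frac{|f|_t(n+a)\,\overline{|f|_t(n)}}{n}\right|\gg\delta
$$
for \emph{each} sufficiently small $t$. On $\mc{N}'_{f,a,b}$ the summand is indeed near $1$, contributing real part $\gg\delta\log x_k$; but for the remaining $n$ (those outside $\mc{N}_{f,a,b}$, or with $0 < |f(n)| \leq 100|b|$, or with $f(n)f(n+a)=0$) the quantity $g(n+a)-g(n)$ is unbounded, so the phases $t(g(n+a)-g(n))$ are uncontrolled and those terms can cancel the favourable contribution. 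There is no uniform small-$t$ threshold $T(\delta)$ that avoids this, since $\log|f(n)|$ may grow arbitrarily quickly. What the paper (via \cite[Lem.~3.2]{ManConsec}) actually establishes is that the inequality $1_{f(n+a)=f(n)+b}\leq 2\max\{1-|g(n+a)-g(n)|,0\}$, valid once the small and zero values of $f$ have been discarded, combined with the nonnegativity of the Fourier transform of this Fej\'er-type weight, yields the correlation bound on a \emph{positive Lebesgue measure} subset of $\alpha\in[-\delta^{-2},\delta^{-2}]$---not at every small $t$. Your subsequent ``compare $t_0$ and $2t_0$'' step is the right spirit, but it must be carried out on this positive-measure set rather than at two hand-picked points, since nothing guarantees any particular fixed $t$ lies in the good set.

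Two smaller points. (i) Your Tur\'an--Kubilius function $g(n)=\sum_{p^\nu\|n}\log\max(1,|f(p^\nu)|)$ makes $B_g^2/A_g$ potentially unbounded if $|f(p^\nu)|$ is frequently large; the truncation you invoke needs actual justification. The paper instead applies Tur\'an--Kubilius to the counting function $\omega_{\mc{S}}$ with $\mc{S}=\{p^\nu:|f(p^\nu)|\neq0,1\}$: integer-valuedness gives $|f(p^\nu)|\geq2$ on $\mc{S}$, so $0<|f(n)|\leq M$ forces $\omega_{\mc{S}}(n)\leq\log(2M)/\log2$, while $A_{\omega_{\mc{S}}}=B_{\omega_{\mc{S}}}^2+O(1)$ holds automatically for indicator-type additive functions (see \eqref{eq:AgBgComp}); this gives Lemma~\ref{lem:bddCase} cleanly with no truncation. (ii) You also need to rule out $r=0$ at the end, which the paper does: if $r=0$ then $|f(p)|\in(1/2,2)$ and $f(p)\in\mb{Z}$ force $|f(p)|=1$ outside $\mc{S}_\delta$, contradicting the divergence in \eqref{eq:cond01}.
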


\begin{rem} \label{rem:sparseVan}
We may assume that for $f$ in Conjecture \ref{conj:Inhom} the set $\{p \in \mc{P} : f(p) = 0\}$ is sparse, and thus Conjecture \ref{conj:Inhom} classifies all non-trivial functions for which $\overline{\delta}(\mc{N}_{f,a,b}) > 0$ for some $ab \neq 0$. \\
Indeed, suppose otherwise. Then the multiplicative function $1_{f(n) \neq 0}$ has vanishing logarithmic mean: 
$$
\frac{1}{\log x}\sum_{n \leq x} \frac{1_{f(n) \neq 0}}{n} \ll \exp\left(-\sum_{\ss{p \leq x \\ f(p) = 0}} \frac{1}{p}\right) = o(1), \quad x \ra \infty.
$$
It follows that $f(n+a) = f(n) = 0$ for a logarithmic density $1$ set of $n$. In particular, $\delta(\mc{N}_{f,a,b}) = 0$ whenever $ab \neq 0$, contradicting our hypothesis. There is therefore no loss in assuming both of the conditions \eqref{eq:cond01} here. 
\end{rem}

\noindent Let $\e \in (0,1)$. Note that if $n \in \mc{N}_{f,a,b}$ satisfies $|f(n)| > |b|\e^{-1}$  then
$$
f(n+a)/f(n) \in (1-\e,1+\e).
$$
Our arguments treat precisely those $n$ for which such a condition holds. The following lemma implies that the converse inequality $|f(n)| \leq |b|\e^{-1}$ holds for very few $n$.
\begin{lem} \label{lem:bddCase}
Assume that $f$ satisfies both conditions in \eqref{eq:cond01}. Then for any fixed $M \geq 1$ we have
$$
\frac{1}{\log x}\sum_{\ss{n \leq x \\ 0 < |f(n)| \leq M}} \frac{1}{n} = o(1).
$$
\end{lem}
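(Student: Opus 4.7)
The plan is to reduce the assertion to a standard Tur\'{a}n--Kubilius concentration argument. Since $f$ is integer-valued, $|f(p^\nu)|$ is a non-negative integer for every prime power, and the condition $|f(p^\nu)|\neq 1$ splits into $|f(p^\nu)|=0$ and $|f(p^\nu)|\geq 2$. Set
$$
\mc{Q} := \{p^\nu : |f(p^\nu)|\geq 2\}.
$$
The two conditions in \eqref{eq:cond01} imply $\sum_{p:\, |f(p)|\geq 2} 1/p = \infty$, and since $\mc{Q}$ contains every such $p$ (with $\nu=1$), we obtain $\sum_{p^\nu \in \mc{Q}} p^{-\nu} = \infty$. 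Combined with \eqref{eq:AgBgComp} this shows $A_{\omega_{\mc{Q}}}(X) \to \infty$ as $X \to \infty$.

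Next I would observe that if $0 < |f(n)| \leq M$, then $|f(p^\nu)|\geq 1$ for every $p^\nu\| n$, so
$$
M \geq |f(n)| = \prod_{p^\nu \| n} |f(p^\nu)| \geq 2^{\omega_{\mc{Q}}(n)},
$$
which forces $\omega_{\mc{Q}}(n) \leq \log_2 M$. Thus $\{n : 0 < |f(n)|\leq M\} \subseteq \{n : \omega_{\mc{Q}}(n) \leq \log_2 M\}$, and it suffices to control the logarithmic density of the latter set.

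Now I would apply the Tur\'{a}n--Kubilius inequality (Lemma \ref{lem:TK}) to the additive function $\omega_{\mc{Q}}$. Provided $X$ is large enough that $A_{\omega_{\mc{Q}}}(X) > 2\log_2 M$, every $n \leq X$ with $\omega_{\mc{Q}}(n) \leq \log_2 M$ satisfies $(\omega_{\mc{Q}}(n) - A_{\omega_{\mc{Q}}}(X))^2 \geq A_{\omega_{\mc{Q}}}(X)^2/4$, hence
$$
\bigl|\{n\leq X : 0<|f(n)|\leq M\}\bigr| \ll \frac{X\, B_{\omega_{\mc{Q}}}(X)^2}{A_{\omega_{\mc{Q}}}(X)^2} \ll \frac{X}{A_{\omega_{\mc{Q}}}(X)},
$$
where in the last step I used \eqref{eq:AgBgComp}.

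Finally I would convert this natural-density bound into the desired logarithmic estimate by dyadic decomposition: over a range $n \in (2^{k-1}, 2^k]$ with $2^{k-1} \geq X_0$ (where $X_0$ is chosen so that $A_{\omega_{\mc{Q}}}(X_0) > 2\log_2 M$), the relevant portion of the sum is $\ll 1/A_{\omega_{\mc{Q}}}(2^k)$. Summing over $k \leq \log_2 x$ and using that $1/A_{\omega_{\mc{Q}}}(2^k) \to 0$, a Ces\`{a}ro-averaging argument shows that the total is $o(\log x)$, giving the claim after dividing by $\log x$. There is no serious obstacle here; the only point requiring (mild) care is the identification of $\mc{Q}$ as a set of prime powers (rather than just primes) so that prime-power contributions to $|f(n)|$ can be controlled uniformly in $\nu$.
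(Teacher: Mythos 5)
Your proof is correct and follows essentially the same route as the paper's: identify the set of prime powers $p^\nu$ with $|f(p^\nu)| \ge 2$ (the paper calls it $\mc{S}$, you call it $\mc{Q}$, and for integer-valued $f$ they coincide), observe that $0 < |f(n)| \le M$ forces $\omega$ on this set to be $O_M(1)$, apply Tur\'{a}n--Kubilius concentration on dyadic blocks, and convert to logarithmic density by averaging. The only cosmetic difference is that the paper makes the final conversion slightly more explicit by splitting at $z = x^{1/E_{\mc{S}}(x)}$, whereas you invoke Ces\`{a}ro averaging directly; both give the same conclusion.
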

\begin{rem}\label{rem:bddCase}
It is not difficult to see that the proof furthermore implies the stronger bound
$$
\frac{1}{x} |\{n \leq x : 0 < |f(n)| \leq M\}| = o(1) \text{ as } x \ra \infty.
$$
\end{rem}
\begin{proof}
Let $\mc{S} := \{p^\nu : |f(p^\nu)| \neq 0, 1\}$, and define $\omega_{\mc{S}}(n)$ to be the number of prime powers $p^\nu \in \mc{S}$ with $p^\nu||n$. 
Since $f$ takes integer values, $|f(p^\nu)| \geq 2$ for all $p^\nu \in \mc{S}$. Thus, 
$$
\text{if } 0 < |f(n)| \leq M \text{ then }
\omega_{\mc{S}}(n) \leq \frac{\log (2M)}{\log 2}.
$$
According to \eqref{eq:cond01}, as $X \ra \infty$ we have
$$
E_{\mc{S}}(X) := \sum_{\ss{p^\nu \leq X \\ p^\nu \in \mc{S}}} \frac{1}{p^\nu}\left(1-\frac{1}{p}\right) \geq \frac{1}{2}\sum_{\ss{p \leq X \\ |f(p)| \neq 0,1}} \frac{1}{p} \ra \infty.
$$ 
Furthermore, by Lemma \ref{lem:TK} (and \eqref{eq:AgBgComp}),
$$
\frac{1}{X} \sum_{X \leq n < 2X} |\omega_{\mc{S}}(n) - E_{\mc{S}}(X)|^2 \ll E_S(X), \quad X \geq 2.
$$
Therefore when $X \geq X_0(M)$ we have
\begin{align*}
|\{X \leq n < 2X : 0 < |f(n)| \leq M\}| &\leq |\{X \leq n < 2X : |\omega_{\mc{S}}(n) - E_{\mc{S}}(X)| > E_{\mc{S}}(X)/2 \}| \\
&\ll \frac{1}{E_{\mc{S}}(X)^2} \sum_{n \leq 2X} |\omega_{\mc{S}}(n) - E_{\mc{S}}(X)|^2 \ll \frac{X}{E_{\mc{S}}(X)}. 
\end{align*}
Set $z := x^{1/E_{\mc{S}}(x)} \geq x^{1/(2\log\log x)}$. Taking a logarithmic average and decomposing over dyadic scales,
\begin{align*}
\frac{1}{\log x}\sum_{\ss{n \leq x \\ 0 < |f(n)| \leq M}} \frac{1}{n} &\leq \frac{1}{\log x}\sum_{z < y = 2^j \leq x} \frac{1}{y} |\{y \leq n < 2y: 0 < |f(n)| \leq M\}| + O\left(\frac{\log z}{\log x}\right) \\
&\ll \frac{1}{\log x}\sum_{z < y = 2^j \leq x} \frac{1}{E_{\mc{S}}(y)} + \frac{1}{\sqrt{\log x}} \ll \frac{1}{E_{\mc{S}}(z)},
\end{align*}
and the claim follows as $x$ (and thus also $z$) tends to infinity.
\end{proof}

\begin{proof}[Proof of Proposition \ref{prop:ArchInhom}]
Let $(X_j)_j \subseteq \mb{N}$ be an infinite increasing subsequence on which
$$
\bar{\delta}(\mc{N}_{f,a,b}) = \lim_{j \ra \infty} \frac{1}{\log X_j} \sum_{\ss{n \leq X_j \\ f(n+a) = f(n) + b}} \frac{1}{n}.
$$
Given $n \in \mb{N}$ with $f(n) \neq 0$, define $g(n) := \log |f(n)|$. 
Observe that 
$$
\text{if } f(n+a) = f(n) + b \text{ with } f(n+a)f(n) \neq 0, \text{ and } |f(n)| \geq 10|b|
$$ 
then it follows that
$$
|g(n+a)-g(n)| = |\log|f(n+a)/f(n)|| \leq \max\{\log(1+|b/f(n)|), -\log(1-|b/f(n)|)\} 
< \frac{1}{2},
$$
and in particular that
$$
1_{f(n+a) = f(n) + b} \leq 2 \max\{1-|g(n+a)-g(n)|,0\}.
$$
In order to use this we first discard those $n$ for which $f(n)f(n+a) = 0$ or $0 < |f(n)| \leq 10|b|$. \\
Let $j \geq j_0(\delta,|b|)$ be large and set $X = X_j$. Observe that since $b \neq 0$, by Lemma \ref{lem:bddCase} we have 
\begin{align*}
\frac{1}{\log X} \sum_{\ss{n \leq X \\ f(n+a) = f(n)+b \\ f(n)f(n+a) = 0}} \frac{1}{n} &\leq \frac{1}{\log X} \sum_{\ss{n \leq X \\ f(n+a) = b}}\frac{1}{n} + \frac{1}{\log X} \sum_{\ss{n \leq X \\ f(n) = -b}} \frac{1}{n} \\
&\ll \frac{1}{\log X} \sum_{\ss{m \leq X \\ 0 < |f(m)| \leq |b|}}\frac{1}{m}  + \frac{a}{\log X}
< \delta^2.
\end{align*}
Moreover, by the same argument we also have
$$
\frac{1}{\log X} \sum_{\ss{n \leq X \\ 0 < |f(n)| \leq 10 |b|}} \frac{1}{n} < \delta^2.
$$
Thus, if $j \geq j_0(\delta)$ we find that
\begin{align*}
\delta \leq \frac{1}{\log X} \sum_{\ss{n \leq X \\ f(n+a) = f(n)+b}} \frac{1}{n} &= \frac{1}{\log X} \sum_{\ss{n \leq X \\ f(n)f(n+a) \neq 0 \\ |f(n)| \geq 10|b|}} \frac{1_{f(n+a) = f(n)+b}}{n} + O(\delta^2) \\
&\leq \frac{2}{\log X} \sum_{\ss{n \leq x \\ f(n)f(n+a)\neq 0}} \frac{\max\{1-|g(n+a)-g(n)|,0\}}{n} + O(\delta^2).
\end{align*}
Recall the notation $|f|_{\beta}(n) := |f(n)|^{i\beta}$ for $n \in \mb{N}$ satisfying $f(n) \neq 0$ and $\beta \in \mb{R}$. Taking $B = \delta^{-2}$, this is sufficient to establish the conclusion of \cite[Lem. 3.2]{ManConsec}, namely that
the set
$$
\left\{\alpha \in [-B,B] : \left|\frac{1}{\log X}\sum_{\ss{n \leq X \\ f(n+a)f(n) \neq 0}} \frac{|f|_{\alpha}(n+a)|f|_{-\alpha}(n)}{n}\right| \geq \delta/16\pi\right\}
$$
has Lebesgue measure $\geq \delta/16\pi$ for each $X$ along an infinite subsequence of $(X_j)_j$.
As discussed in \cite[Sec. 3]{ManConsec}, the conclusion of \cite[Prop. 3.1]{ManConsec} follows from this, and so there is a $0 \leq r \ll_{\delta} 1$ and a set of primes $\mc{S}_{\delta}$ outside of which
$$
e^{-\delta^2} \leq |f(p)|/p^r \leq e^{\delta^2}.
$$
Since $\delta \in (0,1/2)$ we deduce that $e^{\delta^2} < 2$, and so
$$
\sum_{p : |f(p)|/p^r \notin (1/2,2)} \frac{1}{p} \leq \sum_{p \in \mc{S}_{\delta}} \frac{1}{p} < \infty.
$$
Now if $r = 0$ then as $f(p) \in \mb{Z}$,
$$
\sum_{p: |f(p)| \neq 1} \frac{1}{p} = \sum_{p : |f(p)| \notin (1/2,2)} \frac{1}{p} < \infty,
$$
a contradiction.
Therefore, $r > 0$ as required.
\end{proof}
\begin{rem}\label{eq:rPosRem}
Given that $r > 0$, we see that for $p \geq p_0(\delta)$,
$$
p \notin \mc{S}_{\delta} \Rightarrow |f(p)| \geq p^r/2 > 1.
$$
In particular, using Proposition \ref{prop:ArchInhom} we conclude that
$$
\sum_{\ss{p^\nu \leq X \\ |f(p^\nu)| \neq 0,1}} \frac{1}{p^\nu} = \log\log X + O\left(1 + \sum_{\ss{p \leq X \\ p \in \mc{S}_{\delta}}} \frac{1}{p}\right) = \log\log X + O_{\delta}(1),
$$
which quantitatively refines the first hypothesis in \eqref{eq:cond01}.
\end{rem}
\section{The non-Archimedean Argument: Reduction to local power maps} \label{subsec:nonArch}
\subsection{Setting up Proposition \ref{prop:almostHom}}
The objective of this section is to prove Proposition \ref{prop:almostHom}. Since its precise statement is somewhat technical we defer providing it for the moment and roughly describe its content here.\\
Since $\overline{d}(\mc{N}_{f,a,b}) \geq \overline{\delta}(\mc{N}_{f,a,b}) > \delta$ we can find an infinite increasing sequence $(x_k)_k$ such that
\begin{equation}\label{eq:posUppDensxk}
|\mc{N}_{f,a,b} \cap [1,x_k]| \geq \delta x_k, \quad k \geq k_0.
\end{equation}
Proposition \ref{prop:almostHom} asserts that there are positive integers $A,D = O_{\delta}(1)$ such that at each such scale $x_k$, for the vast majority of primes $\ell \leq x_k^{1/A}$ the power $f^D$ of $f$ 
behaves like a homomorphism 
$$
\varphi: (\mb{Z}/\ell \mb{Z})^{\times} \ra (\mb{Z}/q_{\ell} \mb{Z})^{\times}
$$ 
for any prime power $q_{\ell}|f(\ell)$ of our choice. More precisely, outside of a sparse set $\mc{T}_{\ell}$ of primes $p \leq x_k$, 
$$
f(p)^D \equiv \varphi(a) \pmod{q_\ell} \text{ whenever } p \equiv a \pmod{\ell}.
$$ 
In the case that $\ell|f(\ell)$ we may select $q_{\ell} = \ell$, and deduce that $f^D$ is (essentially) a \emph{local power map} modulo $\ell$, i.e. that for some $0 \leq g_{\ell} < \ell-1$,
$$
f(p)^D \equiv p^{g_{\ell}} \pmod{\ell}, \quad p \in [2,x_k] \bk \mc{T}_{\ell}.
$$
In the next section we will see how to ``glue'' together these local conditions to establish global information about $f$. \\
To this end, we begin by showing that $\mc{N}_{f,a,b}$ contains many multiples of primes $\ell$ for which $(b,f(\ell)) = 1$. This will allow us to show that
$$
f(n+a) = f(m\ell+a) \equiv b \pmod{q_{\ell}}
$$
for many multiples $n = m\ell \in \mc{N}_{f,a,b}$, a condition that we can then study using the compositions $\{f^{\chi}\}_{\chi \pmod{q_\ell}}$.
\begin{lem} \label{lem:multSet}
Let $(x_k)_k$ be the sequence for which \eqref{eq:posUppDensxk} holds. Then there is a collection of sets $\mc{P}_{\delta,k}$ of prime powers that satisfy
$$
\sum_{\ss{p^\nu \leq x_k \\ p^\nu \in \mc{P}_{\delta,k}}} \frac{1}{p^\nu} \ll \delta^{-2}, \quad k \geq 1
$$
such that if $k \geq k_0(\delta)$ and $p^\nu \leq x_k$, $p^\nu \notin \mc{P}_{\delta,k}$ then
$$
|\{n \leq x_k : n \in \mc{N}_{f,a,b}, \, p^\nu || n\}| \geq \frac{\delta x_k}{4p^\nu}.
$$
\end{lem}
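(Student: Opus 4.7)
The plan is to apply Elliott's dual Tur\'{a}n--Kubilius inequality (Lemma \ref{lem:Ell}) to the indicator sequence of $\mc{N}_{f,a,b}$. More concretely, set $a_n := 1_{n \in \mc{N}_{f,a,b}} 1_{n \leq x_k}$ and write
$$
N_k := |\mc{N}_{f,a,b} \cap [1,x_k]|, \qquad N_k(p^\nu) := |\{n \leq x_k : n \in \mc{N}_{f,a,b}, \, p^\nu || n\}|.
$$
By \eqref{eq:posUppDensxk} we have $N_k \geq \delta x_k$, while trivially $\sum_{n \leq x_k} a_n^2 = N_k \leq x_k$.

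Plugging this sequence into Lemma \ref{lem:Ell} with $X = x_k$ gives
$$
\sum_{p^\nu \leq x_k} p^\nu \left| N_k(p^\nu) - \frac{1}{p^\nu}\left(1-\frac{1}{p}\right) N_k \right|^2 \ll x_k \sum_{n \leq x_k} a_n \leq x_k^2.
$$
The natural definition of the exceptional set is thus
$$
\mc{P}_{\delta,k} := \left\{ p^\nu \leq x_k : \left| N_k(p^\nu) - \frac{N_k}{p^\nu}\left(1-\frac{1}{p}\right) \right| \geq \frac{\delta x_k}{4 p^\nu} \right\}.
$$
For each $p^\nu \in \mc{P}_{\delta,k}$, the corresponding summand in Elliott's bound is at least $p^\nu \cdot \tfrac{\delta^2 x_k^2}{16 p^{2\nu}} = \tfrac{\delta^2 x_k^2}{16 p^\nu}$, so rearranging yields
$$
\sum_{p^\nu \in \mc{P}_{\delta,k}} \frac{1}{p^\nu} \ll \delta^{-2},
$$
which is the desired sparsity bound.

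It remains to verify the pointwise lower bound. For any prime $p \geq 2$ we have $1 - 1/p \geq 1/2$, so for $p^\nu \leq x_k$ with $p^\nu \notin \mc{P}_{\delta,k}$,
$$
N_k(p^\nu) \geq \frac{N_k}{p^\nu}\left(1-\frac{1}{p}\right) - \frac{\delta x_k}{4 p^\nu} \geq \frac{\delta x_k}{2 p^\nu} - \frac{\delta x_k}{4 p^\nu} = \frac{\delta x_k}{4 p^\nu},
$$
as required. No serious obstacle arises here: the lemma is essentially a direct packaging of Elliott's inequality in this context, and the only mild care needed is in choosing the tolerance ($\delta x_k/(4p^\nu)$) large enough that the lower bound survives the subtraction of $(1-1/p)$ and $\frac{1}{4}$ factors, while small enough that $\sum_{p^\nu \in \mc{P}_{\delta,k}} 1/p^\nu$ remains $O(\delta^{-2})$.
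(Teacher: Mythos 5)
Your proof is correct and follows essentially the same route as the paper: apply Elliott's dual Tur\'{a}n--Kubilius inequality to the indicator $a_n = 1_{\mathcal{N}_{f,a,b}}(n)$, declare $p^\nu$ exceptional when the local count deviates from the expected $\frac{N_k}{p^\nu}(1-1/p)$ by more than $\frac{\delta x_k}{4p^\nu}$, and then read off both the $O(\delta^{-2})$ sparsity of the exceptional set and the pointwise lower bound. The paper phrases the threshold via the normalized quantity $\Delta_{p^\nu}(x_k) > \delta/4$, but that is exactly equivalent to your condition, so the two arguments are the same up to notation.
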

\begin{proof}
Let $k \geq k_0(\delta)$, and define $a_n := 1_{\mc{N}_{f,a,b}}(n)$ for each $n \in \mb{N}$.
For each $p^\nu \leq x_k$ define also
$$
\Delta_{p^\nu}(x_k) := \left|\frac{p^\nu}{x_k} \sum_{\ss{n \leq x_k \\ p^\nu || n}} a_n - \left(1-\frac{1}{p}\right) \frac{1}{x_k}\sum_{n \leq x_k} a_n\right|,
$$
and define
$$
\mc{P}_{\delta,k} := \{p^\nu \leq x_{k} : \Delta_{p^\nu}(x_{k}) > \delta/4\}.
$$
By Lemma \ref{lem:Ell},
$$
\sum_{p^\nu \leq x_k} \frac{\Delta_{p^\nu}(x_k)^2}{p^\nu} \ll \frac{1}{x_k} \sum_{n\leq x_k} |a_n|^2 \ll 1,
$$
so that
$$
\sum_{\ss{p^\nu \in \mc{P}_{\delta,k}}} \frac{1}{p^\nu} = \sum_{\ss{p^\nu \leq x_k \\ \Delta_{p^\nu}(x_k) > \delta/4}} \frac{1}{p^\nu} \leq \frac{16}{\delta^2} \sum_{p^\nu \leq x_k} \frac{\Delta_{p^\nu}(x_k)^2}{p^\nu} \ll \delta^{-2}.
$$
On the other hand, note that
$$
\sum_{\ss{n \leq x_k \\ p^\nu||n}} a_n = |\{n \leq x_k : n \in \mc{N}_{f,a,b}, \, p^\nu||n\}|,
$$
and using \eqref{eq:posUppDensxk}, 
$$
\sum_{n \leq x_k} a_n = |\mc{N}_{f,a,b}\cap [1,x_k]| \geq \delta x_k.
$$
Hence, for each $p^\nu \leq x_k$ with $p^\nu \notin \mc{P}_{\delta,k}$ we have
$$
|\{n \leq x_k : n \in \mc{N}_{f,a,b}, \, p^\nu ||n\}| \geq \frac{x_k}{p^\nu} \left(\delta \left(1-\frac{1}{p}\right) - \Delta_{p^\nu}(x_k)\right) \geq \frac{\delta x_k}{4p^\nu},
$$
as required.
\end{proof}
Set $\mc{P}_f := \{p^\nu : |f(p^\nu)| \neq 0,1\}$. Given $m \in \mb{N}$ and $k \geq k_0(\delta)$ define also 
$$
\mc{G}_{\delta,k} := \mc{P}_f \bk (\mc{P}_{\delta,k} \cup \mc{S}_{\delta}), \quad \mc{G}_{\delta,k;m} := \{p^\nu \in \mc{G}_{\delta,k} : (m,f(p^\nu)) = 1\}.
$$
Combining Remark \ref{eq:rPosRem} and Lemma \ref{lem:multSet},
\begin{equation}\label{eq:relGdPf}
\sum_{\ss{p^\nu \leq x_k \\ p \in \mc{G}_{\delta,k}}} \frac{1}{p^\nu} = \sum_{\ss{p^\nu \leq x_k \\ p^\nu \in \mc{P}_f}} \frac{1}{p^\nu} + O\left( \sum_{\ss{p \leq x_k \\ p \in \mc{S}_{\delta}}} \frac{1}{p} + \sum_{\ss{p^\nu \leq x_k \\ p^\nu \in \mc{P}_{\delta,k}}} \frac{1}{p^\nu}\right) = \log\log x_k + O_{\delta}(1).
\end{equation}
The next lemma shows that \eqref{eq:relGdPf} holds with $\mc{G}_{\delta,k}$ replaced by $\mc{G}_{\delta,k;m}$, for any $m$ fixed.
\begin{lem} \label{lem:Gdeltakm}
For any $m \in \mb{N}$ we have
$$
\sum_{\ss{p^\nu \leq x_k \\ p^\nu \in \mc{G}_{\delta,k;m}}} \frac{1}{p^\nu} = \log\log x_k + O_{\delta, \omega(m)}(1).
$$
\end{lem}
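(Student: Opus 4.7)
The plan is to derive the claim directly from \eqref{eq:relGdPf} by showing that the additional coprimality constraint $(m, f(p^\nu)) = 1$ defining $\mc{G}_{\delta,k;m}$ excludes only a set of prime powers whose reciprocal sum is $O_{\delta, \omega(m)}(1)$. Since $\mc{G}_{\delta,k;m} \subseteq \mc{G}_{\delta,k}$ and \eqref{eq:relGdPf} supplies $\sum_{p^\nu \in \mc{G}_{\delta,k}} 1/p^\nu = \log\log x_k + O_\delta(1)$, it suffices to establish
$$
\Delta := \sum_{\substack{p^\nu \leq x_k \\ p^\nu \in \mc{G}_{\delta,k} \\ (m, f(p^\nu)) > 1}} \frac{1}{p^\nu} = O_{\delta, \omega(m)}(1).
$$

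Noting that $(m, f(p^\nu)) > 1$ iff some prime $q \mid m$ divides $f(p^\nu)$, a union bound over the (at most $\omega(m)$) prime divisors of $m$ reduces matters to showing that for each prime $q$,
$$
S_q := \sum_{\substack{p^\nu \leq x_k \\ p^\nu \in \mc{G}_{\delta,k} \\ q \mid f(p^\nu)}} \frac{1}{p^\nu} = O_\delta(1),
$$
uniformly in $q$. Trivial contributions arise from $p = q$ (summing to $O(1/q)$) and from $\nu \geq 2$ (summing to $O(1)$). The essential case is primes $p \neq q$ with $q \mid f(p)$. Since $p \in \mc{G}_{\delta,k} \subseteq \mc{P}_f \bk \mc{S}_\delta$, Proposition~\ref{prop:ArchInhom} gives $|f(p)| < 2p^r$ with $r = r_\delta > 0$, and $q \mid f(p) \neq 0$ then forces $p > (q/2)^{1/r}$.

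To bound the sum over such primes, apply Lemma~\ref{lem:multSet}: since $p \in \mc{G}_{\delta,k}$ implies $p \notin \mc{P}_{\delta,k}$, there are at least $\delta x_k/(4p)$ integers $n = p n' \in \mc{N}_{f,a,b} \cap [1,x_k]$ with $(n',p)=1$. For each such $n$, multiplicativity gives $f(n) = f(p) f(n') \equiv 0 \pmod q$, hence $f(n+a) \equiv b \pmod q$. Double-counting these $n$ against
$$
\mc{M}_q := \{ n \leq x_k : n \in \mc{N}_{f,a,b}, \, q \mid f(n) \}
$$
produces the inequality
$$
\sum_p \frac{\delta x_k}{4p} \leq \sum_{n \in \mc{M}_q} \omega_q(n), \qquad \omega_q(n) := |\{ p \mid\mid n : p \in \mc{G}_{\delta,k}, \, q \mid f(p) \}|.
$$
Since $\omega_q$ is additive with $A_{\omega_q}(x_k) = (1 - 1/q) S_q + O(1)$, Turán–Kubilius (Lemma~\ref{lem:TK}) converts this into a self-bounding inequality for $S_q$.

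The main obstacle is closing this into the \emph{uniform-in-$q$} bound $S_q = O_\delta(1)$. A direct double-count using the worst-case estimate $\omega(n) \ll \log\log n$ would only yield $S_q \ll_\delta \log\log x_k$, which is inadequate. The sharper route uses that $\omega_q$ is much smaller than $\omega$, with both mean and variance of order $S_q$; Cauchy–Schwarz combined with the Turán–Kubilius variance bound yields $\sum_{n \in \mc{M}_q} \omega_q(n) \ll S_q \cdot |\mc{M}_q| + \sqrt{S_q \cdot x_k |\mc{M}_q|}$, producing an inequality of the shape $\delta S_q \ll S_q \cdot (|\mc{M}_q|/x_k) + O(1)$. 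Controlling $|\mc{M}_q|/x_k$ away from $1$ via the logarithmic-density lower bound for integers with $q \nmid f(n)$ (which follows from an elementary sieve once $q$ is not absorbed into the sparse exceptional set) bootstraps this to $S_q = O_\delta(1)$, uniformly in $q$, completing the proof.
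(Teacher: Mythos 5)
Your reduction to bounding $S_q$ for each prime $q\mid m$, via a union bound, is a sound first step and matches the paper's opening move. The difficulty is in the closing step of your self-bounding inequality, and the gap there is genuine.

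After the double-count $\tfrac{\delta}{4}x_k S_q \ll \sum_{n\in\mc{M}_q}\omega_q(n)$ and the Cauchy--Schwarz/Tur\'an--Kubilius step, you arrive at roughly
$$
\delta S_q \ll S_q\cdot\frac{|\mc{M}_q|}{x_k} + \sqrt{S_q\cdot\frac{|\mc{M}_q|}{x_k}},
$$
and to conclude you need $|\mc{M}_q|/x_k$ bounded away from $1$ by a margin depending only on $\delta$. But $\mc{M}_q \subseteq \mc{N}_{f,a,b}\cap[1,x_k]$, whose density need only exceed $\delta$ and may be arbitrarily close to $1$. Worse, precisely in the regime you are trying to rule out --- $S_q$ large --- the set $\{n\leq x_k: q\nmid f(n)\}$ has density $o(1)$ by Tur\'an--Kubilius, since $\omega_q$ concentrates near its mean $S_q\to\infty$. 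So the ``logarithmic-density lower bound for integers with $q\nmid f(n)$'' that you invoke to separate $|\mc{M}_q|/x_k$ from $1$ is exactly what fails when it is needed. Your inequality then reduces to the vacuous $\delta S_q \ll S_q + O(1)$.

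The missing idea is to exploit the equation $f(n+a)=f(n)+b$ through $q$-adic \emph{valuations} rather than mere divisibility. The paper's argument (with $\ell_0$ the worst prime divisor of $m$) shows via Tur\'an--Kubilius that for most $n$ in $\mc{N}_{f,a,b}\cap[1,x_k]$, \emph{both} $\omega_{\ell_0}(n)$ and $\omega_{\ell_0}(n+a)$ are $\geq 10(\nu+1)$ where $\nu=\nu_{\ell_0}(b)$; hence $\nu_{\ell_0}(f(n))$ and $\nu_{\ell_0}(f(n+a))$ are both far larger than $\nu_{\ell_0}(b)$. But then the ultrametric inequality gives $\nu_{\ell_0}(f(n+a))=\nu_{\ell_0}(f(n)+b)=\nu_{\ell_0}(b)$, which is bounded --- an immediate contradiction with no self-bounding needed. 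Your approach records only the one-bit information $q\mid f(n)$ and so cannot detect this collision of valuations; that is why it stalls at a trivial inequality while the paper's argument closes cleanly.
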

\begin{proof}
For each prime $\ell|m$ define the set
$$
\mc{B}_{\ell} := \{p^\nu \in \mc{P}_f : \ell | f(p^\nu)\}, \quad \mc{B}_m := \bigcup_{\ell|m} \mc{B}_{\ell}.
$$
Then $(m,f(p^\nu)) > 1$ if and only if $p^\nu \in \mc{B}_m$, i.e., $\mc{G}_{\delta,k;m} = \mc{G}_{\delta,k} \bk \mc{B}_m$. We will show that 
\begin{equation}\label{eq:Bblarge}
\sum_{\ss{p^\nu \leq x_k \\ p^\nu \in \mc{B}_m}} \frac{1}{p^\nu} \ll_{\delta, \omega(m)} 1, \quad k \ra \infty.
\end{equation}
Thus, assume for the sake of contradiction that \eqref{eq:Bblarge} fails. For convenience, given $\ell|m$ we introduce the notation
$$
B_{\ell}(y) := \sum_{\ss{p^\gamma \leq y \\ p^\gamma \in \mc{B}_\ell}} \frac{1}{p^\gamma}, \quad y \geq 2,
$$
noting that $B_m(y) \ra \infty$ as $y \ra \infty$ by assumption. \\
Let
$\nu := \max_{\ell|m} \nu_{\ell}(b)$, and select $k_0 = k_0(\delta,\omega(m))$ sufficiently large so that 
$$
B_m(x_k) \geq \omega(m)\max\{\delta^{-2}, 20(\nu+1)\} \text{ for all } k \geq k_0.
$$
By the union bound we have
\begin{equation} \label{eq:BlBd}
\max_{\ell|m} B_{\ell}(x_k) \geq \frac{1}{\omega(m)} \sum_{\ell|m} B_{\ell}(x_k) \geq \frac{1}{\omega(m)} B_m(x_k) \geq \max\{\delta^{-2}, 20(\nu+1)\}.
\end{equation}
Let $\ell_0$ denote a prime factor of $m$ that maximises the left-hand side of \eqref{eq:BlBd}, 
and write 
$$
\omega_{\ell_0}(n) := |\{p^\gamma || n : p^\gamma \in \mc{B}_{\ell_0}\}| = |\{p^\gamma || n : \ell_0 | f(p^\gamma)\}|, \quad n \in \mb{N}.
$$ 
By Lemma \ref{lem:TK},
\begin{align*}
|\{n \leq x_k : \omega_{\ell_0}(n) < B_{
\ell_0}(x_k)/2\}| &\leq |\{n \leq x_k : |\omega_{\ell_0}(n) - B_{\ell_0}(x_k)| > B_{\ell_0}(x_k)/2\}| \\
&\ll \frac{1}{B_{\ell_0}(x_k)^2} \sum_{n \leq x_k} |\omega_{\ell_0}(n) - B_{\ell_0}(x_k)|^2 \\
&\ll x_k/B_{\ell_0}(x_k). 
\end{align*}
Since $B_{\ell_0}(x_k) \geq \delta^{-2}$, it follows that for all but $O(\delta^2 x_k)$ integers $n \leq x_k$ we have
\begin{equation} \label{eq:ell0Cond}
\min\{\omega_{\ell_0}(n), \omega_{\ell_0}(n+a)\} \geq B_{\ell_0}(x_k)/2 \geq 10 (\nu+1).
\end{equation}
If $\delta$ is small enough then by \eqref{eq:posUppDensxk} there are $\geq \delta x_k/2$ integers $n \leq x_k$ for which
$$
f(n+a) = f(n) + b,
$$
and \eqref{eq:ell0Cond} holds. Now, since $f$ is multiplicative we have $f(p^\gamma)|f(n)$ whenever $p^\gamma || n$, whence 
$$
\ell_0^{\omega_{\ell_0}(n)} | f(n), \text{ i.e. } \nu_{\ell_0}(f(n)) \geq \omega_{\ell_0}(n).
$$
Therefore, by \eqref{eq:ell0Cond}, 
$$
\nu_{\ell_0}(f(n+a)) \geq \min\{\nu_{\ell_0}(f(n)),\nu_{\ell_0}(f(n+a))\} \geq 10(\nu+1) > \nu_{\ell_0}(b).
$$
On the other hand, since $\nu_{\ell_0}(f(n)) \neq \nu_{\ell_0}(b)$ the non-archimedean property of $\nu_{\ell_0}$ yields
$$
\nu_{\ell_0}(f(n+a)) = \nu_{\ell_0}(f(n)+b) = \min\{\nu_{\ell_0}(f(n)),\nu_{\ell_0}(b)\} \leq \nu_{\ell_0}(b),
$$
which is a contradiction. The claim follows.
\end{proof}

\noindent For $k \geq k_0(\delta)$ consider a prime $\ell \leq x_k$ with $\ell \in \mc{G}_{\delta,k; 2b}$. By choice, $f(\ell)$ is odd and 
$$
\frac{\ell^r}{2}  < |f(\ell)| < 2 \ell^r \text{ for some } 0 < r \ll_{\delta} 1.
$$
In the sequel we fix\footnote{Actually, any sufficiently large prime power divisor of $f(\ell)$ would be suitable, but we make this choice for concreteness.}
 
$$
q_{\ell} = \max\{p^\nu : p^\nu || f(\ell)\}.
$$ 
\begin{rem}\label{rem:qellProps}
Note that $q_{\ell}$ must be an odd prime power, so that the group of units $(\mb{Z}/q_\ell \mb{Z})^\times$ is cyclic. We will use this property several times in the sequel.  Moreover,
since $\omega(n) \leq 2\frac{\log n}{\log\log n}$ for all $n \geq n_0$, whenever $\ell$ is sufficiently large we have
$$
q_\ell \geq |f(\ell)|^{1/\omega(|f(\ell)|)} \gg_{\delta} (\log \ell)^{1/2}.
$$ 
\end{rem}
We are now in a position to state the main result of this section.
\begin{prop} \label{prop:almostHom}
Let $f: \mb{N} \ra \mb{Z}$ be a multiplicative function for which
$$
\sum_{p : |f(p)| \neq 1} \frac{1}{p} = \infty, \quad \sum_{p : f(p) = 0} \frac{1}{p} < \infty.
$$
Assume moreover that there are positive integers $a \geq 1$ and $b \neq 0$, and a $\delta \in (0,1/2)$ such that $\bar{\delta}(\mc{N}_{f,a,b}) > \delta$.
Then there are
integers $A,D,k_0,\ell_0 = O_{\delta}(1)$ with $2|D$ such that the following holds. \\
If $k \geq k_0$ and $\ell \in \mc{G}_{\delta, k; 2b} \cap [\ell_0, x_k^{1/A}]$ is prime then there is a 
set of primes $\mc{T}_\ell \subseteq \mc{P} \cap [2,x_k]$ satisfying
$$
\sum_{\ss{p \leq x_k \\ p \in \mc{T}_\ell}} \frac{1}{p} = O_{\delta}(1)
$$
and a homomorphism 
$$
\tilde{\rho}_\ell: (\mb{Z}/\ell\mb{Z})^\times \rightarrow (\mb{Z}/q_\ell\mb{Z})^\times
$$
such that for each $a \in (\mb{Z}/\ell \mb{Z})^\times$
\begin{equation}\label{eq:genHomCond}
p \equiv a \pmod{\ell}, \, p \in (\mc{P} \cap [2,x_k]) \bk \mc{T}_\ell \Rightarrow f(p)^D \equiv \tilde{\rho}_\ell(a) \pmod{q_\ell}.
\end{equation}
Moreover, if $q_{\ell}$ is a power of $\ell$ then there is a $0 \leq g_{\ell} < \ell-1$ such that
\begin{equation}\label{eq:ellDivfell}
p \in (\mc{P} \cap [2,x_k]) \bk \mc{T}_\ell 
\Rightarrow f(p)^D \equiv p^{g_{\ell}} \pmod{\ell}.
\end{equation}
\end{prop}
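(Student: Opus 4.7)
The plan is to realise the non-archimedean projection method outlined in the introduction, using the compositions $f^\chi(n) := \chi(f(n))$ (extended by $0$ whenever $(f(n), q_\ell) > 1$) as bounded multiplicative proxies for $f$ modulo $q_\ell$. Each $n \in \mc{N}_{f,a,b}$ with $\ell \| n$ satisfies $f(n+a) \equiv b \pmod{q_\ell}$ (by multiplicativity and $q_\ell \mid f(\ell)$), while $\ell \in \mc{G}_{\delta,k;2b}$ ensures $(b, q_\ell) = 1$. Lemma \ref{lem:multSet} produces $\geq \delta x_k/(4\ell)$ such $n$; after the change of variables $n \mapsto n+a$ (so $n \equiv a \pmod{\ell}$ for $\ell$ large) and invoking orthogonality of characters of $(\mb{Z}/q_\ell\mb{Z})^\times$ followed by pigeonholing, I obtain a set $\mc{H}_{\ell,k} \subseteq \Xi_{q_\ell}$ of size $\gg_\delta \phi(q_\ell)$ with
\[
\left|\sum_{\ss{n \leq x_k \\ n \equiv a \pmod{\ell}}} f^\chi(n)\right| \gg_\delta \frac{x_k}{\ell}, \quad \chi \in \mc{H}_{\ell,k}.
\]

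Choosing $A = A(\delta)$ large and restricting to $\ell \leq x_k^{1/A}$, Lemma \ref{lem:charDist} produces characters $\psi_\chi \pmod{\ell}$ with $\bigl|\sum_{n \leq x_k} f^\chi(n)\bar{\psi_\chi}(n)\bigr| \gg_\delta x_k$, and Theorem \ref{thm:HMT} yields $t_\chi \in [-x_k^2, x_k^2]$ satisfying $\mb{D}(f^\chi, \psi_\chi(n) n^{it_\chi}; x_k)^2 = O_\delta(1)$. For $\chi_1, \chi_2, \chi_1\chi_2 \in \mc{H}_{\ell,k}$, the identity $f^{\chi_1} f^{\chi_2} = f^{\chi_1\chi_2}$ on the support of $f$ together with the pretentious triangle inequality \eqref{eq:multTri} gives
\[
\mb{D}\bigl(\psi_{\chi_1}\psi_{\chi_2}\bar{\psi_{\chi_1\chi_2}},\, n^{i(t_{\chi_1\chi_2}-t_{\chi_1}-t_{\chi_2})}; x_k\bigr)^2 = O_\delta(1).
\]
Since the leftmost character has conductor dividing $\ell^2 \leq x_k^{2/A}$, Lemma \ref{lem:charnit} forces it to be principal and $|t_{\chi_1\chi_2}-t_{\chi_1}-t_{\chi_2}| = O_\delta((\log x_k)^{-1})$. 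So $\chi \mapsto \psi_\chi$ is a Freiman $2$-homomorphism $\mc{H}_{\ell,k} \to \Xi_\ell$ and $\chi \mapsto t_\chi$ is near-additive.

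Since $\Xi_{q_\ell}$ is cyclic (Remark \ref{rem:qellProps}), a Kneser / inverse-sumset argument over cyclic groups (following the approach of \cite{ManConsec}) extracts a subgroup $H \leq \Xi_{q_\ell}$ of index $D = O_\delta(1)$ with $H \subseteq \mc{H}_{\ell,k}$ on which $\chi \mapsto \psi_\chi$ is a genuine homomorphism; after a further bounded-index refinement to handle the near-additivity of $t_\chi$ (via Lemma \ref{lem:charnit} applied iteratively around a generator of $H$) and the sign ambiguity of $f(p) \in \mb{Z}$ (by passing to squares of characters, which forces $2 \mid D$), the $n^{it_\chi}$ factor is absorbable, yielding $\mb{D}(f^\chi, \psi_\chi; x_k)^2 = O_\delta(1)$ for all $\chi$ in the final $H$. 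For each $\chi$ in an $O(1)$-size generating set of $H$, Halász-type control furnishes a set $\mc{T}_{\ell,\chi} \subseteq [2, x_k]$ of primes with $\sum_{p \in \mc{T}_{\ell,\chi}} 1/p = O_\delta(1)$ outside of which $\chi(f(p)) = \psi_\chi(p)$; take $\mc{T}_\ell := \bigcup_\chi \mc{T}_{\ell,\chi}$.

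For $p \in \mc{P}\cap[2,x_k] \setminus \mc{T}_\ell$ with $p \equiv a \pmod{\ell}$, the identity $\chi(f(p)) = \psi_\chi(a)$ holds for every $\chi \in H$, so $f(p) \bmod q_\ell$ is determined by $a$ modulo the annihilator $H^\perp \leq (\mb{Z}/q_\ell\mb{Z})^\times$. Cyclicity of $(\mb{Z}/q_\ell\mb{Z})^\times$ together with $|H^\perp| = D$ identifies $H^\perp = \{g : g^D = 1\}$, so $f(p)^D \equiv \tilde\rho_\ell(a) \pmod{q_\ell}$ for a well-defined homomorphism $\tilde\rho_\ell : (\mb{Z}/\ell\mb{Z})^\times \to (\mb{Z}/q_\ell\mb{Z})^\times$ (well-definedness and multiplicativity both follow from the homomorphism property of $\chi \mapsto \psi_\chi$ on $H$). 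When $q_\ell$ is a power of $\ell$, composing $\tilde\rho_\ell$ with the reduction $(\mb{Z}/q_\ell\mb{Z})^\times \to (\mb{Z}/\ell\mb{Z})^\times$ yields a self-homomorphism of the cyclic group $(\mb{Z}/\ell\mb{Z})^\times$, necessarily a power map $a \mapsto a^{g_\ell}$, which proves \eqref{eq:ellDivfell}. \textbf{The main obstacle} is the cyclic-group subgroup extraction: converting the near-homomorphism $(\chi \mapsto \psi_\chi, \chi \mapsto t_\chi)$ into an exact homomorphism on a subgroup of index $O_\delta(1)$ \emph{uniformly} in $\ell$ and in $q_\ell$ (which by Remark \ref{rem:qellProps} can be as small as $(\log\ell)^{1/2}$), while simultaneously arranging $2 \mid D$ via the sign-killing squaring step and ensuring the exceptional sets $\mc{T}_\ell$ have $1/p$-sums bounded uniformly in $\ell$ (handled by invoking the pretentious structure theorem only for a bounded generating set of $H$).
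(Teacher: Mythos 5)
Your proposal follows the paper's strategy up to and including the construction of the large subgroup $H \leq \Xi_{q_\ell}$ with $\mb{D}(f^\chi,\psi_\chi;x_k)^2 = O_\delta(1)$ for all $\chi \in H$, but there is a decisive gap in the last step, where you pass from the pretentious distance bound to the sparse exceptional set $\mc{T}_\ell$.

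You write that for each $\chi$ in a generating set of $H$, ``Hal\'asz-type control furnishes a set $\mc{T}_{\ell,\chi} \subseteq [2,x_k]$ of primes with $\sum_{p \in \mc{T}_{\ell,\chi}} 1/p = O_\delta(1)$ outside of which $\chi(f(p)) = \psi_\chi(p)$.'' This inference is false. Since $H$ is cyclic, a generating set is a single character $\tilde\chi$, and its order $d_\ell = |H|$ is as large as $\gg_\delta \phi(q_\ell)$, which is unbounded. The pretentious distance bound
$\sum_{p \leq x_k} \frac{1 - \mathrm{Re}(\tilde\chi(f(p))\bar{\psi}(p))}{p} = O_\delta(1)$
is perfectly compatible with $\tilde\chi(f(p))\bar\psi(p) = e^{2\pi i/d_\ell}$ for \emph{every} prime $p \leq x_k$: each term then contributes only $\asymp 1/d_\ell^2$, so the sum is $\asymp (\log\log x_k)/d_\ell^2 = O_\delta(1)$ once $d_\ell$ is large, yet $\tilde\chi(f(p)) \neq \psi(p)$ for \emph{all} $p$, giving $\sum 1/p = \log\log x_k$, which is not $O_\delta(1)$. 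The paper circumvents exactly this obstruction: having established the \emph{uniform} bound $\max_{0 \le j < |H_\ell|} \mb{D}(f^{\tilde\chi^j},\psi^j;x_k) = O_\delta(1)$ (Lemma \ref{lem:groupLem}(iii)), it \emph{averages} these squared distances over $j$ and uses that $\mathrm{ord}(\psi) \mid |H_\ell|$ (Lemma \ref{lem:groupLem}(ii)) to turn $\frac{1}{|H_\ell|}\sum_j (\tilde\chi(f(p))\bar\psi(p))^j$ into the indicator $1_{\tilde\chi(f(p)) = \psi(p)}$, whence the exceptional sum is directly bounded by the averaged distance, which is $O_\delta(1)$. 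Without this averaging step your $\mc{T}_\ell$ has no sparsity guarantee. (Incidentally, your deduction of the small $t_\chi$ also needs care: iterating the approximate additivity $\theta(\tilde\chi^{j+1}) = \theta(\tilde\chi^j) + \theta(\tilde\chi) + O(K)$ along powers of the generator accumulates an error $O(jK)$ which is not uniform in $j$; one must instead exploit that every $\chi \in H$ has finite order $d \mid |H_\ell|$ and argue $d\theta(\chi) = O(dK)$, or invoke Ruzsa's approximate homomorphism theorem as the paper does.)

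A second, smaller issue: the proposition requires a single exponent $D$, fixed once and for all, valid for every admissible $\ell$. Your $D$ is the index $[\Xi_{q_\ell}:H]$, which varies with $\ell$. The paper handles this by observing that the index $m_\ell$ is uniformly $O_\delta(1)$, hence divides $M!$ for some $M = O_\delta(1)$, and then sets $D := 2M!$ and $\tilde\rho_\ell(a) := \rho_\ell(a)^{D/m_\ell}$; your argument omits this standardization step. (The extra factor of $2$ is simply inserted to guarantee $D$ is even; it is not obtained by ``passing to squares of characters.'')
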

\subsection{Deductions from pretentiousness} 
The proof of Proposition \ref{prop:almostHom} relies on variants of the additive combinatorial arguments employed in \cite[Sec. 3]{ManConsec}, applied here to the groups of Dirichlet characters to specific moduli. To deploy these we first need a few lemmas, derived from the pretentious theory of multiplicative functions. \\
In the sequel, we assume that $\delta$ is smaller than any fixed absolute constant (the condition $\overline{\delta}(\mc{N}_{f,a,b}) > \delta$ can only be weakened under this assumption). We also let $k_0(\delta)$ be larger than any fixed constant in terms of $\delta$, and let $k \geq k_0(\delta)$.  
\begin{lem} \label{eq:largefchiSums}
For each prime $\ell \in \mc{G}_{\delta, k; 2b} \cap [\ell_0(\delta), \delta^2 x_k]$ there is a collection of non-principal characters $\mc{X}_\ell$ modulo $q_\ell$ of size $|\mc{X}_\ell| \geq \delta \phi(q_\ell)/8$ such that
\begin{equation} \label{eq:lowBdChar}
\min_{\chi \in \mc{X}_{\ell}} \left|\sum_{\ss{n \leq x_k \\ n \equiv a \pmod{\ell}}} f^{\chi}(n)\right| \geq \frac{\delta x_k}{10\ell}.
\end{equation}
\end{lem}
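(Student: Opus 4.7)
The plan is to combine Lemma \ref{lem:multSet} with the multiplicativity of $f$ to convert the condition $f(n+a) = f(n) + b$, for multiples $n$ of $\ell$, into a congruence $f(n+a) \equiv b \pmod{q_\ell}$, then to extract information via orthogonality of Dirichlet characters modulo $q_\ell$.

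First I would apply Lemma \ref{lem:multSet} with $p^\nu = \ell$, which produces at least $\delta x_k/(4\ell)$ integers $n \leq x_k$ with $\ell || n$ and $n \in \mc{N}_{f,a,b}$. Writing $n = m\ell$ with $(m, \ell) = 1$, multiplicativity gives $f(n) = f(\ell) f(m)$, so $q_\ell \mid f(n)$ since $q_\ell \mid f(\ell)$ by construction. Consequently $f(n + a) = f(n) + b \equiv b \pmod{q_\ell}$. The assumption $\ell \in \mc{G}_{\delta,k;2b}$ guarantees $(2b, f(\ell)) = 1$, hence $(b, q_\ell) = 1$, so $f(n+a)$ is a unit modulo $q_\ell$. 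Setting $N := n + a$ and taking $\ell_0(\delta)$ large enough that $\ell > |a|$, one obtains at least $\delta x_k/(4\ell) - O(1)$ distinct integers $N \in [1, x_k]$ with $N \equiv a \pmod{\ell}$ and $f(N) \equiv b \pmod{q_\ell}$; the $O(1)$ loss from discarding those $N \in (x_k, x_k + a]$ is negligible in the range $\ell \leq \delta^2 x_k$.

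Next I would apply orthogonality of Dirichlet characters modulo $q_\ell$. Defining
$$
T_\chi := \sum_{\substack{N \leq x_k \\ N \equiv a \pmod{\ell}}} f^{\chi}(N)
$$
(with the convention $f^{\chi}(N) = 0$ when $q_\ell \mid f(N)$), expanding the indicator of the congruence $f(N) \equiv b \pmod{q_\ell}$ as a character sum yields
$$
(1 - o(1))\frac{\delta x_k}{4\ell} \leq \frac{1}{\phi(q_\ell)} \sum_{\chi \pmod{q_\ell}} \bar{\chi}(b)\, T_\chi,
$$
so that $\sum_{\chi} |T_\chi| \geq (1 - o(1)) \delta \phi(q_\ell) x_k/(4\ell)$. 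The principal character contributes $T_{\chi_0} \leq x_k/\ell + 1 = O(x_k/\ell)$, which is negligible beside the total since $\phi(q_\ell) \gg_\delta (\log \ell)^{1/2} \to \infty$ by Remark \ref{rem:qellProps}.

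Finally I would set $\mc{X}_\ell := \{\chi \neq \chi_0 : |T_\chi| \geq \delta x_k/(10\ell)\}$ and close via a short pigeonhole computation. Using the trivial upper bound $|T_\chi| \leq x_k/\ell + 1$ for every character $\chi$, one obtains
$$
(1 - o(1)) \frac{\delta \phi(q_\ell) x_k}{4\ell} \leq |\mc{X}_\ell| \cdot \frac{x_k}{\ell}(1 + o(1)) + \phi(q_\ell) \cdot \frac{\delta x_k}{10\ell},
$$
which rearranges to $|\mc{X}_\ell| \geq (3\delta/20 - o(1))\phi(q_\ell) \geq \delta \phi(q_\ell)/8$ for $k \geq k_0(\delta)$. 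The only mild technical point is ensuring that the $o(1)$ error terms (from integer-part losses in the shift $n \mapsto n + a$, from the principal character contribution, and from the ratio $\ell/x_k \leq \delta^2$) are comfortably absorbed by the numerical margin between $3\delta/20$ and $\delta/8$; this is automatic in the stated range once $\ell_0(\delta)$ and $k_0(\delta)$ are chosen sufficiently large.
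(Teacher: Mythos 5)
Your proof is correct and follows essentially the same route as the paper's: apply Lemma \ref{lem:multSet} to $p^\nu = \ell$, use multiplicativity and $q_\ell \mid f(\ell)$ to obtain $f(\ell m + a) \equiv b \pmod{q_\ell}$, invoke orthogonality modulo $q_\ell$, discard the principal character using the lower bound on $\phi(q_\ell)$, and pigeonhole to extract $\mc{X}_\ell$. The only cosmetic difference is that you explicitly track the $O(1)$ loss from the shift $n \mapsto n+a$ and reabsorb it into the margin between $3\delta/20$ and $\delta/8$, which the paper handles implicitly by writing the reindexing as an equality; both approaches are fine once $\delta$ is taken small and $\ell_0(\delta), k_0(\delta)$ sufficiently large.
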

\begin{proof}
Let $\ell \in \mc{G}_{\delta,k;2b} \cap [\ell_0(\delta), \delta^2 x_k]$. Since $\ell \in \mc{G}_{\delta,k}$, Lemma \ref{lem:multSet} yields 
$$
|\{m \leq x_k/\ell : \ell m \in \mc{N}_{f,a,b}, \, \ell \nmid m\}| = |\{n \leq x_k : n \in \mc{N}_{f,a,b}, \, \ell || n\}| \geq \frac{\delta x_k}{4\ell}.
$$
Suppose $\ell m \in \mc{N}_{f,a,b}$ with $\ell \nmid m$. Since $q_{\ell}|f(\ell)$, 
$$
f(\ell m+a) =  f(\ell)f(m) + b \equiv b \pmod{q_\ell}.
$$
Thus, we find that
$$
\sum_{\ss{n \leq x_k \\ n \equiv a \pmod{\ell}}} 1_{f(n) \equiv b \pmod{q_\ell}} = \sum_{\ss{m \leq x_k/\ell \\ f(\ell m+a) \equiv b \pmod{q_\ell}}} 1 \geq \frac{\delta x_k}{4\ell}.
$$
Since $\ell \in \mc{G}_{\delta,k; 2b}$ we have $(b,q_{\ell}) = 1$. Thus, by the orthogonality of Dirichlet characters modulo $q_\ell$,
\begin{equation}\label{eq:qellOrthoApp}
\frac{1}{\phi(q_\ell)}\sum_{\chi \pmod{q_\ell}} \bar{\chi}(b) \sum_{\ss{n \leq x_k \\ n \equiv a \pmod{\ell}}} f^{\chi}(n) \geq \frac{\delta x_k}{4\ell}.
\end{equation}
By Remark \ref{rem:qellProps}, 
$$
\phi(q_\ell) \geq q_\ell^{0.99} \geq (\log \ell)^{0.49} \text{ for } \ell \geq \ell_0(\delta).
$$ 
Therefore, the contribution from the principal character modulo $q_\ell$ to \eqref{eq:qellOrthoApp} is
$$
\leq \frac{1}{\phi(q_\ell)} |\{n \leq x_k : n \equiv a \pmod{\ell}\}| \leq \frac{\delta^2 x_k}{\ell}.
$$
Now set
$$
\mc{X}_\ell := \{\chi \pmod{q_\ell} : \chi \neq \chi_0, \, \eqref{eq:lowBdChar} \text{ holds}\}.
$$
If $\delta$ is sufficiently small then by the triangle inequality and the bound $\ell \leq \delta^2 x_k$,
\begin{align*}
\frac{\delta x_k}{4\ell} &\leq \frac{\delta^2 x_k}{\ell} + \frac{1}{\phi(q_{\ell})} \sum_{\ss{\chi \pmod{q_{\ell}} \\ \chi \notin \mc{X}_\ell \cup \{\chi_0\}}} \left|\sum_{\ss{n \leq x_k \\ n \equiv a \pmod{\ell}}} f^{\chi}(n)\right| + \frac{1}{\phi(q_\ell)} \sum_{\chi \in \mc{X}_\ell} \left|\sum_{\ss{n \leq x_k \\ n \equiv a \pmod{\ell}}} f^{\chi}(n)\right| \\
&< \frac{x_k}{\ell}\left(\delta^2 + \frac{\delta}{10}\right) + \frac{|\mc{X}_\ell|}{\phi(q_\ell)}\left(\frac{x_k}{\ell} + 1\right) \leq \frac{x_k}{\ell}\left(\frac{\delta}{8} + \frac{|\mc{X}_\ell|}{\phi(q_\ell)}\right)
\end{align*}
This conclusion follows upon rearranging.
\end{proof}
\begin{lem}\label{lem:charDist}
Let $\ell \in \mc{G}_{\delta,k;2b}$ be a prime satisfying $\max\{|a|,\ell_0(\delta)\} < \ell \leq x_k^{1/A}$ for some 
$A \geq \delta^{-10}$. Then for each $\chi \in \mc{X}_\ell$ 
$$
\min_{\psi \pmod{\ell}} \min_{|t| \leq \log x_k} \mb{D}(f^{\chi},\psi(n)n^{it};x_k)^2 \leq 1.1\log(1/\delta).
$$
\end{lem}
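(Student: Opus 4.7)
The strategy is to derive the desired pretentiousness bound from the lower bound on the AP partial sum already established in Lemma \ref{eq:largefchiSums}, by invoking the Halász-type theorem for arithmetic progressions due to Granville-Harper-Soundararajan.

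First, I would invoke Lemma \ref{eq:largefchiSums} to obtain, for any $\chi \in \mc{X}_\ell$,
$$
\frac{\phi(\ell)}{x_k}\left|\sum_{\substack{n \leq x_k \\ n \equiv a \pmod{\ell}}} f^\chi(n)\right| \geq \frac{\delta}{20},
$$
valid for $\ell \geq \ell_0(\delta)$, since $\phi(\ell) = \ell - 1 \geq \ell/2$. Informally, this says that the AP sum of $f^\chi$ over $n \equiv a \pmod{\ell}$ attains a $\delta$-proportion of the ``expected'' size $x_k/\phi(\ell)$ for a $1$-bounded function.

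Next I would apply the Halász-type bound for AP sums of $1$-bounded multiplicative functions from \cite{GHS}: for $g: \mb{N} \to \mb{U}$ multiplicative, coprime integers $(a,q) = 1$ with $q$ sufficiently small relative to $x$, and any $T \geq 1$,
$$
\frac{\phi(q)}{x}\left|\sum_{\substack{n \leq x \\ n \equiv a \pmod{q}}} g(n)\right| \ll (1+M)e^{-M} + \frac{1}{T} + o(1),
$$
where
$$
M := \min_{\psi \pmod{q}} \min_{|t| \leq T} \mb{D}(g, \psi(n)n^{it}; x)^2.
$$
Applied with $g = f^\chi$, $q = \ell$, $x = x_k$, and $T = \log x_k$, the hypothesis $\ell \leq x_k^{1/A}$ with $A \geq \delta^{-10}$ places $\ell$ in the range where the bound applies with error terms that are $o(\delta)$ as $k \to \infty$. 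Combining with the previous display yields $(1+M)e^{-M} \gg \delta$, whence $M \leq \log(1/\delta) + O(\log\log(1/\delta)) \leq 1.1\log(1/\delta)$ provided $\delta$ is smaller than an absolute constant, which is the claim (with $|t| \leq T = \log x_k$ as required).

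The main technical point is verifying that the GHS bound genuinely applies in the regime $\ell \leq x_k^{1/A}$ with error term strictly dominated by $\delta$; the hypothesis $A \geq \delta^{-10}$ is specifically calibrated to give enough slack for the character-twisted Halász bound to be effective. The argument otherwise reduces to a routine combination of Lemma \ref{eq:largefchiSums} with quantitative mean-value theory, with no essentially new ideas required beyond those already introduced in this section.
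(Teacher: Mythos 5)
Your proposal is correct and follows essentially the same route as the paper: both combine the lower bound from Lemma \ref{eq:largefchiSums} with the Granville--Harper--Soundararajan theory for multiplicative functions in arithmetic progressions, then extract the bound on $M_\chi$ by inverting $(1+M)e^{-M} \gg \delta$. The only cosmetic difference is that the paper explicitly splits the argument into two steps --- first using \cite[Thm.~1.8]{GHS} to show the AP sum is approximated by the mean value of $f^\chi\bar\psi_\chi$ with error $\ll \delta^2 x_k/\ell$ (which is where $A \geq \delta^{-10}$ is used), and then applying the Hal\'asz--Montgomery--Tenenbaum inequality (Theorem \ref{thm:HMT}) to that mean value --- whereas you invoke the combined Hal\'asz-for-APs corollary as a single black box.
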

\begin{proof}
Let $\chi \in \mc{X}_\ell$ and enumerate the characters $\{\psi \pmod{\ell}\} = \{\psi_{\chi,1},\psi_{\chi,2},\ldots,\psi_{\chi,\ell-1}\}$ so that
$$
\min_{|t| \leq \log x_k} \mb{D}(f^\chi, \psi_{\chi,j}(n)n^{it};x_k) \leq \min_{|t| \leq \log x_k} \mb{D}(f^\chi, \psi_{\chi,j+1}(n)n^{it};x_k), \quad 1 \leq j < \ell-1. 
$$
In particular,
$$
\min_{\psi \pmod{\ell}} \min_{|t| \leq \log x_k} \mb{D}(f^\chi, \psi(n)n^{it};x_k)^2 = \min_{|t| \leq \log x_k} \mb{D}(f^\chi, \psi_{\chi,1}(n)n^{it};x_k)^2 =: M_{\chi}. 
$$
By \cite[Thm. 1.8]{GHS}, we have
$$
\left|\sum_{\ss{n \leq x_k \\ n \equiv a \pmod{\ell}}} f^{\chi}(n) - \frac{\psi_{\chi,1}(a)}{\ell-1} \sum_{n \leq x_k} f^\chi(n)\bar{\psi}_{\chi,1}(n)\right| \ll \frac{x_k}{\ell}\frac{\log A}{A^{1-1/\sqrt{2}}} \ll \delta^2 \frac{x_k}{\ell}.
$$
Writing $\psi_{\chi} = \psi_{\chi,1}$ for convenience, we combine this with \eqref{eq:lowBdChar} and $\ell \nmid a$ to obtain
$$
\left|\sum_{n \leq x_k} f^{\chi}(n)\bar{\psi}_{\chi}(n)\right| \geq \frac{\delta x_k}{20},
$$
for small enough $\delta$. Applying Theorem \ref{thm:HMT} with $T = \log x_k$, the left-hand side of the previous estimate is
$$
\ll x_k(1+M_{\chi})e^{-M_{\chi}} + \frac{x_k \log\log x_k}{\log x_k}.
$$
We deduce upon rearranging that when $\delta$ is sufficiently small, $M_{\chi} \leq 1.1\log(1/\delta)$, as claimed.
\end{proof}
From now on, given $\chi \pmod{q_\ell}$ we select a character $\psi_{\chi}$ modulo $\ell$ that satisfies
$$
\min_{\psi \pmod{\ell}} \min_{|t| \leq \log x_k} \mb{D}(f^{\chi},\psi(n)n^{it};x_k) = \min_{|t| \leq \log x_k} \mb{D}(f^{\chi},\psi_{\chi}(n)n^{it};x_k),
$$
and refer to it as the \emph{minimising character} for $\chi$. 
(If more than one such character exists modulo $\ell$ for a given $\chi$ we choose any of them.)
To prove Proposition \ref{prop:almostHom} we will show the following.
\begin{lem} \label{lem:groupLem}
Let $\delta \in (0,1/2)$ be small.
Let $A \geq \delta^{-10}$, and let $\ell \in \mc{G}_{\delta,k;2b}$ be a prime with $\max\{|a|, \ell_0(\delta) \} < \ell \leq x_k^{1/A}$. Then there is a (cyclic) subgroup $H_\ell = \langle \tilde{\chi}\rangle$ of the character group $\pmod{q_\ell}$ satisfying the following properties:
\begin{enumerate}[(i)]
\item $|H_{\ell}| \gg_{\delta} \phi(q_{\ell})$
\item the minimising character $\psi = \psi_{\tilde{\chi}}$ for $\tilde{\chi}$ satisfies $\text{ord}(\psi) \mid |H_{\ell}|$
\item we have
\begin{equation}\label{eq:unifBd}
\max_{0 \leq j \leq |H_\ell|-1} \mb{D}(f^{\tilde{\chi}^j}, \psi^j;x_k) \ll_{\delta} 1.
\end{equation}
\end{enumerate}
\end{lem}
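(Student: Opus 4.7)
The aim is to combine the pretentiousness data from Lemma~\ref{lem:charDist} on $\mc{X}_\ell$ with additive-combinatorial analysis in the cyclic group $\Xi_{q_\ell}$. Begin by attaching to each $\chi \in \mc{X}_\ell$ its minimising pair $(\psi_\chi, t_\chi) \in \Xi_\ell \times [-\log x_k, \log x_k]$, for which $\mb{D}(f^\chi, \psi_\chi(n)n^{it_\chi}; x_k)^2 \leq 1.1 \log(1/\delta)$. Applying the pretentious triangle inequality \eqref{eq:multTri} to $f^{\chi_1\chi_2} = f^{\chi_1}f^{\chi_2}$ and invoking Lemma~\ref{lem:charnit} for the resulting Dirichlet character modulo $\ell \leq x_k^{1/A}$ (valid since $A \geq \delta^{-10}$), one obtains, for all $\chi_1, \chi_2, \chi_1\chi_2 \in \mc{X}_\ell$, the approximate homomorphism relations $\psi_{\chi_1\chi_2} = \psi_{\chi_1}\psi_{\chi_2}$ and $|t_{\chi_1}+t_{\chi_2}-t_{\chi_1\chi_2}| \ll_\delta (\log x_k)^{-1}$.

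Next, extract the subgroup: since $\Xi_{q_\ell}$ is cyclic (Remark~\ref{rem:qellProps}) and in a cyclic group the elements of order at most $D$ form a subgroup of size at most $D$, the density $|\mc{X}_\ell| \geq \delta\phi(q_\ell)/8$ forces some $\tilde\chi \in \mc{X}_\ell$ of order $N \geq \delta\phi(q_\ell)/8$. Put $H_\ell := \langle\tilde\chi\rangle$, yielding (i), and set $\psi := \psi_{\tilde\chi}$, $t := t_{\tilde\chi}$. Using a Kneser-type sumset iteration in $H_\ell$---available once one arranges $|\mc{X}_\ell \cap H_\ell|/|H_\ell| \gg_\delta 1$ by choosing $\tilde\chi$ suitably within $\mc{X}_\ell$---every element $\tilde\chi^j$ with $0 \leq j < N$ admits a representation as an $O_\delta(1)$-fold product of elements of $\mc{X}_\ell \cap H_\ell$. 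Combining this short product representation with the triangle inequality and the approximate multiplicativity collapses the products of the $\psi_\cdot$'s to $\psi^j$ and the sums of the $t_\cdot$'s to $jt$ (up to an $O_\delta((\log x_k)^{-1})$ accumulated error), yielding $\mb{D}(f^{\tilde\chi^j}, \psi^j(n)n^{ijt}; x_k) = O_\delta(1)$ uniformly in $j$. Taking $j = N$, the identity $\tilde\chi^N = \chi_0$ and $f^{\tilde\chi^N}(n) = 1_{(n,q_\ell)=1}$, together with Lemma~\ref{lem:charnit}, force $\psi^N$ to be the principal character modulo $\ell$---establishing (ii)---and $|Nt|\log x_k = O_\delta(1)$.

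Condition (iii) follows by absorbing the archimedean twist. Since $|Nt|\log x_k = O_\delta(1)$, for every $0 \leq j < N$ we have $|jt|\log x_k \leq (j/N)\cdot O_\delta(1) = O_\delta(1)$, so by standard estimates on the pretentious distance to $n^{is}$ (companion to \eqref{eq:nitTo1Small}) one gets $\mb{D}(1, n^{-ijt}; x_k) = O_\delta(1)$. The triangle inequality then yields $\mb{D}(f^{\tilde\chi^j}, \psi^j; x_k) \leq \mb{D}(f^{\tilde\chi^j}, \psi^j(n)n^{ijt}; x_k) + \mb{D}(1, n^{-ijt}; x_k) = O_\delta(1)$. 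The principal obstacle is the sumset step: the naive triangle inequality iterated $j$ times accrues a fatal linear factor of $j$, which must be suppressed by decomposing each $\tilde\chi^j$ as a short product of elements of $\mc{X}_\ell \cap H_\ell$. Securing both the positive density of $\mc{X}_\ell \cap H_\ell$ in $H_\ell$ and a uniform $O_\delta(1)$ bound on the number of factors is the main technical input, and hinges crucially on the cyclicity of $\Xi_{q_\ell}$ via Kneser-type inverse sumset bounds.
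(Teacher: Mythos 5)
Your plan has the right high-level shape---use the pretentious triangle inequality to get approximate multiplicativity of $\chi\mapsto(\psi_\chi,t_\chi)$, then pass to a subgroup on which things behave well---but the mechanism you use to produce the subgroup does not work, and the paper's route is genuinely different at exactly this point.

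The pivot of your argument is the claim that ``in a cyclic group the elements of order at most $D$ form a subgroup of size at most $D$.'' This is false. In $\mb{Z}/6\mb{Z}$ with $D=3$, the elements of order $\leq 3$ are $\{0,2,3,4\}$, which is not a subgroup (since $2+3=5$ has order $6$) and has cardinality $4>D$. So the deduction that $\mc{X}_\ell$ of density $\geq\delta/8$ must contain an element $\tilde\chi$ of order $\gg_\delta\phi(q_\ell)$ has no justification. Worse, even if you grant yourself such a $\tilde\chi\in\mc{X}_\ell$, nothing in the setup lets you ``arrange $|\mc{X}_\ell\cap H_\ell|/|H_\ell|\gg_\delta 1$ by choosing $\tilde\chi$ suitably'': the set $\mc{X}_\ell$ could be spread over $\Xi_{q_\ell}$ in such a way that no cyclic subgroup generated by one of its own members captures a positive proportion of it. You flag this as ``the main technical input'' at the end, but it is a gap, not a technical detail: the Kneser-type iteration you invoke needs density of $\mc{X}_\ell$ inside $H_\ell$, and that density is exactly what is not established.

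The paper avoids both problems by \emph{first} producing the subgroup and \emph{then} naming a generator. It passes to the symmetric set $\tilde{\mc{X}}_\ell=\{\chi_1\bar\chi_2:\chi_i\in\mc{X}_\ell\}$, whose index set in $\mb{Z}/\phi(q_\ell)\mb{Z}$ contains $0$ and is symmetric, and applies a Freiman-type inverse sumset lemma (Lemma~\ref{lem:findLowOrder}) to obtain $r_\ell=O_\delta(1)$ and a subgroup $H_\ell$ with $|H_\ell|\gg_\delta\phi(q_\ell)$ such that $H_\ell\subseteq(\tilde{\mc{X}}_\ell)^{r_\ell}$. Thus every element of $H_\ell$ is an $O_\delta(1)$-fold product of elements of $\tilde{\mc{X}}_\ell$ \emph{by construction}, with no need to know anything about $\mc{X}_\ell\cap H_\ell$; the generator $\tilde\chi$ is chosen afterwards and need not belong to $\mc{X}_\ell$. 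A second, smaller divergence: to kill the archimedean twists the paper does not extrapolate from $t_{\tilde\chi^N}$ via $|jt|\leq(j/N)|Nt|$ (which implicitly assumes a clean linear relation $t_{\tilde\chi^j}=jt_{\tilde\chi}$ that is only approximately true); it instead observes that $\theta_\ell(\chi)=t_\chi\log x_k$ is a $K$-approximate homomorphism $H_\ell\to\mb{R}$ with $K=O_\delta(1)$, applies Ruzsa's theorem (Lemma~\ref{lem:RuzGenG}) together with the fact that a finite group has no nonzero homomorphism to $\mb{R}$, and gets the \emph{uniform} bound $|t_\chi|\leq K/\log x_k$ for all $\chi\in H_\ell$ in one stroke. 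Your proof would need to be rebuilt along these lines to close the gap.
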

The proof of Lemma \ref{lem:groupLem} relies on the following inverse sumset lemma.
\begin{lem}\label{lem:findLowOrder}
Let $\eta > 0$ and $M \geq 1$. Let $G = \mb{Z}/M\mb{Z}$ be the cyclic group of order $M$, and let $B \subseteq G$ be a symmetric\footnote{We say that a subset $B$ of a cyclic group $\mb{Z}/N\mb{Z}$ is \emph{symmetric} when $b \in B$ if and only if $-b \in B$.} subset with $|B| \geq \eta M$. Then there is an integer $m = O_{\eta}(1)$ and a $d|M$ with $d \geq \eta M$ such that $mB = H$ is a subgroup of $G$ with $|H| = d$.
\end{lem}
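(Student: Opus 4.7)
The plan is to replace $B$ by a symmetric subset containing $0$, and then use Kneser's theorem together with an induction on $|G|$ to show that its iterated sumsets reach the subgroup $\langle B \rangle$ in $O_\eta(1)$ steps.

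First, set $B' := 2B$, which inherits symmetry from $B$ and contains $0 = b + (-b)$ for any $b \in B$. One has $|B'| \geq |B| \geq \eta M$ and $\langle B' \rangle = \langle B \rangle =: H$. Since $0 \in B'$, the chain $B' \subseteq 2B' \subseteq 3B' \subseteq \cdots$ is nested inside $H$ and so stabilises at some smallest $m_0 \geq 1$ with $m_0 B' = (m_0+1)B'$. At that step $m_0 B'$ is closed under addition by $B'$ and, by iterating and invoking symmetry, under addition by every element of $\langle B' \rangle$; hence $m_0 B' = H$ is a subgroup of $G$, with $|H| \geq \eta M$ and $|H|$ dividing $M$. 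Setting $m := 2m_0$ then gives $mB = m_0 B' = H$, and it remains to prove $m_0 = O_\eta(1)$.

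For this, Kneser's theorem yields
\[
|(k+1)B'| \geq |kB'| + |B'| - |S_{k+1}|, \qquad S_k := \mathrm{Stab}(kB') \leq H,
\]
with $S_1 \subseteq S_2 \subseteq \cdots \subseteq S_{m_0} = H$ monotonically increasing. Let $k^*$ be the smallest index with $|S_{k^*}| \geq 2$, or $k^* := m_0$ if no such index exists. For $k < k^*$ all the intervening stabilisers are trivial, so telescoping gives $|kB'| \geq k(|B'|-1) + 1$; combined with $|kB'| < |H| \leq M$ for $k < m_0$ and $|B'| \geq \eta M$, this forces $k^* \leq 1/\eta + O(1)$. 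If $k^* = m_0$ we are done; otherwise set $S := S_{k^*}$ and project $\pi \colon H \twoheadrightarrow H/S$, noting that $kB'$ is a union of $S$-cosets for all $k \geq k^*$ by monotonicity. The image $\pi(B')$ is symmetric, contains $0$, generates $H/S$, and has density
\[
\frac{|\pi(B')|}{|H/S|} \;=\; \frac{|B'+S|}{|H|} \;\geq\; \frac{|B'|}{|H|} \;\geq\; \eta,
\]
so by induction on $|H|$ (with $|H/S| \leq |H|/2$ since $|S| \geq 2$, and the trivial base case $|H|=1$) there exists $m_0' = O_\eta(1)$ with $m_0' \pi(B') = H/S$. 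Pulling back, for $k \geq \max(k^*, m_0')$ the set $kB'$ is a union of $S$-cosets whose projection covers $H/S$, so $kB' = H$; this gives $m_0 \leq \max(k^*, m_0') = O_\eta(1)$.

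The delicate point is the preservation of the density $\eta$ under the projection $\pi$: this is what prevents the bound on $m_0$ from deteriorating with the depth of the recursion, keeping it uniformly $O_\eta(1)$ independent of $|G|$.
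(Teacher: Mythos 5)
Your proof is correct, and it takes a genuinely different route from the paper's. The paper applies Freiman's $3/2$-lemma for symmetric sets: it considers the doubling sequence $(2^j B)_{j \geq 0}$, takes the first $j$ with $|2^{j+1}B| < \tfrac{3}{2}|2^j B|$ (which is $O(\log(1/\eta))$ by the $(3/2)^j$ growth bound), and invokes Freiman to conclude that $2^{j+1}B$ is already a subgroup; then $m = 2^{j+1}$, $H = mB$, and Lagrange finishes. You instead pass to $B' = 2B$ to normalise $0 \in B'$, track the nondecreasing chain of stabilisers $S_k = \mathrm{Stab}(kB')$ via Kneser's theorem, and recurse on the quotient $H/S_{k^*}$ once a nontrivial stabiliser appears. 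Both arguments are sound and standard in additive combinatorics; Freiman's lemma gives the shortest proof in this symmetric setting, while your Kneser-plus-recursion approach is more elaborate but generalises beyond symmetric sets. You correctly identify the crux — that the density $\eta$ survives the projection to $H/S$, so the $O_\eta(1)$ bound does not degrade with the recursion depth (the depth only contributes a $\max$, not a sum). As a small bonus, your bound $m = O(1/\eta)$ is in fact sharper than the $m = 2^{j+1} = O(\eta^{-1/\log_2(3/2)})$ that the Freiman route gives, though this plays no role in the application. One cosmetic observation: your reduction $B \mapsto 2B$ and $m = 2m_0$ exactly parallels the paper's use of even powers $2^{j+1}$, and neither proof actually requires $G$ to be cyclic except to conclude that $H$ is (which the lemma doesn't assert anyway).
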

\begin{proof}
Consider the sequence of sumsets
$(2^iB)_{i \geq 1}$, and let $j \geq 0$ be minimal such that 
$$
|2^{j+1}B| = |2^j B + 2^j B| < \frac{3}{2}|2^j B|.
$$
Note that as $B$ is symmetric, so is $2^j B$. Thus, by a lemma due to Freiman \cite{Fre}, $2^{j+1} B$ is in fact a subgroup of $G$. Since $|2^j B| \leq |G|$ we find
$$
M \geq |2^j B| \geq \left(\frac{3}{2}\right)^{j-1} |B| \geq \left(\frac{3}{2}\right)^{j-1} \eta M,
$$
so that $j = O(\log(2/\eta))$. Taking $m = 2^{j+1} = O_{\eta}(1)$, it follows that $mB = H$ for some subgroup $H \leq G$. \\
Now $G$ is cyclic, so $H$ must be as well. By Lagrange's theorem, $d = |H|$ divides $M$. Furthermore, as $|H| = |m B| \geq |B| \geq \eta M$ we find $d \geq \eta M$, and the proof is complete.
%
%
\end{proof}
As in the proof of \cite[Prop. 3.3]{ManHighOrd}, we will make crucial use of the following.
\begin{lem}[Ruzsa] \label{lem:RuzGenG}
Let $(G,\cdot)$ be an abelian group, and let $K \geq 0$. Let $\phi: G \ra \mb{R}$ be a map satisfying
$$
|\phi(x\cdot y)-\phi(x)-\phi(y)| \leq K \text{ for all } x,y \in G.
$$
Then there is a genuine homomorphism $\psi: G \ra \mb{R}$ such that
$$
\sup_{x \in G} |\phi(x)-\psi(x)| \leq K.
$$
\end{lem}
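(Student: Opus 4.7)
\medskip

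\noindent\textbf{Proof proposal for Lemma \ref{lem:RuzGenG}.} The plan is to construct $\psi$ as a limit of the ``averaged iterates'' $\phi(x^n)/n$ as $n \to \infty$, which is the classical Ruzsa construction. The first step is to establish, by induction on $n \geq 1$, the linearised bound
$$
|\phi(x^n) - n\phi(x)| \leq (n-1)K \quad \text{for all } x \in G.
$$
The base case $n=1$ is trivial. For the inductive step, the approximate homomorphism hypothesis gives $|\phi(x^{n+1}) - \phi(x^n) - \phi(x)| \leq K$, which combined with the inductive hypothesis yields $|\phi(x^{n+1}) - (n+1)\phi(x)| \leq nK$ by the triangle inequality.

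Next I would show that for each fixed $x \in G$ the sequence $(\phi(x^n)/n)_{n \geq 1}$ is Cauchy. Applying the bound from the previous paragraph with base $x^m$ and exponent $n$, and separately with base $x^n$ and exponent $m$, gives
$$
|\phi(x^{mn}) - n\phi(x^m)| \leq (n-1)K, \qquad |\phi(x^{mn}) - m\phi(x^n)| \leq (m-1)K,
$$
so that $|n\phi(x^m) - m\phi(x^n)| \leq (m+n-2)K$, and therefore
$$
\left|\frac{\phi(x^m)}{m} - \frac{\phi(x^n)}{n}\right| \leq K\left(\frac{1}{m} + \frac{1}{n}\right).
$$
This is Cauchy in $\mb{R}$, so the limit $\psi(x) := \lim_{n \to \infty} \phi(x^n)/n$ exists. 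Note that if $x$ has finite order then $\phi(x^n)$ takes only finitely many values, so automatically $\psi(x) = 0$, consistent with the torsion case.

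It remains to verify that $\psi$ is a genuine homomorphism and that $\sup_x |\phi(x) - \psi(x)| \leq K$. For the bound, dividing the linearised inequality by $n$ gives $|\phi(x^n)/n - \phi(x)| \leq (n-1)K/n$, and letting $n \to \infty$ yields $|\psi(x) - \phi(x)| \leq K$. For the homomorphism property, this is where commutativity of $G$ enters decisively: since $(xy)^n = x^n y^n$ in an abelian group, the approximate homomorphism hypothesis gives
$$
|\phi((xy)^n) - \phi(x^n) - \phi(y^n)| = |\phi(x^n y^n) - \phi(x^n) - \phi(y^n)| \leq K.
$$
Dividing by $n$ and letting $n \to \infty$ gives $\psi(xy) = \psi(x) + \psi(y)$, as required.

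The only real subtlety is the need for commutativity in the final step: without $(xy)^n = x^n y^n$ the argument for the exact homomorphism property breaks down, and indeed the lemma is false in general for non-abelian groups. The rest of the argument is essentially a clean telescoping/induction followed by a Cauchy sequence construction, so I do not anticipate any other obstacles.
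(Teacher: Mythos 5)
Your proof is correct and is exactly the classical Hyers/Ruzsa construction of $\psi(x) = \lim_{n \to \infty} \phi(x^n)/n$; the paper does not give a proof but only cites Ruzsa's paper (Statement (7.3)), which uses the same argument. The only thing you might add for completeness in the paper's intended application is the immediate consequence that if $G$ is a \emph{finite} abelian group (as the group $H_\ell$ is there) then every homomorphism $G \to \mb{R}$ is trivial, so $\psi \equiv 0$ and the conclusion becomes $\sup_{x \in G} |\phi(x)| \leq K$ — you implicitly have this via your torsion observation, but it is the form of the bound that gets invoked.
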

\begin{proof}
This is\cite[Statement (7.3)]{Ruz}.
\end{proof}

\begin{proof}[Proof of Lemma \ref{lem:groupLem}]
Let $\max\{|a|, \ell_0(\delta)\} < \ell \leq x_k^{1/A}$ be a prime with $\ell \in \mc{G}_{\delta,2b;k}$. Since $A \geq \delta^{-10}$, Lemma \ref{lem:charDist} shows that there is a set $\mc{X}_{\ell}$ of non-principal characters $\chi$ modulo $q_{\ell}$ for which there is a choice of minimising\footnote{If there are several possible choices of $t_{\chi}$ pick the one of minimal absolute value.} $t_{\chi} \in [-\log x_k,\log x_k]$ and $\psi_{\chi} \pmod{\ell}$ for which
$$
\mb{D}(f^\chi,\psi_{\chi}(n)n^{it_{\chi}};x_k)^2 \leq 1.1\log(1/\delta).
$$
Let $\chi_{q_{\ell}}$ be a generator for the (cyclic) group of characters modulo $q_{\ell}$, and for each $\chi \pmod{q_{\ell}}$ let $m_{\chi} \in \mb{Z}/\phi(q_\ell) \mb{Z}$ be such that $\chi = \chi_{q_{\ell}}^{m_{\chi}}$. Below, given a set $\Xi$ of characters modulo $q_\ell$ we define the \emph{index set of $\Xi$} to be
$$
\mc{I}(\Xi) := \{m_{\chi} : \chi \in \Xi\} \subset \mb{Z}/\phi(q_\ell) \mb{Z}.
$$
Let now 
$$
\tilde{\mc{X}_\ell} := \{\chi_1\bar{\chi}_2 : \chi_1,\chi_2 \in \mc{X}_{\ell}\}.
$$
We observe that if $\chi_1\bar{\chi}_2 \in \tilde{\mc{X}_\ell}$, with $\chi_j \in \mc{X}_{\ell}$ for $j = 1,2$, then as $f^{\chi_1\bar{\chi}_2} = f^{\chi_1} \bar{f^{\chi_2}}$ the pretentious triangle inequality yields
\begin{align} \label{eq:tildeXellprop}
\min_{\psi \pmod{\ell}} \min_{|t| \leq 2\log x_k} \mb{D}(f^{\chi_1\bar{\chi}_2}, \psi(n) n^{it};x_k) &\leq \mb{D}(f^{\chi_1\bar{\chi}_2}, \psi_{\chi_1}\bar{\psi}_{\chi_2}(n)n^{i(t_{\chi_1}-t_{\chi_2})};x_k) \nonumber\\
&\leq \sum_{j = 1,2} \mb{D}(f^{\chi_j}, \psi_{\chi_j}(n)n^{it_j};x_k) \nonumber \\
&\leq 2.2\sqrt{ \log(1/\delta)}.
\end{align}
We extend the definitions of $(\psi_{\chi},t_{\chi})$ to all $\chi \in \tilde{\mc{X}_{\ell}} \bk \mc{X}_{\ell}$ by selecting these to satisfy
$$
\min_{\psi \pmod{\ell}} \min_{t_{\chi} \in [-2\log x_k,2\log x_k]} \mb{D}(f^{\chi},\psi(n)n^{it};x_k) = \mb{D}(f^{\chi},\psi_{\chi}(n)n^{it_{\chi}};x_k).
$$
Now, consider the index set $\mc{I}(\tilde{\mc{X}_\ell})$. Clearly, $0 \in \mc{I}(\tilde{\mc{X}_\ell})$ and
$$
m \in \mc{I}(\tilde{X}_{\ell}) \text{ if and only if } -m \in \mc{I}(\tilde{X}_{\ell}).
$$ 
By Lemma \ref{lem:findLowOrder},
there is a positive integer $r_{\ell} = O_{\delta}(1)$ and a subgroup $H_{\ell}$ of the group of characters $\pmod{q_\ell}$ with $|H_{\ell}| \gg_{\delta} \phi(q_{\ell})$ such that
\begin{equation}\label{eq:sumsetCover}
\mc{I}(H_{\ell}) \subseteq r_{\ell} \mc{I}(\tilde{\mc{X}_\ell}) = \{a_1 + \cdots + a_{r_{\ell}} : a_j \in \mc{I}(\tilde{X}_{\ell}) \text{ for } 1 \leq j \leq r_\ell\}.
\end{equation}
Set $d_{\ell} := |H_{\ell}|$. There is a choice of generator $\tilde{\chi}$ for $H_{\ell}$ such that $m_{\ell} := m_{\tilde{\chi}}$ satisfies $m_{\ell} = \phi(q_{\ell})/d_{\ell} = O_{\delta}(1)$, and
$$
H_{\ell} = \{\tilde{\chi}^j : \, 0 \leq j \leq d_\ell-1\} = \{\chi_{q_{\ell}}^{jm_{\ell}} : \, 0 \leq j \leq d_\ell-1\}.
$$
Together with \eqref{eq:sumsetCover}, the above parametrisation of $H_{\ell}$ implies that for each $0 \leq j \leq d_{\ell}-1$ we have a representation
$$
j m_{\ell} = m_{\chi_1} + \cdots + m_{\chi_{r_{\ell}}}, \text{ with } m_{\chi_i} \in \mc{I}(\tilde{\mc{X}_\ell}) \text{ for each } 1 \leq i \leq r_{\ell}.
$$
This immediately implies that
\begin{equation}\label{eq:tildechijRep}
\tilde{\chi}^j = \chi_1 \cdots \chi_{r_{\ell}}, \quad \chi_i \in \tilde{\mc{X}}_\ell.
\end{equation}
Combining \eqref{eq:tildechijRep} with \eqref{eq:multTri} and \eqref{eq:tildeXellprop}, we find
\begin{align}
\min_{\psi \pmod{\ell}} \min_{|t| \leq 2r_{\ell} \log x_k} \mb{D}(f^{\tilde{\chi}^j}, \psi(n)n^{it};x_k) &\leq \mb{D}(f^{\chi_1} \cdots f^{\chi_{r_\ell}}, \psi_{\chi_1}\cdots \psi_{\chi_{r_\ell}}(n) n^{i(t_{\chi_1} + \cdots + t_{\chi_{r_\ell}})};x_k) \nonumber\\
&\leq \sum_{1 \leq s \leq r_{\ell}} \mb{D}(f^{\chi_s}, \psi_{\chi_s}(n) n^{it_{\chi_s}};x_k) \nonumber\\
&\leq r_{\ell} \max_{1 \leq s \leq r_{\ell}} \mb{D}(f^{\chi_s}, \psi_{\chi_s}(n) n^{it_{\chi_s}};x_k) \leq 2.2 r_{\ell} \sqrt{\log(1/\delta)}, \label{eq:onHl}
\end{align}
uniformly in $0 \leq j \leq d_{\ell}-1$. \\
Finally, we extend the definitions of $\psi_{\chi}$ and $t_{\chi}$ to the remaining characters in $\Xi_\ell \bk \tilde{\mc{X}_{\ell}}$ so that
$$
\mb{D}(f^\chi,\psi_{\chi}(n)n^{it_{\chi}};x_k) = \min_{\psi \pmod{\ell}}\min_{|t| \leq 4r_{\ell}\log x_k}  \mb{D}(f^\chi,\psi(n)n^{it};x_k).
$$
We will show that the maps $\gamma_{\ell} : H_{\ell} \ra \Xi_\ell$ and $\theta_{\ell} : H_{\ell} \ra \mb{R}$ given by 
$$
\gamma_{\ell}(\chi) := \psi_{\chi}, \quad \theta_{\ell}(\chi) := t_{\chi}\log x_k,
$$
have the following properties: 
\begin{enumerate}[(a)]
\item $\gamma_{\ell}$ is a homomorphism on $H_{\ell}$, and 
\item there is $K = O_{\delta}(1)$ such that $\theta_{\ell}$ is a \emph{$K$-approximate homomorphism}, i.e., such that
$$
\max_{\chi_1,\chi_2 \in H_{\ell}} |\theta_{\ell}(\chi_1\chi_2)-\theta_\ell(\chi_1)\theta_\ell(\chi_2)| \leq K.
$$
\end{enumerate}
Note that \eqref{eq:onHl} implies that 
\begin{equation}\label{eq:bdforHell}
\max_{\chi \in H_{\ell}} \mb{D}(f^\chi,\psi_{\chi}(n)n^{it_{\chi}};x_k) \leq 2.2 r_{\ell}\sqrt{\log(1/\delta)}.
\end{equation}
If $\tilde{\chi}^j$ has the representation \eqref{eq:tildechijRep} then, uniformly over all $0 \leq j \leq d_{\ell}-1$,
\begin{align*}
&\mb{D}(\psi_{\tilde{\chi}^j} \bar{\psi}_{\chi_1} \cdots \bar{\psi}_{\chi_{r_{\ell}}}, n^{i(t_{\tilde{\chi}^j} - t_{\chi_1} - \cdots - t_{\chi_{r_{\ell}}})};x_k) \\
&\leq  \mb{D}(f^{\tilde{\chi}^j}, \psi_{\tilde{\chi}^j}(n) n^{it_{\tilde{\chi}^j}};x_k) + \mb{D}(f^{\chi_1} \cdots f^{\chi_{r_{\ell}}},\psi_{\chi_1} \cdots \psi_{\chi_{r_{\ell}}} n^{i(t_{\chi_1} + \cdots + t_{\chi_{r_{\ell}}})};x_k) \\
&\leq 2 r_{\ell} \max_{1 \leq s \leq {r_{\ell}}} \mb{D}(f^{\chi_s},\psi_{\chi_s}(n) n^{it_{\chi_s}};x_k) \leq 4.4 r_{\ell}\sqrt{\log(1/\delta)}.
\end{align*}
Since $r_{\ell} = O_{\delta}(1)$, Lemma \ref{lem:charnit} implies that for $\eta$ sufficiently small in terms of $\delta$ and $\ell \leq x^{1/\eta}$ we have
$$
\psi_{\tilde{\chi}^j} = \psi_{\chi_1} \cdots \psi_{\chi_{r_{\ell}}},
$$
and moreover
$$
t_{\tilde{\chi}^j} = t_{\chi_1} + \cdots + t_{\chi_{r_{\ell}}} + O_{\delta}\left(\frac{1}{\log x_k}\right).
$$
Similarly, if $\xi_1,\xi_2 \in H_{\ell}$ then 
by \eqref{eq:onHl}, \eqref{eq:bdforHell}, the minimal property of $\psi_{\xi_1\xi_2}$ and $t_{\xi_1\xi_2}$, and \eqref{eq:usualTri} again,
\begin{align*}
\mb{D}(\psi_{\xi_1\xi_2}\bar{\psi}_{\xi_1}\bar{\psi}_{\xi_2}, n^{i(t_{\xi_1+\xi_2} - t_{\xi_1} - t_{\xi_2})}; x_k) &\leq \mb{D}(f^{\xi_1\xi_2}, \psi_{\xi_1\xi_2}(n)n^{it_{\xi_1\xi_2}};x_k) + \mb{D}(f^{\xi_1\xi_2}, \psi_{\xi_1}\psi_{\xi_2}(n)n^{i(t_{\xi_1} + t_{\xi_2})};x_k)\\
&\leq 2\mb{D}(f^{\xi_1\xi_2}, \psi_{\xi_1}\psi_{\xi_2}(n)n^{i(t_{\xi_1} + t_{\xi_2})};x_k)\\
&\leq 4 \max_{s = 1,2} \mb{D}(f^{\xi_s},\psi_{\xi_s}(n) n^{it_{\xi_s}};x_k) \leq 8.8 r_{\ell}\sqrt{\log(1/\delta)}.
\end{align*}
Thus, since $|t_{\xi_1\xi_2}-t_{\xi_1}-t_{\xi_2}| \leq 12r_{\ell}\log x_k$ and $r_\ell = O_{\delta}(1)$, again by Lemma \ref{lem:charnit} we obtain that whenever $\chi_1,\chi_2 \in H_{\ell}$, 
\begin{align*}
\gamma_{\ell}(\chi_1\chi_2) &= \psi_{\chi_1 \chi_2} = \psi_{\chi_1}\psi_{\chi_2} = \gamma_{\ell}(\chi_1)\gamma_{\ell}(\chi_2), \\
\theta_\ell(\chi_1\chi_2) &= t_{\chi_1\chi_2}\log x_k = (t_{\chi_1} + t_{\chi_2})\log x_k + O_{\delta}(1) \\
&= \theta_{\ell}(\chi_1) + \theta_{\ell}(\chi_2) + O_{\delta}(1).
\end{align*}
Thus, $\gamma_\ell$ is a homomorphism on $H_{\ell}$, while $\theta_{\ell}$ is a $K$-approximate homomorphism from $H_{\ell}$ to $\mb{R}$ for some $K = O_{\delta}(1)$, confirming (a) and (b) above. \\
Note that there are no non-zero homomorphisms from a finite abelian group to $\mb{R}$. By Lemma \ref{lem:RuzGenG},
(b) therefore implies 
$$
\max_{\chi \in H_\ell} |t_{\chi}| = \max_{\chi \in H_\ell} \left|\frac{\theta_{\ell}(\chi)}{\log x_k}\right| \leq \frac{K}{\log x_k}.
$$
Taylor expanding $p^{it_{\chi}}$ for $p \leq x_k^{1/(3K)}$, we find that for any $\chi \in H_{\ell}$,
$$
\mb{D}(1,n^{it_{\chi}};x_k^{1/(3K)})^2 \leq \sum_{p \leq x_k^{1/(3K)}} \frac{|1-p^{it_{\chi}}|}{p} \ll \frac{K}{\log x_k} \sum_{p \leq x_k^{1/(3K)}} \frac{\log p}{p} = O_{\delta}(1).
$$
Combining this with \eqref{eq:usualTri} and \eqref{eq:bdforHell},
\begin{equation}\label{eq:unifDfchipsichi}
\mb{D}(f^\chi, \psi_{\chi}(n); x_k)^2 \leq 2\left(\mb{D}(f^{\chi},\psi_{\chi}(n)n^{it_{\chi}};x_k^{1/(3K)})^2 + \mb{D}(1,n^{it_{\chi}};x_k^{1/(3K)})^2\right) + \sum_{x_k^{1/3K} \leq p \leq x_k} \frac{2}{p} \ll_{\delta} 1,
\end{equation}
uniformly over $\chi \in H_{\ell}$. \\
We can now complete the proof. By construction, $|H_{\ell}| \gg_{\delta} \phi(q_{\ell})$, so (i) holds. Next, as $H_{\ell}$ is cyclic, $\psi_{\tilde{\chi}} = \gamma_{\ell}(\tilde{\chi})$ generates the subgroup $\gamma_{\ell}(H_\ell)$ of $\Xi_\ell$. Since
$$
\psi_{\tilde{\chi}}^{d_\ell} = \gamma_{\ell}(\tilde{\chi})^{d_{\ell}} = \gamma_{\ell}(\tilde{\chi}^{d_\ell})
$$
is the trivial character modulo $\ell$, it follows that $\psi = \psi_{\tilde{\chi}}$ has order dividing $d_{\ell}$, verifying (ii). Finally, as each $\chi \in H_{\ell}$ is of the form $\tilde{\chi}^j$ for some $0 \leq j \leq d_\ell-1$, we have 
$$
\psi_{\chi} = \gamma(\tilde{\chi})^j = \psi^j,
$$
and so
$$
\max_{\chi \in H_{\ell}} \mb{D}(f^\chi,\psi_{\chi};x_k) = \max_{0 \leq j \leq d_{\ell}-1} \mb{D}(f^{\tilde{\chi}^j}, \psi^j;x_k).
$$
Thus, (iii) follows from \eqref{eq:unifDfchipsichi}.
\end{proof}

\begin{proof}[Proof of Proposition \ref{prop:almostHom}]
Let $A := \delta^{-10}$. Let $\ell \in \mc{G}_{\delta,k;2b}$ be a prime with $\max\{|a|, \ell_0(\delta) \} < \ell \leq x_k^{1/A}$. Let $\chi_{q_\ell}$ be a generator of the cyclic group $\Xi_{q_\ell}$. By Lemma \ref{lem:groupLem}, we find a subgroup $H_{\ell} = \langle \tilde{\chi}\rangle \leq \Xi_{q_\ell}$ with $|H_{\ell}| \gg_{\delta} \phi(q_{\ell})$ on which \eqref{eq:unifBd} holds. 
A choice of generator $\tilde{\chi} = \chi_{q_\ell}^{m_\ell}$ can be made 
so that $m_\ell = \phi(q_\ell)/|H_\ell| = O_{\delta}(1)$. 
By \eqref{eq:unifBd}, 
\begin{equation}\label{eq:avgUnifEst}
\sum_{p \leq x_k} \frac{1}{p} \text{Re}\left(1-\frac{1}{|H_\ell|} \sum_{0 \leq j \leq |H_\ell|-1} (\tilde{\chi}(f(p))\bar{\psi}(p))^j\right) = \frac{1}{|H_\ell|} \sum_{0 \leq j \leq |H_\ell|-1} \mb{D}(f^{\tilde{\chi}^j},\psi^j; x_k)^2 \ll_{\delta} 1.
\end{equation}
By Lemma \ref{lem:groupLem}(ii), $\psi^{|H_{\ell}|}$ is principal. Thus, for $p \leq x_k$ we have
$$
\frac{1}{|H_{\ell}|}\sum_{0 \leq j \leq |H_\ell|-1} (\tilde{\chi}(f(p))\bar{\psi}(p))^j = 1_{\tilde{\chi}(f(p))\bar{\psi}(p) = 1}.
$$
Inserting this into \eqref{eq:avgUnifEst}, we find
$$
\sum_{\ss{p \leq x_k \\ \tilde{\chi}(f(p)) \neq \psi(p)}} \frac{1}{p} = \sum_{p \leq x_k} \frac{1}{p}(1-1_{\tilde{\chi}(f(p)) = \psi(p)}) \ll_{\delta} 1.
$$
Note that $\psi$ takes values in the set $\mu_{\phi(q_{\ell})}$ of roots of unity of order $\phi(q_\ell)$. As $\chi_{q_\ell}$ is a bijection from $(\mb{Z}/q_\ell \mb{Z})^\times$ onto that set, there is an induced map 
$$
\rho_\ell: (\mb{Z}/\ell \mb{Z})^\times \to (\mb{Z}/q_\ell\mb{Z})^\times
$$
such that 
$$
\chi_{q_\ell}(\rho_\ell(a)) = \psi(a) \text{ for all } a \in (\mb{Z}/\ell \mb{Z})^\times.
$$
It is easily checked that $\rho_{\ell}$ is a homomorphism: since $\chi_{q_{\ell}}$ is a bijection it suffices to show that
$$
\chi_{q_\ell}(\rho_{\ell}(ab)) = \chi_{q_{\ell}}(\rho_{\ell}(a)\rho_{\ell}(b)) \text{ for any } a,b \in (\mb{Z}/\ell \mb{Z})^\times,
$$
and by definition, 
$$
\chi_{q_\ell}(\rho_\ell(ab)) = \psi(ab) = \psi(a)\psi(b) = \chi_{q_\ell}(\rho_\ell(a))\chi_{q_\ell}(\rho_\ell(b)) = \chi_{q_\ell}(\rho_\ell(a)\rho_\ell(b)).
$$
Finally, set 
$$
\mc{T}_\ell := \{p \leq x_k : \tilde{\chi}(f(p)) \neq \psi(p)\}
$$ 
and let $p \leq x_k$ with $p \notin \mc{T}_\ell$. As $\tilde{\chi} = \chi_{q_\ell}^{m_\ell}$, we deduce that 
$$
\text{if } p \equiv a \pmod{\ell} \text{ then } f(p)^{m_\ell} \equiv \rho_\ell(a) \pmod{q_\ell}.
$$ 
Since $m_\ell = O_{\delta}(1)$ uniformly over all $\ell \in \mc{G}_{\delta,k;2b} \cap [2,x_k^{1/A}]$ (sufficiently large relative to $\delta$ and $|a|$), there is an integer $M = O_{\delta}(1)$, independent of $k$, such that $m_\ell | M!$ for all such $\ell$.
If we set $D := 2M!$ and define the homomorphism $\tilde{\rho}_{\ell}(a) := \rho_{\ell}(a)^{D/m_{\ell}} \pmod{q_{\ell}}$, we obtain the conclusion \eqref{eq:genHomCond} uniformly over all $\ell \in \mc{G}_{\delta,k;2b} \cap [2,x_k^{1/A}]$ sufficiently large, as required.\\
Suppose now that $q_{\ell}$ is a power of $\ell$, for $\ell \in \mc{G}_{\delta,k;2b}$.
Composing the homomorphism $\tilde{\rho}_\ell$ with the projection $(\mb{Z}/q_{\ell}\mb{Z})^{\times} \twoheadrightarrow (\mb{Z}/\ell\mb{Z})^{\times}$, we obtain an endomorphism
$$
\rho^\ast_{\ell} : (\mb{Z}/\ell \mb{Z})^\times \to (\mb{Z}/\ell \mb{Z})^\times.
$$ 
Since $(\mb{Z}/\ell \mb{Z})^\times$ is cyclic,
select $u$ to be a primitive root modulo $\ell$ and define $0 \leq g_\ell < \ell-1$ implicitly from the formula $\rho^\ast_{\ell}(u) \equiv u^{g_{\ell}} \pmod{\ell}$. Then whenever $p \notin \mc{T}_{\ell}$, $p \neq \ell$, and $p \equiv u^{\theta_p} \pmod{\ell}$ we find
$$
f(p)^D \equiv \rho^\ast_{\ell}(u^{\theta_p}) \pmod{\ell} \equiv \rho^\ast_{\ell}(u)^{\theta_p} \pmod{\ell} \equiv u^{g_{\ell}\theta_p } \pmod{\ell} \equiv p^{g_{\ell}} \pmod{\ell}, 
$$
and \eqref{eq:ellDivfell} follows as well.
\end{proof}
\section{Local and global power maps} \label{sec:LocGlob}
\subsection{The Fabrykowski-Subbarao Conjecture and a result of N. Jones}
Let $\ell$ be a prime. Recall that a a map $F: \mb{N} \ra \mb{Z}$ is said to be a \emph{local power map} modulo $\ell$ if there is an integer $0 \leq g_{\ell} < \ell-1$ such that 
$$
F(n) \equiv n^{g_{\ell}} \pmod{\ell} \text{ for all } n \in \mb{N}. 
$$
If $F$ is a completely multiplicative function then $F$ is a local power map modulo $\ell$ if and only if
\begin{equation} \label{eq:LPMPrimes}
F(p) \equiv p^{g_{\ell}} \pmod{\ell} \text{ for all } p \in \mc{P}.
\end{equation}
For comparison, the conclusion of Proposition \ref{prop:almostHom} 
states that the map $F(n) = f(n)^D$ satisfies 
\begin{equation}\label{eq:LPMApprox}
F(p) \equiv p^{g_{\ell}} \pmod{\ell} \text{ for all } p \leq x_k, \, p \notin \mc{T}_{\ell},
\end{equation}
for each $\ell \in \mc{G}_{\delta,k;2b} \cap [\ell_0,x_k^{1/A}]$ with $\ell |f(\ell)$. That is, $f^D$ behaves like a local power map at all such primes $\ell$, with the complications arising from the exceptional set $\mc{T}_{\ell}$ and the potential dependence of $g_{\ell}$ on the scale $x_k$. \\
Fabrykowski and Subbarao conjectured that a multiplicative function that is a local power map modulo infinitely many primes must be a global power map, i.e., a monomial. 
\begin{conj}[Fabrykowski-Subbarao, \cite{FabSub}] \label{conj:FS}
Suppose $F: \mb{N} \ra \mb{Z}$ is a multiplicative function such that for infinitely many primes $\ell$,
$$
F(n+\ell) \equiv F(n) \pmod{\ell} \text{ for all $n \in \mb{N}$.}
$$
Then 
there is a non-negative integer $k$ such that $F(n) = n^k$ for all $n \in \mb{N}$.
\end{conj}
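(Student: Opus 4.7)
The plan is to approach Conjecture~\ref{conj:FS} via the Kummer-theoretic strategy that Jones employed in the positive-density regime, striving to weaken his hypothesis to the mere infinitude of local-power-map primes. Write $\mc{L}$ for the infinite set of $\ell$ at which $F(n+\ell)\equiv F(n)\pmod{\ell}$ holds for all $n$. First I would reduce to a statement about $F$ at primes: the congruence forces $F\bmod\ell$ to descend to a function on $\mb{Z}/\ell\mb{Z}$, and combining multiplicativity of $F$ with Dirichlet's theorem yields a well-defined multiplicative map $(\mb{Z}/\ell\mb{Z})^{\times}\to \mb{Z}/\ell\mb{Z}$. Cyclicity of the domain forces this to be a power map $x\mapsto x^{k_\ell}$, giving $0\le k_\ell<\ell-1$ with $F(p)\equiv p^{k_\ell}\pmod{\ell}$ for every prime $p\ne\ell$. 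The goal then reduces to finding a fixed $k\ge 0$ such that $F(p)=p^k$ for every $p$; the extension to prime powers, and thence to all of $\mb{N}$, follows by the analogous reduction applied to $n=p^j$.

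Next, I would fix a prime $p$ and suppose for contradiction that $F(p)\ne \pm p^j$ for any $j\ge 0$, so that $p$ and $F(p)$ are multiplicatively independent in $\mb{Q}^{\times}$ modulo torsion. Consider the Kummer tower
$$
K_{p,m}:=\mb{Q}(\zeta_m,\,p^{1/m},\,F(p)^{1/m}).
$$
By Kummer theory and the multiplicative independence, $\operatorname{Gal}(K_{p,m}/\mb{Q}(\zeta_m))$ has rank two as a subgroup of $(\mb{Z}/m\mb{Z})^2$ up to bounded index. The congruence $F(p)\equiv p^{k_\ell}\pmod{\ell}$ translates into a one-parameter ``diagonal'' splitting condition on $\ell$ in $K_{p,m}$ for $m$ a divisor of $\ell-1$, namely that the Frobenius at $\ell$ pairs the Kummer generator $F(p)^{1/m}$ with the $k_\ell$-th power of $p^{1/m}$. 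Jones's argument shows via Chebotarev that the set of such $\ell$ has density zero unless $F(p)$ is already a power of $p$. To upgrade to infinitude, I would work inside a much larger extension $\mb{Q}(\zeta_m,\{p_i^{1/m},F(p_i)^{1/m}\}_{i\le r})$ for many primes $p_1,\ldots,p_r$ simultaneously, whose Galois group attains maximal rank whenever $F$ is not a global power map, and try to drive the count of ``bad'' $\ell$ towards zero as $r\to\infty$.

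The principal obstacle is that Chebotarev, even in its effective Lagarias--Odlyzko form, yields only \emph{density-zero} statements, which remain compatible with $|\mc{L}|=\infty$. Jones circumvents this by \emph{assuming} positive upper density of $\mc{L}$, so that a single density-zero Chebotarev bound already contradicts the hypothesis; the full conjecture affords no such luxury. To close this gap one would need either (i) a \emph{quantitative} counting bound $|\mc{L}\cap[1,X]|=o(\pi(X))$ with sufficiently explicit decay, for instance via a zero-density estimate for Dedekind zeta functions of Kummer extensions of growing degree; or (ii) a large-sieve-type argument combining splitting conditions across the family $(K_{p_i,m})_i$, exploiting their near-independence for distinct bases so that the individual bounds multiply into a summable tail. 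Absent such new input, the gap between ``positive upper density'' and ``infinite'' appears to be the fundamental barrier, explaining why the present paper proves only the weaker classification under positive Dirichlet density of $S_f$ or under ERH, rather than attacking Fabrykowski--Subbarao in full generality.
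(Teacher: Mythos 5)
The statement you were asked to address is Conjecture~\ref{conj:FS} (Fabrykowski--Subbarao), and the paper does not prove it: it states it, notes explicitly that ``as far as the author is aware the conjecture is still open in full generality,'' and proves only the partial result Proposition~\ref{prop:Assumlfl} (and, via Jones, the case of positive upper density). You have read the situation correctly and, appropriately, have not fabricated a proof.

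Your sketch of the would-be attack is faithful to the paper's own machinery. The reduction to prime values and to a power map on $(\mb{Z}/\ell\mb{Z})^\times$ via Dirichlet's theorem and cyclicity is exactly the content of Lemma~\ref{lem:red1}. The Kummer towers $\mb{Q}(\zeta_m, p_1^{1/m}, p_2^{1/m}, F(p_1)^{1/m}, F(p_2)^{1/m})$, the translation of $F(p)\equiv p^{k_\ell}\pmod{\ell}$ into a Frobenius splitting condition, and the use of Lagarias--Odlyzko/Lagarias--Montgomery--Odlyzko Chebotarev bounds are precisely what appear in the proof of Proposition~\ref{prop:Assumlfl}. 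Your diagnosis of the barrier is also accurate and matches the paper's remarks: the effective Chebotarev bounds yield only that the set of local-power-map primes has density zero (or at best $o(\log\log X)$ in a reciprocal sum), which is consistent with infinitude, so an infinitude hypothesis alone does not produce a contradiction. The paper's conditional improvement under ERH (sharpening the admissible range in Theorem~\ref{thm:LMO}, hence the choices of $Y_j,Z_j$) and its remark on what weaker zero-free regions or zero-density estimates would suffice are exactly the avenue you gesture at in item (i); the paper also notes why currently available zero-density results for Dedekind zeta functions (which degrade badly as $|\text{Gal}(L_m/\mb{Q})|$ grows polynomially in $m$) do not close the gap. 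In short: there is no proof in the paper to compare against, and your assessment that the conjecture is out of reach by these methods, together with your account of why, is correct and consonant with the paper's own discussion.
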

\begin{lem} \label{lem:red1}
Conjecture \ref{conj:FS} is equivalent to the following statement: \\ 
Let $F: \mb{N} \ra \mb{Z}$ be a completely multiplicative function for which there is an infinite sequence of primes $S_F$ with the property that
for each $\ell \in S_F$ there is an integer $0 \leq g_\ell < \ell-1$ such that 
\begin{equation}\label{eq:locPow}
F(n) \equiv n^{g_\ell} \pmod{\ell} \text{ for all } n \in \mb{N}.
\end{equation}
Then there is a non-negative integer $k$ such that $F(n) = n^k$ for all $n \in \mb{N}$.
\end{lem}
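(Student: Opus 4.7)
\medskip
\noindent
\textbf{Proof proposal.} The forward direction, namely that Conjecture \ref{conj:FS} implies the stated completely-multiplicative assertion, is essentially immediate. Indeed, if $F$ is completely multiplicative and satisfies \eqref{eq:locPow} for every $\ell \in S_F$, then
$$
F(n+\ell) \equiv (n+\ell)^{g_\ell} \equiv n^{g_\ell} \equiv F(n) \pmod{\ell},
$$
so Conjecture \ref{conj:FS} applies and yields $F(n)=n^k$. So my energy would go entirely into the reverse implication.

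For the reverse direction, I would take $F: \mb{N} \ra \mb{Z}$ multiplicative with $F(n+\ell) \equiv F(n) \pmod{\ell}$ for all $n$ and for every $\ell$ in an infinite set $T_F$. The first step is to upgrade the periodicity to a local power map on the units. Given $\ell \in T_F$, the periodicity lets me define $\tilde{F}_\ell : (\mb{Z}/\ell\mb{Z})^\times \to \mb{Z}/\ell\mb{Z}$ by $\tilde{F}_\ell(a) := F(n) \pmod{\ell}$ for any $n \equiv a \pmod{\ell}$. Given $a,b \in (\mb{Z}/\ell\mb{Z})^\times$, Dirichlet's theorem produces distinct primes $p \equiv a$, $q \equiv b \pmod{\ell}$, whence $\tilde{F}_\ell(ab) \equiv F(pq) = F(p)F(q) \equiv \tilde{F}_\ell(a)\tilde{F}_\ell(b) \pmod{\ell}$. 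Since $\tilde{F}_\ell(1)=1$, the image lies in $(\mb{Z}/\ell\mb{Z})^\times$; cyclicity then forces $\tilde{F}_\ell(a) = a^{g_\ell}$ for some $0 \leq g_\ell < \ell-1$.

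Next I would dichotomise on whether $g_\ell = 0$ for infinitely many $\ell$. If so, then $F(p) \equiv 1 \pmod{\ell}$ for every prime $p \neq \ell$, and since this holds for infinitely many $\ell$ we deduce $F(p) = 1$ at every prime; by the same infinitude argument applied to $p^m$ (always coprime to $\ell$ for $\ell \ne p$) we get $F(p^m)=1$, so $F \equiv 1 = n^0$ and we are done. Otherwise, passing to a cofinite subset $T_F' \subseteq T_F$, I may assume $g_\ell \geq 1$ throughout. For each such $\ell$, Dirichlet produces a prime $p \neq \ell$ with $F(p) \not\equiv 1 \pmod{\ell}$; the identities $F(p\ell^a) = F(p)F(\ell^a) \equiv F(\ell^a) \pmod{\ell}$ (the last by periodicity, as $p\ell^a \equiv \ell^a \equiv 0 \pmod{\ell}$) force $F(\ell^a) \equiv 0 \pmod{\ell}$ for all $a\ge 1$. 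I can then introduce the completely multiplicative extension $\tilde{F}$ defined by $\tilde{F}(p^a) := F(p)^a$ and verify that $\tilde{F}(n) \equiv n^{g_\ell} \pmod{\ell}$ for every $n \in \mb{N}$, separately treating the case $(n,\ell)=1$ (where it follows from the power-map structure on units) and the case $\ell \mid n$ (where both sides vanish mod $\ell$, using $\ell \mid F(\ell)$ on the left and $g_\ell \geq 1$ on the right). This places $\tilde{F}$ in the hypothesis of the completely-multiplicative statement, yielding $\tilde{F}(n) = n^k$ for some $k \ge 0$, so that $F(p) = p^k$ at every prime.

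To finish, I would recover $F(p^m)$ by a final divisibility-by-infinitely-many-primes argument: for all sufficiently large $\ell \in T_F'$ coprime to $p$, $F(p^m) \equiv p^{m g_\ell} \pmod{\ell}$, and the relation $F(p) = p^k$ combined with $F(p) \equiv p^{g_\ell} \pmod{\ell}$ gives $g_\ell \equiv k \pmod{\operatorname{ord}_\ell(p)}$, so that $F(p^m) \equiv p^{mk} \pmod{\ell}$ for infinitely many $\ell$, forcing equality. The main obstacle I anticipate is the case $g_\ell = 0$: the completely multiplicative extension $\tilde{F}$ does not obviously inherit the local power map structure at powers of $\ell$ (since the periodicity of $F$ alone does not pin down $F(\ell) \pmod{\ell}$), which is precisely why I would peel this case off and dispatch it by the direct infinitude argument before applying statement (B) to the complementary case.
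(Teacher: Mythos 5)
Your proof is correct but takes a different route from the paper's. The paper first shows that the hypotheses of Conjecture \ref{conj:FS} already force $F$ itself to be completely multiplicative: for $m,n \in \mb{N}$ and each $\ell \in S_F$ with $\ell \nmid mn$, it picks distinct primes $q_m \equiv m$ and $q_n \equiv n \pmod{\ell}$ via Dirichlet, uses periodicity and ordinary multiplicativity on the coprime pair to get $F(mn) \equiv F(q_mq_n) = F(q_m)F(q_n) \equiv F(m)F(n) \pmod{\ell}$, and — since this holds for infinitely many $\ell$ — concludes $F(mn) = F(m)F(n)$. It then observes that $F^{\chi_\ell} := \chi_\ell \circ F$ is completely multiplicative, $\ell$-periodic and vanishes on multiples of $\ell$, hence is a Dirichlet character $\chi_\ell^{g_\ell}$, which gives \eqref{eq:locPow}. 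You instead leave $F$ alone, build a completely multiplicative surrogate $\tilde F$ agreeing with $F$ on primes, check $\tilde F$ satisfies the restricted hypotheses, apply the statement to get $\tilde F(n) = n^k$, and then recover $F(p^m) = p^{mk}$ by one further divisibility-by-infinitely-many-primes step. Your argument is longer, but it makes explicit the dichotomy on $g_\ell = 0$ — and that is a real dividend, because the paper's assertion that $F^{\chi_\ell}$ vanishes at multiples of $\ell$ presupposes $\ell \mid F(\ell)$, which can fail: if $\ell \nmid F(\ell)$ then periodicity and complete multiplicativity force $F \equiv 1 \pmod{\ell}$, so $g_\ell = 0$ works trivially but $F^{\chi_\ell}$ is the constant function $1$, not a Dirichlet character mod $\ell$. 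The paper's write-up leaves that degenerate case implicit; you dispatch it directly. Both arguments are sound; the paper's is more compact, yours is more careful about the edge case.
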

\begin{proof}
Conjecture \ref{conj:FS} trivially implies this conjecture, so it suffices to show that if $f$ satisfies the hypotheses of Conjecture \ref{conj:FS} then it must be completely multiplicative and satisfy \eqref{eq:locPow} for some $0 \leq g_\ell < \ell-1$ and each $\ell \in S_F$. \\
For $d \in \mb{N}$ let $S_F(d):= \{\ell \in S_F : \ell \nmid d\}$, and let $m,n \in \mb{N}$.  For each $\ell \in S_F(mn)$ select distinct primes $q_m,q_n$ such that $q_k \equiv k \pmod{\ell}$ for each $k \in \{m,n\}$; such primes exist by Dirichlet's theorem on primes in arithmetic progressions. It follows that $q_m,q_n$ are coprime, and moreover $mn \equiv q_mq_n \pmod{\ell}$. Thus, $F(mn) \equiv F(q_mq_n) \pmod{\ell}$, and moreover by multiplicativity we get
$$
F(q_mq_n) = F(q_m)F(q_n).
$$
This implies that
$$
F(mn) \equiv F(q_mq_n) \pmod{\ell} \equiv F(q_m)F(q_n) \pmod{\ell} \equiv F(m)F(n) \pmod{\ell}.
$$
We thus obtain that
$$
\ell|(F(mn) - F(m)F(n)) \text{ for all } \ell \in S_F(mn).
$$
Since $S_F$ is an infinite set, so is $S_F(mn)$, and we deduce that $F(mn) = F(m)F(n)$. As $m,n$ were arbitrary, $F$ must be completely multiplicative.\\
Next, fix $\ell \in S_F$ and let $\chi_\ell$ be a generator of $\Xi_{\ell}$.
By (a), $F^{\chi_\ell}$ is a completely multiplicative function, and moreover since $F$ is periodic modulo $\ell$ we have 
$$
F^{\chi_\ell}(n+\ell) = F^{\chi_\ell}(n) \text{ for all } n \in \mb{N}.
$$
Thus, $F^{\chi_\ell}$ is completely multiplicative, periodic modulo $\ell$ and satisfies $F^{\chi_\ell}(n) = 0$ whenever $\ell|n$. By definition, $F^{\chi_\ell}$ must be a Dirichlet character modulo $\ell$.
Since $\chi_\ell$ generates the group of such characters there must therefore exist $0 \leq g_\ell < \ell-1$ such that 
$$
\chi_\ell(F(n)) = F^{\chi_\ell}(n) = \chi_\ell^{g_\ell}(n) = \chi_\ell(n^{g_\ell}) \text{ for all } n.
$$
Since $\chi_\ell$ is injective on $\mb{Z}/\ell\mb{Z}$, we deduce that $F(n) \equiv n^{g_\ell} \pmod{\ell}$ for all $n$, as required.
\end{proof}
Conjecture \ref{conj:FS} remains open, but partial results exist under the assumption that $S_F(X)$ grows somewhat quickly. In particular, N. Jones \cite{Jones} proved that if $\limsup_{X \ra \infty} S_F(X)/\pi(X) > 0$ then $F$ must be a monomial.\\
In light of the similarities between \eqref{eq:LPMApprox} and \eqref{eq:LPMPrimes}, we apply Jones' method to derive structural information about $f$ (through $f^D$) provided sufficiently many $\ell|f(\ell)$. \\
For $X \geq 2$ define
$$
S_f := \{\ell \in \mc{P} : \, \ell | f(\ell)\} \subseteq \mc{P}_f, \quad S_f(X) := |S_f \cap [2,X]|.
$$
Using Jones' method, we prove the following.
\begin{prop}\label{prop:Assumlfl}
Let $f: \mb{N} \ra \mb{Z}$ be a multiplicative function for which
$$
\sum_{p : |f(p)| \neq 1} \frac{1}{p} = \infty, \quad \sum_{p : f(p) = 0} \frac{1}{p} < \infty,
$$
and also that
$$
\lim_{X \ra \infty} \frac{1}{\log\log X} \sum_{\ss{p \leq X \\ p \in S_f}} \frac{1}{p} > 0.
$$ 
Suppose moreover that there are integers $a \geq 1$ and $b \neq 0$, and a $\delta \in (0,1/2)$ such that $\bar{\delta}(\mc{N}_{f,a,b}) > \delta$.
Then the set
\begin{equation}\label{eq:nonPrimPow}
\{\ell \in \mc{P} : \ell' \nmid f(\ell) \text{ for all } \ell' \neq \ell \}
\end{equation}
is $O_{\delta}(1)$-sparse. \\
The same conclusion holds under the assumption of the Extended Riemann Hypothesis, provided that
$$
\sum_{p \in S_f} \frac{1}{p} = \infty.
$$
\end{prop}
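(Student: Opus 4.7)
The plan is a Jones-style argument using Chebotarev's density theorem applied to a family of Kummer extensions, following the strategy sketched earlier. Write $F := f^D$ with $D$ the positive integer furnished by Proposition \ref{prop:almostHom}. I argue by contradiction: assume the set
$$
\mc{E} := \{\ell \in S_f : \exists\,\ell' \neq \ell \text{ prime with } \ell' \mid f(\ell)\}
$$
(the complement, within $S_f$, of the set whose sparsity we wish to establish) is not $O_\delta(1)$-sparse, so $\sum_{\ell \in \mc{E},\,\ell \leq X} \ell^{-1}$ grows without bound in a quantitative way (controlled by hypothesis (a) or its ERH-conditional counterpart (b)). Along a sequence of scales $(x_k)$ realising $\bar\delta(\mc{N}_{f,a,b}) > \delta$, restrict to $\mc{E}_k := \mc{E} \cap \mc{G}_{\delta,k;2b} \cap [\ell_0, x_k^{1/A}]$, which still has divergent $\sum \ell^{-1}$ because the complement of $\mc{G}_{\delta,k;2b}$ is uniformly sparse by Lemma \ref{lem:Gdeltakm}. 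Proposition \ref{prop:almostHom} then supplies, for each $\ell \in \mc{E}_k$, a homomorphism $\tilde\rho_\ell : (\mb{Z}/\ell\mb{Z})^\times \to (\mb{Z}/q_\ell\mb{Z})^\times$ and an exceptional set $\mc{T}_\ell$ with
$$
F(p) \equiv \tilde\rho_\ell(p \bmod \ell) \pmod{q_\ell}, \qquad p \in [2,x_k] \setminus \mc{T}_\ell,
$$
and I choose a prime divisor $\ell^\ast = \ell^\ast(\ell)$ of $q_\ell$ distinct from $\ell$ (which exists by definition of $\mc{E}$).

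The core construction is as follows. Using Proposition \ref{prop:ArchInhom} (which gives $|F(p)| \asymp p^{rD}$ with $r > 0$ outside a sparse set), select two primes $p_1, p_2$ for which $p_1, p_2, F(p_1), F(p_2)$ are multiplicatively independent integers. For each prime $m$, form
$$
L_m := \mb{Q}\bigl(\zeta_m,\, p_1^{1/m},\, p_2^{1/m},\, F(p_1)^{1/m},\, F(p_2)^{1/m}\bigr);
$$
by Kummer theory and the multiplicative independence, $[L_m : \mb{Q}(\zeta_m)] = m^4$. For $\ell$ unramified in $L_m$, the Frobenius at $\ell$ freely encodes the four residues $p_i, F(p_i) \pmod m$ (up to $m$-th powers), and Chebotarev distributes it uniformly over $\text{Gal}(L_m/\mb{Q})$ --- unconditionally in case (a), via its ERH-effective form in case (b). However, whenever $\ell \in \mc{E}_k$ has $\ell^\ast(\ell) = m$ and $p_1, p_2 \notin \mc{T}_\ell$, the relation $F(p_i) \equiv \tilde\rho_\ell(p_i \bmod \ell) \pmod m$ pins the Frobenius of $\ell$ into a sub-locus of codimension $\geq 2$ (the image of $\tilde\rho_\ell$ is a single cyclic one-parameter family, while the full Kummer Galois group has four free $m$-th-power-residue coordinates), bounding the density of such $\ell$ by $O(1/m^2)$. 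A dyadic summation over the possible values $m = \ell^\ast(\ell) \ll \ell^{rD}$ yields $\sum_{\ell \in \mc{E}_k} \ell^{-1} = O_\delta(1)$ uniformly in $k$, contradicting the assumed divergence.

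The main obstacle is that $\mc{T}_\ell$ depends on $\ell$ and on the scale $x_k$, while $p_1, p_2$ must be fixed before $k$ varies. I resolve this by a Fubini-type double count: the uniform sparsity bound $\sum_{p \in \mc{T}_\ell} p^{-1} = O_\delta(1)$ gives
$$
\sum_{p \leq x_k} \frac{1}{p}\,\bigl|\{\ell \in \mc{E}_k : p \in \mc{T}_\ell\}\bigr| = \sum_{\ell \in \mc{E}_k} \sum_{p \in \mc{T}_\ell} \frac{1}{p} \ll_\delta |\mc{E}_k|,
$$
so the logarithmic density of primes $p$ appearing in more than $|\mc{E}_k|/(\log\log x_k)^2$ of the $\mc{T}_\ell$ vanishes. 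Choosing $p_1, p_2$ outside this negligible set --- compatible with Proposition \ref{prop:ArchInhom}, which still leaves a positive-density pool from which to draw a multiplicatively-independent pair --- ensures $p_1, p_2 \notin \mc{T}_\ell$ for all but a sparse subset of $\mc{E}_k$. Taking $\ell$ large enough to be unramified in $L_{\ell^\ast(\ell)}$ (automatic for $\ell$ above a bound depending only on the fixed data $p_1, p_2, F(p_1), F(p_2)$) completes the Chebotarev setup and delivers the required contradiction.
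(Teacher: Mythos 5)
Your plan has the right shape in outline---contradiction via Kummer extensions and Chebotarev, careful selection of $p_1,p_2$ to avoid the exceptional sets $\mc{T}_\ell$---but the core mechanism by which you propose to constrain the Frobenius is not valid. The issue is a mismatch between the modulus of your constraint and what the Frobenius of $\ell$ in $L_m$ actually detects.

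You take $m = \ell^\ast(\ell)$ to be a prime divisor of $q_\ell \mid f(\ell)$, and you write the constraint from Proposition \ref{prop:almostHom} (projected to $m$) as $F(p_i) \equiv \tilde\rho_\ell(p_i \bmod \ell) \pmod m$. But the Frobenius of a prime $\ell$ in the Kummer extension $L_m = \mb{Q}(\zeta_m, p_1^{1/m}, p_2^{1/m}, F(p_1)^{1/m}, F(p_2)^{1/m})$ does \emph{not} encode anything about $F(p_i) \bmod m$: those are fixed residues, determined once $p_1,p_2,m$ are chosen, independent of $\ell$. What the Frobenius of $\ell$ does encode is $\ell \bmod m$ (through the cyclotomic layer) and, when $\ell \equiv 1 \pmod m$, the $m$-th power residue symbols $(p_i/\ell)_m$ and $(F(p_i)/\ell)_m$ --- i.e., residue data of the fixed integers $p_i, F(p_i)$ \emph{modulo the varying prime $\ell$}. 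Your constraint $F(p_i) \equiv \tilde\rho_\ell(p_i) \pmod m$ is a condition modulo $m$ on a fixed integer, not a condition modulo $\ell$, so it simply does not translate into a constraint on $\text{Frob}_\ell \in \text{Gal}(L_m/\mb{Q})$. Consequently the ``codimension $\geq 2$'' step, and the resulting $O(1/m^2)$ density bound, has no basis, and the ``dyadic sum over $m = \ell^\ast(\ell)$'' that would conclude the argument cannot get off the ground.

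What makes the argument work in the paper (following Jones) is quite specifically the other branch of Proposition \ref{prop:almostHom}: since $\ell \in S_f$ means $\ell \mid f(\ell)$, one can take $q_\ell$ to be the $\ell$-part of $f(\ell)$, in which case \eqref{eq:ellDivfell} gives a \emph{local power map modulo $\ell$}: $F(p) \equiv p^{g_\ell} \pmod{\ell}$ for $p \notin \mc{T}_\ell$. This is a congruence modulo $\ell$, and when one further imposes $\ell \equiv 1 \pmod m$, the relations $F(p_i) \equiv p_i^{g_\ell} \pmod{\ell}$ ($i=1,2$) force the residue symbols to satisfy $(F(p_i)/\ell)_m = (p_i/\ell)_m^{g_\ell}$, which genuinely pins $\text{Frob}_\ell$ to a small union of classes $\mc{C}_4$ with $|\mc{C}_4|/|\text{Gal}(L_m/\mb{Q})| \ll 1/m^2$. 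The auxiliary prime $m$ therefore must be selected as a divisor of $\ell - 1$ ranging over a window $[Y,Z]$, not as a divisor of $f(\ell)$; and your proposal also lacks the corresponding decomposition of the sum over $\ell$ into the contribution $\mc{E}(X)$ of primes with $\ell \not\equiv 1 \pmod m$ for all $m \in [Y,Z]$ (handled by sieve estimates) plus the contributions $\mc{H}_m(X)$ for $m$ in the window (handled by Chebotarev). Finally, note that the non-sparse set you wish to use in order to supply the elements $p_1,p_2$ (with a spare prime $p_3 \mid f(p_1)$, $p_3 \neq p_1$, to guarantee multiplicative independence) is the set of $\ell$ with $f(\ell)$ having a prime factor $\neq \ell$; the set one then bounds via Chebotarev is $S_f$, and the contradiction comes from the hypothesis $d_{\text{Dir}}(S_f) > 0$, not from bounding $\mc{E}$ directly. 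Your Fubini/double-counting idea for controlling how often $p_1,p_2$ land in $\mc{T}_\ell$ is in the right spirit (compare Lemma \ref{lem:controlExcep}), but it cannot rescue the argument without the correct Frobenius constraint.
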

That is, if $\ell | f(\ell)$ for a positive Dirichlet density set of primes $\ell$ then, outside of an $O_{\delta}(1)$-sparse set, 
$f(\ell) = \ell^{\alpha_{\ell}}$ for some $\alpha_{\ell} \in \mb{N}$. \\
As a complement to Proposition \ref{prop:Assumlfl}, we classify all multiplicative functions $f$ for which $f(\ell)$ is a power of $\ell$ for all but a sparse set of $\ell$.
\begin{prop}\label{prop:veryRigid}
Assume that $f$ satisfies the hypotheses of Proposition \ref{prop:Assumlfl}, and that the set
\begin{equation} \label{eq:primepow}
\{\ell \in \mc{P} : \ell' \nmid f(\ell) \text{ for all } \ell' \neq \ell \}
\end{equation}
is sparse. Then the following properties must hold:
\begin{enumerate}[(i)]
\item $|f(\ell)| = \ell$ for all but an $O_{\delta}(1)$-sparse set of primes $\ell$
\item if $f$ is completely multiplicative then there is a divisor $d|a$ such that $f(d)|b$, and $a/d$ divides $b/f(d)$. 
\end{enumerate}
\end{prop}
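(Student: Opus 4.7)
The plan is to use the Archimedean information from Proposition \ref{prop:ArchInhom} together with the local-power-map conclusion \eqref{eq:ellDivfell} of Proposition \ref{prop:almostHom} to identify a common integer exponent $\alpha_0$ with $|f(p)| = p^{\alpha_0}$ outside a sparse set of $p$, and then to force $\alpha_0 = 1$ by comparing $|f(n)|$ with $|f(n+a)|$ for $n \in \mc{N}_{f,a,b}$. By Proposition \ref{prop:Assumlfl} and hypothesis \eqref{eq:primepow}, outside an $O_\delta(1)$-sparse set of primes $\ell$ we may write $f(\ell) = \varepsilon_\ell \ell^{\alpha_\ell}$ with $\varepsilon_\ell \in \{\pm 1\}$ and $\alpha_\ell \geq 1$; the Archimedean bound $|f(\ell)|/\ell^r \in (1/2, 2)$ from Proposition \ref{prop:ArchInhom} constrains $\alpha_\ell \in \{1, \ldots, \lceil 2r\rceil\} = O_\delta(1)$. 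Fix a scale $x_k$ as in Proposition \ref{prop:almostHom}; for each $\ell \in \mc{G}_{\delta,k;2b} \cap [\ell_0, x_k^{1/A}] \cap S_f$ the modulus $q_\ell$ is a power of $\ell$, so \eqref{eq:ellDivfell} yields $0 \leq g_\ell < \ell-1$ and a sparse set $\mc{T}_\ell$ with $f(p)^D \equiv p^{g_\ell} \pmod{\ell}$ for $p \in [2,x_k] \setminus \mc{T}_\ell$. A standard Mertens-in-APs count furnishes a prime $p_\ell \in [2, x_k]$, outside $\mc{T}_\ell \cup \mc{S}_\delta$ and the other sparse exceptional sets, that is simultaneously a primitive root modulo $\ell$; substituting $f(p_\ell) = \varepsilon_{p_\ell} p_\ell^{\alpha_{p_\ell}}$ into \eqref{eq:ellDivfell} and using $2 \mid D$ yields $p_\ell^{D\alpha_{p_\ell}} \equiv p_\ell^{g_\ell} \pmod{\ell}$, hence (by the primitive root property) $g_\ell \equiv D\alpha_{p_\ell} \pmod{\ell-1}$, which is an equality in $\mb{Z}$ once $\ell > D\alpha_{p_\ell} + 1$. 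Pigeonholing the $O_\delta(1)$ possible integer values of $\alpha_{p_\ell}$ isolates a single $\alpha_0$ and a non-sparse sub-collection $\mc{L}_k$ of primes $\ell$ with $g_\ell = D\alpha_0$. Finally, for any fixed prime $p$ outside the sparse exceptional sets, a double-counting argument (exploiting the uniform bound $\sum_{p \in \mc{T}_\ell} 1/p = O_\delta(1)$) locates $\ell \in \mc{L}_k$ with $p \notin \mc{T}_\ell$ and $\ell > 2p^{D\alpha_0}$; the congruence $f(p)^D \equiv p^{D\alpha_0} \pmod{\ell}$ then upgrades to the identity $f(p)^D = p^{D\alpha_0}$, giving $|f(p)| = p^{\alpha_0}$ outside an $O_\delta(1)$-sparse set $\mc{P}^\ast$.

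To rule out $\alpha_0 \geq 2$: a standard two-dimensional shifted sieve (as afforded by Lemma \ref{lem:Pol}) shows that a positive proportion of $n \in \mc{N}_{f,a,b} \cap [1, x_k]$ have both $n$ and $n + a$ squarefree with all prime factors outside $\mc{P}^\ast$. For such $n$, multiplicativity yields $|f(n)| = \prod_{p|n} |f(p)| = n^{\alpha_0}$ and similarly $|f(n+a)| = (n+a)^{\alpha_0}$, so
\[
|b| = |f(n+a) - f(n)| \geq (n+a)^{\alpha_0} - n^{\alpha_0} \gg_a n^{\alpha_0 - 1};
\]
letting $n \to \infty$ forces $\alpha_0 = 1$, establishing (i).

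If $f$ is completely multiplicative, (i) gives $f(p) = \varepsilon_p p$ for all $p \notin \mc{P}^\ast$. The map $n \mapsto \gcd(n, a)$ takes at most $d(a)$ values on $\mc{N}_{f,a,b}$, so by pigeonhole there is a divisor $d \mid a$ and a logarithmically-positive-density sub-family $\mc{N}' \subseteq \mc{N}_{f,a,b}$ on which $\gcd(n, a) = d$. Writing $n = dn_1$, $n + a = d(n_1 + a/d)$ with $\gcd(n_1, a/d) = 1$, complete multiplicativity gives
\[
f(d)\bigl(f(n_1 + a/d) - f(n_1)\bigr) = f(n+a) - f(n) = b,
\]
so $f(d) \mid b$. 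Restricting $\mc{N}'$ further (again by a shifted sieve) to those $n$ for which $n_1$ and $n_1 + a/d$ are squarefree with all prime factors outside $\mc{P}^\ast$ (retaining a positive proportion), we have $f(n_1) = \varepsilon_1 n_1$ and $f(n_1 + a/d) = \varepsilon_2 (n_1 + a/d)$ with $\varepsilon_j \in \{\pm 1\}$, and $\varepsilon_2(n_1 + a/d) - \varepsilon_1 n_1 = b/f(d)$. When $\varepsilon_1 \neq \varepsilon_2$ this pins $n_1$ to a finite set, so $\varepsilon_1 = \varepsilon_2$ for all but finitely many $n \in \mc{N}'$, yielding $\pm(a/d) = b/f(d)$ and hence $(a/d) \mid (b/f(d))$, proving (ii). The main obstacle will be the scale-dependence of $\mc{T}_\ell = \mc{T}_\ell(k)$ in Proposition \ref{prop:almostHom}: the pigeonhole step isolating $\alpha_0$ is performed at each fixed scale $x_k$, and propagating the resulting consequence $|f(p)| = p^{\alpha_0}$ to individual fixed primes $p$ requires a careful uniform double-counting bound on the mass $\sum_\ell \sum_{p \in \mc{T}_\ell} 1/(p\ell)$, analogous to the overlap estimate invoked in the proof sketch for Proposition \ref{prop:Assumlfl}.
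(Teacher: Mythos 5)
Your Step (a) outline---using Proposition \ref{prop:ArchInhom} to bound $\alpha_\ell$, selecting primitive roots $p_\ell$ outside the exceptional sets $\mc{T}_\ell$, pigeonholing to pin down a uniform $\alpha_0$, and then propagating to fixed primes $p$ via a large auxiliary $\ell$ with $\ell > 2p^{D\alpha_0}$---is essentially the paper's argument, and you correctly flag the scale-dependence of $\mc{T}_\ell$ as the technical issue to be managed (the paper handles it by running the argument for each scale $P$ and pigeonholing over the resulting $\alpha(P)$).

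There is, however, a genuine gap in your Steps (b) and (c): the claim that ``a positive proportion of $n \in \mc{N}_{f,a,b}$ have both $n$ and $n+a$ squarefree with all prime factors outside $\mc{P}^\ast$.'' A two-dimensional sieve gives that the set
$\{n : \mu^2(n(n+a)) = 1,\ p|n(n+a) \Rightarrow p \notin \mc{P}^\ast\}$
has some fixed positive density, but that density is bounded away from $1$ (e.g.\ already $\prod_p(1-2/p^2) \approx 0.32$ when $\mc{P}^\ast=\emptyset$), so it cannot be guaranteed to meet $\mc{N}_{f,a,b}$, whose lower logarithmic density is only known to exceed some $\delta > 0$. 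Indeed, Proposition \ref{prop:converse} produces functions $f$ with $\overline{\delta}(\mc{N}_{f,a,b}) > 0$ for which \emph{every} $n \in \mc{N}_{f,a,b}$ has a prime factor $p_1 \in \mc{S} \subseteq \mc{P}^\ast$, so your restricted set is empty. The paper instead introduces the ``atypical part'' $A(n)$ (primes outside $\tilde{\mc{G}}$ and higher prime powers) and invokes Lemma \ref{lem:sieveSparse} to show $\{n: A(n) > N_0\}$ has density $<\epsilon$ for $N_0 = N_0(\epsilon)$, i.e.\ a \emph{density-$(1-\epsilon)$} condition, which does intersect $\mc{N}_{f,a,b}$ in positive proportion; one then pigeonholes over the finitely many values of $A(n), A(n+a)$ and the signs $h(n), h(n+a)$. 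This correction also affects your conclusion in (ii): you derive the stronger statement $\pm(a/d) = b/f(d)$, which is \emph{false} in general (Proposition \ref{prop:converse} allows $b$ to be any nonzero multiple of $a/d$), precisely because $G(n) = h(n)f(A(n))/A(n)^{\alpha_0}$ need not equal $\pm 1$ when $A(n) \neq 1$; the correct conclusion $(a/d) \mid (b/f(d))$ emerges instead from the identity $a'|A(n)b'$ combined with $(n,a')=1$.
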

\subsection{Proof of Proposition \ref{prop:Assumlfl}} \label{subsec:Prop63}
We prove Proposition \ref{prop:Assumlfl} by appealing to elements of the work \cite{Jones}. As a first step towards applying the ideas therein, we address the nature of the exceptional sets $\mc{T}_{\ell}$, for $\ell \in \mc{G}_{\delta,k;2b}$. Below, given a prime $p \leq x_k$ we set
$$
\mc{T}(p) := \{\ell \in \mc{G}_{\delta,k;2b} \cap [2,x_k] : p \in \mc{T}_{\ell}\}.
$$
We would like to show that for many primes $p$, $\mc{T}(p)$ is a \emph{small} subset of $\mc{G}_{\delta,k;2b} \cap [2,x_k]$, as then we can deduce information on the behaviour of $f(p)^D \pmod{\ell}$ for many primes $\ell \in \mc{G}_{\delta,k;2b}$. This is implied by the following simple lemma.
\begin{lem} \label{lem:controlExcep}
With the notation of Proposition \ref{prop:almostHom}, let $y_k := x_k^{1/A}$, let $\mc{U} \subseteq \mc{G}_{\delta,k;2b} \cap [\ell_0,y_k]$ be a non-empty set of primes and put $\mc{U}(x) := \mc{U} \cap [2,x]$. Let $2 \leq P \leq x_k$ and let $\tau \in (0,1)$.
Then the set of primes $p \leq P$ for which
\begin{equation}\label{eq:TpBig}
\sum_{\ell \in \mc{T}(p) \cap \mc{U}(y_k)} \frac{1}{\ell} > \tau \sum_{\ell \in \mc{U}(y_k)} \frac{1}{\ell}
\end{equation}
holds satisfies 
$$
\sum_{\ss{p \leq P \\ \eqref{eq:TpBig} \text{ holds}}} \frac{1}{p} \ll_{\delta} \tau^{-1}.
$$
\end{lem}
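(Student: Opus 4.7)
The strategy is a straightforward Fubini/Markov double-counting argument that exploits the uniform sparsity $\sum_{p \in \mc{T}_\ell,\, p \leq x_k} p^{-1} \ll_\delta 1$ furnished by Proposition \ref{prop:almostHom}.

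The plan is as follows. First I would swap the order of summation in the double sum
\[
\Sigma \;:=\; \sum_{\ell \in \mc{U}(y_k)} \frac{1}{\ell} \sum_{\substack{p \leq P \\ p \in \mc{T}_\ell}} \frac{1}{p} \;=\; \sum_{p \leq P} \frac{1}{p} \sum_{\ell \in \mc{T}(p) \cap \mc{U}(y_k)} \frac{1}{\ell},
\]
the equality being simply the definition of $\mc{T}(p)$ together with the condition $P \leq x_k$, which ensures that the inner prime ranges are compatible.

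Next I would bound $\Sigma$ from above using the input of Proposition \ref{prop:almostHom}. Since $\mc{U}(y_k) \subseteq \mc{G}_{\delta,k;2b} \cap [\ell_0, y_k]$, for each such $\ell$ the exceptional set $\mc{T}_\ell \subseteq \mc{P} \cap [2,x_k]$ satisfies $\sum_{p \in \mc{T}_\ell} p^{-1} \ll_\delta 1$, with implicit constant independent of $\ell$. Restricting to $p \leq P \leq x_k$ only shrinks this sum, so
\[
\Sigma \;\leq\; \sum_{\ell \in \mc{U}(y_k)} \frac{1}{\ell} \cdot O_\delta(1) \;\ll_\delta\; \sum_{\ell \in \mc{U}(y_k)} \frac{1}{\ell}.
\]

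Finally, I would extract the claimed bound by Markov's inequality applied to the right-hand representation of $\Sigma$. Letting $\mc{B} \subseteq \mc{P} \cap [2,P]$ denote the set of primes $p$ for which \eqref{eq:TpBig} holds, the inner sum exceeds $\tau \sum_{\ell \in \mc{U}(y_k)} \ell^{-1}$ for every $p \in \mc{B}$, so
\[
\tau \,\Bigl(\sum_{\ell \in \mc{U}(y_k)} \frac{1}{\ell}\Bigr) \sum_{p \in \mc{B}} \frac{1}{p} \;\leq\; \sum_{p \in \mc{B}} \frac{1}{p} \sum_{\ell \in \mc{T}(p) \cap \mc{U}(y_k)} \frac{1}{\ell} \;\leq\; \Sigma \;\ll_\delta\; \sum_{\ell \in \mc{U}(y_k)} \frac{1}{\ell}.
\]
Since $\mc{U}$ is non-empty and $\ell_0 \leq \ell$ for $\ell \in \mc{U}(y_k)$, the common factor $\sum_{\ell \in \mc{U}(y_k)} \ell^{-1}$ is positive and may be divided out, yielding $\sum_{p \in \mc{B}} p^{-1} \ll_\delta \tau^{-1}$, as required.

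There is no real obstacle here: the only thing to check with care is that the sparsity bound from Proposition \ref{prop:almostHom} is uniform in $\ell$ over $\mc{G}_{\delta,k;2b} \cap [\ell_0, y_k]$ (which it is, with constant depending only on $\delta$), and that $P \leq x_k$ so that the truncation $p \leq P$ is compatible with the ambient range $[2, x_k]$ in which the sparsity of $\mc{T}_\ell$ is stated. Both are built into the hypotheses.
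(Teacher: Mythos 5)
Your proof is correct and follows exactly the same Fubini-swap-plus-Markov argument as the paper, including the key observation that the uniform sparsity bound on $\mc{T}_\ell$ from Proposition~\ref{prop:almostHom} applies uniformly over $\ell \in \mc{G}_{\delta,k;2b} \cap [\ell_0,y_k]$. Nothing is missing.
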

\begin{proof}
By Proposition \ref{prop:almostHom} we have
$$
\max_{\ss{\ell \in \mc{G}_{\delta,k;2b} \\ \ell_0 \leq \ell \leq y_k}} \sum_{\ss{p \leq x_k \\ p \in \mc{T}_{\ell}}} \frac{1}{p} \ll_{\delta} 1.
$$
Thus, by Markov's inequality 
$$
\sum_{\ss{p \leq P \\ \eqref{eq:TpBig} \text{ holds}}} \frac{1}{p} \leq \tau^{-1} \left(\sum_{\ell \in \mc{U}(y_k)} \frac{1}{\ell}\right)^{-1}\sum_{p \leq P} \frac{1}{p} \sum_{\ss{\ell \in \mc{U}(y_k) \\ \ell \in \mc{T}(p)}} \frac{1}{\ell} = \tau^{-1} \left(\sum_{\ell \in \mc{U}(y_k)} \frac{1}{\ell}\right)^{-1} \sum_{\ss{\ell \in \mc{U}(y_k)}} \frac{1}{\ell} \sum_{\ss{p \leq P \\ p \in \mc{T}_\ell}} \frac{1}{p} \ll_{\delta} \tau^{-1},
$$
as claimed.
\end{proof}
We will also need the following well-known result.
\begin{lem}[Brun-Titchmarsh Inequality] \label{lem:BT}
Let $x \geq 100$ and let $1 \leq q < x/50$. Then
$$
\max_{\ss{a \pmod{q} \\ (a,q) = 1}} \pi(x;q,a) \ll \frac{x}{\phi(q)\log(x/q)}.
$$
\end{lem}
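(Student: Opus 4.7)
The strategy is to derive the bound from the Selberg $\Lambda^2$ upper-bound sieve, which is the standard route to the Brun--Titchmarsh inequality. First I would introduce a parameter $z$ (to be chosen as $\sqrt{x/q}$) and separate $\pi(x;q,a)$ into the primes $p \leq z$, which contribute at most $\pi(z) \ll z/\log z$, and the primes $p \in (z,x]$ lying in the progression $a \pmod{q}$. Writing such a prime as $p = a + qm$ with $0 \leq m \leq x/q$, the property that $p$ has no prime factor $\ell \leq z$ with $\ell \nmid q$ becomes the condition that $m$ avoids a single residue class (namely $-a q^{-1} \pmod{\ell}$) modulo each such $\ell$.

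Applying Selberg's sieve to $\mathcal{B} = \{0 \leq m \leq x/q\}$ with this one-dimensional sifting problem yields an upper bound of the shape
$$
\#\{m \in \mathcal{B} : (a + qm, P(z)) = 1\} \leq \frac{x/q}{G_q(z)} + O(z^2),
$$
where $P(z) = \prod_{\ell \leq z,\ \ell \nmid q} \ell$ and
$$
G_q(z) = \sum_{\substack{d \leq z,\ \mu^2(d) = 1 \\ (d,q) = 1}} \prod_{\ell \mid d} \frac{1}{\ell - 1}.
$$
A standard Mertens-type calculation, isolating the Euler factors at primes dividing $q$, gives $G_q(z) \geq (\phi(q)/q)\log z + O(\phi(q)/q)$.

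Taking $z = \sqrt{x/q}$, which exceeds $\sqrt{50}$ under the hypothesis $q < x/50$, the sieve error $O(z^2)$ is comparable to $x/q$ while $\log z \asymp \log(x/q)$, so
$$
\pi(x;q,a) \ll \frac{x/q}{(\phi(q)/q)\log(x/q)} + \frac{z}{\log z} \ll \frac{x}{\phi(q)\log(x/q)},
$$
as required. The only real technical step is the lower bound for $G_q(z)$; this is routine Mertens-type estimation and can alternatively be replaced by an appeal to Montgomery's formulation of the large sieve, which even yields the sharp leading constant $2 + o(1)$. Since the Brun--Titchmarsh inequality is entirely classical, in practice one simply invokes it without reproducing the proof.
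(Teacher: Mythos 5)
The paper's proof is a one-line citation to \cite[Thm. 6.6]{IK}; you instead sketch a Selberg-sieve derivation from scratch, which is a legitimate alternative route, and you correctly observe at the end that in practice one simply invokes the classical result, which is exactly what the paper does.

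However, your sketch contains a genuine gap in the handling of the sieve error. With the choice $z = \sqrt{x/q}$, the Selberg remainder $O(z^2)$ is of size $\asymp x/q$, as you note. But the main term is $\asymp \dfrac{x}{\phi(q)\log z}$, and when $\phi(q) \asymp q$ and $\log(x/q)$ is large this main term is of size $\asymp \dfrac{x}{q\log(x/q)}$, which is \emph{smaller} than the error $x/q$ by a factor of $\log(x/q)$. In your final display the $O(x/q)$ error simply disappears, so as written the argument only yields $\pi(x;q,a) \ll x/q$, which is weaker than the claimed Brun--Titchmarsh bound in the regime $q = o(x)$. The standard fix is to take $z$ slightly smaller, e.g.\ $z = \sqrt{(x/q)/\log(x/q)}$, so that $z^2 \ll (x/q)/\log(x/q)$ is dominated by the main term while $\log z$ remains $\asymp \log(x/q)$; alternatively one can follow Montgomery--Vaughan via the large sieve, which dispenses with the remainder term entirely. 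Since the paper just quotes the reference, none of this affects the paper, but the choice $z = \sqrt{x/q}$ as stated does not close the argument.
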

\begin{proof}
This follows from \cite[Thm. 6.6]{IK}.
\end{proof}

We will also use a ``Brun-Titchmarsh'' variant of the Chebotarev density theorem, due to Lagarias, Montgomery and Odlyzko \cite{LMO}. To state this we recall some algebraic preliminaries. \\
Given a Galois number field $L/\mb{Q}$ and a rational prime $p$ that does not ramify in $L$, let $\mf{p} \subseteq L$ be an ideal lying above $p$. Let $\mc{O}_L$ denote the ring of integers of $L$, and set $F_{\mf{p}} := \mc{O}_L/\mf{p}\mc{O}_L$, $F_p := \mb{Z}/p\mb{Z}$ as the respective residue fields of $\mf{p}$ in $L$ and $p$ in $\mb{Q}$. Writing $D_{\mf{p}}$ to denote the decomposition group of $\mf{p}$, we recall that the \emph{Frobenius element} $\text{Frob}_{p}$ associated to $p$ is the conjugacy class in the Galois group $G := \text{Gal}(L/\mb{Q})$ of the Frobenius automorphisms of $\mf{p}$ in $\text{Gal}(F_{\mf{p}}/F_p)$, viewed as an element of $D_{\mf{p}}$ (this class being independent of the choice of $\mf{p}$ above $p$). That is,
$$
\text{Frob}_p(\beta) \equiv \beta^p \pmod{\mf{p}} \text{ for all } \beta\in\mc{O}_L.
$$
Given a union of conjugacy classes $\mc{C}$ of elements of $G$, we let
$$
\pi_{\mc{C}}(x, L/\mb{Q}) := |\{p \leq x : \, p \text{ unramified in } L, \, \text{Frob}_p \in \mc{C}\}|.
$$
\begin{thm}[Lagarias-Odlyzko \cite{LO}; Lagarias-Montgomery-Odlyzko \cite{LMO}] \label{thm:LMO}
Let $L/\mb{Q}$ be a Galois number field with Galois group $G$ and absolute discriminant $D_L$.
Let $\mc{C} \subseteq G$ be a union of conjugacy classes of $G$. Then there are absolute constants $C_1,C_2 > 0$ such that if
\begin{equation} \label{eq:LMOrange}
\log x \geq C_1 (\log D_L)(\log\log D_L)(\log\log \log (C_2 D_L))
\end{equation}
then we have
$$
\pi_{\mc{C}}(x,L/\mb{Q}) \ll \frac{|\mc{C}|}{|G|} \pi(x),
$$
the implicit constant being absolute. \\
Under the assumption of ERH, we may replace \eqref{eq:LMOrange} by the range
$$
x \gg (\log D_L)^2 (\log\log D_L)^4.
$$
\end{thm}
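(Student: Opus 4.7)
The plan is to follow the standard analytic template for effective Chebotarev-type upper bounds, via Artin/Hecke $L$-functions together with zero density estimates. The starting point is the orthogonality expansion
$$
1_{\operatorname{Frob}_p \in \mc{C}} = \frac{|\mc{C}|}{|G|} \sum_{\chi \in \widehat{G}} \overline{\chi(\mc{C})}\, \chi(\operatorname{Frob}_p),
$$
which, weighted by $\log p$, rewrites the prime-power analogue
$\psi_{\mc{C}}(x,L/\mb{Q}) := \sum_{N\mf{p}^{k} \le x,\, \operatorname{Frob}_{\mf{p}}^{k} \in \mc{C}} \log N\mf{p}$
as a weighted combination of sums $\sum_{N\mf{p}^{k} \le x} \chi(\operatorname{Frob}_{\mf{p}}^{k}) \log N\mf{p}$ attached to irreducible characters $\chi$ of $G$. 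Via Brauer induction I would rewrite each such sum in terms of Hecke $L$-functions of abelian subextensions, thereby reducing the problem to objects that are unconditionally known to be entire and to satisfy a functional equation with explicit conductor bounded in terms of $D_L$. Contributions from higher prime powers and from the $O(\log D_L)$ ramified primes are handled at once by the trivial estimates $\sum_{k \ge 2,\, p^{k} \le x} \log p \ll \sqrt{x}$ and Abel summation, so the whole weight of the argument is concentrated on the Hecke $L$-function averages.

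Assuming ERH, I would then apply the standard truncated explicit formula for each Hecke $L$-function $L(s,\chi)$, grouping zeros $\rho = 1/2 + i\gamma$ with $|\gamma| \le T$ to produce a main term of size $(|\mc{C}|/|G|)\, x$ and an error of the shape
$$
O\!\left(\sqrt{x}\,(\log x)\bigl(\log D_L + [L:\mb{Q}]\log T\bigr)\right),
$$
using the classical zero-counting bound $N(T,\chi) \ll \log D_L + [L:\mb{Q}]\log T$. Optimising $T$ and passing from $\psi_{\mc{C}}$ to $\pi_{\mc{C}}$ by Abel summation delivers the ERH conclusion, with the range $x \gg (\log D_L)^{2}(\log\log D_L)^{4}$ arising precisely from demanding that the error term be dominated by the main term $(|\mc{C}|/|G|)\,\pi(x)$; the extra $(\log\log D_L)^{4}$ factor is the cost of bookkeeping the zero count through partial summation on $\log x$.

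For the unconditional statement one loses the clean explicit formula and the main obstacle appears: the possible existence of a real (``Siegel'') zero of a Dedekind zeta function of some intermediate subfield of $L$. My plan here would be to follow the Lagarias--Montgomery--Odlyzko approach, combining two ingredients: (a) \emph{log-free zero density estimates} for Hecke $L$-functions, controlling the number of zeros of $L(s,\chi)$ with $\operatorname{Re}(s) \ge \sigma$ polynomially in $(1-\sigma)$ and $\log D_L$; and (b) the \emph{Deuring--Heilbronn repulsion phenomenon}, stating that if an exceptional real zero $\beta$ exists close to $1$, then the remaining non-trivial zeros are pushed deep into the critical strip by a quantitative factor depending on $1-\beta$. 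One then splits into cases according to whether an exceptional zero exists. In the ``good'' case the zero-density estimate alone produces the upper bound; in the ``bad'' case, Deuring--Heilbronn converts the exceptional zero into a strong zero-free region, which more than compensates in the upper bound for $\pi_{\mc{C}}$. The range \eqref{eq:LMOrange} is dictated by this trade-off, with the triple logarithm factor emerging from the recursion involved in quantifying Deuring--Heilbronn.

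The technical heart of the argument, and the place I expect to spend the most effort, is controlling this dichotomy uniformly with absolute implicit constants, and ensuring that the reductions via Brauer induction do not inflate conductors in a way that destroys the bound. Once this is set up correctly, the unconditional conclusion follows from combining the two cases above with the straightforward partial summation used in the ERH case.
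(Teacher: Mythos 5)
This statement is quoted in the paper as a black box, attributed to Lagarias--Odlyzko and Lagarias--Montgomery--Odlyzko; the paper supplies no proof of its own, so there is no internal argument to compare against. Your sketch is a reasonable account of how those cited proofs go: explicit formula for Hecke $L$-functions and zero counting for the ERH bound, and a combination of log-free zero density estimates with the Deuring--Heilbronn repulsion phenomenon in the unconditional case. The ranges you extract (the $(\log D_L)^2(\log\log D_L)^4$ barrier under ERH, the triple-logarithmic loss unconditionally) are the right ones and arise for the reasons you describe.

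One point worth flagging, since it is a place where a blind implementation can actually fail: Lagarias--Odlyzko do not reduce to Hecke $L$-functions via Brauer induction. Brauer's theorem expresses an irreducible character of $G$ only as a $\mathbb{Z}$-linear combination, with possibly negative coefficients, of characters induced from one-dimensional characters of subgroups. Those negative coefficients are poisonous when you want a one-sided (upper) bound on $\pi_{\mathcal{C}}$: the induced $L$-functions no longer combine with uniformly signed weights, so you cannot simply bound each piece and add. The device actually used is Deuring's reduction: pick a representative $\sigma \in \mathcal{C}$, pass to the fixed field $E = L^{\langle\sigma\rangle}$, and relate $\pi_{\mathcal{C}}(x, L/\mathbb{Q})$ to prime counting in the cyclic extension $L/E$. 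In the cyclic case all Artin $L$-functions are genuine Hecke $L$-functions of $E$, orthogonality produces only nonnegative weights, and the conductor--discriminant formula keeps the conductors controlled by $D_L$. Replacing your Brauer-induction step with this cyclic reduction removes the obstruction; the rest of your outline then matches the cited proofs.
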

\noindent In the sequel, by a \emph{Kummer extension} of $\mb{Q}$ we mean a number field $L/\mb{Q}$ of the form
$$
L = \mb{Q}(\zeta_m,\alpha_1^{1/m},\ldots,\alpha_d^{1/m}),
$$
where $m > 1$ is an integer, $\zeta_m$ is a primitive $m$th root of unity, and $\alpha_1,\ldots,\alpha_d \in \mb{Q}^+$ with $d \geq 1$. Note that $L$ is the splitting field of the polynomial $\prod_{1 \leq i \leq d} (X^m-\alpha_i)$, thus necessarily Galois. 
\begin{proof}[Proof of Proposition \ref{prop:Assumlfl}]
We will give full details of the proof of the unconditional part of Proposition \ref{prop:Assumlfl}. As the details are largely the same we simply give a sketch of the necessary modifications required for the ERH-conditional part in Remark \ref{rem:GRHImprovement}, below. \\
We follow the strategy of the proof of \cite[Thm. 1.7]{Jones} with $K = \mb{Q}$ and $A = \mb{N}$ in the notation there, but must take care to deal with those primes $p$ for which the local power condition $f(p)^D \equiv p^{g_{\ell}}\pmod{\ell}$ fails.  \\
Let $\mc{L}_f$ be the set of primes in \eqref{eq:nonPrimPow}. We assume for the sake of contradiction that $\mc{L}_f$ is not sparse, i.e., that
$$
L_f(y) := \sum_{\ss{\ell \leq y \\ \ell \in \mc{L}_f}} \frac{1}{\ell} \ra \infty, \quad y \ra \infty.
$$
Let $X \geq X_0(\delta)$ and suppose $k \geq k_0(\delta)$ is chosen large enough so that $y_k = x_k^{1/A} \geq X$, where $A$ is as given in Proposition \ref{prop:almostHom}. By assumption, there is $s_f > 0$ such that
$$
\sum_{\ss{\ell \leq X \\ \ell \in S_f}} \frac{1}{\ell} \sim s_f \log\log X, \quad X \ra \infty.
$$ 
To obtain a contradiction we will show on the other hand that as $X \ra \infty$,
$$
\sum_{\ss{\ell \leq X \\ \ell \in S_f}} \frac{1}{\ell} = o(\log\log X). 
$$ 
Applying Lemma \ref{lem:Gdeltakm}, note that
$$
\sum_{\ss{\ell \leq X \\ \ell \in S_f}} \frac{1}{\ell} \leq \sum_{\ss{\ell \leq X \\ \ell \in S_f \cap \mc{G}_{\delta,k; 2b}}} \frac{1}{\ell} +  \sum_{\ss{\ell \leq x_k \\ \ell \notin \mc{G}_{\delta,k;2b}}} \frac{1}{\ell} = \sum_{\ss{\ell \leq X \\ \ell \in S_f \cap \mc{G}_{\delta,k; 2b}}} \frac{1}{\ell} + O_{\delta}(1),
$$
so we focus on bounding the contribution from $S_f \cap \mc{G}_{\delta,k;2b}$. \\
Let $2 \leq Y \leq Z \leq (\log X)^{1/6}$, with $Y,Z$ being chosen later as slowly-growing functions of $X$ that satisfy 
$$
\frac{\log Z}{\log Y}, \, \frac{\log\log X}{\log Z} \ra \infty, \text{ as } X \ra \infty.
$$
We also set $P := \tfrac{1}{10}Y^{1/(r+1)}$, with $r$ given by Proposition \ref{prop:ArchInhom}.  In the sequel, for $m \geq 1$ we define the sets
\begin{equation*} 
\mf{P}_{m} := \{\ell \in \mc{S}_f \cap \mc{G}_{\delta,k;2b} : \ell \equiv 1 \pmod{m}\}, \quad \mf{P}_{m}(y) := \mf{P}_{m} \cap [2,y].
\end{equation*}
We have the trivial upper bound
\begin{align}\label{eq:initDecomp}
\sum_{\ss{\ell \leq X \\ \ell \in S_f \cap \mc{G}_{\delta,k;2b}}} \frac{1}{\ell} &\leq \sum_{\ss{\ell \leq X \\ \ell \not \equiv 1 \pmod{m} \\ \forall \ m \in [Y,Z]}} \frac{1}{\ell} + \sum_{\ss{Y \leq m \leq Z \\ m \text{ prime}}} \sum_{\ss{\ell \in \mf{P}_m(X)}} \frac{1}{\ell} 
\nonumber\\
&=: \mc{E}(X) + \sum_{\ss{Y \leq m \leq Z \\ m \text{ prime}}} \mc{H}_m(X).
\end{align}
\underline{Step 1: Estimation of $\mc{E}(X)$} \\
Observe that if $\ell \leq Y$ the condition $m \nmid (\ell-1)$ is trivial for all $Y \leq m \leq Z$. On the other hand, if $v > Y$ then
by Lemma \ref{lem:Pol},
\begin{align*}
\sum_{\ss{\ell \leq v \\ \ell \not\equiv 1 \pmod{m} \\ \forall \, m \in [Y,Z]}} 1 &\leq \sum_{\ss{n \leq v \\ n \not \equiv 1 \pmod{m} \\
\forall \, m \notin [Y,\min\{v,Z\}]}} 1_{p|n \Rightarrow p > v^{1/10}} \\
&\ll \frac{v}{\log v} \exp\left(-\sum_{Y < p \leq \min\{v,Z\}} \frac{1}{p} + \sum_{v^{1/10} < p \leq v} \frac{1}{p}\right) \\
&\ll \frac{\log Y}{\log \min\{v,Z\}} \frac{v}{\log v}.
\end{align*}
Thus, by partial summation we obtain
\begin{align}
\mc{E}(X) &\leq \sum_{\ell \leq Y} \frac{1}{\ell} + \int_Y^{X} \left(\sum_{\ss{\ell \leq v \\ \ell \not \equiv 1 \pmod{m} \\ \forall \, m \in [Y,Z]}} 1\right) \frac{dv}{v^2} + O\left(\frac{1}{\log Y}\right) \nonumber \\
&\ll \log\log Y +  (\log Y)\int_Y^Z \frac{dv}{v(\log v)^2} + \frac{\log Y}{\log Z} \int_Z^{X} \frac{dv}{v\log v} \nonumber\\
&\ll \log\log Y + \frac{\log Y}{\log Z} \log\log X. \label{eq:EBound}
\end{align}
In the remainder of the argument we will show that for each prime $m \in [Y,Z]$,
$$
\mc{H}_m(X) \ll \frac{1}{m^2} \log\log X +\frac{1}{m}\log Z.
$$
\underline{Step 2: Constructing a suitable Kummer extension} \\
In \cite[Sec. 5]{Jones}, in order to bound quantities analogous to $\mc{H}_m(X)$ Jones makes heavy use of Kummer theory, which is well-exposed in his article. We will merely quote as a black box the relevant facts about Kummer extensions, as needed.\\
For the time being, fix a prime $m \in [Y,Z]$. We claim that we can find primes $p_1,p_2 \leq P$ with the properties
\begin{enumerate}[(i)]
\item $p_1,p_2 \notin \mc{S}_{\delta}$,
\item $p_1 \nmid f(p_2)$ and $p_2 \nmid f(p_1)$, and
\item for both $p = p_1$ and $p = p_2$ we have
\begin{equation}\label{eq:iiiCond}
\sum_{\ell \in \mc{T}(p) \cap \mf{P}_m(X)} \frac{1}{\ell} \leq \frac{1}{100} \sum_{\ell \in \mf{P}_m(X)} \frac{1}{\ell}.
\end{equation}
\end{enumerate}
Recall that by assumption, $L_f(P) \ra \infty$ with $P$, so we assume that $P$ (and thus $X$) are as large as desired in an absolute sense. Applying Lemma \ref{lem:controlExcep} with $\mc{U} = \mf{P}_m \cap [2,X]$, we see that the set of primes $p \leq P$ with 
\begin{equation}\label{eq:largeTp}
\sum_{\ell \in \mc{T}(p) \cap \mf{P}_m(X)} \frac{1}{\ell} > \frac{1}{100} \sum_{\ell \in \mf{P}_m(X)} \frac{1}{\ell}
\end{equation} 
has a sum of reciprocals satisfying
\begin{equation}\label{eq:sparseLTp}
\sum_{\ss{p \leq P \\ \eqref{eq:largeTp} \text{ holds}}} \frac{1}{p} \ll_{\delta} 1.
\end{equation}
Let $P_1 \in [2,P]$ be chosen minimally so that $L_f(P_1) \geq L_f(P)^{1/2}$. By \eqref{eq:sparseLTp} and Proposition \ref{prop:ArchInhom},
$$
\sum_{\ss{p \leq P_1 \\ p \in \mc{L}_f \bk S_{\delta} \\ \eqref{eq:largeTp} \text{ fails}}} \frac{1}{p} \geq L_f(P_1) - \sum_{\ss{p \leq x_k \\ p \in S_{\delta}}} \frac{1}{p} + O_{\delta}\left(1\right) \geq \frac{1}{2}L_f(P)^{1/2}. 
$$
Thus, we may select $p_1 \in (\mc{L}_f \bk S_{\delta}) \cap [2,P_1]$ such that \eqref{eq:iiiCond} holds for $p = p_1$.
Furthermore, taking $r' := 10(r+1)$ and combining \eqref{eq:Bblarge} and \eqref{eq:sparseLTp}, we have
\begin{align*}
\sum_{\ss{P_1^{r'} \leq p \leq P \\ p \in \mc{L}_f \bk S_{\delta}
 \\ \eqref{eq:largeTp} \text{ fails} \\ p_1 \nmid f(p)}} \frac{1}{p} &\geq \sum_{\ss{P_1 \leq p \leq P \\ p \in \mc{L}_f}} \frac{1}{p} - \sum_{\ss{p \leq P \\ p_1 | f(p)}} \frac{1}{p} - \sum_{\ss{P_1 \leq p \leq P_1^{r'}}} \frac{1}{p} - \sum_{\ss{p \leq x_k \\ p \in S_{\delta}}} \frac{1}{p} - \sum_{\ss{p \leq P \\ \eqref{eq:largeTp} \text{ holds}}} \frac{1}{p}  \\ 
 &= L_f(P) - L_f(P_1) - B_{p_1}(P) - O_{\delta}\left(1\right) \geq L_f(P) - 2 L_f(P)^{1/2} \\
 &\geq \frac{1}{2}L_f(P),
 \end{align*}
 for $P$ sufficiently large. Thus, we may select $p_2 \in (\mc{L}_f \bk \mc{S}_{\delta}) \cap [P_1^{r'},P]$ with $p_1 \nmid f(p_2)$ and such that \eqref{eq:largeTp} fails and thus \eqref{eq:iiiCond} holds for $p= p_2$.
 Moreover, since $p_1 \notin \mc{S}_{\delta}$ we have $|f(p_1)| \leq 2p_1^r < 2 P_1^{r'/2} \leq p_2$, so $p_2 \nmid f(p_1)$. The primes $p_1,p_2$ thus satisfy the required conditions (i), (ii) and (iii) above. \\
Now, let $\zeta_m$ be a primitive root of unity of order $m$, let $F(n) := f(n)^D$ and denote by $L_m$ the Kummer extension
$$
L_m := \mb{Q}(\zeta_m, p_1^{1/m}, p_2^{1/m}, F(p_1)^{1/m}, F(p_2)^{1/m}).
$$ 
Let also $p_3 \notin \{p_1,p_2\}$ be a prime such that $p_3|f(p_1)$. Following the proof of Case 1 of Lemma 5.8 of \cite{Jones} (which is the only case of interest to us there given our assumption on $\mc{L}_f$), we find upon setting $c_f := \nu_{p_3}(F(p_1))$ that
$$
[L_m:\mb{Q}(\zeta_m)] \geq m^3 \text{ whenever $m > c_f$}.
$$
The condition $m > c_f$ is automatically satisfied when $m \geq Y$ and $X$ is sufficiently large, since by virtue of $p_1 \leq P \leq Y^{1/(r+1)}$ we find
$$
c_f \leq 2\log |f(p_1)| < (r+1) \log P \leq \log Y.
$$
Since also $[\mb{Q}(\zeta_m):\mb{Q}] = m-1$ for prime $m$,
 the tower property for degrees of number fields yields
\begin{equation} \label{eq:degLm}
[L_m:\mb{Q}] \geq m^3(m-1) \text{ for all } Y \leq m \leq Z.
\end{equation}
For later reference, we note that the set of $p$ that ramify in $L_m$ are precisely those dividing the absolute discriminant $D_{L_m}$, and by \cite[Lem. 5.2]{Jones},
$$
D_{L_m} \text{ divides } (p_1p_2f(p_1p_2))^{[L_m:\mb{Q}]} m^{5[L_m:\mb{Q}]}.
$$
In particular, $\ell \equiv 1 \pmod{m}$ is unramified whenever $\ell \nmid p_1p_2f(p_1p_2)$.
Furthermore, as $P^{r+1} < Y \leq m$, and since $|f(p_i)| \leq P^{r+1}$ for $i = 1,2$, the trivial bound $[L_m:\mb{Q}] \leq m^4(m-1)$ leads to
\begin{equation}\label{eq:keytoCheb}
\log D_{L_m} \leq [L_m:\mb{Q}] \log(P^{2(r+2)} m^5) \ll m^{5} \log m,
\end{equation}
a bound that we will invoke shortly. \\
\underline{Step 3: Estimation of $\mc{H}_m(X)$ using Theorem \ref{thm:LMO}}  \\
Having defined $L_m$, we now split
\begin{align*}
\mc{H}_m(X) = \sum_{\ell \in \mf{P}_m(X)} \frac{1}{\ell} \leq \sum_{\ss{\ell \in \mf{P}_m(X) \\ \ell \nmid p_1p_2f(p_1p_2) \\ f(p_i)^D \equiv p_i^{g_\ell} \pmod{\ell} \\ i = 1,2}} \frac{1}{\ell} + \sum_{\ss{\ell | p_1p_2f(p_1p_2) \\ \ell \equiv 1 \pmod{m}}} \frac{1}{\ell} + \sum_{\ss{\ell \in \mf{P}_m(X) \\ p_1 \text{ or } p_2 \in \mc{T}_{\ell}}} \frac{1}{\ell}.
\end{align*}
Since $p_1,p_2 \leq P$ and $p_1,p_2 \notin \mc{S}_{\delta}$, we have 
$$
\max\{p_1,p_2,|f(p_1)|,|f(p_2)|\} \leq P^{r+1} < Y \leq m.
$$
Thus, $\ell \nmid p_1p_2f(p_1p_2)$ for any prime $\ell \equiv 1 \pmod{m}$, as each of these must obviously exceed $m$. In particular,
$$
\sum_{\ss{\ell| p_1p_2f(p_1p_2) \\ \ell \equiv 1 \pmod{m}}} \frac{1}{\ell} = 0.
$$
Furthermore, by the union bound, the definition of $\mc{T}(p_i)$ and property (iii) above, we have
$$
\sum_{\ss{\ell \in \mf{P}_m(X) \\ p_1 \text{ or } p_2 \in \mc{T}_{\ell}}} \frac{1}{\ell} \leq \sum_{\ell \in \mc{T}(p_1) \cap \mf{P}_m(X)} \frac{1}{\ell} + \sum_{\ell \in \mc{T}(p_2) \cap \mf{P}_m(X)} \frac{1}{\ell} \leq \frac{1}{50} \sum_{\ell \in \mf{P}_m(X)} \frac{1}{\ell} = \frac{1}{50} \mc{H}_m(X).
$$
Thus, upon rearranging we deduce that
\begin{equation}\label{eq:HmUppBd}
\mc{H}_m(X) \ll \sum_{\ss{\ell \in \mf{P}_m(X) \\ \ell \nmid p_1p_2f(p_1p_2) \\ f(p_i)^D \equiv p_i^{d_\ell} \pmod{\ell} \\ i = 1,2}} \frac{1}{\ell}.
\end{equation}
Note that the primes in the sum on the right-hand side of \eqref{eq:HmUppBd} are all unramified in $L_m$, and by \cite[Cor. 5.11]{Jones} there is a conjugacy class $\mc{C}_4$ of prime ideals of $L_m$ such that
$$
\text{if } \ell \in \mf{P}_m(X) \text{ and } \ell \nmid p_1p_2f(p_1p_2) \text{ then } \text{Frob}_\ell \in \mc{C}_4.
$$
Let now $2 \leq \tilde{X} \leq X$ be defined implicitly by 
$$
\log \tilde{X} = C Z^5 (\log Z)^3,
$$ 
where $C > 0$ is a large absolute constant to be specified shortly. Splitting the sum in \eqref{eq:HmUppBd} at $X$ and applying partial summation, we deduce that
\begin{equation}\label{eq:HmBdPS}
\mc{H}_m(X) \ll \sum_{\ss{\ell \leq X \\ \ell \equiv 1 \pmod{m} \\ \text{Frob}_{\ell} \in \mc{C}_4}} \frac{1}{\ell} \ll \sum_{\ss{m < \ell \leq \tilde{X} \\ \ell \equiv 1 \pmod{m}}} \frac{1}{\ell} + \int_{\tilde{X}}^{X} \frac{\pi_{\mc{C}_4}(v,L_m/\mb{Q})}{v^2} dv + \frac{1}{m\log \tilde{X}}.
\end{equation}
By Lemma \ref{lem:BT} and partial summation, 
\begin{align*}
\sum_{\ss{m < \ell \leq \tilde{X} \\ \ell \equiv 1 \pmod{m}}} \frac{1}{\ell} &\ll \sum_{\ss{m < n \leq m^{10} \\ n \equiv 1 \pmod{m}}} \frac{1}{n} + \int_{m^{10}}^{\tilde{X}} \frac{\pi(v;m,1)}{v^2} dv + \frac{1}{m \log m} \\
&\ll \frac{\log m}{m} + \frac{1}{m}\log\log \tilde{X} \ll \frac{1}{m} \log Z.
\end{align*}
Next, as $p_1 \neq p_2$ we have $[\mb{Q}(\zeta_m,p_1^{1/m},p_2^{1/m}):\mb{Q}] = m^2(m-1)$, so we may combine \eqref{eq:degLm} with \cite[Lem. 5.12]{Jones} to obtain
$$
\frac{|\mc{C}_4|}{\text{Gal}(L_m/\mb{Q})} \leq \frac{2}{m(m-1)}.
$$
Moreover, for each $Y \leq m \leq Z$ and each $\tilde{X} \leq v \leq X$ (with $C$ chosen sufficiently large in the definition of $\tilde{X}$) Theorem \ref{thm:LMO} gives
$$
\pi_{\mc{C}_4}(v,L_m/\mb{Q}) \ll \frac{|\mc{C}_4|}{|\text{Gal}(L_m/\mb{Q})|} \pi(v) \ll \frac{v}{m^2 \log v}.
$$
Inserting this bound into \eqref{eq:HmBdPS} for $v \geq \tilde{X}$, and bounding trivially elsewhere, we obtain
$$
\mc{H}_m(X) \ll \frac{1}{m^2} \int_{\tilde{X}}^X \frac{dv}{v\log v} + \frac{1}{m} (\log Z + 1) \ll \frac{1}{m^2} \log\log X + \frac{1}{m}\log Z,
$$
uniformly over primes $m \in [Y,Z]$. \\
\underline{Concluding the proof:} \\
Summing over the primes $Y \leq m \leq Z$ and inserting the resulting bound together with \eqref{eq:EBound} into \eqref{eq:initDecomp}, we obtain 
\begin{equation}\label{eq:finalBdforCeb}
\sum_{\ss{\ell \leq X \\ \ell \in S_f}} \frac{1}{\ell} \ll \left(\frac{\log Y}{\log Z} + \frac{1}{Y \log Y}\right)\log\log X + (\log Z)\log\left(\frac{\log Z}{\log Y}\right) + \log\log Y. 
\end{equation}
We now select
$$
Y = (\log\log X)^{1/2}, \quad Z = \exp((\log\log X)^{1/2}),
$$
so that $Y \leq Z \leq (\log X)^{1/6}$, as needed. Then
$$
\sum_{\ss{\ell \leq X \\ \ell \in S_f}} \frac{1}{\ell} \ll (\log\log X)^{1/2} \log\log \log X,
$$
which is $o(\log\log X)$ as $X \ra \infty$, as claimed.
\end{proof}
\begin{rem} \label{rem:GRHImprovement}
The constraint $\log y \gg m^5(\log m)^3$ required to apply Theorem \ref{thm:LMO} limited the choices of $Y$ and $Z$ in the above proof, and we may do much better by assuming the Extended Riemann Hypothesis.
As stated in Theorem \ref{thm:LMO}, for any conjugacy class $\mc{C} \subseteq \text{Gal}(L/K)$ one has under ERH that
$$
\pi_{\mc{C}}(y,L/\mb{Q}) \ll \frac{|\mc{C}|}{|G|} \frac{y}{\log y} \text{ for } y \gg (\log D_L)^2(\log\log D_L)^4.
$$
In particular, when $L = L_m$ with $m \in [Y,Z]$ as in the proof above, we may instead take $\tilde{X} = Z^{10}(\log Z)^6$. \\
Using $\tilde{X} > Y$ and keeping track only of the primes $\tilde{X} < \ell \leq X$ (which has the effect that the first term in each of the bounds \eqref{eq:EBound} and \eqref{eq:HmBdPS} can be removed), the above proof gives
\begin{equation} \label{eq:relBoundProp63}
\sum_{\ss{\tilde{X} < \ell \leq X \\ \ell \in \mc{S}_f \cap \mc{G}_{\delta,k;2b}}} \frac{1}{\ell} \ll \left(\frac{\log Y}{\log Z} + \frac{1}{Y\log Y}\right) \log\log X + \frac{\log\log Z}{\log \tilde{X}},
\end{equation}
which is valid as long as $X$ is sufficiently large subject to the upper bound $X \leq x_k$. \\
Set now $J := \lfloor \log\log X\rfloor$. For each $j_0 \leq j \leq J$, where $j_0 \gg_{\delta} 1$, take 
$$
Y_j = j^{12}, \, Z_j = \exp(j^{20}), \, \tilde{X}_j = Z_j^{10}(\log Z_j)^6 \text{ and } X_j = e^{e^j}.
$$ 
Applying \eqref{eq:relBoundProp63} with $X,\tilde{X},Y$ and $Z$ replaced by $X_j, \tilde{X}_j, Y_j$ and $Z_j$, respectively, we find
$$
\sum_{\ss{X_{j-1} < \ell \leq X_j \\ \ell \in \mc{S}_f \cap \mc{G}_{\delta,k;2b}}} \frac{1}{\ell} \leq \sum_{\ss{\exp(20 j^{20}) < \ell \leq X_j \\ \ell \in \mc{S}_f \cap \mc{G}_{\delta,k;2b}}} \frac{1}{\ell} \leq \sum_{\ss{\tilde{X}_j < \ell \leq X_j \\ \ell \in \mc{S}_f \cap \mc{G}_{\delta,k;2b}}} \frac{1}{\ell}  \ll \frac{1}{j^{10}}
$$
(the constructions of the extensions $L_m$ may be relativised to $m \in [Y_j,Z_j]$, as long as $Y_j,Z_j$ are sufficiently large with respect to $\delta$). Since $X_J \leq X < X_{J+1}$, summing over $j_0 \leq j \leq J$ yields
$$
\sum_{\ss{\ell \leq X \\ \ell \in \mc{S}_f \cap \mc{G}_{\delta,k;2b}}} \frac{1}{\ell} \leq \sum_{j_0+1 \leq j \leq J} \sum_{\ss{X_{j-1} < \ell \leq X_j \\ \ell \in S_f \cap \mc{G}_{\delta,k;2b}}} \frac{1}{\ell} + O_{\delta}(1) \ll \sum_{j > j_0} \frac{1}{j^{10}} + O_{\delta}(1) \ll_{\delta} 1.
$$
Combined with Lemma \ref{lem:Gdeltakm}, we thus obtain
$$
\sum_{\ss{\ell \leq X \\ \ell \in S_f}} \frac{1}{\ell} \leq \sum_{\ss{\ell \leq X \\ \ell \in S_f \cap \mc{G}_{\delta,k;2b}}} \frac{1}{\ell} + \sum_{\ss{\ell \leq x_k \\ \ell \notin \mc{G}_{\delta,k;2b}}} \frac{1}{\ell} \ll_{\delta} 1,
$$
uniformly over $k$, and thus over $X$. We hence deduce  assuming ERH that $S_f$ must be $O_{\delta}(1)$-sparse, which contradicts the hypotheses of Proposition \ref{prop:Assumlfl}. 
\end{rem}
\begin{rem}
One may ask whether the full strength of the ERH is actually needed in the conditional proof. In fact, the success of our method relies only on the convergence as $J \ra \infty$ of the series
$$
\sum_{j_0 < j \leq J} \sum_{\ss{X_{j-1} < \ell \leq X_j \\ \ell \in S_f \cap \mc{G}_{\delta,k;2b}}} \frac{1}{\ell}.
$$
For this to work, it is enough to be able to take $Z_j$ so that $\log Z_j \geq (\log\log X_j)^{1+c}$ for some $c > 0$. While this range of parameters is not accessible using unconditional estimates in Theorem \ref{thm:LMO} (the range available is rather of the form $\log Z_j \geq (\log\log X_j)^c$, with $c \in (0,1/5)$), it does hold if one assumes the existence of fairly narrow zero-free regions for all of the Hecke $L$-functions of suitable cyclic subfields of each of the extensions $L_m$, $m \in [Y,Z]$. For instance, applying \cite[Thm. 7.1]{LO}, we see that if $\mb{Q} \subseteq E_m \subseteq L_m$ has (cyclic) Galois subgroup $H_m \leq \text{Gal}(L_m/\mb{Q})$ then we have
$$
\psi_{\mc{C}}(x,L_m/\mb{Q}) := \sum_{\ss{p^k \leq x \\ \text{Frob}_p \in \mc{C}}} \log p  \ll \frac{|\mc{C}|}{|\text{Gal}(L_m/\mb{Q})|} x
$$
as long as $m \leq Z \leq x^{1/6}$ (so that $n_{L_m} \leq \log D_{L_m} \ll m^5 \log m = o(x)$) and each of the Hecke $L$-functions $L(s,\chi, L_m/E_m)$, $\chi \in H_m$, has no zeroes in a region
$$
\text{Re}(s) > 1-\delta_m, \quad |\text{Im}(s)| \leq T_m, \quad \delta_m := \frac{C(\log m + \log\log x)}{\log x}, \, T_m := m^5 (\log x)^2.
$$
To access the range $\log Z_j \geq (\log\log X_j)^{1+c}$ it would suffice therefore to have access to a zero-free region in which $\text{Re}(s) > 1-\frac{(\log\log X_j)^2}{\log X_j}$ and $|\text{Im}(s)| \ll \exp((\log\log X_j)^{2+\e})$, for example. Any such zero-free region would be implied by one for the corresponding Dedekind zeta function $\zeta_{L_m}$, and thus something far weaker than ERH would suffice. \\
The potential for such improvements under the assumption of weak zero-free regions suggests a possible unconditional improvement to Proposition \ref{prop:Assumlfl} by invoking zero-density estimates. This seems reasonable in light of the fact that in constructing the extensions $L_m$ we had a significant amount of flexibility in the choice of primes satisfying conditions (i)-(iii) of Step 2, and thus there are rather substantial collections of extensions that could be produced in this way. However, currently existing zero-density estimates for Dedekind zeta functions, such as those of \cite{PTBW} and \cite{TZ} are effective for collections of fields in which the Galois group is either of fixed or very slowly growing size (see for instance \cite[Thm. 1.1]{TZ}, where the quality of the bound depends exponentially on the size $|G|$ of the Galois group of the number fields in the collection). In our case, however, $\text{Gal}(L_m/\mb{Q})$ grows polynomially with $m \in [Y,Z]$, which unfortunately makes these latter results unsuitable. 
\end{rem}
\subsection{Proof of Proposition \ref{prop:veryRigid}}
The proof of Proposition \ref{prop:veryRigid} relies on a few elementary sieve lemmas.
\begin{lem} \label{lem:sieveSparse}
Let $C > 0$ and let $S$ be a set of prime powers that is $C$-sparse.
For each $n \in \mb{N}$ write 
$$
n_S := \prod_{\ss{p^\nu || n \\ p^{\nu} \in S}} p^{\nu}
$$ 
to denote the product of all prime factors of $n$ that belong to $S$. 
Then for any $\e > 0$ there is a $Z_0 = Z_0(\e,C)$ such that if $Z \geq Z_0$ then for any $X$ sufficiently large,
$$
N_S(X;Z) := |\{n \leq X : n_S > Z\} \ll \e X.
$$
\end{lem}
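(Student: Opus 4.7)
The plan is to exploit the fact that $n \mapsto n_S$ is a multiplicative function with values in the set
$$
\Pi_S := \left\{\prod_{i=1}^{r} p_i^{\nu_i} : r \geq 0, \, p_1 < \cdots < p_r \text{ primes with } p_i^{\nu_i} \in S\right\},
$$
combined with the trivial observation $n_S \mid n$. For any $n \leq X$ with $n_S > Z$, the integer $d := n_S$ itself lies in $\Pi_S$, divides $n$, and exceeds $Z$. Since there are at most $X/d$ multiples of $d$ in $[1,X]$, I would obtain the crude but effective bound
$$
N_S(X;Z) \leq \sum_{\ss{d \in \Pi_S \\ d > Z}} \left|\{n \leq X : d \mid n\}\right| \leq X \sum_{\ss{d \in \Pi_S \\ d > Z}} \frac{1}{d}.
$$

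Next, I would bound the full series $\sum_{d \in \Pi_S} 1/d$ by means of the Euler product expansion that arises from unique factorization in $\Pi_S$ (each $d \in \Pi_S$ being specified by a choice of at most one $p^\nu \in S$ per prime $p$):
$$
\sum_{d \in \Pi_S} \frac{1}{d} = \prod_{p \in \mc{P}} \left(1 + \sum_{\ss{\nu \geq 1 \\ p^\nu \in S}} \frac{1}{p^\nu}\right) \leq \exp\left(\sum_{p^\nu \in S} \frac{1}{p^\nu}\right) \leq e^C,
$$
using the inequality $1+x \leq e^x$ and the $C$-sparseness of $S$. Convergence of this series forces its tail to vanish, so for any $\e > 0$ one may select $Z_0 = Z_0(\e,C)$ with $\sum_{d \in \Pi_S,\, d > Z_0} 1/d \leq \e$. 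Combining with the display in the previous paragraph yields $N_S(X;Z) \leq \e X$ for all $Z \geq Z_0$ and all $X \geq 1$, which is in fact slightly stronger than the stated conclusion (and holds with implicit constant $1$, with no lower bound needed on $X$).

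This argument is essentially a one-step sieve and presents no serious obstacle. The only modest point to verify is the Euler product identity, which follows directly from the definition of $\Pi_S$ and unique factorization in $\mb{N}$; no sharper sieve and no hypothesis on $X$ are required beyond the trivial bound on the number of multiples of $d$ in $[1,X]$.
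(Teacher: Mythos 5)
Your argument is correct and more elementary than the paper's. The paper runs a second-moment argument: it applies the Tur\'an--Kubilius inequality to show that outside a set of density $o(1)$ (as $Z \to \infty$), every $n \leq X$ has $\omega_S(n) \leq Y_1 := \sqrt{\log Z}$, and a separate union bound removes those $n$ having a prime-power factor $p^\nu \in S$ exceeding $Y_2 := \exp(\tfrac{1}{2}\sqrt{\log Z})$; any surviving $n$ then satisfies $n_S \leq Y_2^{Y_1} = \sqrt{Z} < Z$. You instead observe that $d := n_S$ is a divisor of $n$ lying in the multiplicatively structured set $\Pi_S$, so a single union bound over $d \in \Pi_S$ with $d > Z$, combined with the Euler product identity $\sum_{d\in\Pi_S} 1/d = \prod_p\bigl(1+\sum_{p^\nu\in S}p^{-\nu}\bigr) \leq e^C$, already suffices. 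This bypasses Tur\'an--Kubilius entirely and, as you note, gives $N_S(X;Z) \leq \e X$ for every $X \geq 1$ with implied constant $1$. One small caveat, applying equally to both proofs: the threshold $Z_0$ beyond which the relevant tail ($\sum_{d\in\Pi_S,\, d>Z_0}1/d$ in your version, $\sum_{p^\nu>Y_2,\, p^\nu\in S}p^{-\nu}$ in the paper's) falls below $\e$ depends on the particular set $S$, not only on $\e$ and $C$; to see this, let $S$ be the set of primes in an interval $(Z_0,M]$ chosen so that $\sum_{p\in S}1/p\approx C$. The notation $Z_0(\e,C)$ in the statement is therefore slightly loose, but harmlessly so, since the lemma is only ever invoked with $S$ fixed in terms of $\delta$.
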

\begin{proof}
Since $n_S|n$ for each $n\in \mb{N}$ the claim is immediate if $Z > X$. We assume henceforth that $Z \leq X$. As in Section \ref{sec:arch}, define 
$$
\omega_S(n) := |\{p^k || n : p^k \in S\}|, \quad E_S(Y) := \sum_{\ss{p^k \leq Y \\ p^k \in S}} \frac{1}{p}.
$$ 
By assumption, $E_S(Y) \leq C$ for all  $Y\geq 2$. \\
Let $Y_1 = \sqrt{\log Z}$ and $Y_2 = e^{\tfrac{1}{2}\sqrt{\log Z}}$. For $Z_0$ large enough and $Z \geq Z_0$ we have $Y_1 \geq 2C \geq 2E_S(X)$, so that by Lemma \ref{lem:TK},
$$
|\{n \leq X : \omega_S(n) > Y_1\}| \leq |\{n \leq X : |\omega_S(n) - E_S(X)| > Y_1/2\}| \ll \frac{1}{Y_1^{2}} \sum_{n \leq X} |\omega_S(n)-E_S(X)|^2 \ll \frac{X}{Y_1}.
$$
Furthermore, we also note that
$$
|\{n \leq X : \exists \ p^\nu||n, \ p^\nu \in S \text{ and } p^\nu > Y_2\}| \leq \sum_{\ss{Y_2 < p^\nu \leq X \\ p^\nu \in S}} \sum_{\ss{n \leq X \\ p^\nu ||n}} 1 \ll X\sum_{\ss{p^\nu > Y_2 \\ p^\nu \in S}} \frac{1}{p^\nu}. 
$$
Now for any $n \leq X$ satisfying $\omega_S(n) \leq Y_1$ and $\max\{p^\nu \in S : p^\nu || n\}\leq Y_2$, 
$$
n_S = \prod_{\ss{p^\nu || n \\ p^\nu \in S}} p^\nu \leq Y_2^{\omega_S(n)} \leq Y_2^{Y_1} < Z.
$$
Consequently, by the union bound
\begin{align*}
N_S(X;Z) &\leq |\{n \leq X : \omega_S(n) > Y_1\}| + |\{n \leq X : \exists \, p^\nu || n, \, p^\nu \in S \text{ and } p^\nu > Y_2\}| \\
&\ll X\left(Y_1^{-1} + \sum_{\ss{p^\nu > Y_2 \\ p^\nu \in S}} \frac{1}{p^\nu}\right) \ll \e X,
\end{align*}
as long as $Z \geq Z_0$ and $Z_0$ is chosen large enough, as claimed.
\end{proof}
\begin{lem}\label{lem:PollApp}
Given $\eta \in (0,1)$, define $\mc{R}_{\eta}$ to be the set of primes $\ell$ such that
\begin{equation*}
|\{a \in (\mb{Z}/\ell \mb{Z})^\times : \, a \text{ is a primitive root}\}| < \eta (\ell-1).
\end{equation*}
Then as $X \ra \infty$ we have
$$
\sum_{\ss{\ell \leq X \\ \ell \in \mc{R}_{\eta}}} \frac{1}{\ell} \ll \eta^{100} \log\log X.
$$
\end{lem}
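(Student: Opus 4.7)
The plan is to apply Markov's inequality to the multiplicative function $(\ell-1)/\phi(\ell-1)$. Since the number of primitive roots of $(\mb{Z}/\ell\mb{Z})^\times$ is exactly $\phi(\ell-1)$, the condition $\ell \in \mc{R}_\eta$ is equivalent to $(\ell-1)/\phi(\ell-1) > 1/\eta$. Hence for any integer $k \geq 1$,
\[
\sum_{\ss{\ell \leq X \\ \ell \in \mc{R}_\eta}}\frac{1}{\ell} \;\leq\; \eta^k \sum_{\ss{\ell \leq X \\ \ell \text{ prime}}} \frac{1}{\ell}\left(\frac{\ell-1}{\phi(\ell-1)}\right)^{k}.
\]
Fixing $k = 100$ and using $\eta \leq 1$, it suffices to prove that $T(X) := \sum_{\ell \leq X} \frac{1}{\ell}\bigl(\tfrac{\ell-1}{\phi(\ell-1)}\bigr)^{100} \ll \log\log X$ with an absolute constant.

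Next, I would expand the $k$-th power of $n/\phi(n)$ as a sum over squarefree divisors. Setting $c_k(p) := (p/(p-1))^k - 1$, we have $(n/\phi(n))^k = \prod_{p\mid n}(1+c_k(p)) = \sum_{d \mid n,\ \mu^2(d)=1} C_k(d)$, where $C_k$ is the multiplicative function supported on squarefrees with $C_k(p) = c_k(p)$. Interchanging summation, $T(X) = \sum_{d\ \text{sqfree}} C_k(d) S_d(X)$ with $S_d(X) := \sum_{\ell \leq X,\, d\mid \ell-1} 1/\ell$. By Lemma \ref{lem:BT} (Brun--Titchmarsh) and partial summation, $S_d(X) \ll (\log\log X)/\phi(d) + O(1/d)$ for $d \leq X^{1/2}$, while $S_d(X) \leq (\log X)/d$ follows trivially for larger $d$.

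I would then bound the Dirichlet series $\sum_d C_k(d)\mu^2(d)/\phi(d) = \prod_p\bigl(1 + c_k(p)/(p-1)\bigr)$. Using $(1+x)^k - 1 \leq 2kx$ for $kx \leq 1$ with $x = 1/(p-1)$, I get $c_k(p) \leq 2k/(p-1)$ for all $p \geq k+1$, so $c_k(p)/(p-1) \ll k/p^2$ is summable over large primes. The finitely many primes $p \leq k$ contribute only a bounded (albeit $k$-dependent) factor, so the Euler product converges. An analogous bound gives $\sum_d C_k(d)\mu^2(d)/d \ll_k 1$, so the main term yields $\ll_k \log\log X$. For the tail $d > X^{1/2}$, Rankin's trick applied to $\sum_d C_k(d) d^{-1/2}$ (which also converges by the same Euler product analysis) yields $\sum_{d > X^{1/2}} C_k(d)/d \ll_k X^{-1/4}$, so the tail contributes $o(1)$. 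Combining, $T(X) \ll \log\log X$, and the lemma follows from the Markov step above.

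The main obstacle is handling the small prime contributions to the Euler product, where $c_k(p)$ can be as large as $c_k(2) = 2^k - 1$. The constant produced by $\prod_{p \leq k}(1 + c_k(p)/(p-1))$ is finite but grows rapidly in $k$; fortunately, since the target exponent $100$ is fixed, choosing $k = 100$ from the outset absorbs all of this into a single absolute constant, matching the form of the claimed bound $\ll \eta^{100} \log\log X$.
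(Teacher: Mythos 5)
Your proposal is correct, but it takes a genuinely different route from the paper. Both arguments begin identically with the Markov step: $\ell \in \mc{R}_\eta$ iff $(\ell-1)/\phi(\ell-1) > \eta^{-1}$, so one majorises the indicator by $\eta^{100}\bigl((\ell-1)/\phi(\ell-1)\bigr)^{100}$. The paper then passes by partial summation to bounding $|\mc{R}_\eta\cap[2,y]|$, observes that $g(n):=(n/\phi(n))^{100}$ satisfies $g(n)\leq d(n)$ for $n$ large, dominates the sum over primes $\ell\leq y$ by a sum over sifted integers $n$ with $p\mid(n+1)\Rightarrow p>y^{1/10}$, and invokes Lemma \ref{lem:Pol} (Pollack's Shiu-type theorem for shifted primes) as a black box to obtain $\ll y/\log y$. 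You instead expand $(n/\phi(n))^{100}$ as a sum over squarefree divisors and interchange summation, reducing to $\sum_d C_k(d)S_d(X)$; you then control $S_d(X)$ with Brun--Titchmarsh plus partial summation for $d\leq X^{1/2}$ and a trivial bound plus Rankin for the tail, with the multiplicativity of $C_k$ giving convergent Euler products at every stage. Your route is more elementary and self-contained (it avoids quoting Pollack's theorem), at the cost of being somewhat longer and requiring explicit tail management; the paper's route is shorter because it absorbs all the sieving and divisor-sum bookkeeping into a single citation. One small remark: in your Brun--Titchmarsh step the $O(1/d)$ error really arises only from the range $d<v\leq 50d$ where $\pi(v;d,1)\leq 50$; for $v\leq d$ the count is zero, and this is worth noting to justify the claimed $O(1/d)$ rather than $O(1)$. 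As you say, the Euler product constant for $k=100$ is enormous but fixed, so the final bound is absolute, matching the lemma.
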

\begin{proof}
By partial summation, it suffices to show that
$$
|\mc{R}_{\eta} \cap [2,y]| \ll \eta^{100}\frac{y}{\log y}, \quad y \geq y_0.
$$
It is a classical fact that there are $\phi(\ell-1)$ distinct primitive roots modulo a prime $\ell$. Thus, defining the multiplicative function $g(n) := \left(n/\phi(n)\right)^{100}$, we have
$$
\ell \in \mc{R}_{\eta} \text{ if and only if } g(\ell-1) > \eta^{-100}.
$$ 
Note that $g(p^k) = 1 + O(1/p)$ for all prime powers $p^k$, so that $g(n) \leq d(n)$ for all sufficiently large $n \geq 1$. Markov's inequality and Lemma \ref{lem:Pol}
thus combine to yield
\begin{align*}
|\mc{R}_{\eta} \cap [2,y]| \leq \eta^{100} \sum_{\ell \leq y} g(\ell-1) \leq \eta^{100}\sum_{\ss{n \leq y-1 \\ p|(n+1) \Rightarrow p > y^{1/10}}} g(n) &\ll \eta^{100} \frac{y}{\log y} \exp\left(\sum_{p\leq y} \frac{g(p)-1_{p \leq y^{1/10}}}{p}\right) \\
&\ll \eta^{100}\frac{y}{\log y},
\end{align*}
as claimed.
\end{proof}
\begin{lem} \label{lem:LangZacApp}
Let $x > y \geq 10$ and let $1 \leq L \leq \exp(\sqrt{\log y})$. Then, outside of a single possible exception, for any prime $\ell \leq L$ we have
$$
\sum_{\ss{y < p \leq x \\ \text{ord}_{\ell}(p) = \ell-1}} \frac{1}{p} = \frac{\phi(\ell-1)}{\ell-1} \log\left(\frac{\log x}{\log y}\right) + O(L^{-10}).
$$
\end{lem}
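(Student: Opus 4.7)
My plan is to apply Vinogradov's identity expressing the indicator function of primitive roots modulo $\ell$ as a linear combination of Dirichlet characters:
$$
1_{\text{ord}_\ell(a) = \ell-1} = \sum_{d \mid \ell-1} \frac{\mu(d)}{d} \sum_{\ss{\chi \bmod \ell \\ \chi^d = \chi_0}} \chi(a).
$$
Separating the contribution of the principal character $\chi_0$ (which appears in every term of the inner sum and carries total coefficient $\sum_{d\mid \ell-1}\mu(d)/d = \phi(\ell-1)/(\ell-1)$) I obtain
$$
\sum_{\ss{y < p \leq x \\ \text{ord}_\ell(p) = \ell-1}} \frac{1}{p} = \frac{\phi(\ell-1)}{\ell-1} \sum_{\ss{y < p \leq x \\ p \neq \ell}} \frac{1}{p} + \sum_{\ss{\chi \bmod \ell \\ \chi \neq \chi_0}} c_\chi \sum_{y < p \leq x} \frac{\chi(p)}{p},
$$
where $|c_\chi| \leq 1$ (uniformly in $\ell$ and $\chi$).

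For the first sum on the right-hand side, Mertens' theorem with the classical error term coming from the de la Vall\'{e}e Poussin zero-free region yields $\log(\log x/\log y) + O(\exp(-c\sqrt{\log y}))$. Together with the coefficient $\phi(\ell-1)/(\ell-1)$, this gives the claimed main term up to an admissible error in the allowed range of $L$.

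For the remaining character sums, I would invoke the Siegel--Walfisz-type bound: for each non-principal (hence primitive, since $\ell$ is prime) character $\chi$ mod $\ell$ whose $L$-function $L(s,\chi)$ has no exceptional real zero near $s=1$, the classical zero-free region combined with partial summation and the prime number theorem for $L(s,\chi)$ gives
$$
\sum_{y < p \leq x} \frac{\chi(p)}{p} \ll \exp\bigl(-c'\sqrt{\log y}\bigr),
$$
uniformly for $\ell \leq \exp(\sqrt{\log y})$, with $c'$ absolute. By the Landau--Page theorem, among the primitive real characters of conductor $\leq L$ there is at most one exceptional character whose $L$-function has a real zero in the critical region; since the only real non-principal character modulo an odd prime $\ell$ is the Legendre symbol, this singles out at most one prime $\ell^{\ast} \in [2,L]$. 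For any $\ell \leq L$ with $\ell \neq \ell^{\ast}$, summing the above bound over the $\ell-1 \leq L$ non-principal characters mod $\ell$ produces a total error of size $O(L \exp(-c'\sqrt{\log y}))$. Provided the implicit absolute constant in the hypothesis $L \leq \exp(\sqrt{\log y})$ is chosen small enough (equivalently, shrinking $L$ by a harmless constant factor in the exponent), this is $O(L^{-10})$, which absorbs the Mertens error as well.

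The main obstacle is handling the exceptional character uniformly: without Landau--Page one cannot rule out a single modulus $\ell^{\ast}$ where the corresponding $L(s,(\cdot/\ell^{\ast}))$ has a Siegel zero, and a putative such zero would genuinely break the uniform bound. Allowing the ``single possible exception'' in the statement is precisely the concession needed to accommodate this well-known obstruction, and once this exception is excluded the rest of the argument is a standard Siegel--Walfisz computation with careful tracking of uniformity in $\ell$.
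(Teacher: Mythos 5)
Your proposal is essentially correct in spirit but takes a genuinely different route, and it proves a slightly weaker quantitative statement than the lemma as written. The paper invokes a single black-box result of Languasco--Zaccagnini (\cite{LangZac}, cited in the proof) giving $\sum_{p \leq X,\ p \equiv a \pmod{\ell}} 1/p = \frac{1}{\ell-1}\log\log X - \log C(\ell,a) + O(Q(X)^{-100})$ with $Q(X) = \exp((\log X)^{3/5}(\log\log X)^{-1/5})$ of Vinogradov--Korobov quality, uniformly for $\ell \leq Q(X)$ apart from one exceptional modulus; the lemma then follows by subtracting at $X=y,x$ and summing over the $\phi(\ell-1)$ primitive roots mod $\ell$. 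You instead decompose the primitive-root indicator via Vinogradov's identity, isolate the principal-character main term (and your computation that the $\chi_0$-coefficient totals $\phi(\ell-1)/(\ell-1)$, and that all other coefficients are $O(1)$, is correct), and then estimate the non-principal sums via Siegel--Walfisz and Landau--Page. Both treatments resolve the ``one possible exception'' in the same way (Landau--Page gives at most one exceptional real character among conductors $\leq L$, and the only real non-principal character mod a prime is the Legendre symbol).

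The genuine gap is quantitative. With the classical de la Vall\'{e}e Poussin zero-free region, the bound you get for each non-exceptional $\chi$ is $\sum_{y<p\leq x}\chi(p)/p \ll \exp(-c\sqrt{\log y})$ with a \emph{small} absolute constant $c$ (nowhere near $11$); summing over the $\leq L$ non-principal characters leaves $L\exp(-c\sqrt{\log y})$, which is \emph{not} $O(L^{-10})$ when $L$ is as large as $\exp(\sqrt{\log y})$. You acknowledge this and propose to shrink the admissible range of $L$, but the lemma as stated has the explicit constant $1$ in the exponent, and the paper uses $L_2 = e^{\sqrt{\log z_k}}$ exactly in the application (Proposition \ref{prop:veryRigid}). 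Your proof thus establishes only $L \leq \exp(c_0\sqrt{\log y})$ for some small absolute $c_0$; to recover the stated range and error term one should replace the classical zero-free region in your argument by the Vinogradov--Korobov zero-free region for Dirichlet $L$-functions (which is exactly what the Languasco--Zaccagnini input encodes), after which your decomposition goes through verbatim with error $\exp(-c(\log y)^{3/5}(\log\log y)^{-1/5})$, which comfortably dominates $L^{11}$ in the given range. In short: the structure of your argument is sound and more elementary and self-contained than the paper's, but as written it proves a version of the lemma with a narrower $L$-range, and the claim that the shrinkage is ``harmless'' would need to be verified against the application (it does happen to be, but only because the parameter choices in Proposition \ref{prop:veryRigid} have slack).
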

\begin{proof}
Let $a \in (\mb{Z}/\ell \mb{Z})^{\times}$, and write 
$$
Q(X) := \exp((\log X)^{3/5}(\log\log X)^{-1/5}), \quad X \geq 3.
$$
By \cite[Thm. 2]{LangZac}, outside of possibly one $\ell \leq Q(X)$ (in case Siegel zeroes exist) there is a positive real number $C(\ell,a)$ such that
$$
\sum_{\ss{p \leq X \\ p \equiv a \pmod{\ell}}} \frac{1}{p} = \frac{1}{\ell-1} \log\log X - \log C(\ell,a) + O\left(Q(X)^{-100}\right).
$$
Applying this with $X \in \{y,x\}$ and subtracting the two, we obtain
$$
\sum_{\ss{y < p \leq x \\ p \equiv a \pmod{\ell}}} \frac{1}{p} = \frac{1}{\ell-1} \log\left(\log x/\log y\right) + O\left(Q(y)^{-100}\right).
$$
Since $\ell \leq L \ll Q(y)$, upon summing over all $a \pmod{\ell}$ with $\text{ord}_\ell(a) = \ell-1$ we obtain
$$
\sum_{\ss{y< p \leq x \\ \text{ord}_{\ell}(p) = \ell-1}} \frac{1}{p} = \frac{\phi(\ell-1)}{\ell-1} \log(\log x/\log y) + O(LQ(y)^{-100}) = \frac{\phi(\ell-1)}{\ell-1} \log(\log x/\log y) + O(L^{-10}),
$$
as required.
\end{proof}
\begin{proof}[Proof of Proposition \ref{prop:veryRigid}]
Let $\mc{G} := \{\ell \in \mc{P} : \ell'|f(\ell) \Rightarrow \ell' = \ell\} = \mc{P} \bk \mc{L}_f$, in the notation of the proof of Proposition \ref{prop:Assumlfl}.
Thus for each $\ell \in \mc{G}$ there is $\alpha_{\ell} \geq 1$ such that $f(\ell) = \ell^{\alpha_{\ell}}$, and also
\begin{equation}\label{eq:mcGHyp}
\sum_{\ell \notin \mc{G}} \frac{1}{\ell} < \infty.
\end{equation}
Moreover, if $\ell \notin \mc{S}_{\delta}$ (defined in Proposition \ref{prop:ArchInhom}) and is large enough in terms of $\delta$ then $1 < |f(\ell)| < 2\ell^r$, so that 
$$
1 \leq \alpha_{\ell} \leq r + 1 \text{ for all } \ell \in \mc{G} \bk \mc{S}_{\delta}, \, \ell \gg_{\delta} 1.
$$ 
In particular, $\alpha_{\ell} = O_{\delta}(1)$ uniformly on $\mc{G} \bk \mc{S}_{\delta}$. \\
The remainder of the proof splits into the following steps:
\begin{enumerate}[(a)]
\item finding $\alpha_0 \in \mb{N}$ such that $\{\ell \in \mc{P}: \, |f(\ell)| \neq \ell^{\alpha_0}\}$ is $O_{\delta}(1)$-sparse,
\item showing that $\alpha_0 = 1$, and
\item in the case that $f$ is completely multiplicative, showing that there is a divisor $d_0|a$ such that $f(d_0)|b$, and that $a/d_0$ divides $b/f(d_0)$.
\end{enumerate}
\, \\
\underline{Step (a):} In order to show (a), we will prove that for each $P \geq 2$ there is an integer $1 \leq \alpha(P) \ll_{\delta} 1$ such that
\begin{equation}\label{eq:goalScale}
\sum_{\ss{p \leq P \\ |f(p)| \neq p^{\alpha(P)}}} \frac{1}{p} = O_{\delta}(1).
\end{equation}
Since $\alpha(P)$ is an integer of bounded size, we can choose $\alpha_0 \in \mb{N}$ by the pigeonhole principle such that $\alpha(P_j) = \alpha_0$ on an infinite sequence $(P_j)_j$. In this case, we have
$$
\sum_{\ss{p \leq P_j \\ |f(p) |\neq p^{\alpha_0}}} \frac{1}{p} = O_{\delta}(1) \text{ for all } j \geq 1,
$$
and since the latter sum is monotone in $j$, we deduce on taking $j \ra \infty$ that
$$
\sum_{p : |f(p)| \neq p^{\alpha_0}} \frac{1}{p} < \infty,
$$
the size of the series depending only on $\delta$. Our goal is therefore to prove \eqref{eq:goalScale}. \\
Let $P \geq 2$. Since the claim is trivial otherwise, we may assume that $P$ is larger than any fixed constant depending only on $\delta$. Let $L_1 \geq e^P$, choose $k$ large enough so that $x_k \geq e^{(\log L_1)^{5}}$ and set $L_2 := \exp((\log x_k)^{2/5})$. 
We begin by showing that there is a large set of primes $\ell \in [L_1,L_2]$ for which the exponents $g_{\ell}$ in \eqref{eq:ellDivfell} are all equal and of size $O_{\delta}(1)$. \\
Define $\mc{R}_{\delta}$ as in the statement of Lemma \ref{lem:PollApp}, and consider the set 
$$
\mc{G}' := (\mc{G} \cap \mc{G}_{\delta,k;2b}) \bk \mc{R}_{\delta} \subseteq \mc{G}\bk \mc{S}_{\delta}.
$$ 
Combining \eqref{eq:mcGHyp} with Lemmas \ref{lem:Gdeltakm} and \ref{lem:PollApp}, we have
\begin{align*}
\sum_{\ss{L_1 \leq \ell \leq L_2 \\ \ell \in \mc{G}'}} \frac{1}{\ell} &\geq \sum_{\ss{\ell \leq x_k \\ \ell \in \mc{G}_{\delta,k;2b}}} \frac{1}{\ell} - \sum_{\ss{\ell \leq x_k \\ \ell \notin \mc{G}}} \frac{1}{\ell} - \sum_{\ss{\ell \leq x_k \\ \ell \in \mc{R}_{\delta}}} \frac{1}{\ell} - \sum_{\ell \in [2,L_1] \cup [L_2,x_k]} \frac{1}{\ell} \\
&\geq (1/5-\delta^{50}) \log\log x_k.
\end{align*}
Now for each $\ell \in \mc{G}' \cap [L_1,L_2]$ we have 
$$
\log \ell \leq \log L_2 = (\log x_k)^{2/5} \text{ and } \phi(\ell-1)/(\ell-1) \geq \delta.
$$ 
Set $z_k := \exp((\log x_k)^{4/5})$, so that $L_2 = e^{\sqrt{\log z_k}}$. By Lemma \ref{lem:LangZacApp},
outside of possibly one prime $\ell_0$, for $\ell \leq L_2$ we have
$$
\sum_{\ss{p \leq x_k \\ \text{ord}_\ell(p) = \ell-1}} \frac{1}{p} \geq \sum_{\ss{z_k < p \leq x_k \\ \text{ord}_{\ell}(p) = \ell-1}} \frac{1}{p} = \frac{\phi(\ell-1)}{\ell-1} \log(\log x_k/\log z_k) + O(1) \geq \frac{\delta}{5} \log\log x_k + O(1), \quad \ell \in \mc{G}' \cap [L_1,L_2].
$$
Taking account of the primes not in $\mc{G}' \bk \mc{T}_{\ell}$ using Lemma \ref{lem:Gdeltakm} and Proposition \ref{prop:almostHom}, we find that
$$
\sum_{\ss{p \leq x_k \\ p \in \mc{G}' \bk \mc{T}_{\ell} \\ \text{ord}_\ell(p) = \ell-1}} \frac{1}{p} \geq \sum_{\ss{p \leq x_k \\ \text{ord}_\ell(p) = \ell-1}} \frac{1}{p} - \sum_{\ss{p \leq x_k \\ p \notin \mc{G}}} \frac{1}{p} - \sum_{\ss{p \leq x_k \\ p \in \mc{T}_\ell}} \frac{1}{p} 
\geq \frac{\delta}{5}\log\log x_k + O_{\delta}(1).
$$
In particular, for each of the $\ell \in \mc{G}' \cap [L_1,L_2]$, $\ell \neq \ell_0$, we may select a prime $p_{\ell} \notin \mc{T}_{\ell}$ that is a primitive root modulo $\ell$, such that $f(p_{\ell}) = p_{\ell}^{\alpha_{p_{\ell}}}$ for some $\alpha_{p_{\ell}} = O_{\delta}(1)$.
Since $(\alpha_{p_{\ell}})_{\ell \in \mc{G}' \cap [L_1,L_2]}$ belongs to a finite set of integers depending only on $\delta$, by the pigeonhole principle there is an $\alpha = \alpha(P)$ with $1 \leq \alpha \ll_{\delta}(1)$ (which might depend on $L_1,L_2$ and $x_k$, thus on $P$) such that
$$
\mc{U}_{\alpha} := \{\ell \in \mc{G}' \cap [L_1,L_2] : \, \alpha_{p_{\ell}} = \alpha\}
$$
satisfies
$$
\sum_{\ss{
\ell \in \mc{U}_{\alpha}}} \frac{1}{\ell} \gg_{\delta} \log\log x_k.
$$
Fix $\ell \in \mc{U}_{\alpha}$
for the moment. As $\ell \in \mc{G}$, $f(\ell) = \ell^{\alpha_\ell} = q_{\ell}$ in the notation of 
Proposition \ref{prop:almostHom}. That result implies that for prime $p \leq x_k$,
$$
\text{if } p \in [2,x_k] \bk \mc{T}_{\ell} \text{ then } f(p)^D \equiv 
p^{g_\ell} \pmod{\ell},
$$
for some $0 \leq g_{\ell} < \ell-1$. 
By choice, the prime $p_{\ell}$ satisfies
$$
p_{\ell}^{D\alpha} = f(p_{\ell})^D \equiv p_\ell^{g_{\ell}} \pmod{\ell}.
$$
Rearranging, we deduce using the fact that $p_\ell$ is a primitive root modulo $\ell$ that
$$
p_\ell^{g_{\ell}-D\alpha} \equiv 1 \pmod{\ell}, \text{ or equivalently, } (\ell-1)|(g_{\ell} - D\alpha).
$$
Since $1 \leq D\alpha = O_{\delta}(1)$ uniformly in $\ell \in \mc{U}_{\alpha}$, and $g_{\ell} < \ell-1$ by construction, we deduce that 
$$
g_\ell = D\alpha \text{ for all } \ell \in \mc{U}_{\alpha}.
$$
We are now in a position to prove \eqref{eq:goalScale}. 
Since $P \leq \log L_1$, 
applying Lemma \ref{lem:controlExcep} we get
\begin{equation}\label{eq:TpShare}
\sum_{\ell \in \mc{T}(p) \cap \mc{U}_\alpha} \frac{1}{\ell} < \frac{1}{2} \sum_{\ell \in \mc{U}_\alpha} \frac{1}{\ell}
\end{equation}
outside of a set of $p \leq P$ such that
$$
\sum_{\ss{p \leq P \\ \eqref{eq:TpShare} \text{ fails}}} \frac{1}{p} = O_{\delta}(1).
$$
Now suppose $p \leq P$ satisfies \eqref{eq:TpShare}, and $p \notin \mc{S}_{\delta}$. Then there is at least one prime $\ell \in \mc{U}_{\alpha}$
such that $p \notin \mc{T}_{\ell}$, and
$$
f(p)^D \equiv p^{D\alpha} \pmod{\ell}.
$$
If $P$ is large enough then $\ell \geq L_1 > P^{D(r+\alpha+1)}$, and thus 
$$
\ell > (2p^r)^D + p^{D\alpha} \geq |f(p)^D-p^{D\alpha}|.
$$ 
We therefore deduce that $f(p)^D = p^{D\alpha}$, and in particular $|f(p)| = p^{\alpha}$, whenever $p \leq P$ satisfies \eqref{eq:TpShare}. \\
It follows that if $p \leq P$ satisfies $|f(p)| \neq p^{\alpha}$ then either \eqref{eq:TpShare} fails or $p \in \mc{S}_{\delta}$, and so we deduce from \eqref{eq:TpShare} that
$$
\sum_{\ss{p \leq P \\ |f(p)| \neq p^{\alpha}}} \frac{1}{p} = O_{\delta}(1),
$$
where $\alpha = \alpha(P)$. This proves \eqref{eq:goalScale}, as required.
\\ \\
\underline{Step (b):} Next, we show that $\alpha_0 = 1$. Set 
$$
\tilde{\mc{G}} := \{p \in \mc{G} \bk \mc{S}_{\delta} : |f(p)| = p^{\alpha_0}\},
$$
the complement of which, according to the preceding arguments, is $O_{\delta}(1)$-sparse. For each $n \in \mb{N}$ we write
$$
A(n) := \prod_{\ss{p^k||n \\ p \notin \tilde{\mc{G}}}} p^k
\cdot \prod_{\ss{p^k || n \\ p \in \tilde{\mc{G}} \\ k \geq 2}} p^k, \quad s(n) := \prod_{\ss{p||n \\ p \in \tilde{\mc{G}}}} p.
$$
Define also the $\pm 1$-valued multiplicative function $h(n)$ on prime powers by
$$
h(p^k) = \begin{cases} f(p)/p^{\alpha_0} &\text{ if } p \in \tilde{\mc{G}}, \, k = 1 \\ 
1 &\text{ otherwise.}
\end{cases}
$$
Since every integer can be written as
$$
n = A(n)s(n)
$$
with $(A(n),s(n)) = 1$, we always have the identity 
$$
f(n) = f(A(n)) f(s(n)) = f(A(n)) h(n)s(n)^{\alpha_0} =  \frac{f(A(n))}{A(n)^{\alpha_0}} h(n) n^{\alpha_0} =: G(n) n^{\alpha_0}.
$$
Thus, for each $n \in \mc{N}_{f,a,b}$,
\begin{equation}\label{eq:withAn}
(n+a)^{\alpha_0} G(n+a) = n^{\alpha_0} G(n) + b.
\end{equation}
Now, applying Lemma \ref{lem:sieveSparse} with $\e = \delta^2$ and the $O_{\delta}(1)$-sparse set
$$
S = \{p^\nu : p \notin \tilde{\mc{G}}, \, \nu \geq 1\} \cup \{p^\nu : p \in \tilde{\mc{G}}, \, \nu \geq 2\},
$$
there is a choice $N_0 = O_{\delta}(1)$ such that the set
$$
\mc{N}_{f,a,b}' := \{n \in \mc{N}_{f,a,b} : \, A(n), A(n+a) \leq N_0\}
$$
satisfies $\overline{\delta}(\mc{N}_{f,a,b}') \geq \delta/2$, when $\delta$ is small enough. Since also $h(n),h(n+a) \in \{-1,+1\}$, by the pigeonhole principle, there are integers $1 \leq A,B \leq N_0$ and elements $\eta_1,\eta_2 \in \{-1,+1\}$ for which 
$$
\mc{N}_{f,a,b}'' := \{n \in \mc{N}_{f,a,b}' : \, A(n+a) = A, \, A(n) = B, \, (h(n),h(n+a)) = (\eta_1,\eta_2)\}
$$
has $\overline{\delta}(\mc{N}_{f,a,b}'') \gg_\delta 1$. In particular, for each $n \in \mc{N}_{f,a,b}''$,
$$
G(n+a) = \eta_2 f(A)/A^{\alpha_0}, \quad G(n) = \eta_1 f(B)/B^{\alpha_0}.
$$
Writing 
$$
\tilde{A} := \eta_2 f(A)B^{\alpha_0}, \quad \tilde{B} :=  \eta_1 f(B)A^{\alpha_0}, \quad \tilde{C} := b(AB)^{\alpha_0}
$$
and rearranging \eqref{eq:withAn}, we obtain
$$
\tilde{A}(n+a)^{\alpha_0} = \tilde{B} n^{\alpha_0} + \tilde{C} \text{ for all } n \in \mc{N}_{f,a,b}''.
$$ 
Since $\overline{\delta}(\mc{N}_{f,a,b}'') > 0$, dividing both sides by $n^{\alpha_0}$ and taking $n \ra \infty$ along an infinite increasing subsequence of $\mc{N}_{f,a,b}''$ yields $\tilde{A} = \tilde{B}$, from which we obtain
$$
\tilde{A}((n+a)^{\alpha_0} - n^{\alpha_0}) = \tilde{C}.
$$
Note that $\tilde{C} \neq 0$ by definition, so $\tilde{A} \neq 0$ as well. Since $a \geq 1$ we have $(n+a)^{\alpha_0} - n^{\alpha_0} \geq \alpha_0 a n^{\alpha_0-1}$. As this holds for infinitely many $n$ we conclude that $\alpha_0 = 1$, as required. \\ \\
\underline{Step (c):} 
When $f$ is completely multiplicative we can make the following reduction, with the effect that every element of $\mc{N}_{f,a,b}$ may be assumed to be coprime to $a$. \\
For each $d|a$ let us define
$$
\mc{N}_{f,a,b}(d) := \{n \in \mc{N}_{f,a,b} : (n,a) = d\} = \{n \in \mb{N} : f(n+a) = f(n) + b, \, (n,a) = d\}.
$$
We thus obtain the partition
$$
\mc{N}_{f,a,b} = \bigcup_{d|a} \mc{N}_{f,a,b}(d),
$$
and by the pigeonhole principle, there is a divisor $d_0|a$ for which $\bar{\delta}(\mc{N}_{f,a,b}(d_0)) > \delta/d(a)$, where $d(m)$ denotes the divisor function. \\
Now, for each $n \in \mc{N}_{f,a,b}(d_0)$, writing $n = md_0$ and $a' = a/d_0$ we get by complete multiplicativity that
$$
f(d_0)f(m + a') = f(n+a) = f(md_0) + b = f(d_0)f(m)+b.
$$
We thus deduce that $f(d_0)|b$. We now set $b' := b/f(d_0)$, $\eta := \delta/(d_0d(a)) > 0$ and 
$$
\tilde{\mc{N}}_{f,a',b'} := \{m \in \mb{N} : md_0 \in \mc{N}_{f,a,b}(d_0)\} = \mc{N}_{f,a',b'}(1).
$$ 
Then $\bar{\delta}(\tilde{\mc{N}}_{f,a',b'}) > \eta$, and $(n,a') = 1$ for each $n \in \tilde{\mc{N}}_{f,a',b'}$. \\
By analogy with the definitions in Step (b), we define
$$
\tilde{\mc{N}}_{f,a',b'}' := \{n \in \tilde{\mc{N}}_{f,a',b'} : A(n),A(n+a') \leq N_0\}.
$$
Choosing $N_0$ larger in terms of $\delta$ if needed, we may assume that $\overline{\delta}(\tilde{\mc{N}}_{f,a',b'}') \geq \eta/2$.
Rearranging \eqref{eq:withAn} (with $a',b'$ in place of $a,b$) we get 
$$
n \left(G(n+a') - G(n)\right) = b'-a' G(n), \quad n \in \tilde{\mc{N}}_{f,a',b'}.
$$
Note that the bracketed expression, if non-zero, is of size $\asymp_{\delta} 1$ for all $n \in \tilde{\mc{N}}_{f,a',b'}'$, since each of $G(n)$ and $G(n+a')$ belongs to a set bounded at most in terms of $\delta$. Thus, if $n \in \tilde{\mc{N}}_{f,a',b'}'$ is sufficiently large then
$$
h(n+a')\frac{f(A(n+a'))}{A(n+a')} = h(n)\frac{f(A(n))}{A(n)} = \frac{b'}{a'}.
$$
Clearing denominators, we see from here that $a'|A(n)b'$. But since $A(n)|n$ and $(n,a') = 1$ for all $n \in \tilde{\mc{N}}_{f,a',b'}$, we must conclude that $a'|b'$, as claimed.
\end{proof}

\section{Proof of Theorem \ref{thm:Inhom}}
We now complete the proof of Theorem \ref{thm:Inhom}. Thus, let $f: \mb{N} \ra \mb{Z}$ be a completely multiplicative function for which
$$
\sum_{p : |f(p)| \neq 1} \frac{1}{p} = \infty,
$$
and suppose that the set
$$
\mc{N}_{f,a,b} = \{n \in \mb{N} : f(n+a) = f(n)+b\}
$$
satisfies $\delta(\mc{N}_{f,a,b}) \neq 0$ for some non-zero $a,b \in \mb{Z}$. Thus, there is a $\delta \in (0,1)$ such that $\bar{\delta}(\mc{N}_{f,a,b}) > \delta$. Assume also that the set
$$
S_f := \{ \ell \text{ prime}: \, \ell | f(\ell)\}
$$
has positive Dirichlet density,
$$
\lim_{X \ra \infty} \frac{1}{\log\log X} \sum_{\ss{p \leq X \\ p \in S_f}} \frac{1}{p} > 0.
$$
We begin with some reductions. \\
First of all, suppose $a < 0$ and let $n \in \mc{N}_{f,a,b}$ with $n > |a|$. Making the change of variables $m = n+a$ we see that
$$
\text{if } n \in \mc{N}_{f,a,b}, \, n > |a|  \text{ then } m \in \mc{N}_{f,|a|,-b}.
$$
Thus, as $a$ is fixed, upon replacing $\mc{N}_{f,a,b}$ by $\mc{N}_{f,|a|,-b}$ if necessary we may assume in the sequel that $a \geq 1$. \\
Furthermore, by Remark \ref{rem:sparseVan}, 
$$
\text{if } \sum_{p : f(p) = 0} \frac{1}{p} = \infty \text{ then } \delta(\mc{N}_{f,a,b}) = 0,
$$ 
contradicting our hypotheses. Thus, in the sequel we may assume that $f$ satisfies \emph{both} conditions
$$
\sum_{p:|f(p)| \neq 1} \frac{1}{p} = \infty, \quad \sum_{p: f(p) = 0} \frac{1}{p} < \infty.
$$
Applying Proposition \ref{prop:ArchInhom}, there exists $0 < r \ll_{\delta} 1$ such that the set of primes
$$
\mc{S}_{\delta} := \{p \in \mc{P} : |f(p)|/p^r \notin (1/2,2)\}
$$
is $O_{\delta}(1)$-sparse, i.e.,
$$
\sum_{\ss{p \leq X \\ p \in \mc{S}_{\delta}}} \frac{1}{p} \ll_{\delta} 1, \quad X \ra \infty.
$$
By Proposition \ref{prop:Assumlfl},we see that
$$
\{\ell \in \mc{P} : \ell' \nmid  f(\ell) \text{ for all } \ell' \neq \ell \}
$$
is a sparse set. Consequently, Proposition \ref{prop:veryRigid} allows us to conclude that $|f(\ell)| = \ell$ except on an $O_{\delta}(1)$-sparse set of primes $\ell$, and that when $f$ is completely multiplicative there is a divisor $d|a$ such that $f(d)|b$, and $a/d$ divides $b/f(d)$, as claimed.
\begin{rem}\label{rem:SfSizeSpec}
It is worthwhile speculating whether a condition on the size of $S_f$ is also necessary in Conjecture \ref{conj:Inhom}. Certainly, our method requires that we at least assume that
\begin{equation}\label{eq:SfnotSparse}
\sum_{p \in S_f} \frac{1}{p} = \infty,
\end{equation}
since we needed to discard sparse subsets of $S_f$ at various points in the argument. \\
Conversely, one can show that if $f$ satisfies $\delta(\mc{N}_{f,a,b}) \neq 0$ then one can produce many infinite sets $S$ of primes that satisfy $\sum_{p \in S} 1/p < \infty$, and associated multiplicative functions $g = g_{S}$ with $S_g \supseteq \{p : f(p) \neq g(p)\} = S$ such that $\delta(\mc{N}_{g,a,b}) \neq 0$. For example, if 
$
\bar{\delta}(\mc{N}_{f,a,b}) > \delta
$
and $S$ is a set of primes with\footnote{In fact, one can argue more carefully and obtain such $S$ with $\sum_{p \in S} p^{-1} \in [C,C+1)$ for any fixed $C$, by employing Lemma \ref{lem:multSet} to better sieve out those $n \in \mc{N}_{f,a,b}$ with $d|n(n+a)$ and $d$ composed of primes $p \in S$; we leave this argument to the interested reader. However, our corresponding lower bound for $\mc{N}_{g,a,b}$ also worsens in the process, at least heuristically, i.e. 
$$
\bar{\delta}(\{n \in \mc{N}_{f,a,b} : (n(n+a),S) = 1\}) \approx \bar{\delta}(\mc{N}_{f,a,b}) \prod_{p \in S}(1-2/p) \gg e^{-2C} \delta.
$$}
$\sum_{p \in S} p^{-1} < \delta/3$, then we may define $g$ to be the multiplicative function defined at prime powers by
$$
g(p^k) = \begin{cases} f(p^k) &\text{ if } p \notin S, \, k \geq 1, \\ p^k &\text{ if } p \in S, \, k \geq 1.
\end{cases}
$$
Clearly, $
g(n+a) = g(n) + b$ whenever $n \in \mc{N}_{f,a,b}$ with  $p|n(n+a) \Rightarrow p \notin S$.
Moreover, along a suitable subsequence we find
\begin{align*}
\frac{1}{\log X}\sum_{\ss{n \leq X \\ n \in \mc{N}_{f,a,b} \\ p|n(n+a) \Rightarrow p \notin S}} \frac{1}{n} \geq \frac{1}{\log X}\sum_{\ss{n \leq X \\ n \in \mc{N}_{f,a,b}}} \frac{1}{n} - \sum_{p \in S} \frac{1}{\log X} \sum_{\ss{n \leq X \\ p|n(n+a)}} \frac{1}{n}
&\geq \delta - 2\sum_{p \in S} \frac{1}{p} - o(1) \\
&> \delta/3 - o(1). 
\end{align*}
Therefore, if $f$ does not satisfy \eqref{eq:SfnotSparse} then there is not much more that $S_f$ can tell us about $f$.
\end{rem}

\section{Gaps and the proof of Corollary \ref{cor:growingPos}} \label{sec:GapCors}
\noindent Conjecture \ref{conj:Inhom}, Theorem \ref{thm:Inhom} and the main theorem of \cite{ManConsec} imply the following result 
from which Corollary \ref{cor:growingPos} follows.
\begin{cor}\label{cor:ofConj}
Suppose $f: \mb{N} \ra \mb{Z}$ is a multiplicative function for which
\begin{equation}\label{eq:gapCorHyp}
\sum_{p : |f(p)| \neq p} \frac{1}{p} = \infty, \quad \sum_{p : |f(p)| \neq 1} \frac{1}{p} = \infty \text{ and } \sum_{p : f(p) = 0} \frac{1}{p} < \infty.
\end{equation}
Assume one of the following:
\begin{enumerate}[(a)]
\item Conjecture \ref{conj:Inhom} holds, 
\item ERH holds and $\sum_{p|f(p)} p^{-1} = \infty$, or
\item $d_{\text{Dir}}(\{p \in \mc{P} : p | f(p)\}) > 0$.
\end{enumerate}
Then for all $C > 0$ we have
$$
\delta(\{n \in \mb{N} : f(n) \neq 0, \, |f(n+1)-f(n)| \leq C\}) = 0.
$$
\end{cor}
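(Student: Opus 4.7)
The plan is to argue by contradiction. Suppose for some $C > 0$ and $\eta > 0$ we have
$$
\overline{\delta}\bigl(\{n \in \mb{N} : f(n) \neq 0, \, |f(n+1) - f(n)| \leq C\}\bigr) \geq (2C+1)\eta.
$$
Since $f$ is integer-valued, the gap $b := f(n+1) - f(n)$ takes only integer values in $[-C,C]$ on this set. Pigeonholing over these finitely many possibilities, there exists $b_0 \in \mb{Z}$ with $|b_0| \leq C$ such that
$$
\overline{\delta}\bigl(\{n \in \mc{N}_{f,1,b_0} : f(n) \neq 0\}\bigr) \geq \eta.
$$
The strategy is then to derive a contradiction from this positive logarithmic density by applying, according to the value of $b_0$, either the main theorem of \cite{ManConsec} or the integer-valued inhomogeneous classification (Conjecture \ref{conj:Inhom} under hypothesis (a), or Theorem \ref{thm:Inhom} under hypothesis (b) or (c)).

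First I would dispose of the case $b_0 = 0$. Here $f(n+1) = f(n) \neq 0$ along a set of positive logarithmic density, so the set $\mc{N}_{f,1;1,1}' = \{n : f(n+1) = f(n) \neq 0\}$ appearing in \cite[Thm.~1.1]{ManConsec} satisfies $\overline{\delta}(\mc{N}_{f,1;1,1}') \geq \eta > 0$. The cited theorem then forces its first alternative, namely $\sum_{p : |f(p)| \neq 1} p^{-1} < \infty$, contradicting the second condition in \eqref{eq:gapCorHyp}.

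For $b_0 \neq 0$, I would apply Theorem \ref{thm:Inhom} (under (b) or (c)) or the assumed Conjecture \ref{conj:Inhom} (under (a)) to the pair $(a,b) = (1,b_0)$. The non-sparseness assumption \eqref{eq:fpnonSparse} on $\{p : |f(p)| \neq 1\}$ is part of \eqref{eq:gapCorHyp}, and the positive upper logarithmic density of $\mc{N}_{f,1,b_0}$ has just been established. The conclusion yields $\sum_{p \in \mc{E}_f} p^{-1} < \infty$, i.e., $\sum_{p : |f(p)| \neq p} p^{-1} < \infty$, in direct contradiction with the first condition in \eqref{eq:gapCorHyp}. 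There is no substantive obstacle in this deduction: the argument is a clean pigeonhole reduction, with all of the real work lying in the main theorems being invoked. The only small technical point to note is that the statement of $\mc{N}_{f,a,b}$ places no non-vanishing restriction on $f(n)$ or $f(n+a)$, so the possibility that $f(n+1) = 0$ on part of $\mc{N}_{f,1,b_0}$ (forcing $|f(n)| = |b_0| \leq C$) causes no trouble and is in fact controlled separately by Lemma \ref{lem:bddCase} if desired.
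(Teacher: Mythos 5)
Your proof is correct and matches the paper's argument essentially step for step: pigeonhole on the integer gap $b \in [-C,C]$, invoke \cite[Thm.~1.1]{ManConsec} when $b = 0$, and invoke Theorem \ref{thm:Inhom} (or Conjecture \ref{conj:Inhom}) when $b \neq 0$ to derive $\sum_{p:|f(p)|\neq p} p^{-1} < \infty$, contradicting \eqref{eq:gapCorHyp}.
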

\begin{proof}
Assume for the sake of contradiction that
$$
\limsup_{X \ra \infty} \frac{1}{\log X}\sum_{\ss{n \leq X \\ f(n) \neq 0 \\ |f(n+1)-f(n)| \leq C}} \frac{1}{n} = \delta > 0.
$$
By the pigeonhole principle, there is an integer $b \in [-C,C]$ such that 
$$
\limsup_{X \ra \infty} \frac{1}{\log X}\sum_{\ss{n \leq X \\ f(n) \neq 0 \\ f(n+1)-f(n) = b}} \frac{1}{n} \geq \frac{\delta}{2\lfloor C\rfloor+1},
$$
and therefore
\begin{equation}\label{eq:posUppDens}
\delta(\{n \in \mb{N} : f(n) \neq 0, \, f(n+1) = f(n) + b\}) \neq 0.
\end{equation}
Suppose first that $b \neq 0$. Assuming either (a),(b) or (c) in conjunction with Theorem \ref{thm:Inhom}, we find that
$$
\sum_{p : |f(p)| \neq p} \frac{1}{p} < \infty,
$$
contradicting the first of the conditions in \eqref{eq:gapCorHyp}. \\
On the other hand, if $b = 0$ then by \cite[Thm. 1.1]{ManConsec} there are no multiplicative functions satisfying \eqref{eq:gapCorHyp} for which \eqref{eq:posUppDens} holds (here the condition $f(n) \neq 0$ is needed). Thus, in either case we obtain a contradiction, and the claim follows. 
\end{proof}
\noindent Next, we record how the assumption $f(n) \ra \infty$ is related to the frequency of prime power values $f(p^\nu) = 1$.
\begin{lem} \label{lem:fnGrows}
Let $f: \mb{N} \ra \mb{N}$ be a multiplicative function. Then
\begin{equation}\label{eq:eventBdd}
\limsup_{C \ra \infty} \, \overline{d}\{n \in \mb{N} : 1 \leq f(n) \leq C\} > 0
\end{equation}
if and only if 
\begin{equation}\label{eq:fpBig}
\sum_{p^\nu : f(p^\nu) \neq 1} \frac{1}{p^\nu} < \infty.
\end{equation}
\end{lem}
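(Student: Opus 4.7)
The plan is to prove the two directions separately; both are elementary, with the nontrivial input being only the Turán--Kubilius inequality.

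For the backward direction, assume $\sum_{p^\nu: f(p^\nu)\neq 1} p^{-\nu} < \infty$, and let $S := \{p^\nu : f(p^\nu)\neq 1\}$. Given $\e > 0$, choose a finite $F \subseteq S$ with $\sum_{p^\nu \in S \setminus F} p^{-\nu} < \e$. Define
$$
A := \{n \in \mb{N} : p^\nu||n \Rightarrow p^\nu \notin S \setminus F\}.
$$
By a union bound over prime powers in $S \setminus F$ (using that the density of $\{n : p^\nu || n\}$ is $p^{-\nu}(1-1/p) \leq p^{-\nu}$), we have $\bar{d}(\mb{N}\setminus A) \leq \sum_{p^\nu \in S\setminus F} p^{-\nu} < \e$. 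For $n \in A$ we have $f(n) = \prod_{p^\nu || n,\, p^\nu \in F} f(p^\nu) \leq M_F^{|F|}$, where $M_F := \max_{p^\nu \in F} f(p^\nu)$. Therefore $\bar{d}\{n : f(n) \leq M_F^{|F|}\} \geq 1-\e$, and letting $\e \to 0$ yields $\limsup_{C \to \infty} \bar{d}\{n : 1\leq f(n) \leq C\} = 1$, which is strictly positive.

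For the forward direction (the one where I expect the main work), assume the divergence $\sum_{p^\nu \in S} p^{-\nu} = \infty$, and fix any $C > 0$. Since $f$ takes values in $\mb{N}$, whenever $p^\nu \in S$ we have $f(p^\nu)\geq 2$, so multiplicativity gives $f(n) \geq 2^{\omega_{\mc{S}}(n)}$. Hence $f(n) \leq C$ implies $\omega_{\mc{S}}(n) \leq \log_2 C$. Now observe that
$$
A_{\omega_{\mc{S}}}(X) = \sum_{\ss{p^\nu \leq X \\ p^\nu \in S}} \frac{1}{p^\nu}\left(1-\frac{1}{p}\right) = \sum_{\ss{p^\nu \leq X \\ p^\nu \in S}} \frac{1}{p^\nu} + O(1) \longrightarrow \infty
$$
(the error term is $O(1)$ because $\sum_{p,\nu\geq 1} p^{-\nu-1} < \infty$), and likewise $B_{\omega_{\mc{S}}}(X)^2 = A_{\omega_{\mc{S}}}(X) + O(1)$. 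So by Lemma \ref{lem:TK}, for $X$ large enough that $A_{\omega_{\mc{S}}}(X) \geq 2\log_2 C$,
$$
|\{n \leq X : f(n) \leq C\}| \leq |\{n \leq X : |\omega_{\mc{S}}(n) - A_{\omega_{\mc{S}}}(X)| \geq A_{\omega_{\mc{S}}}(X)/2\}| \ll \frac{X B_{\omega_{\mc{S}}}(X)^2}{A_{\omega_{\mc{S}}}(X)^2} \ll \frac{X}{A_{\omega_{\mc{S}}}(X)}.
$$
Dividing by $X$ and letting $X \to \infty$, we get $\bar{d}\{n : 1 \leq f(n) \leq C\} = 0$ for every fixed $C$. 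Since this set is monotone in $C$, the limsup in $C$ is the supremum, which is $0$.

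The only mild subtlety is ensuring that the divergence of $\sum_{p^\nu \in S} p^{-\nu}$ transfers to the divergence of $A_{\omega_{\mc{S}}}(X)$; the computation above shows the two series differ by a convergent quantity, so no genuine obstacle arises. Combining the two directions completes the proof.
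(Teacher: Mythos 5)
Your proof is correct. The forward direction is essentially a self-contained transcription of the argument the paper uses (by reference to Lemma~\ref{lem:bddCase} and Remark~\ref{rem:bddCase}): bound $\omega_{\mc{S}}(n) \leq \log_2 C$ when $f(n)\leq C$, then apply Tur\'an--Kubilius and the divergence of $A_{\omega_{\mc{S}}}(X)$. The backward direction takes a genuinely different route: the paper lower-bounds $\bar{d}\{n : f(n)=1\}$ directly by restricting to squarefree $n$ all of whose prime factors satisfy $f(p)=1$ and invoking a zero-dimensional sieve, arriving at the explicit density $\tfrac{6}{\pi^2}\prod_{p: f(p)\neq 1}(1-1/p)>0$; you instead fix a finite $F\subseteq S$ capturing all but an $\e$-tail, observe via a crude union bound (valid here since the finite-scale counts $|\{n\leq X: p^\nu||n\}|\leq X/p^\nu$ give a uniform-in-$X$ estimate) that the set of $n$ avoiding $S\setminus F$ has upper density at least $1-\e$, and bound $f$ on that set by $M_F^{|F|}$. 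Your version is arguably more elementary (no sieve input needed) and in fact proves something slightly stronger, namely that the $\limsup$ in \eqref{eq:eventBdd} equals $1$, whereas the paper only needs and asserts strict positivity. Both approaches are fine; yours is self-contained at the cost of a less explicit density bound.
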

\begin{proof}
If \eqref{eq:fpBig} fails then Lemma \ref{lem:bddCase} (more precisely Remark \ref{rem:bddCase}) implies that 
$$
d \{n \in \mb{N} : 1 \leq f(n) \leq C\} = 0
$$
for all $C > 0$, as desired. \\
%
Conversely, suppose that \eqref{eq:fpBig} holds. For each $y \geq 1$ and $C \geq 1$ we of course have
$$
\frac{1}{y}|\{n \leq y : 1 \leq f(n) \leq C\}| \geq \frac{1}{y}|\{n \leq y : f(n) = 1\}| \geq \frac{1}{y}|\{n \leq y : \mu^2(n) = 1, \, p || n \Rightarrow f(p) = 1\}|.
$$
By a zero-dimensional sieve, it is easily verified that as $y \ra \infty$ this last expression is
$$
\frac{6}{\pi^2} \prod_{\ss{p \in \mc{P} \\ f(p) \neq 1}} \left(1-\frac{1}{p}\right) + o(1) \geq \frac{6}{\pi^2}\exp\left(-\sum_{\ss{p \in \mc{P} \\ f(p) \neq 1}} \frac{1}{p} \right) + o(1) > 0.
$$
Hence, \eqref{eq:eventBdd} holds as required.
\end{proof}

\begin{proof}[Proof of Corollary \ref{cor:growingPos}]
Let $f: \mb{N} \ra \mb{N}$ is multiplicative, such that
$$
\sum_{\ss{p \in \mc{P} \\  f(p) \neq p}} \frac{1}{p} = \infty.
$$
Further, suppose that for each $C > 0$ we have
$$
d\{n \in \mb{N} : f(n) \leq C\} = 0.
$$
By Lemma \ref{lem:fnGrows}, it follows that
$$
\sum_{p : f(p) \neq 1} \frac{1}{p} = \infty.
$$
Since $f(p) \neq 0$ for all $p$, Corollary \ref{cor:ofConj} implies that
$$
\delta\{n \in \mb{N} : |f(n+1)-f(n)| \leq C\} = 0 \text{ for all } C > 0
$$
under the assumption of ERH, assuming furthermore that $\sum_{p|f(p)} 1/p = \infty$, as claimed.
\end{proof}

\section{Proof of Proposition \ref{prop:converse}} \label{sec:converse}
To prove Proposition \ref{prop:converse} we will construct the prime sets $\mc{T}$ via the probabilistic method. \\
Let us introduce the following notation. Given a set of primes $\mc{T}$ we define the Liouville-type function $\lambda_{\mc{T}}$ 
to be the completely multiplicative function such that for each prime $p$,
$$
\lambda_{\mc{T}}(p) = \begin{cases} -1 \text{ if } p \in \mc{T} \\ +1 \text{ if } p \notin \mc{T}.\end{cases}
$$
Fix the sparse set $\mc{S}$. Let $(X_p)_{p \notin \mc{S}}$ be a sequence of i.i.d. random variables on an auxiliary probability space $(\Omega, \mc{B},\mb{P})$ with
$$
\mb{P}(X_p = 0) = \mb{P}(X_p = 1) = 1/2.
$$
We now define a random set of primes 
\begin{equation}\label{eq:TrandSet}
\mathbf{T} := \{p \notin \mc{S} : X_p = 1\}.
\end{equation}
In the next several lemmas we collect some facts about $\mathbf{T}$. \\
(In the remainder of this section we follow the convention that the trivial character is primitive.)
\begin{lem} \label{lem:probProps}
For $k \geq 1$ define the sequence $x_k := e^{e^{k^{3}}}$. Then with the above scheme, the following statements hold almost surely (i.e. on a set $A \subseteq \Omega$ with $\mb{P}(A) = 1$):
\begin{enumerate}[(a)]
\item for each $k \geq k_0(\omega)$, 
$$
\sum_{\ss{p \leq x_k \\ p \in \mathbf{T}}} \frac{1}{p} \geq \frac{1}{4}\log\log x_k,
$$
\item for every real primitive Dirichlet character $\chi \pmod{q}$ with $1 < q \leq k$,
$$
\left|\sum_{\ss{p \leq x_k \\ p \in \mathbf{T}}} \frac{\chi(p)}{p}\right| \leq \frac{1}{10} \log\log x_k, \quad k \geq k_0(\omega).
$$
\end{enumerate}
\end{lem}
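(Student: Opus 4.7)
Both statements are proved by a second moment / Borel--Cantelli argument applied to the random variables
$$
S_k := \sum_{\ss{p \leq x_k \\ p \in \mathbf{T}}} \frac{1}{p} = \sum_{\ss{p \leq x_k \\ p \notin \mc{S}}} \frac{X_p}{p}, \qquad T_k(\chi) := \sum_{\ss{p \leq x_k \\ p \in \mathbf{T}}} \frac{\chi(p)}{p} = \sum_{\ss{p \leq x_k \\ p \notin \mc{S}}} \frac{\chi(p) X_p}{p}.
$$
The crucial quantitative feature is that $\log\log x_k = k^{3}$, which makes the tail bounds produced by Chebyshev's inequality summable in $k$ with plenty of room to spare for a subsequent union bound over characters.

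For (a), Mertens's theorem combined with $\sum_{p \in \mc{S}} p^{-1} < \infty$ yields $\mb{E}[S_k] = \tfrac{1}{2}\log\log x_k + O(1)$, while independence of the $(X_p)_{p \notin \mc{S}}$ together with $\sum_p p^{-2} < \infty$ gives $\mathrm{Var}(S_k) \leq \tfrac{1}{4}\sum_{p} p^{-2} = O(1)$. Chebyshev's inequality therefore produces
$$
\mb{P}\!\left(S_k < \tfrac{1}{4}\log\log x_k\right) \leq \mb{P}\!\left(|S_k - \mb{E}[S_k]| > \tfrac{1}{8}\log\log x_k\right) \ll \frac{1}{(\log\log x_k)^{2}} = k^{-6}
$$
for $k$ sufficiently large, and since $\sum_{k} k^{-6} < \infty$, the first Borel--Cantelli lemma delivers (a) on a set of full probability.

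For (b), the same bound $\mathrm{Var}(T_k(\chi)) \leq \tfrac{1}{4}\sum_p p^{-2} = O(1)$ holds uniformly in $\chi$. The key analytic input is the uniform control of $\mb{E}[T_k(\chi)]$: for every non-principal primitive Dirichlet character $\chi$ of conductor $q$ with $1 < q \leq k$, the Siegel--Walfisz theorem (applicable since $\log x_k = \exp(k^{3})$ dwarfs any fixed polynomial in $q \leq k$) gives
$$
\left|\sum_{p \leq x_k} \frac{\chi(p)}{p}\right| = O(1),
$$
so that $|\mb{E}[T_k(\chi)]| = O(1) \leq \tfrac{1}{20}\log\log x_k$ for $k$ large. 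Chebyshev then yields $\mb{P}(|T_k(\chi)| > \tfrac{1}{10}\log\log x_k) \ll k^{-6}$ for each such $\chi$. Since the number of real primitive characters of conductor $\leq k$ is $O(k)$ (they are parameterised by fundamental discriminants), a union bound followed by summation in $k$ produces
$$
\sum_{k \geq 1} \sum_{\ss{\chi \text{ real primitive} \\ 1 < \text{cond}(\chi) \leq k}} \mb{P}\!\left(|T_k(\chi)| > \tfrac{1}{10}\log\log x_k\right) \ll \sum_{k} k^{-5} < \infty,
$$
and a second application of Borel--Cantelli concludes (b).

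\textbf{The one point requiring care} is the uniform bound $\left|\sum_{p \leq x_k} \chi(p)/p\right| = o(\log\log x_k)$ across all non-principal real primitive $\chi$ of conductor at most $k$; however, the super-exponential scale $x_k = \exp\exp(k^{3})$ renders such moduli essentially negligible, so a range of standard tools (Siegel--Walfisz, the prime number theorem for Dirichlet $L$-functions, or even the crude estimate $|\log L(1,\chi)| \ll \log q$ combined with partial summation) is more than sufficient for this input.
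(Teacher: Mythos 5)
Your proposal is correct and follows essentially the same route as the paper: both compute means and variances of $S_k$ and $T_k(\chi)$ using independence, apply Chebyshev's inequality, invoke Siegel--Walfisz (via partial summation) to control the deterministic sum $\sum_{p\le x_k}\chi(p)/p$, union-bound over the $O(k)$ real primitive characters of conductor $\le k$, and finish with Borel--Cantelli. The only substantive difference is cosmetic: your variance bound $O(1)$ is tighter than the paper's working estimate $O(\log\log x_k)$ for $\mb{E}\bigl[(S_k-\tfrac{1}{2}\log\log x_k)^2\bigr]$, giving you a tail bound $\ll k^{-6}$ where the paper settles for $\ll k^{-3}$, but both are summable so the conclusion is identical. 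One small imprecision: the unconditional bound on $\sum_{p\le x_k}\chi(p)/p$ is $O(\log q)$, not $O(1)$ with an absolute constant; you flag this correctly in your closing remark, and since $\log q\le\log k=\tfrac13\log\log\log x_k=o(\log\log x_k)$ the argument is unaffected.
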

\begin{proof}
By linearity of expectation we have
$$
\mb{E}\left(\sum_{\ss{p \leq x_k \\ p \in \mathbf{T}}} \frac{1}{p}\right) = \mb{E}\left(\sum_{\ss{p \leq x_k \\ p \notin \mc{S}}} \frac{X_p}{p}\right) = \sum_{p \leq x_k} \frac{\mb{E}(X_p)}{p} + O\left(\sum_{\ss{p \leq x_k \\ p \in \mc{S}}} \frac{1}{p}\right) = \frac{1}{2}\log\log x_k + O(1).
$$
Therefore, by independence the variance satisfies
\begin{align*}
\mb{E}\left(\sum_{\ss{p \leq x_k \\ p \in \mathbf{T}}} \frac{1}{p} - \frac{1}{2} \log\log x_k\right)^2 &= \sum_{\ss{p,q \leq x_k \\ p,q \notin \mc{S}}} \frac{\mb{E}(X_pX_q)}{pq} - \frac{1}{4} (\log\log x_k)^2 + O(\log\log x_k) \\
&= \left(\sum_{\ss{p \leq x_k \\ p \notin \mc{S}}} \frac{\mb{E}(X_p)}{p}\right)^2 + \frac{1}{2} \sum_{p \leq x_k} \frac{1}{p^2} - \frac{1}{4} \log\log x_k^2 + O(\log\log x_k) \\
&= O(\log\log x_k).
\end{align*}
By Chebyshev's inequality, we deduce that
$$
\mb{P}\left(\sum_{\ss{p \leq x_k \\ p \in \mathbf{T}}} \frac{1}{p} \leq 
\frac{1}{4}\log\log x_k\right) 
\leq 16\frac{\mb{E}\left(\sum_{\ss{p \leq x_k \\ p \in \mathbf{T}}} \frac{1}{p} - \frac{1}{2} \log\log x_k\right)^2}{(\log\log x_k)^2} \ll \frac{1}{\log\log x_k} \ll \frac{1}{k^3}.
$$
Next, 
suppose $1 < q \leq k \leq \log\log x_k$ and let $\chi$ be a real primitive character modulo $q$. Then
\begin{align}
\mb{E}\left(\left|\sum_{\ss{p \leq x_k \\ p \in \mathbf{T}}} \frac{\chi(p)}{p}\right|^2\right) &= \frac{1}{4} \sum_{\ss{p,q \leq x_k \\ p,q \notin \mc{S} }} \frac{\chi(pq)}{pq} + O\left(\sum_{p \leq x_k} \frac{1}{p^2}\right) = \left(\frac{1}{2}\sum_{p \leq x_k} \frac{\chi(p)}{p} + O(1) \right)^2 + O(1). \nonumber
\end{align}
By the Siegel-Walfisz theorem and partial summation (see e.g. \cite[Cor. 5.29]{IK}), 
\begin{equation}
\sum_{p \leq x_k} \frac{\chi(p)}{p} \ll \sum_{p \leq e^k} \frac{1}{p} + \int_{e^k}^{x_k} \left|\sum_{p \leq v} \chi(p)\right|\frac{dv}{v^2} \ll \log k + k^{1/2}k^{-10} \ll \log\log\log x_k. \label{eq:SWApp}
\end{equation}
Combining these two estimates and using Chebyshev's inequality, we find
$$
\mb{P}\left(\left|\sum_{\ss{p \leq x_k \\ p \in \mathbf{T}}} \frac{\chi(p)}{p}\right| \geq \frac{1}{10}\log\log x_k\right) \ll \frac{(\log\log\log x_k)^2}{(\log\log x_k)^2} \ll \frac{(\log k)^2}{k^4} \ll \frac{1}{k^3}.
$$
Since there are $\ll k$ real primitive characters with conductor $\leq k$, by the union bound we deduce that the probability that either (a) fails or that (b) fails for some $\chi \pmod{q}$, $1 < q \leq k$ is
$$
\ll \frac{1}{k^3} + k \cdot \frac{1}{k^3} \ll \frac{1}{k^2}.
$$
By the Borel-Cantelli lemma (see \cite[VIII.3]{Fel}), we deduce that, on a set of $\omega \in \Omega$ with probability $1$, both
$$
\sum_{\ss{p \leq x_k \\ p\in \mathbf{T}(\omega)}} \frac{1}{p} \geq \frac{1}{4}\log\log x_k + O(1) 
$$
and also
$$
\max_{1 < q \leq k} \left|\sum_{\ss{p \leq x_k \\ p \in \mathbf{T}(\omega)}} \frac{\chi(p)}{p}\right| \leq \frac{1}{10}\log\log x_k
$$
hold for all $k \geq k_0(\omega)$, as claimed.
\end{proof}
\begin{lem} \label{lem:nonPretrandom}
For every real primitive character $\chi$,
$$
\lim_{x \ra \infty} \mb{D}(\lambda_{\mathbf{T}},\chi; x) = \infty
$$
almost surely.
\end{lem}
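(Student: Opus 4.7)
The plan is to argue pointwise on the almost-sure event $A \subseteq \Omega$ on which both conclusions (a) and (b) of Lemma \ref{lem:probProps} hold for all sufficiently large $k$. Fix $\omega \in A$. Since $\lambda_{\mathbf{T}}$ and $\chi$ are both real-valued, each summand in $\mb{D}(\lambda_{\mathbf{T}},\chi;x)^2$ is non-negative, so $\mb{D}(\lambda_{\mathbf{T}},\chi;x)$ is non-decreasing in $x$. It therefore suffices to show that $\mb{D}(\lambda_{\mathbf{T}},\chi;x_k) \to \infty$ as $k \to \infty$ along the sequence $x_k = e^{e^{k^3}}$ appearing in Lemma \ref{lem:probProps}. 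Expanding the definition and using that $\chi$ takes only finitely many values at primes dividing its conductor,
$$
\mb{D}(\lambda_{\mathbf{T}},\chi;x_k)^2 = \sum_{p \leq x_k} \frac{1}{p} - \sum_{p \leq x_k} \frac{\lambda_{\mathbf{T}}(p)\chi(p)}{p} + O_\chi(1) = \log\log x_k - \sum_{p \leq x_k} \frac{\lambda_{\mathbf{T}}(p)\chi(p)}{p} + O_\chi(1).
$$

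Consider first the trivial primitive character $\chi \equiv 1$. Then $1 - \lambda_{\mathbf{T}}(p) = 2 \cdot 1_{p \in \mathbf{T}}$, and Lemma \ref{lem:probProps}(a) directly yields
$$
\mb{D}(\lambda_{\mathbf{T}},1;x_k)^2 = 2\sum_{\ss{p \leq x_k \\ p \in \mathbf{T}}} \frac{1}{p} + O(1) \geq \tfrac{1}{2}\log\log x_k + O(1) \longrightarrow \infty.
$$
For a non-trivial real primitive character $\chi$ of conductor $q > 1$, write
$$
\sum_{p \leq x_k} \frac{\lambda_{\mathbf{T}}(p)\chi(p)}{p} = \sum_{p \leq x_k} \frac{\chi(p)}{p} - 2 \sum_{\ss{p \leq x_k \\ p \in \mathbf{T}}} \frac{\chi(p)}{p}.
$$
The first term on the right is $O_\chi(\log\log\log x_k)$ by the Siegel--Walfisz bound already recorded in \eqref{eq:SWApp}. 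Once $k \geq q$, Lemma \ref{lem:probProps}(b) applies and bounds the second term in absolute value by $\tfrac{1}{10}\log\log x_k$. Combining,
$$
\mb{D}(\lambda_{\mathbf{T}},\chi;x_k)^2 \geq \log\log x_k - \tfrac{2}{10}\log\log x_k - O_\chi(\log\log\log x_k) \geq \tfrac{3}{4}\log\log x_k
$$
for $k$ sufficiently large, whence $\mb{D}(\lambda_{\mathbf{T}},\chi;x_k) \to \infty$ as well.

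Since the event $A$ has probability one and, on $A$, the conclusion holds for every real primitive $\chi$ (one exceptional threshold in $k$ depending on the conductor), the lemma follows. There is no real obstacle here; the construction of the i.i.d.\ signs $(X_p)$ was arranged precisely so that $\lambda_{\mathbf{T}}$ acquires just enough cancellation against any fixed real primitive character (via the second-moment computation in Lemma \ref{lem:probProps}(b)) while remaining far from $+1$ on average (via Lemma \ref{lem:probProps}(a)), and the proof amounts to bookkeeping these two quantitative estimates inside $\mb{D}(\lambda_{\mathbf{T}},\chi;x)^2$.
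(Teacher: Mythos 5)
Your proof is correct and follows essentially the same route as the paper's: expand $\mb{D}(\lambda_{\mathbf{T}},\chi;x_k)^2$ into $\sum_{p\leq x_k}\frac{1-\chi(p)}{p}$ plus twice the partial sum over $p\in\mathbf{T}$, feed in Lemma \ref{lem:probProps}(a) for the trivial character and Lemma \ref{lem:probProps}(b) together with Siegel--Walfisz for non-trivial ones, and pass from the sequence $(x_k)$ to all $x$ by monotonicity of $\mb{D}^2$. The only cosmetic difference is that you make the monotonicity step explicit up front, whereas the paper invokes it implicitly at the end via $\mb{D}(\lambda_{\mathbf{T}},\chi;x)^2\geq\mb{D}(\lambda_{\mathbf{T}},\chi;x_k)^2$ for $x\geq x_k$.
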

\begin{proof}
Let $\chi$ be a real primitive character with modulus $q \geq 1$. Given $k \geq q$ we have
\begin{equation}\label{eq:distLambdaT}
\mb{D}(\lambda_{\mathbf{T}}, \chi;x_k)^2 = \sum_{\ss{p \leq x_k \\ p \in \mathbf{T}}} \frac{1+\chi(p)}{p} + \sum_{\ss{p \leq x_k \\ p \notin \mathbf{T}}} \frac{1-\chi(p)}{p} = \sum_{p \leq x_k} \frac{1-\chi(p)}{p} + 2\sum_{\ss{p \leq x_k \\ p \in \mathbf{T}}} \frac{\chi(p)}{p}.
\end{equation}
If $q = 1$ then by Lemma \ref{lem:probProps} the right-hand side of \eqref{eq:distLambdaT} is simply
$$
2\sum_{\ss{p\leq x_k \\ p \in \mathbf{T}}} \frac{1}{p} \geq \frac{1}{2}\log\log x_k
$$
for all $k \geq k_0(\omega)$ almost surely.\\
If $1 < q \leq k$ then combining Lemma \ref{lem:probProps} with the Siegel-Walfisz theorem (as in \eqref{eq:SWApp}), the right-hand side of \eqref{eq:distLambdaT} is bounded from below as
\begin{align*}
\geq \log\log x_k- \left|\sum_{p \leq x_k} \frac{\chi(p)}{p}\right| - 2\left|\sum_{\ss{p \leq x_k \\ p \in \mathbf{T}}} \frac{\chi(p)}{p}\right| + O(1) &\geq \log\log x_k + O(\log\log \log x_k) - \frac{1}{5}\log\log x_k \\ &\geq \frac{1}{2}\log\log x_k
\end{align*}
for $k \geq k_0(\omega)$ almost surely. Thus, we obtain
$$
\inf_{\ss{\chi \text{ real primitive} \\ q \leq k}} \mb{D}(\lambda_{\mathbf{T}},\chi;x_k)^2 \geq \frac{1}{2}\log\log x_k, \quad k \geq k_0(\omega)
$$
almost surely.\\
Therefore, for any real primitive character $\chi$ modulo $q_0$, any $k \geq \max\{k_0(\omega),q_0\}$ and any $x \geq x_k$,
$$
\mb{D}(\lambda_{\mathbf{T}},\chi; x)^2 \geq \inf_{q \leq k} \mb{D}(\lambda_{\mathbf{T}},\chi_q;x_k)^2 \geq \frac{1}{2}\log\log x_k + O(1)
$$
almost surely. Taking $k \ra \infty$ forces $x \ra \infty$, and the claim follows.
\end{proof}
\begin{lem} \label{lem:unctble}
If $\mc{C}$ is any countable collection of subsets of primes,
$$
\mb{P}(\mathbf{T} \in \mc{C}) = 0.
$$
In particular, for any set $A$ with $\mb{P}(A) > 0$, the collection
$
\{\mathbf{T}(\omega) : \omega \in A\}
$
is uncountable.
\end{lem}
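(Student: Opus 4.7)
\medskip

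\noindent\textbf{Proof plan for Lemma \ref{lem:unctble}.} The plan is to first show that for any \emph{single} set of primes $T$, the event $\{\mathbf{T} = T\}$ has probability zero, then conclude the lemma by countable additivity.

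First, I would observe that by construction $\mathbf{T} \subseteq \mc{P} \setminus \mc{S}$, so if $T$ is a subset of primes with $T \cap \mc{S} \neq \emptyset$, then trivially $\mb{P}(\mathbf{T} = T) = 0$. Thus, fix $T \subseteq \mc{P} \setminus \mc{S}$. Since $\mc{S}$ is sparse, the complement $\mc{P} \setminus \mc{S}$ is infinite. Unfolding the definition, the event $\{\mathbf{T} = T\}$ is exactly the intersection
$$
\{\mathbf{T} = T\} = \bigcap_{p \in \mc{P} \setminus \mc{S}} \{X_p = 1_{p \in T}(p)\}.
$$
By the independence of the family $(X_p)_{p \notin \mc{S}}$ and the fact that $\mb{P}(X_p = 0) = \mb{P}(X_p = 1) = 1/2$, the probability of the intersection over any finite subcollection is $2^{-N}$ where $N$ is its size. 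Letting $N \to \infty$ along finite subsets of $\mc{P} \setminus \mc{S}$ and using continuity from above (or just the trivial bound $\prod_{p \notin \mc{S}} (1/2) = 0$), we conclude $\mb{P}(\mathbf{T} = T) = 0$.

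Next, for a countable collection $\mc{C}$, countable subadditivity yields
$$
\mb{P}(\mathbf{T} \in \mc{C}) \leq \sum_{T \in \mc{C}} \mb{P}(\mathbf{T} = T) = 0,
$$
giving the first assertion. For the second assertion, suppose for contradiction that $A \subseteq \Omega$ satisfies $\mb{P}(A) > 0$ but the collection $\mc{C}_A := \{\mathbf{T}(\omega) : \omega \in A\}$ is countable. Then $A \subseteq \{\omega : \mathbf{T}(\omega) \in \mc{C}_A\}$, so by the first part
$$
0 < \mb{P}(A) \leq \mb{P}(\mathbf{T} \in \mc{C}_A) = 0,
$$
a contradiction. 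Hence $\mc{C}_A$ is uncountable.

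I do not anticipate any real obstacle here; the only subtlety worth explicitly flagging is the necessity of the observation that $\mc{P} \setminus \mc{S}$ is infinite, which is immediate since $\mc{S}$ is sparse. Both steps are essentially one-line applications of independence and countable subadditivity, respectively.
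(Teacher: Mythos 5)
Your proof is correct and follows essentially the same approach as the paper's: bound $\mb{P}(\mathbf{T}=T)$ by the probability of agreement on finitely many primes outside $\mc{S}$, use independence and the infinitude of $\mc{P}\setminus\mc{S}$ (which the paper quantifies slightly more explicitly via $\pi_{\mc{S}^c}(N_k)\geq\tfrac{2}{3}\pi(N_k)$) to send this to zero, and then apply countable (sub)additivity. The second assertion is handled identically in both.
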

\begin{proof}
For any fixed set of primes $\mc{T}$ and any $N \geq 1$ the probability
\begin{equation}\label{eq:cylinderbound}
\mb{P}(\mathbf{T} = \mc{T}) \leq \mb{P}(\bigcap_{\ss{p \leq N \\ p \notin \mc{S}}} \{X_p = 1_{p \in T}\}) \leq 2^{-\pi_{\mc{S}^c}(N)},
\end{equation}
where $\pi_{\mc{S}^c}(N) := \pi(N) - |\mc{S} \cap [2,N]|$. Since $\mc{S}$ is sparse we have $\pi_{\mc{S}^c}(N_k) \geq \tfrac{2}{3}\pi(N_k)$ on an infinite increasing sequence $(N_k)_k$. Taking $N \ra \infty$ along $(N_k)_k$, we deduce that $\mb{P}(\mathbf{T} = \mc{T}) = 0$. Thus, by countable additivity of the measure $\mb{P}$ we find
$$
\mb{P}(\mathbf{T} \in \mc{C}) = \sum_{\mc{T} \in \mc{C}} \mb{P}(\mathbf{T} = \mc{T}) = 0.
$$
This establishes the first claim.\\
For the second, observe that if $\{\mathbf{T}(\omega) : \omega \in A\}$ were countable then there would exist a collection $\mc{C} = (\mc{T}_j)_j$ such that 
$$
\omega \in A \Rightarrow \mathbf{T}(\omega) \in \mc{C}.
$$
By the previous claim,
$$
0 < \mb{P}(A) \leq \mb{P}(\mathbf{T} \in \mc{C}) = 0,
$$
which is a contradiction.
\end{proof}
\begin{proof}[Proof of Proposition \ref{prop:converse}]
Set $\mc{S}_d := \mc{S} \bk \{p : p|d\}$. Let $\mc{T}$ be a realisation of the random set $\mathbf{T}$ defined in \eqref{eq:TrandSet}. By Lemma \ref{lem:nonPretrandom}, with probability $1$ we have
\begin{equation}\label{eq:nonpretCheck}
\lim_{X \ra \infty} \mb{D}(\lambda_{\mc{T}}, \chi;X) = \infty
\end{equation}
for each fixed real primitive character $\chi$, and Lemma \ref{lem:unctble} shows that the collection of $\mc{T}$ for which this holds is uncountable. For the remainder of the proof we fix some realisation $\mc{T}$ satisfying \eqref{eq:nonpretCheck}. \\
Define now the set 
$$
\mc{N} = \mc{N}_{\mc{T}} := \{dpm : \, p \in S_d, \, p|m \Rightarrow p \notin \mc{S} \text{ and } \lambda_{\mc{T}}(m) = +1\}.
$$
Let $\tilde{d} = \prod_{\ss{p || d}} p$. Thus, if $d = 1$ or $d$ is squarefull then $\tilde{d} = 1$; otherwise, $\tilde{d} > 1$. Define also $k := db/a \in \mb{Z}$, $k \neq 0$.
We define the completely multiplicative function $f = f_{\mc{T}}$ at primes via
$$
f(p) = \begin{cases} \lambda_{\mc{T}}(p) p &\text{ if $p\notin \mc{S}$}  \\ P^+(|k|) &\text{ if $p = P^+(\tilde{d})$} \\ p k/f(P^+(\tilde{d})) &\text{ if $p \in \mc{S}_d$} \\ 1  &\text{ if $p|d$, $p \neq P^+(\tilde{d})$.}
\end{cases}
$$
Since $k \neq 1$ by assumption, $f$ clearly satisfies items (i) and (ii). Note furthermore that $f(d) = f(P^+(\tilde{d}))$ divides $k$, and is thus a divisor of $b$ as in item (iii). It thus remains to show that $f$ so-defined satisfies item (iv). \\
Consider the set
$$
\mc{N}' := \{n \in \mb{N} : n,n+a \in \mc{N}\}.
$$
If $n \in \mc{N}'$ then we can find primes $p_1,p_2 \in \mc{S}_d$ and $m_1,m_2$ not divisible by any prime $p \in \mc{S}$, such that
$$
n = m_1p_1 d, \quad n+a = m_2p_2d = d(m_1p_1 + a/d),
$$
and $\lambda_{\mc{T}}(m_1) = \lambda_{\mc{T}}(m_2) = +1$.
By construction we have $f(d) = f(P^+(\tilde{d}))$. Thus,
\begin{align*}
f(n+a) &= f(d)f(m_2)f(p_2) = f(P^+(\tilde{d})) \lambda_{\mc{T}}(m_2) m_2 (k/f(P^+(\tilde{d}))) p_2 = k m_2p_2, \\
f(n)+b &= f(d)f(m_1)f(p_1) + k_1k_2a/d = k(\lambda_{\mc{T}}(m_1)m_1p_1 + a/d) = k(m_1p_1 + a/d).
\end{align*}
Since $m_2p_2 = m_1p_1 + a/d$, we deduce that $f(n+a) = f(n) + b$, and so
$$
\mc{N}' \subseteq \mc{N}_{f,a,b}.
$$
To complete the proof we will show that $\bar{\delta}(\mc{N}') > 0$. \\
Since $|\mc{S}_d| \geq 2$ and the sets
$$
\mc{N}_q := \{dqm : p | m \Rightarrow p \notin \mc{S} \text{ and } \lambda_{\mc{T}}(m) = +1\}, \quad q \in \mc{S}_d
$$
are mutually disjoint, it suffices to show that there are $p_1,p_2 \in \mc{S}_d$ with $p_1 \neq p_2$ and $(p_1p_2,a) = 1$ such that
$$
\{n \in \mc{N}_{p_1} : n+a \in \mc{N}_{p_2}\} \subseteq \mc{N}'
$$
has positive upper logarithmic density. To this end, we will estimate
$$
\frac{1}{\log x}\sum_{\ss{n \leq x \\ n \in \mc{N}_{p_1}, \, n+a \in \mc{N}_{p_2}}} \frac{1}{n} = \frac{1}{d \log x} \sum_{\ss{n' \leq x/d \\ n' = m_1p_1, \, n'+a/d = m_2p_2 \\ (m_1m_2,\mc{S}) = 1 \\ \lambda_{\mc{T}}(m_1) = \lambda_{\mc{T}}(m_2) = +1}} \frac{1}{n'}. 
$$
By the Chinese remainder theorem, we can find $0 \leq u < p_2$ so that
$$
n' \equiv 0 \pmod{p_1}, \, n'+a/d \equiv 0 \pmod{p_2} \text{ if and only if } n' \equiv up_1 \pmod{p_1p_2}.
$$
Define the multiplicative function $g(n) := 1_{(n,\mc{S}) = 1}$.
Using the changes of variables 
$$
n' = mp_1p_2 + up_1, \quad v := (up_1+a/d)/p_2 \in \mb{Z},
$$ 
and the identity $1_{\lambda_{\mc{T}}(m) = +1} = (1+\lambda_{\mc{T}}(m))/2$, we can recast the above expression as
\begin{align*}
&\frac{1}{4d\log x} \sum_{m \leq x/dp_1p_2} \frac{g(mp_2 + u)g(mp_1 + v)}{m}(1 + \lambda_{\mc{T}}(mp_2+u))(1 + \lambda_{\mc{T}}(mp_1+v)) \\
&= \sum_{\eta_1,\eta_2 \in \{0,1\}} \frac{1}{4d \log x} \sum_{m \leq x/dp_1p_2} \frac{g\lambda_{\mc{T}}^{\eta_1}(mp_2 + u)g\lambda_{\mc{T}}^{\eta_2}(mp_1 + v)}{m}.
\end{align*}
Since $g(p) = 1$ for all $p$ outside the sparse set $\mc{S}$, the pretentious triangle inequality \eqref{eq:usualTri} implies that for any fixed, real primitive character $\chi$,
$$
\mb{D}(g \lambda_{\mc{T}},\chi;x) \geq \mb{D}(\lambda_{\mc{T}},\chi;x) - \mb{D}(g,1;x) = \mb{D}(\lambda_{\mc{T}},\chi;x) - \left(\sum_{\ss{p \leq x \\ p \in \mc{S}}} \frac{1}{p}\right)^{1/2} = \mb{D}(\lambda_{\mc{T}},\chi;x) - O(1).
$$
Combining \eqref{eq:nonpretCheck} with Lemma \ref{lem:realNonPret}, we deduce therefore that
$$
\lim_{x \ra \infty}
\inf_{|t| \leq x} \mb{D}(g\lambda_{\mc{T}}, \chi;x) = \infty 
$$
for every fixed Dirichlet character $\chi$ (real or complex). As $d,p_1,p_2,u$ and $v$ are all fixed, Theorem \ref{thm:Tao} yields
$$
\frac{1}{4d \log x} \sum_{m \leq x/dp_1p_2} \frac{g\lambda_{\mc{T}}^{\eta_1}(mp_2 + u)g\lambda_{\mc{T}}^{\eta_2}(mp_1 + v)}{m} = o(1)
$$
for each $(\eta_1,\eta_2) \neq (0,0)$. Thus,
\begin{equation}\label{eq:afterTao}
\frac{1}{\log x}\sum_{\ss{n \leq x \\ n \in \mc{N}_{p_1}, \, n+a \in \mc{N}_{p_2}}} \frac{1}{n} = \frac{1}{4d \log x} \sum_{\ss{m \leq x \\ (mp_1+u,\mc{S}) = (mp_2+v,\mc{S}) = 1}} \frac{1}{m} + o(1).
\end{equation}
By partial summation, it now suffices to show that the set
$$
\{m \in \mb{N} : (mp_2+u,\mc{S}) = (mp_1+v,\mc{S}) = 1\}
$$
has positive density. 
By a zero-dimensional sieve this density is
$$
\frac{1}{p_1p_2}\left(1-\frac{1}{p_1}\right)\left(1-\frac{1}{p_2}\right) \prod_{\ss{p \in \mc{S} \\ p \nmid p_1p_2}} \left(1-\frac{2}{p}\right),
$$
which is $>0$ as $2 \notin \mc{S}$. \\
When $\mc{T} = \emptyset$ the condition $\lambda_{\mc{T}}(m) = +1$ in the definition of $\mc{N}$ is trivial, and \eqref{eq:afterTao} holds with the prefactor $1/4$ replaced by $1$. The same conclusion follows.
\end{proof}
\begin{rem}
The above proof may easily be modified to allow $2 \in \mc{S}$, though the analysis differs according to whether $2$ divides $d$ and/or $a$. We leave this to the interested reader. 
\end{rem}

\bibliographystyle{plain}
\bibliography{InhomBib.bib}

\end{document}